\title{Generically Stable Measures and Distal Regularity in Continuous Logic}
\author{Aaron Anderson}
\begin{document}

\maketitle

\begin{abstract}
    We develop a theory of generically stable and smooth Keisler measures in NIP metric theories, generalizing the case of classical logic.
    Using smooth extensions, we verify that fundamental properties of (Borel)-definable measures and the Morley product hold in the NIP metric setting.
    With these results, we prove that as in discrete logic, generic stability can be defined equivalently through
    definability properties, statistical properties, or behavior under the Morley product.
    We also examine weakly orthogonal Keisler measures, characterizing weak orthogonality in terms of various analytic regularity properties.
    
    We then examine Keisler measures in distal metric theories, proving that as in discrete logic, distality is characterized by all generically stable measures being smooth, or by all pairs of generically stable measures being weakly orthogonal.
    We then use this, together with our results on weak orthogonality and a cutting lemma, to find analytic versions of distal regularity and the strong Erd\H{o}s-Hajnal property.
\end{abstract}

\section{Introduction}
This article continues the program begun in \cite{anderson1} of studying distal theories in continuous logic.
In that paper, we characterized distal metric structures in terms of the behavior of their indiscernible sequences and a continuous version of strong honest definitions, generalizing \cite{distal_simon} and \cite{cs2}.
It is just as fundamental to define distal structures as those structures where all generically stable Keisler measures are smooth.

Keisler measures, as a real-valued generalization of types, lend themselves naturally to continuous logic.
Despite this, while many properties of types such as definability, finite satisfiability, and generic stability have been generalized both to Keisler measures\cite{HPS} and to types in continuous logic\cite{StabGrps}\cite{random_gs}\cite{khanaki},
the literature is comparatively lacking in simultaneous generalizations to measures in continuous logic.
Thus before we can examine distal metric structures from a Keisler measure perspective, we must generalize these properties, extending the theory of Keisler measures over metric structures from papers such as \cite{random09}, \cite{randomVC}, and \cite{csr}.

Once we understand generically stable Keisler measures in continuous logic, and prove that the Keisler measure definition of distality is equivalent to all other definitions for metric structures,
we may use these measures for combinatorial applications of distality.
We develop continuous logic versions of the distal regularity lemma and (definable) strong Erd\H{o}s-Hajnal property of \cite{distal_reg}.
A forthcoming paper with Ben Yaacov will provide several examples of metric structures to which these results apply\cite{anderson3}.

This contributes to a growing subject of ``tame regularity'' in the analytic setting.
Analytic regularity lemmas replace the graphs of Szemer\'edi's original regularity lemma with real-valued functions, which are decomposed into structured, pseudorandom, and error parts\cite{ls1}.
Under a tameness assumption, such as the function being definable in an NIP\cite{ls2}, $n$-dependent\cite{ct}, or stable\cite{csr} metric structure, this decomposition can be simplified.

The distal analytic regularity lemma, Theorem \ref{thm_distal_reg}, implies that for every $\varepsilon > 0$,
any definable predicate $\phi(x_1,\dots, x_n)$ in a distal structure can be expressed as a the sum of a structured part of bounded complexity
and a particularly well-behaved error part, which is bounded in magnitude by $\varepsilon$ everywhere except on a structured set of small measure.

This in turn implies an analytic version of the strong Erd\H{o}s-Hajnal property:
We say that a predicate $\phi(x_1,\dots,x_n)$ has the \emph{strong Erd\H{o}s-Hajnal property} in some structure $M$
when for every $\varepsilon > 0$, there exists $\delta > 0$ such that for any finite sets $A_i \subseteq M^{x_i}$,
there are subsets $B_i \subseteq A_i$ such that $|B_i| \geq \delta |A_i|$ and for all $b, b' \in B_1 \times \dots \times B_n$,
$|\phi(b) - \phi(b')| \leq \varepsilon$.
Just as \cite[Theorem 3.1]{distal_reg} proves in the discrete case, we show that in continuous logic, distality is equivalent to every definable predicate 
having a definable version of the strong Erd\H{o}s-Hajnal property, where the counting measures on the sets $A_i$ can be replaced with generically stable Keisler measures,
and the sets $B_i$ can be defined uniformly.

\subsection*{Overview and Results}
Section \ref{sec_measures} lays out the basic theory of Keisler measures in continuous logic.
These can be understood either as regular Borel measures on the space of types, or equivalently, as certain linear functionals on the space of definable predicates\cite{csr}.
Most importantly for studying distality, we study weak orthogonality and smooth measures, following the approach of \cite{simon_distal_reg}. 
We characterize weakly orthogonal measures as those where the following equivalent conditions hold:
\begin{cor}[Corollary \ref{cor_ortho}]
    Let $x_1,\dots,x_n$ be variable tuples, and let $\mu_i \in \mathfrak{M}_{x_i}(M)$ be Keisler measures on $x_i$ for each $i$.
    The measures $\mu_i$ are weakly orthogonal, meaning that there is a unique measure $\omega \in \mathfrak{M}_{x_1,\dots,x_n}(M)$ on $(x_1,\dots,x_n)$ extending the product measure of $\mu_1, \dots, \mu_n$,
    if and only if for every $M$-definable predicate $\phi(x_1,\dots,x_n)$ and every $\varepsilon > 0$, there exist $M$-definable predicates $\psi^-(x_1,\dots,x_n),\psi^+(x_1,\dots,x_n)$,
where $\psi^\pm(x_1,\dots,x_n)$ are each of the form $\sum_{j = 1}^m \prod_{i = 1}^n\theta_{ij}^{\pm}(x_i)$, such that
\begin{itemize}
    \item For all $(x_1,\dots,x_n)$, $\psi^-(x_1,\dots,x_n) \leq \phi(x_1,\dots,x_n) \leq \psi^+(x_1,\dots,x_n)$.
    \item For any product measure $\omega$ of $\mu_1,\dots,\mu_n$, $\int_{S_{x_1\dots x_n}(M)}(\psi^+ - \psi^-)\,d\omega \leq \varepsilon$.
\end{itemize}
\end{cor}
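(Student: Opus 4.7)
The plan is to recast the equivalence as a statement about positive linear extensions. Let $R \subseteq \mathcal{C}(S_{x_1\dots x_n}(M))$ denote the subalgebra spanned by rectangle predicates $\prod_{i=1}^n \theta_i(x_i)$, and let $\Lambda_0 \colon R \to \mathbb{R}$ be defined by $\Lambda_0\bigl(\sum_j \prod_i \theta_{ij}\bigr) = \sum_j \prod_i \int \theta_{ij}\, d\mu_i$. Any product measure of $\mu_1,\dots,\mu_n$ extends $\Lambda_0$, and conversely any positive linear functional on $\mathcal{C}(S_{x_1\dots x_n}(M))$ extending $\Lambda_0$ corresponds, via Riesz representation, to such a product measure. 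Weak orthogonality of the $\mu_i$ thus becomes the statement that $\Lambda_0$ admits a unique positive extension to $\mathcal{C}(S_{x_1\dots x_n}(M))$, which is what I want to match with the sandwich condition.

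The direction ``sandwich $\Rightarrow$ uniqueness'' is immediate: given two product measures $\omega_1,\omega_2$ and $\psi^- \leq \phi \leq \psi^+$ with $\Lambda_0(\psi^+ - \psi^-) \leq \varepsilon$, both $\omega_k$ agree with $\Lambda_0$ on $\psi^\pm \in R$, so $|\int \phi\,d\omega_1 - \int \phi\,d\omega_2| \leq \varepsilon$; letting $\varepsilon \to 0$ gives $\omega_1 = \omega_2$.

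For the converse I would argue by contrapositive. Suppose the sandwich condition fails for some $\phi$ and some $\varepsilon > 0$, and define
\[U(\phi') = \inf\{\Lambda_0(\psi^+) : \psi^+ \in R,\ \psi^+ \geq \phi'\}, \quad L(\phi') = \sup\{\Lambda_0(\psi^-) : \psi^- \in R,\ \psi^- \leq \phi'\},\]
which makes sense for any $\phi' \in \mathcal{C}(S_{x_1\dots x_n}(M))$ and yields a sublinear functional $U$ (and superlinear $L$) with $U = L = \Lambda_0$ on $R$; our assumption forces $U(\phi) - L(\phi) \geq \varepsilon$. The Hahn--Banach theorem first extends $\Lambda_0$ to $R + \mathbb{R}\phi$ by sending $\phi$ to any prescribed value in $[L(\phi), U(\phi)]$, and then extends further to all of $\mathcal{C}(S_{x_1\dots x_n}(M))$ subject to $\Lambda \leq U$. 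Choosing two distinct initial values for $\phi$ yields two distinct extensions $\Lambda^-, \Lambda^+$, both of which are positive: for nonnegative $\phi'$, $L(\phi') \geq 0$ (take $\psi^- = 0$), and $\Lambda^\pm(\phi') \geq -U(-\phi') = L(\phi') \geq 0$. Riesz representation then produces two distinct product measures of $\mu_1,\dots,\mu_n$, contradicting weak orthogonality.

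The main technical step is the Hahn--Banach extension together with the verification that the resulting functionals are bona fide Keisler measures, which reduces to checking positivity (handled above) and normalization $\Lambda^\pm(1) = \Lambda_0(1) = 1$ from each $\mu_i$ being a probability measure. A preliminary bookkeeping point is that $\Lambda_0$ must be well defined on $R$ independently of the representation of a rectangle sum; this follows once one knows at least one product measure of $\mu_1,\dots,\mu_n$ exists, which should have been established earlier in the paper by a smooth-extension argument.
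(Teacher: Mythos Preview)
Your proposal is correct and follows essentially the same route as the paper. The paper packages the Hahn--Banach step as a standalone Lemma~\ref{lem_extension}, which says that the possible values of $\hat f(\phi)$ for a positive extension $\hat f$ of a functional $f$ on a subspace $V$ are exactly the interval $[\sup_{\psi\in V,\,\psi\le\phi} f(\psi),\ \inf_{\psi\in V,\,\psi\ge\phi} f(\psi)]$; the corollary then reads off both directions directly from that interval characterization, whereas you run the ``uniqueness $\Rightarrow$ sandwich'' direction by contrapositive and build the two extensions by hand. Same content, slightly different packaging.

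One small correction: the existence of at least one product measure (which you use to justify that $\Lambda_0$ is well defined and positive on $R$) is in the paper a consequence of the Hahn--Banach extension result, Corollary~\ref{cor_extension}, not of smooth extensions. Smooth extensions require NIP and come later; Corollary~\ref{cor_extension} is unconditional and is exactly what gives you a product measure here.
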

From this perspective, we consider smooth measures - all measures such that there is a small model $M$ such that $\mu|_M$ has a unique global extension.
We characterize them also as the measures that are weakly orthogonal to all measures, or equivalently all types.
We also examine invariant and (Borel)-definable measures, extending the careful work in \cite{cgh} on Morley products in NIP to continuous logic.
This approach revolves around the fact that any measure in an NIP theory admits a smooth extension, which we verify for continuous logic in Lemma \ref{lem_smooth_extensions}.
We are then able to use smooth extensions of measures as we would use realizations of types.

In Section \ref{sec_gs}, we turn to generically stable measures, finding many equivalent continuous logic characterizations of these versatile measures,
culminating with a generalization of \cite[Theorem 3.2]{HPS} to continuous logic:
\begin{thm}[Thm \ref{thm_gen_stable}]
    Assume $T$ is an NIP metric theory.
    For any small model $M \subseteq \mathcal{U}$, if $\mu$ is a global $M$-invariant measure, the following are equivalent:
    \begin{enumerate}[(i)]
        \item $\mu$ is a frequency interpretation measure (fim) over $M$ (see Definition \ref{defn_measure})
        \item $\mu$ is a finitely approximated measure (fam) over $M$ (see Definition \ref{defn_measure})
        \item $\mu$ is definable over and approximately realized in $M$ (see Definition \ref{defn_measure})
        \item $\mu(x) \otimes \mu(y) = \mu(y) \otimes \mu(x)$ (see Definition \ref{defn_product})
        \item $\mu^{(\omega)}(x_0,x_1,\dots)|_{M}$ is totally indiscernible (see Definition \ref{defn_product}).
    \end{enumerate}
\end{thm}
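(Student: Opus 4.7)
The plan is to establish the cycle $(\text{i}) \Rightarrow (\text{ii}) \Rightarrow (\text{iii}) \Rightarrow (\text{iv}) \Leftrightarrow (\text{v}) \Rightarrow (\text{i})$, following the structure of \cite[Theorem 3.2]{HPS} but substituting smooth extensions (via Lemma \ref{lem_smooth_extensions}) wherever the discrete proof appeals to realizations of types. The implication $(\text{i}) \Rightarrow (\text{ii})$ is essentially unpacking definitions: a fim sequence witnesses the fam condition once one observes that uniform convergence in probability to the measure implies that \emph{some} choice of tuple supplies an $\varepsilon$-approximation in the supremum norm for each $\varepsilon$, for each definable predicate $\phi(x,y)$.

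For $(\text{ii}) \Rightarrow (\text{iii})$, given any $\phi(x,y)$ I will fix, for each $n$, a tuple $(a_1^n,\dots,a_{k_n}^n)$ in $M$ such that the empirical average $\mathrm{Av}_n(y) = \tfrac{1}{k_n}\sum_i \phi(a_i^n, y)$ approximates $q \mapsto \int \phi(x,y)\,d\mu$ uniformly in $y$; since each $\mathrm{Av}_n$ is an $M$-definable predicate in $y$ and they converge uniformly, the limit is a definable predicate on $S_y(M)$, giving definability, while the same tuples exhibit approximate realization. For $(\text{iii}) \Rightarrow (\text{iv})$, I follow the NIP Fubini argument from \cite{cgh}: using Lemma \ref{lem_smooth_extensions} fix a smooth extension $\nu$ of $\mu|_M$, write $\mu \otimes \mu$ and $\mu \otimes \mu$ (swapped) as iterated integrals against $\nu$, and exploit definability of $\mu$ together with approximate realization in $M$ to transfer the equality $\int\int \phi\,d\nu_x\,d\nu_y = \int\int \phi\,d\nu_y\,d\nu_x$ (which holds because smooth measures are weakly orthogonal to every measure, by the characterization from Section \ref{sec_measures}) back to $\mu$.

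The equivalence $(\text{iv}) \Leftrightarrow (\text{v})$ is the measure-theoretic analogue of the classical equivalence between exchangeability and total indiscernibility: $(\text{iv})$ implies that the Morley power $\mu^{(\omega)}|_M$ is invariant under any transposition, hence under all finite permutations, and in an NIP metric theory an exchangeable Morley sequence is totally indiscernible (this uses the NIP Sauer–Shelah bound for real-valued predicates from \cite{randomVC}); the converse is immediate. Finally, for $(\text{v}) \Rightarrow (\text{i})$, I will apply the continuous-logic VC law of large numbers of \cite{randomVC} to a Morley sequence $(b_i)_{i < \omega}$ drawn from $\mu^{(\omega)}$: total indiscernibility plus NIP yields uniform, almost-sure convergence of the sample means $\tfrac{1}{n}\sum_{i<n}\phi(b_i,y)$ to $\int \phi(x,y)\,d\mu(x)$, which is exactly the fim condition.

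The main obstacle will be the $(\text{v}) \Rightarrow (\text{i})$ step, since it requires a quantitative uniform law of large numbers for real-valued NIP predicates along a totally indiscernible sequence, rather than the cleaner Vapnik–Chervonenkis argument available for $\{0,1\}$-valued formulas; I expect to cite the continuous VC theorem from \cite{randomVC} and verify that the Morley sequence produced in $(\text{v})$ satisfies its hypotheses. A secondary difficulty is the Fubini step in $(\text{iii}) \Rightarrow (\text{iv})$, where one must check that approximate realization in $M$ is enough (in place of an actual realization) to justify swapping integrals against a smooth extension; this is where the careful treatment of Borel-definable Morley products from Section \ref{sec_measures} will do the real work.
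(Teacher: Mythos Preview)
Your overall cycle matches the paper's, and steps $(\text{i}) \Rightarrow (\text{ii}) \Rightarrow (\text{iii}) \Rightarrow (\text{iv})$ are handled just as the paper does (the last via Lemma~\ref{lem_dfs_commute}, which is exactly the smooth-extension Fubini argument you sketch). Two points deserve correction or sharpening.

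First, your claim that $(\text{v}) \Rightarrow (\text{iv})$ is ``immediate'' is not right as stated: condition $(\text{v})$ only asserts total indiscernibility of $\mu^{(\omega)}|_M$, so it gives $(\mu(x)\otimes\mu(y))|_M = (\mu(y)\otimes\mu(x))|_M$, not equality of the \emph{global} measures. The paper never attempts this direction directly; it closes the cycle via $(\text{v}) \Rightarrow (\text{i})$ and then back around. Your cycle does the same, so this is harmless, but drop the ``$\Leftrightarrow$'' claim. Relatedly, in $(\text{iv}) \Rightarrow (\text{v})$ you omit the crucial ingredient: to show that $\mu^{(n)}$ is invariant under transpositions of \emph{non-adjacent} factors you need associativity of the Morley product, which in NIP is Lemma~\ref{lem_associative} (proved via smooth extensions). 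Once you have associativity plus commutativity, invariance under all permutations is automatic---your extra appeal to ``exchangeable implies totally indiscernible in NIP'' is a confusion with the element-sequence setting and is not needed for measures.

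Second, for $(\text{v}) \Rightarrow (\text{i})$ the paper does not cite \cite{randomVC} but gives a self-contained symmetrization argument (Lemmas~\ref{lem_keisler_VC1}--\ref{lem_keisler_VC2} and Theorem~\ref{thm_ind_fim}): it bounds $\mu^{(2n)}(f_n^\phi > \varepsilon)$ by a Hoeffding-plus-covering-number estimate, using total indiscernibility to justify the sign-flip symmetrization, and then extracts the formulas $\theta_n$ explicitly. Your plan to cite a continuous VC theorem may well work, but note that fim demands \emph{definable} witnesses $\theta_n(x_1,\dots,x_n)$ with $\mu^{(n)}(\theta_n)\to 1$, not merely almost-sure uniform convergence along a Morley sequence; you should check that whatever you cite actually produces such formulas, or else mimic the paper's explicit construction.
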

This connects the topological properties of generically stable measures (definability and approximate realizability) and the behavior of the Morley product to the property of being a frequency interpretation measure (fim).
Classically, these are measures against which formulas obey a version of the VC-theorem.
We show that in continuous logic, definable predicates and generically stable measures satisfy various properties that were shown in \cite[Section 2]{anderson1} for definable predicates and finitely-supported measures.
This includes a Glivenko-Cantelli property analogous to the VC-Theorem (our definition of fim) as well as bounds on the sizes of $\varepsilon$-approximations (Corollary \ref{cor_keisler_approx}) and $\varepsilon$-nets (Theorem \ref{thm_keisler_net}).

Before approaching distal regularity directly, we connect weak orthogonality of measures to regularity properties in Section \ref{sec_wo}.
In \cite{simon_distal_reg}, the distal regularity lemma are proven using weak orthogonality.
Before assuming distality, we develop the nomenclature for expressing this regularity lemma and the (definable) strong Erd\H{o}s-Hajnal property in continuous logic,
generalizing \cite[Theorems 3.1 and 5.8]{distal_reg} in the discrete case,
and we are able to prove non-uniform versions of these regularity lemmas for any weakly orthogonal measures:
\begin{thm}[Theorem \ref{thm_ortho_reg}]
    Let $\mu_1 \in \mathfrak{M}_{x_1}(M),\dots,\mu_n \in \mathfrak{M}_{x_n}(M)$.
    The following are equivalent:
    \begin{itemize}
        \item The measures $\mu_1,\dots,\mu_n$ are weakly orthogonal.
        \item For each $M$-definable predicate $\phi(x_1,\dots,x_n)$ and each $\varepsilon, \delta > 0$, there is some $C$ such that $\phi$ admits a definable $(\varepsilon,\delta)$-distal regularity partition (see Definitions \ref{defn_constructible} and \ref{defn_distal_reg_part})
        \item For each $M$-definable predicate $\phi(x_1,\dots,x_n)$ and each $\varepsilon, \delta > 0$, there is some $C$ such that $\phi$ admits a constructible $(\varepsilon,\delta)$-distal regularity partition (see Definitions \ref{defn_constructible} and \ref{defn_distal_reg_part})
        \item For each $M$-definable predicate $\phi(x_1,\dots,x_n)$ and each $\varepsilon > \gamma \geq 0$, there is some $\delta > 0$ such that for any product measure $\omega$ of continuous localizations of $\mu_1,\dots,\mu_n$,
            if $\int_{S_{x_1\dots x_n}(M)}\phi\,d\omega \geq \varepsilon$, then there are $M$-definable predicates $\psi_i(x_i)$ 
            such that $\phi(a_1,\dots,a_n) \geq \gamma$ whenever $\psi_i(a_i) > 0$ for each $i$, and $\int_{S_{x_i}(M)}\psi_i(x_i)\,d\mu_i \geq \delta$ for each $i$.
        \item For each $M$-definable predicate $\phi(x_1,\dots,x_n)$ and each $\varepsilon > 0$, $\phi$ has the definable $\varepsilon$-SEH with respect to any continuous localizations of $\mu_1,\dots,\mu_n$ (see Definition \ref{defn_SEH}).
    \end{itemize}
    Furthermore, if these hold, then the $(\varepsilon,\delta)$-distal regularity partitions can be chosen to be grid partitions of size $O(\delta^{-C})$ for some constant $C$ depending on $\phi,\varepsilon,\mu_1,\dots,\mu_n$. 
\end{thm}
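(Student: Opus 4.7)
The plan is to prove the equivalences via a cycle that makes essential use of Corollary \ref{cor_ortho}, and to extract the polynomial partition-size bound from the same construction that proves the main implication. The central analytic tool throughout is that weak orthogonality is equivalent to every definable predicate admitting sandwich approximations of the tensor form $\sum_j \prod_i \theta_{ij}^\pm(x_i)$ with arbitrarily small integrated discrepancy against any product measure of the $\mu_i$.

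For (i) $\Rightarrow$ (ii), I would fix $\phi, \varepsilon, \delta$ and apply Corollary \ref{cor_ortho} with a small auxiliary error $\eta$ to produce tensor sandwich approximants $\psi^- \leq \phi \leq \psi^+$ with $\int (\psi^+ - \psi^-) \, d\omega \leq \eta$. The finitely many factors $\theta_{ij}^\pm$ in each coordinate $x_i$ naturally generate a definable grid partition of $\prod_i S_{x_i}(M)$, and a cutting-lemma argument — the same tool that the introduction flags for the distal regularity lemma — would refine this grid so that the total number of cells is polynomial in $\delta^{-1}$, while guaranteeing that $\phi$ oscillates by at most $\varepsilon$ on every cell outside a collection of cells of total $\omega$-mass at most $\delta$.

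The step (ii) $\Rightarrow$ (iii) should be immediate from Definition \ref{defn_constructible}, since definable partitions are a special case of constructible ones. For (iii) $\Rightarrow$ (v) and (iii) $\Rightarrow$ (iv), I would extract the required rectangular structure from a constructible regularity partition by pigeonholing on cells of significant product measure: any cell on which $\phi$ is nearly constantly large simultaneously witnesses the SEH property and condition (iv), with the resulting parameter controlled by $1/C$ for $C$ the partition size, so the polynomial bound transfers. The equivalence (iv) $\Leftrightarrow$ (v) is then essentially a reformulation, since continuous localizations of the $\mu_i$ to the supports of the definable predicates $\psi_i$ are exactly what translates between the two statements.

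For the closing implication (iv) $\Rightarrow$ (i), I would use (iv) iteratively to build a covering of $S_{x_1\dots x_n}(M)$ by rectangles on which $\phi$ is nearly constant: applying SEH-style localization to the subset where a current tensor sandwich fails (viewed through continuous localizations of the $\mu_i$ onto that subset) forces it to contain a rectangle of positive product mass, which can be absorbed into the sandwich and the argument iterated on the remaining error. This yields the tensor sandwich approximation of Corollary \ref{cor_ortho} and hence weak orthogonality. The principal obstacle will be the cutting-lemma step in (i) $\Rightarrow$ (ii), since both the polynomial partition-size bound and the uniform upgrade from integrated $\eta$-smallness to pointwise $\varepsilon$-closeness outside a set of mass $\delta$ require a VC-theoretic cutting argument adapted to the continuous tensor setting; it is at this step that the constant $C$ acquires its dependence on $\phi, \varepsilon$, and the $\mu_i$.
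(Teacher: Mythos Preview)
Your overall cycle is close in spirit to the paper's, but there are two concrete issues.

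First, the claim that (ii) $\Rightarrow$ (iii) is ``immediate from Definition \ref{defn_constructible}, since definable partitions are a special case of constructible ones'' is incorrect. In Definition \ref{defn_constructible} these are parallel notions: a definable partition of unity has continuous $[0,1]$-valued pieces, while a constructible one has $\{0,1\}$-valued pieces given by Boolean combinations of closed sets. Neither is a special case of the other. The paper instead proves both (ii) and (iii) directly from (i): Lemma \ref{lem_ortho_pou} (building on Lemma \ref{lem_hom_pou}) produces, from the tensor sandwich of Corollary \ref{cor_ortho}, a grid partition that can be taken either definable or constructible, and then a Markov-inequality argument (Lemma \ref{lem_ortho_reg}) converts the integrated discrepancy $\int(\chi^+-\chi^-)\,d\omega \le \varepsilon\delta$ into an $(\varepsilon,\delta)$-distal regularity partition. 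No cutting lemma is used here.

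Second, and more seriously, your plan to extract the polynomial size bound $O(\delta^{-C})$ via a cutting-lemma argument in the step (i) $\Rightarrow$ (ii) will not work as stated. The cutting lemma in this paper (Lemma \ref{lem_cutting}) relies on strong honest definitions and hence on distality; it is not available under weak orthogonality alone. You correctly flag this as the ``principal obstacle,'' but the paper's resolution is to avoid it entirely: the direct construction (i) $\Rightarrow$ (ii) gives only existence, with no size control. The polynomial bound is obtained by going around the cycle and coming back through (v) $\Rightarrow$ (ii). Specifically, once weak orthogonality gives SEH for all continuous localizations (via Lemma \ref{lem_ortho_local}, Lemma \ref{lem_reg_dseh}, and Lemma \ref{lem_dseh_seh}), Lemma \ref{lem_seh_reg} runs a greedy iterative construction---repeatedly carving out $(\phi,\varepsilon)$-homogeneous boxes of product measure at least $(\delta/2)^n$ from the uncovered remainder---which after $O(\log\gamma / \log(1-(\delta/2)^n))$ steps covers all but $\gamma$ of the mass, yielding a grid partition of size $O(\gamma^{-C})$. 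Your proposed iterative covering for (iv) $\Rightarrow$ (i) is essentially this same idea, so you have the right mechanism; it just belongs in the SEH $\Rightarrow$ regularity-partition direction, and that is where the polynomial bound lives.
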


Having explored Keisler measures in NIP metric structures, we turn to distality in Section \ref{sec_distal_measure}.
First we characterize distal metric theories in terms of Keisler measures:
\begin{thm}[Theorem \ref{thm_distal_measure}]
    The following are equivalent:
    \begin{itemize}
        \item The theory $T$ is distal
        \item Every generically stable measure is smooth
        \item All pairs of generically stable measures are weakly orthogonal.
    \end{itemize}
\end{thm}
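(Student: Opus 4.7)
The plan is to establish the cycle: distal $\Rightarrow$ (every generically stable measure is smooth) $\Rightarrow$ (all pairs of generically stable measures are weakly orthogonal) $\Rightarrow$ distal. The middle implication is essentially immediate from Section \ref{sec_measures}, where smooth measures are characterized as exactly those that are weakly orthogonal to every global measure. Given two generically stable $\mu_1,\mu_2$, each is smooth by hypothesis, so in particular $\mu_1$ is weakly orthogonal to $\mu_2$.

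For distal $\Rightarrow$ every generically stable measure is smooth, fix $\mu$ generically stable over a small model $M$. By Theorem \ref{thm_gen_stable}, $\mu^{(\omega)}|_M$ is totally indiscernible, and the fim property computes $\int \phi(x;b)\,d\mu$ as a uniform limit of Cesàro averages $\frac{1}{k}\sum_{i=1}^k \phi(a_i;b)$ drawn from this sequence. Use Lemma \ref{lem_smooth_extensions} to pick a smooth extension $\mu^{\ast}$ of $\mu|_M$ over some $N \supseteq M$, and aim to show $\mu^{\ast} = \mu|_N$, which forces $\mu$ itself to be smooth. For a test predicate $\phi(x;b)$ with $b \in N$, the indiscernible-sequence characterization of distality from \cite{anderson1} tells us that the behaviour of $\phi(\cdot;b)$ along a totally indiscernible sequence over $M$ is rigidly controlled by the position in the sequence and by $\mathrm{tp}(b/M)$. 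Combined with the fim convergence, this forces the two integrals computed against $\mu$ and $\mu^{\ast}$ to coincide.

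The hard direction is the contrapositive of (3)$\Rightarrow$(1): assuming $T$ is not distal, produce a pair of generically stable measures that fails weak orthogonality. Using the failure-of-distality criterion from \cite{anderson1}, obtain an indiscernible sequence $I = I_1 + I_2$, a tuple $b$, and a predicate $\phi$ witnessing non-distality, in the sense that $I_1 + b + I_2$ is still indiscernible but the behaviour of $\phi(\cdot;b)$ along $I$ is not pinned down by $I_1, I_2$ alone. The average/EM-type of $I$ yields a generically stable measure $\mu_x$, and the type of $b$ (as a Dirac measure at a realized type, or a generically stable extension in a suitably saturated model) yields a $\nu_y$. The non-distality witness then provides two distinct extensions of the product of $\mu_x$ and $\nu_y$ that disagree on the integral of $\phi$; by Corollary \ref{cor_ortho}, the resulting quantitative gap blocks the squeezing of $\phi$ between sums of products of single-variable predicates, so weak orthogonality fails.

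The main obstacle is this last step. One must ensure the classical insertion-of-a-point construction survives the passage to real-valued predicates, verify that both $\mu_x$ and $\nu_y$ satisfy an equivalent criterion of Theorem \ref{thm_gen_stable}, and produce a distinguishing $\phi$ with a genuine quantitative gap in the product-form squeeze demanded by Corollary \ref{cor_ortho}. The continuous-logic version of the non-distality criterion in \cite{anderson1} is quantitative, so one expects the gap to arise directly from that criterion, but bookkeeping the $\varepsilon$'s and showing that the generic stability of the constructed measures is not lost along the way is where most of the technical work will sit.
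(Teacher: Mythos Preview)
The middle implication is fine and matches the paper. The other two directions both diverge from the paper, and one of them has a genuine gap.

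\textbf{Distal $\Rightarrow$ generically stable is smooth.} Your sketch here is quite different from the paper's argument and, as written, is not a proof. You want to compare $\mu$ with a smooth extension $\mu^\ast$ by appealing to ``the behaviour of $\phi(\cdot;b)$ along a totally indiscernible sequence over $M$ is rigidly controlled by position and by $\mathrm{tp}(b/M)$''; but you never say what this rigidity statement is, nor why it forces $\int\phi(x;b)\,d\mu = \int\phi(x;b)\,d\mu^\ast$. The fim averages $\frac{1}{k}\sum\phi(a_i;b)$ approximate $\int\phi(x;b)\,d\mu$, but there is no corresponding description of $\int\phi(x;b)\,d\mu^\ast$ in your outline. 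The paper instead uses strong honest definitions (from \cite{anderson1}, Theorem 5.9): for any $\phi(x;b)$ there are $M'$-definable $\psi^-\le\phi\le\psi^+$ that agree with $\phi$ on $M$. Approximate realizability of $\mu$ in $M$ then forces $\int(\psi^+-\psi^-)\,d\mu=0$, and definability of $\mu$ lets one pull the parameters of $\psi^\pm$ back into $M$ at cost $<\varepsilon$. This sandwiches $\int\phi\,d\nu$ for \emph{every} extension $\nu$ of $\mu|_M$ in an interval of width $4\varepsilon$. Your idea is not obviously unsalvageable, but it is not an argument as it stands.

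\textbf{Weak orthogonality of generically stable pairs $\Rightarrow$ distal.} Here there is a real gap. You produce $\mu_x$ as the average of an indiscernible sequence (fine, this is generically stable by Lemma \ref{lem_ave_gs}) and $\nu_y$ as ``the type of $b$ as a Dirac measure, or a generically stable extension''. But the tuple $b$ witnessing failure of distality is completely arbitrary; there is no reason whatsoever for $\mathrm{tp}(b)$ to be, or to extend to, a generically stable type. So your pair $(\mu_x,\nu_y)$ need not consist of two generically stable measures, and the contrapositive does not go through. The paper sidesteps this entirely by using the \emph{two-cut} formulation of non-distality: there is an indiscernible segment $I$, two cut points $t_1<t_2$, and insertions $b_1,b_2$ such that each insertion alone preserves indiscernibility but both together do not. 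This yields a predicate $\phi(c_1,x_1,c_2,x_2,c_3)$ taking two different values on pairs consistent with $\lim(I_{[0,t_1')})\times\lim(I_{[0,t_2')})$, which shows the average measures $\mu_J,\mu_K$ of two sub-segments are not weakly orthogonal. A localization argument then shows $\mu_I$ is not weakly orthogonal to \emph{itself}. Since $\mu_I$ is generically stable, the pair $(\mu_I,\mu_I)$ is the desired counterexample. The point is that by putting both variables into the same indiscernible segment, generic stability of both measures comes for free.
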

We then apply distality to the regularity results of Section \ref{sec_wo},
showing that the regularity lemmas hold uniformly, getting a continuous logic version of the distal regularity lemma from \cite{distal_reg}:
\begin{thm}[Theorem \ref{thm_distal_reg}]
    Assume $T$ is distal.
    For each definable predicate $\phi(x_1,\dots,x_n;y)$ and $\varepsilon > 0$, there exist predicates $\psi_i(x_i;z_i)$, which can be chosen to be either definable or constructible, and a constant $C$ such that
    if $\mu_1 \in \mathfrak{M}_{x_1}(M),\dots,\mu_n \in \mathfrak{M}_{x_n}(M)$ are such that for $i < n$, $\mu_i$ is generically stable, $b \in M^y$, and $\delta > 0$, the following all hold:
    The predicate $\prod_{i = 1}^n \psi_i(x_i;z_i)$ defines a $(\varepsilon,\delta)$-distal regularity grid partition for $\phi(x_1,\dots,x_n;b)$ of size $O(\delta^{-C})$.
\end{thm}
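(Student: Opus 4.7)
The plan is to bootstrap the instance-by-instance distal regularity partitions provided by Theorem \ref{thm_ortho_reg} to a uniform family $\psi_i(x_i;z_i)$ depending only on $\phi$ and $\varepsilon$. By Theorem \ref{thm_distal_measure}, distality makes every generically stable $\mu_i$ smooth, hence weakly orthogonal to any other measure $\mu_n$; so the hypothesis of Theorem \ref{thm_ortho_reg} is met in every instance of the present theorem, producing an $(\varepsilon,\delta)$-distal regularity grid partition, but a priori with $\psi_i$'s and constant $C$ that depend on the choice of $(\mu_1,\dots,\mu_n,b,\delta)$. The task is to peel off these dependencies.

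To do so I would mirror the Chernikov--Starchenko argument in the discrete case, now in continuous logic. Fix $\phi(x_1,\dots,x_n;y)$ and $\varepsilon>0$ and invoke the uniform continuous strong honest definitions for distal metric theories from \cite{anderson1} to obtain, for each coordinate, a predicate $\theta_i(x_i;z_i)$ such that for any finite configuration of parameters in the remaining variables, an approximate $\phi$-type is $\varepsilon$-witnessed by a single instance of $\theta_i$. The $\psi_i$ are then taken to be appropriate truncations of these $\theta_i$, so that products $\prod_i \psi_i(x_i;c_{i,j})$ cut out the grid cells and, by the honest-definition property, $\phi(\cdot;b)$ is nearly constant on each nonempty cell. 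The regularity condition — that the cells on which this near-constancy fails to give the required accuracy have small $\omega$-measure — is then controlled using the sandwich formulation of weak orthogonality from Corollary \ref{cor_ortho}: choose $\psi^\pm$ of product form with $\int(\psi^+-\psi^-)\,d\omega$ small, and $L^1(\omega)$-bound the bad cells against $\psi^+-\psi^-$.

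The size bound $O(\delta^{-C})$ follows by applying the cutting lemma alluded to in the introduction to the uniform family $\psi_i(x_i;z_i)$, with $C$ controlled by its VC-density, exactly as in \cite{distal_reg}. The main obstacle — and the principal novelty relative to the discrete case — is uniformity across measures rather than parameters: unlike $b \in S_y(\mathcal{U})$, which ranges over a compact type space, the generically stable measures $\mu_i$ form a less accessible object. I would navigate this by using the fim characterization from Theorem \ref{thm_gen_stable} to approximate integrals against each $\mu_i$ by finite empirical averages over samples, reducing measure-uniformity to parameter-uniformity on finite tuples inside a compact type space; the standard compactness argument then extracts the single uniform family $\psi_i(x_i;z_i)$ valid for all instances simultaneously.
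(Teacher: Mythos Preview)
Your diagnosis of the structure is right—the content of Theorem \ref{thm_ortho_reg} gives everything except uniformity in the measures and in $b$—but your proposed route to that uniformity is more roundabout than either of the two the paper actually takes.

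The paper's primary proof is essentially one line once Lemma \ref{lem_ortho_uniform} is available: that lemma shows, by an ultraproduct-of-counterexamples argument, that a single finite set $\Psi$ of product-form predicates gives the sandwich $\psi^-\le\phi(\cdot;b)\le\psi^+$ with $\int(\psi^+-\psi^-)\,d(\mu_1\times\cdots\times\mu_n)\le\varepsilon$ uniformly over all models, all admissible $(\mu_1,\dots,\mu_n)$, and all $b$. The crucial input is Lemma \ref{lem_ultra_gs}—ultraproducts of generically stable measures are generically stable—so the ultraproduct of counterexamples is again an admissible instance, hence (by distality) weakly orthogonal, contradicting failure. One then feeds this uniform sandwich through Lemmas \ref{lem_ortho_pou}, \ref{lem_ortho_reg}, \ref{lem_reg_dseh}, \ref{lem_dseh_seh}, \ref{lem_seh_reg}, each phrased so as to preserve uniformity of the defining predicates; the $O(\delta^{-C})$ bound is exactly what Lemma \ref{lem_seh_reg} produces. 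The paper also gives a second, constructive route in Section~5.2 via the cutting lemma (Lemma \ref{lem_cutting}), which is closer in spirit to your appeal to strong honest definitions; but there measure-uniformity is \emph{free}, because the honest definition $\theta$ is fixed in advance and the only dependence on $\nu$ is through an $\varepsilon$-net whose size is bounded uniformly by Theorem \ref{thm_keisler_net}.

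Your fim-then-compactness idea—replace each $\mu_i$ by a finite sample of bounded size and run compactness on the sample parameters—could be made to work, but it is a detour that essentially reconstructs the ultraproduct argument along a longer path, and you would still owe an argument that a distal regularity partition for the empirical measure transfers to one for $\mu_i$ with controlled loss in $\delta$. Neither of the paper's proofs needs this step: the ultraproduct handles the measures directly, and the cutting-lemma route never faces the obstacle you flag.
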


\subsection*{Acknowledgements}
We thank Artem Chernikov for advising and support throughout this project, Ita\"i Ben Yaacov for advising while the author was in Lyon, and James Hanson for several helpful ideas and conversations about continuous logic.
The author was partially supported by the Chateaubriand fellowship, the UCLA Logic Center, the UCLA Dissertation Year Fellowship,
and NSF grants DMS-1651321 and DMS-2246598
during the writing process.

\section{Keisler Measures in Metric Theories}\label{sec_measures}
In this section, we translate some of the theory of Keisler measures to continuous logic, building on the definitions in \cite{random09}, \cite{randomVC}, and \cite{csr}.
Throughout, $T$ will be a complete metric theory, $M$ will be metric structure modelling $T$ contained in a monster model $\mathcal{U}$,
and for any $A \subseteq M$, $S_x(A)$ will be the space of types in variables $x$ with parameters in $A$.
For a general reference on metric structures, see \cite{mtfms}.

\begin{defn}
A \emph{Keisler measure} on $S_x(M)$ is a regular Borel probability measure on $S_x(M)$. We denote the space of such measures $\mathfrak{M}_x(M)$.
\end{defn}
It is noted in \cite{csr} that these are in bijection with \emph{Keisler functionals}, that is, positive linear functionals on $\mathcal{C}(S_x(M),\R)$ with $\norm{f} = 1$.
We give the space $\mathfrak{M}_x(M)$ of Keisler measures the weak$^*$ topology as positive linear functionals,
the coarsest topology such that every definable predicate $\phi(x)$ with parameters in $M$, $\mu \mapsto \int_{S_x(M)} \phi(x)\,d\mu$ is continuous.
This generalizes the \emph{compact Hausdorff topology} used for Keisler measures in classical logic in \cite{gannon}.
We also see that for every definable predicate $\phi(x)$ with parameters in $M$, as $\phi(x)$ is the uniform limit of a sequence of formulas,
$\mu \mapsto \int_{S_x(M)} \phi(x)\,d\mu$ is the uniform limit of a sequence of integrals of formulas, each of which is a continuous function, and is thus continuous.

We now present continuous analogs for several key properties that global Keisler measures (measures in $\mathfrak{M}_x(\mathcal{U})$) can have.

\begin{defn}\label{defn_measure}
Let $\mu$ be a global Keisler measure, and let $A \subseteq \mathcal{U}$ be a small set, and $M \preceq \mathcal{U}$ a small model.
\begin{itemize}
    \item We say $\mu$ is $A$-\emph{invariant} when for any tuples $a \equiv_A b$ in $\mathcal{U}^y$, and any formula $\phi(x;y) \in \mathcal{L}(A)$, $\int\phi(x;a)\,d\mu = \int\phi(x;b)\,d\mu$. Equivalently, any automorphism of $\mathcal{U}$ fixing $A$ preserves $\mu$.
    \item If $\mu$ is $A$-invariant, define the map $F^\phi_{\mu,A} : S_y(A) \to [0,1]$ by $F^\phi_{\mu,A}(p) = \int \phi(x;b)\,d\mu$ for $b \models p$.
    \item We say $\mu$ is $A$-\emph{Borel definable} when it is $A$-\emph{invariant} and for all $\phi(x;y) \in \mathcal{L}(A)$, the map $F^\phi_{\mu,A}$ is Borel.
    \item We say $\mu$ is $A$-\emph{definable} when it is $A$-\emph{invariant} and for all $\phi(x;y) \in \mathcal{L}(A)$, the map $F^\phi_{\mu,A}$ is continuous (and thus a definable predicate).
    \item We say $\mu$ is \emph{approximately realized in} $A$ when $\mu$ is in the topological closure of the convex hull of the Dirac measures at types of points in $A$. This corresponds to finite satisfiability.
    \item Keeping discrete notation, we call a definable, approximately realized measure \emph{dfs} (for definable, finitely satisfiable).
    \item We say $\mu$ is \emph{finitely approximated in} $M$ when for every $\varphi(x;y) \in \mathcal{L}(M)$ and every $\varepsilon > 0$, there exists a tuple $(a_1,\dots,a_n) \in (M^x)^n$ which is a $\varepsilon$-approximation for the family $\{\varphi(x;b) : b \in \mathcal{U}^y\}$ with respect to $\mu$. We abbreviate this property as \emph{fam}.
    \item We say $\mu$ is a \emph{frequency interpretation measure over} $M$ when for every $\varphi(x;y) \in \mathcal{L}(M)$, there is a family of formulas $(\theta_n(x_1,\dots,x_n) : n \in \omega)$ with parameters in $M$ such that $\lim_{n \to \infty} \mu^{(n)}(\theta_n(x_1,\dots,x_n)) = 1$, and for every $\varepsilon > 0$, for large enough $n$, any $\bar a \in (\mathcal{U}^x)^n$ satisfying $\theta_n(\bar a)$ is a $\varepsilon$-approximation to $\varphi(x;y)$ with respect to $\mu$. We abbreviate this property as \emph{fim}.
    \item We say $\mu$ is \emph{smooth} over $M$ when for every $N$ with $M \preceq N$, there exists a unique extension $\mu' \in \mathfrak{M}_x(N)$ of $\mu|_M$.
\end{itemize}
\end{defn}

Note that if $\mu$ is $A$-invariant, then $F^\phi_{\mu,A}$ can also be defined for $\phi(x;y)$ a definable predicate.
Any definable predicate is a uniform limit of formulas, so not only will $F^\phi_{\mu,A}$ be well-defined, but it will be the uniform limit of functions of the form $F^{\psi}_{\mu,A}$ where $\psi$ is a formula.
Thus if $\mu$ is $A$-Borel definable, the function $F^\phi_{\mu,A}$ will be Borel for $\phi$ a definable predicate, and if $\mu$ is $A$-definable, $F^\phi_{\mu,A}$ will be continuous.
While we will often prove results for Borel definable measures for full generality, we will eventually show that in the NIP context, these are the same as invariant measures (see Lemma \ref{lem_inv_borel}).

We will need to be able to consider sequences which are indiscernible with respect to $\mu$ in a certain sense,
for which we will need the following definitions.
\begin{defn}
    Let $\mathcal{L}_{\mathbb{E}}$ be an extension of the language $\mathcal{L}$ to add a relation symbol $\mathbb{E}_\psi(y)$ for each restricted formula $\psi(x;y)$,
    with $\mathbb{E}_\psi(y)$ having the same Lipschitz constant as $\psi$.
    
    If $M$ is a model and $\mu \in \mathfrak{M}_x(M)$, let $(M;\mu)$ be the $\mathcal{L}_{\mathbb{E}}$-structure so that for all $b \in M^y$,
    $\mathbb{E}_\psi(b) = \int_{S_x(M)}\psi(x;b)\,d\mu$.
    \end{defn}
    The metric structure $(M;\mu)$ is valid because the integral of a $C$-Lipschitz function is also $C$-Lipschitz.
    Then by density and the fact that uniform limits commute with integrals,
    for any $\mathcal{L}$-definable predicate $\psi(x;y)$, we can define a $\mathcal{L}_{\mathbb{E}}$-definable predicate
    $\mathbb{E}_\psi(y)$, which we interpret as $\int_{S_x(M)}\psi(x;y)\,d\mu$.

    \begin{lem}[{Generalizes \cite[Prop 7.5]{nip_guide}}]\label{lem_measure_consistent}
        Let $\mu \in \mathfrak{M}_x(\mathcal{U})$ be a global measure, $(b_i : i < \omega)$ an indiscernible sequence.
        Let $\phi(x;y)$ be a formula, and let $0 < s < r$ be such that
        $$\int_{S_x(\mathcal{U})}(\phi(x;b_i))\,d\mu \geq r$$ for all $i < \omega$.
        Then the partial type $\{\phi(x;b_i) \geq s : i < \omega\}$ is consistent.
    \end{lem}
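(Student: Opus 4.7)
The plan is to argue by contradiction via a first-moment estimate, directly transcribing the discrete proof of \cite[Prop 7.5]{nip_guide} to the measure-valued continuous setting. Suppose for contradiction that $\{\phi(x;b_i) \geq s : i < \omega\}$ is inconsistent.

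First, I would unpack inconsistency via continuous compactness: it yields a finite index set $\{i_1, \dots, i_n\}$ and some $\varepsilon > 0$ such that for every $x \in \mathcal{U}$ there is some $j$ with $\phi(x;b_{i_j}) \leq s - \varepsilon$. Equivalently, the closed condition $\sup_x \min_j \phi(x;y_j) \leq s - \varepsilon$ is in the type of $(b_{i_1}, \dots, b_{i_n})$. Indiscernibility of $(b_i)_{i < \omega}$ then transfers this condition to every increasing $n$-tuple from the sequence. The combinatorial consequence is that for each fixed $x \in \mathcal{U}$ and each $N \geq n$, the set
\[
\bigl\{\, j < N : \phi(x;b_j) > s - \varepsilon \,\bigr\}
\]
has size at most $n - 1$; otherwise choosing an increasing $n$-tuple inside it would contradict the gap established above.

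To derive the contradiction, I fix $N$ and bound pointwise: since formulas take values in $[0,1]$,
\[
\sum_{j < N} \phi(x;b_j) \leq (n-1) + (N - n + 1)(s - \varepsilon)
\]
holds for every $x \in \mathcal{U}$. Integrating against $\mu$, interchanging the finite sum with the integral, and applying the hypothesis $\int \phi(x;b_j)\,d\mu \geq r$ gives
\[
Nr \;\leq\; \sum_{j < N}\int \phi(x;b_j)\,d\mu \;\leq\; (n-1) + N(s - \varepsilon),
\]
so $N(r - s + \varepsilon) \leq n - 1$. Because $r > s$ and $\varepsilon > 0$, this fails once $N$ is sufficiently large, which is the desired contradiction.

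The only delicate step is the compactness extraction at the start: ``inconsistent'' in continuous logic must be read as producing a uniform $\varepsilon$-gap on some finite subfamily, not merely as the absence of a realization. Once that gap is extracted, indiscernibility immediately delivers the counting bound, and the averaging argument is routine, so I expect no further obstacles.
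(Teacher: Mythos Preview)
Your proof is correct and takes a genuinely different route from the paper's. The paper first passes to an elementary extension and extracts an $\mathcal{L}_{\mathbb{E}}$-indiscernible sequence (indiscernible in the language expanded by the expectation symbols $\mathbb{E}_\psi$), so that integrals of multi-variable formulas against the measure are constant along the sequence; it then runs a more intricate argument involving a minimal $N$ with $\min_{i \leq N}(\phi(x;b_i)\dot{-}s) \equiv 0$, defines block functions $\psi_k$, shows $\int \min(\psi_i,\psi_j)\,d\mu = 0$ by indiscernibility in the expanded language, deduces $\int \max_{j\leq m} \psi_{i_j}\,d\mu = mt$, and obtains a contradiction once $mt > 1$. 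Your first-moment averaging argument sidesteps the expanded language entirely: you use only ordinary indiscernibility to propagate the compactness-extracted gap $\sup_x \min_j \phi(x;b_{k_j}) \leq s-\varepsilon$ to all increasing $n$-tuples, together with the given lower bound on $\int \phi(x;b_i)\,d\mu$. This is the direct transcription of the discrete argument in \cite{nip_guide} and is the more elementary of the two; the paper's detour through $\mathcal{L}_{\mathbb{E}}$ is not actually needed for this lemma, though that machinery is introduced here because it is reused later (for instance in the proof of Lemma~\ref{lem_smooth_extensions}).
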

    \begin{proof}
    We can use Ramsey and compactness to extract an $\mathcal{L}_{\mathbb{E}}$-indiscernible in an elementary extension of $(M;\mu)$ satisfying the EM-type of $(b_i : i < \omega)$.
    In particular, for every formula $\psi(x;y_1,\dots,y_n)$ (not just the restricted ones), $\int_{S_x(M')}\psi(x;b_{i_1},\dots,b_{i
    n})\,d\mu'$ takes the same value for all $i_1< \dots < i_n \in \N$.
    Thus we can assume that the sequence $(b_i : i < \omega)$ was already indiscernible in this extended language.
    
    Assume for contradiction that $\{\phi(x;b_i) \geq s : i < \omega\}$ is inconsistent.
    Thus for some $N$,
    $$\min_{i = 0}^N \left(\phi(x;b_i)\dot{-}s\right) = 0$$
    indentically, and in particular,
    $$\int_{S_x(M)}\min_{i = 0}^N \left(\phi(x;b_i)\dot{-}s\right)\, d\mu = 0.$$
    Let $N$ be the minimal such value, and let $t = \int_{S_x(M)}\min_{i = 0}^{N - 1} \left(\phi(x;b_i)\dot{-}s\right)\,d\mu'$.
    Note that $t > 0$, as
    $$t =
    \int_{S_x(M)}\min_{i = 0}^{N - 1} \left(\phi(x;b_i)\dot{-}s\right)\,d\mu'
    \geq \int_{S_x(M)}\phi(x;b_0)\,d\mu' - s  = r - s > 0.$$
    
    Then define
    $$\psi_k(x) = \min_{i = 0}^{N - 1} \left(\phi(x;b_i)\dot{-}s\right),$$
    and observe that
    $$\min(\psi_0(x),\psi_1(x)) = \min_{i = 0}^{2N-1} \left(\phi(x;b_i)\dot{-}s\right) \leq \min_{i = 0}^{N} \left(\phi(x;b_i)\dot{-}s\right) = 0,$$
    so by indiscernibility, for all $i < j$,
    $\int_{S_x(M)}\min(\psi_i(x),\psi_j(x))\,d\mu = 0.$
    Thus for any $i_1 < \dots < i_m$,
    \begin{align*}
        \int_{S_x(M)}\max_{1 \leq j \leq m}\psi_{i_j}(x)\,d\mu
        & = \int_{S_x(M)}\sum_{j = 1}^m\psi_{i_j}(x)\,d\mu\\
        & = mt \\
    \end{align*}
    and for $m > \frac{1}{t}$, this gives $\int_{S_x(M)}\max_{1 \leq j \leq m}\psi_{i_j}(x)\,d\mu' > 1$,
    a contradiction because $\max_{1 \leq j \leq m}\psi_{i_j}(x)$ is bounded above by 1.
    \end{proof}

\begin{defn}\label{defn_support}
For any measure $\mu \in \mathfrak{M}_x(A)$, define $S_\mu(x)$ to be the partial type consisting of all closed $A$-conditions with $\mu$-measure 1.
We also define $S(\mu) \subseteq S_x(A)$ to be the set of all types satisfying $S_\mu(x)$, we call this \emph{the support} of $\mu$.
\end{defn}
Clearly the intersection of finitely many closed conditions in $S_\mu(x)$ has $\mu$-measure 1, so any finite subtype of $S_\mu(x)$ is satisfiable.

\begin{lem}\label{lem_inv_support}
Assume that $T$ is NIP.
Let $A\subset \mathcal{U}$ be such that $\mathcal{U}$ is $|A|^+$-saturated, let $\mathfrak{M}_x(\mathcal{U})$ be an $A$-invariant measure, and $p(x) \in S(\mu)$.
Then $p$ is $A$-invariant, meaning that for any formula $\phi(x;y)$, and any $b,b' \in \mathcal{U}$ with $b \equiv_A b'$, $(\phi(x;b) = \phi(x;b')) \in p(x)$.
\end{lem}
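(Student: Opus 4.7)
Proceed by contradiction: suppose $p \in S(\mu)$ is not $A$-invariant. Then there exist a formula $\phi(x;y)$, tuples $b, b' \in \mathcal{U}^y$ with $b \equiv_A b'$, and reals $r < s$ such that $\phi^p(b) < r$ and $\phi^p(b') > s$. Since $p$ lies in the support of $\mu$, the open set $U := \{q : \phi^q(b) < r,\ \phi^q(b') > s\}$ has positive $\mu$-measure, say $\mu(U) \geq \delta > 0$.

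Using the $|A|^+$-saturation of $\mathcal{U}$ together with a standard Ramsey extraction, I would build an $A$-indiscernible sequence of pairs $((b_i, b_i') : i < \omega)$ with $(b_0, b_0') \equiv_A (b, b')$, so that each pair satisfies $(b_i, b_i') \equiv_A (b, b')$. By $A$-invariance of $\mu$, the analogous open set for each pair has measure at least $\delta$. I would then encode the two inequalities $\phi(x; y_1) < r$ and $\phi(x; y_2) > s$ in a single nonnegative cutoff formula $\Psi(x; y_1, y_2)$, for instance a bounded truncation of $(r - \phi(x;y_1))_+ \cdot (\phi(x;y_2) - s)_+$, arranged so that $\Psi(x; y_1, y_2) > 0$ implies both inequalities and so that $\int \Psi(x; b_i, b_i')\, d\mu \geq \delta'$ for a uniform $\delta' > 0$. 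Applying Lemma \ref{lem_measure_consistent} to $\Psi$ and the indiscernible pair sequence (treating $(y_1, y_2)$ as a single tuple of parameters), I obtain a realization $c$ (in $\mathcal{U}$ by saturation, or else in an elementary extension) satisfying $\Psi(c; b_i, b_i') \geq s_0 > 0$ for all $i$, whence $\phi(c; b_i) < r$ and $\phi(c; b_i') > s$ for every $i$.

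The final step, and the main obstacle, is producing the NIP contradiction. The plan is to extract from the interleaved singleton sequence $b_0, b_0', b_1, b_1', b_2, b_2', \ldots$---all of whose entries share the same $A$-type since $b \equiv_A b'$---an $A$-indiscernible subsequence on which $\phi(c; \cdot)$ continues to oscillate between values $< r$ and $> s$. Such a sequence would exhibit unbounded $(s-r)$-alternation along an $A$-indiscernible sequence, contradicting the NIP alternation bound for the formula $\phi$. The subtlety is that whether a given entry is on the ``low side'' or the ``high side'' under $\phi(c; \cdot)$ is visible only over $Ac$ and not over $A$; one must argue, using the pair-indiscernibility together with the symmetry afforded by $b \equiv_A b'$ and a careful iterated Ramsey (or equivalently, compactness) argument over the pair combinatorics, that an $A$-indiscernible singleton subsequence of the interleaved list retaining the alternating pattern can always be found.
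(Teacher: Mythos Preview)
Your overall architecture is right---contradiction, positive-measure open set, indiscernible extraction, Lemma~\ref{lem_measure_consistent}, and an NIP alternation bound---and it matches the paper's. The gap is exactly the step you flag as the ``main obstacle,'' and the paper sidesteps it entirely by a choice you did not make.

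The key move you are missing is this: because $b \equiv_A b'$, you can extract an $A$-indiscernible sequence of \emph{singletons} $(b_i : i < \omega)$ with $(b_0,b_1) \equiv_A (b,b')$, rather than an indiscernible sequence of \emph{pairs} $((b_i,b_i'))_i$. Once you have the singleton sequence, the consecutive pairs $(b_{2i},b_{2i+1})$ are automatically $A$-conjugate to $(b,b')$, so $A$-invariance of $\mu$ gives $\int |\phi(x;b_{2i}) - \phi(x;b_{2i+1})|\,d\mu \geq \varepsilon$ for all $i$. Applying Lemma~\ref{lem_measure_consistent} to the formula $|\phi(x;y_1)-\phi(x;y_2)|$ along the indiscernible sequence of pairs $(b_{2i},b_{2i+1})$ yields a $c$ with $|\phi(c;b_{2i}) - \phi(c;b_{2i+1})| \geq \varepsilon/2$ for all $i$. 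But now $(b_i)_i$ is \emph{already} a singleton $A$-indiscernible sequence along which $\phi(c;\,\cdot\,)$ oscillates infinitely often, and NIP is contradicted immediately---no further Ramsey step is needed.

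By contrast, your pair-indiscernible $((b_i,b_i'))_i$ gives an interleaved list $b_0,b_0',b_1,b_1',\ldots$ that need not be $A$-indiscernible as singletons, and any Ramsey extraction over $A$ will in general destroy the alternation pattern, which is only visible over $Ac$. Your sketch of how to recover this (``careful iterated Ramsey over the pair combinatorics'') is not a proof, and it is not clear it can be made into one without effectively rediscovering the singleton-sequence trick. Two smaller points: the cutoff formula $\Psi$ is unnecessary---$|\phi(x;y_1)-\phi(x;y_2)|$ does the job directly---and the paper organizes the argument as ``show $\int |\phi(x;b)-\phi(x;b')|\,d\mu = 0$, hence the closed condition $\phi(x;b)=\phi(x;b')$ has full measure, hence lies in $p$,'' which is marginally cleaner than your open-set formulation but equivalent.
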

\begin{proof}
    Let $p(x) \in S(\mu)$ and let $\phi(x;y)$ be an $A$-formula.
    Then for any $b \equiv_A b'$, by $A$-invariance, $\int_{S_x(\mathcal{U})}\phi(x;b)\,d\mu = \int_{S_x(\mathcal{U})}\phi(x;b')\,d\mu$.
    Assume $p(x)$ is not $A$-invariant.
    Then there exist $b \equiv_A b'$ with $(\phi(x;b) = \phi(x;b'))\not\in p(x)$.
    Then without loss of generality, there is some $\varepsilon > 0$ with $\phi(x;b) = \phi(x;b') + \varepsilon \in p(x)$.
    Meanwhile, $\int_{S_x(\mathcal{U})}\phi(x;b)\,d\mu =\int_{S_x(\mathcal{U})} \phi(x;b')\,d\mu$.
    We will show that $\int_{S_x(\mathcal{U})}|\phi(x;b) - \phi(x;b')|\,d\mu = 0$.
    
    Assume for contradiction that $\int_{S_x(\mathcal{U})}|\phi(x;b) - \phi(x;b')|\,d\mu = \varepsilon > 0$.
    Then because $b \equiv_A b'$, we may find an $A$-indiscernible sequence $(b_i : i < \omega)$ with $b_0 = b$, $b_1 = b'$.
    For all $i$, we have $b_{2i}b_{2i + 1} \equiv_A bb'$, and by invariance of $\mu$,
    $\int_{S_x(\mathcal{U})}|\phi(x;b_{2i}) - \phi(x;b_{2i+1})|\,d\mu = \int_{S_x(\mathcal{U})}|\phi(x;b) - \phi(x;b')|\,d\mu = \varepsilon$.
    Thus by Lemma \ref{lem_measure_consistent},
    the partial type $\left\{|\phi(x;b_{2i}) - \phi(x;b_{2i+1})| \geq \frac{\varepsilon}{2} : i < \omega\right\}$ is consistent.
    This contradicts NIP.
\end{proof}

\subsection{(Borel) Definable Measures and the Morley Product}

\begin{lem}\label{lem_inv_borel}
    Let $\mu \in \mathfrak{M}_x(\mathcal{U})$ be $A$-(Borel) definable, and let $A \subseteq B \subseteq \mathcal{U}$.
    Then $\mu$ is $A$-(Borel) definable if and only if $\mu$ is $B$-(Borel) definable.
    In particular, if either holds, $\mu$ is $C$-(Borel) definable whenever $\mu$ is $C$-invariant.
\end{lem}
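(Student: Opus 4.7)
The plan is to prove the forward and reverse directions of the biconditional separately---reading the opening hypothesis as implicitly asking at least $A$-invariance of $\mu$ so that the biconditional has content---and then deduce the ``in particular'' clause by combining them: given $\mu$ is $A$-(Borel) definable and $C$-invariant, set $D = A \cup C$, apply the forward direction (since $D \supseteq A$) to get that $\mu$ is $D$-(Borel) definable, and then apply the reverse direction (since $D \supseteq C$ and $\mu$ is $C$-invariant) to descend to $C$-(Borel) definability.

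For the forward direction, any $B$-automorphism fixes $A$ and therefore preserves $\mu$, giving $B$-invariance. Given $\phi(x;y) \in \mathcal{L}(B)$, I separate out the $B$-parameters by writing $\phi(x;y) = \phi'(x;y,c)$ with $\phi'(x;y,z) \in \mathcal{L}(A)$ and $c \in B^z$. The map $r \colon S_y(B) \to S_{yz}(A)$ sending $\mathrm{tp}(b/B) \mapsto \mathrm{tp}(bc/A)$ is well-defined (since $c \in B$, any two realizations of a fixed type over $B$ yield the same type of $bc$ over $A$) and continuous (for each $\psi(y,z) \in \mathcal{L}(A)$, the value of $\psi$ at $r(q)$ equals the value of the $\mathcal{L}(B)$-formula $\psi(y,c)$ at $q$). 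Then $F^\phi_{\mu,B} = F^{\phi'}_{\mu,A} \circ r$ inherits continuity or Borel measurability.

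For the reverse direction, assuming $\mu$ is $B$-(Borel) definable and $A$-invariant, fix $\phi(x;y) \in \mathcal{L}(A) \subseteq \mathcal{L}(B)$. The restriction map $r \colon S_y(B) \to S_y(A)$ is a continuous surjection of compact Hausdorff spaces, hence a closed quotient map, and $A$-invariance yields the well-defined factorization $F^\phi_{\mu,B} = F^\phi_{\mu,A} \circ r$. Continuity of $F^\phi_{\mu,A}$ then follows from the universal property of the quotient. For Borel measurability, the task reduces to showing that $r$ induces a $\sigma$-algebra isomorphism between $r$-saturated Borel subsets of $S_y(B)$ and Borel subsets of $S_y(A)$ via $E \leftrightarrow r^{-1}(E)$, so that the saturated Borel set $(F^\phi_{\mu,B})^{-1}(U) = r^{-1}((F^\phi_{\mu,A})^{-1}(U))$ has Borel image.

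The main obstacle is this Borel half of the reverse direction, since continuous images of Borel sets are in general only analytic. The argument proceeds by transfinite induction on the Borel hierarchy, combining two ingredients: $r$ is closed, so images of saturated closed sets are closed (and images of saturated open sets are open by complementation); and for any $r$-saturated family $(F_n)$, images commute with countable intersection, $r(\bigcap_n F_n) = \bigcap_n r(F_n)$, since $y \in \bigcap_n r(F_n)$ together with saturation of each $F_n$ forces $r^{-1}(y) \subseteq F_n$ for all $n$, hence $r^{-1}(y) \subseteq \bigcap_n F_n$. Working at each stage with the saturations of auxiliary sets, one obtains the desired $\sigma$-algebra identification.
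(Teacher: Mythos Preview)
Your overall strategy matches the paper's: use that the restriction $\pi\colon S_y(B)\to S_y(A)$ is a continuous, closed surjection together with the factorization $F^\phi_{\mu,B}=F^\phi_{\mu,A}\circ\pi$. You are in fact more careful than the paper in the forward direction (explicitly absorbing $\mathcal{L}(B)$-parameters into an extra variable via the map to $S_{yz}(A)$) and in deriving the ``in particular'' clause (passing through $D=A\cup C$); the paper leaves both implicit.

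The soft spot is your transfinite induction for the Borel half of the reverse direction. Your reduction to showing that $\pi$-images of saturated Borel sets are Borel is correct, and the two ingredients you name (closed images of saturated closed sets; images commute with countable intersections of saturated families) are true, but they do not assemble into an induction up the hierarchy. If a saturated $\Sigma^0_\alpha$ set $F$ is written as $\bigcup_n F_n$ with each $F_n\in\Pi^0_{<\alpha}$, replacing $F_n$ by its saturation $\pi^{-1}(\pi(F_n))$ preserves the union, but for $\alpha\ge 2$ there is no reason this saturation remains in $\Pi^0_{<\alpha}$: $\pi$ does not commute with intersections of non-saturated families, and dually the ``small image'' $y\mapsto\{y:\pi^{-1}(y)\subseteq G\}$ does not commute with unions, so neither operation propagates cleanly through the hierarchy. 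The paper does not give this argument either---it simply cites the discrete-logic versions in Gannon and in Conant--Gannon--Hanson. The standard route (when the relevant type spaces are metrizable) is to invoke a Borel section $s\colon S_y(A)\to S_y(B)$ of the closed continuous surjection $\pi$, available by selection theorems for compact-valued maps; then $F^\phi_{\mu,A}=F^\phi_{\mu,B}\circ s$ is Borel as a composition.
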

\begin{proof}
    The proof is essentially the same as the version for discrete logic (see \cite[Proposition 2.22]{gannon} \cite[Corollary 2.2]{cgh}).

    The map $\pi_{B,A} : S_x(B) \to S_x(A)$ given by $\pi_{B,A}(p) = p|_A$ is continuous, surjective, and closed\cite[Prop. 8.11]{mtfms}, and
    $F^{\phi}_{\mu, B} = F^\phi_{\mu,A} \circ \pi_{B,A}$.
    These properties of $\pi_{B,A}$ imply that $F^\phi_{\mu,A}$ is continuous/Borel if and only if $F^\phi_{\mu,A} \circ \pi_{B,A}$ is.
    Most of these implications are straightforward, but it is nontrivial that $F^\phi_{\mu,B}$ being Borel implies $F^{\phi}_{\mu, A}$ is as well.
\end{proof}

Borel definable measures are important largely because they are the measures for which we can define the Morley product of Keisler measures.

\begin{defn}\label{defn_product}
    Given an $A$-Borel definable measure $\mu$ and a global measure $\nu$, let $f_{\mu \otimes \nu}$ be the Keisler functional defined by
    $$f_{\mu \otimes \nu}(\phi(x;y)) = \int_{S_y(A')}F^\phi_{\mu,A'}(y)\,d\nu|_{A'}$$
    where $\phi(x;y)$ is a formula, and $A'$ contains $A$ and the parameters of $\phi$.
    Let $\mu \otimes \nu$ be the corresponding Keisler measure, so that
    $$\int_{S_{xy}(\mathcal{U})}\phi(x;y)\,d(\mu \otimes \nu) = \int_{S_y(A')}F^\phi_{\mu,A'}\,d\nu|_{A'}$$
    for all formulas $\phi(x;y)$ with parameters in $A' \supset A$.
\end{defn}

First we check that this definition does not depend on the choice of $A'$. It is enough to see that if $A'$ is enlarged to $B \supset A'$, that the value will not change.
In this case, if $\pi:S_y(B) \to S_y(A')$ is the projection map, then it is easy to see that $\nu|_{A'}$ is equal to the pushforward measure $\pi_* \nu|_{B}$. 
Also, $F^\phi_{\mu,A'} = F^\phi_{\mu,A'} \circ \pi$. Thus 
$$\int_{S_y(A')}F^\phi_{\mu,A'}(y)\,d\nu|_{A'} = \int_{S_y(B)}F^\phi_{\mu,A'}(y) \circ \pi \,d\nu|_{B} = \int_{S_y(B)}F^\phi_{\mu,B}(y),d\nu|_{B}.$$

This indeed defines a valid Keisler functional, as it is clearly linear and
$$f_{\mu \otimes \nu}(1) = \int_{S_y(A)}F^1_{\mu,A}\,d\nu|_A = 1.$$
It is also easy to see that if $\mu$ is $A$-Borel definable and $\nu$ is $A$-invariant, then $\mu \otimes \nu$ is also $A$-invariant.
Also, we see that for any $A$ such that $\mu$ is $A$-Borel definable and the parameters of $\phi(x;y)$ are contained in $A$, the value of
$\int_{S_y(\mathcal{U})}\phi(x;y)\,d(\mu\otimes\nu)$ depends only on $\nu|_A$.

\begin{lem}[{Generalizing \cite[Prop. 2.6]{cg}}]\label{lem_definable_product}
If $\mu \in \mathfrak{M}_x(\mathcal{U}), \nu \in \mathfrak{M}_y(\mathcal{U}), \lambda \in \mathfrak{M}_z(\mathcal{U})$
    are $M$-definable measures, then $\mu \otimes \nu$ is $M$-definable, and $(\mu \otimes \nu) \otimes \lambda = \mu \otimes (\nu \otimes \lambda)$.
\end{lem}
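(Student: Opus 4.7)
The plan is to prove both statements by reducing the Morley product to an iterated integral of a definable predicate against a definable measure, so that continuity is preserved at each step.

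For definability of $\mu \otimes \nu$, I would fix an arbitrary formula $\psi(x,y;w) \in \mathcal{L}(M)$ and consider
\[
\Psi(y,w) := F^\psi_{\mu,M}(y,w) = \int \psi(x;y,w)\,d\mu,
\]
which is a definable predicate over $M$ in $(y,w)$ by $M$-definability of $\mu$. For any $c \in \mathcal{U}^w$, set $A' = M \cup \{c\}$; for $b \models p \in S_y(A')$ we have $F^\psi_{\mu,A'}(p) = \int \psi(x;b,c)\,d\mu = \Psi(b,c)$, so Definition~\ref{defn_product} yields
\[
F^\psi_{\mu\otimes\nu,M}(\operatorname{tp}(c/M)) = \int_{S_y(A')} \Psi(y,c)\,d\nu|_{A'} = \int_{S_y(\mathcal{U})} \Psi(y,c)\,d\nu,
\]
the last step being just the pushforward characterization of $\nu|_{A'}$ under the restriction map $S_y(\mathcal{U}) \to S_y(A')$. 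The final expression equals $F^\Psi_{\nu,M}(\operatorname{tp}(c/M))$, which is continuous in $\operatorname{tp}(c/M)$ because $\nu$ is $M$-definable and $\Psi$ is a definable predicate over $M$ (via the extension of definability from formulas to definable predicates noted after Definition~\ref{defn_measure}). Hence $F^\psi_{\mu\otimes\nu,M}$ is continuous, as required.

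For associativity, it suffices by linearity and density to evaluate both measures on an arbitrary formula $\phi(x,y,z) \in \mathcal{L}(M)$. The previous paragraph, specialized to $w = z$, gives $F^\phi_{\mu\otimes\nu,M}(z) = \int_{S_y(\mathcal{U})} F^\phi_{\mu,M}(y,z)\,d\nu$, so Definition~\ref{defn_product} applied to $(\mu\otimes\nu)\otimes\lambda$ yields
\[
\int \phi\,d\bigl((\mu\otimes\nu)\otimes\lambda\bigr) = \int_{S_z(\mathcal{U})}\int_{S_y(\mathcal{U})} F^\phi_{\mu,M}(y,z)\,d\nu\,d\lambda.
\]
On the other side, since $F^\phi_{\mu,M}(y,z)$ is itself a definable predicate over $M$ in $(y,z)$, applying Definition~\ref{defn_product} first to $\mu\otimes(\nu\otimes\lambda)$ and then to $\nu\otimes\lambda$ produces exactly the same iterated integral. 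The two sides therefore agree on every formula, so the measures coincide.

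The main obstacles are bookkeeping: verifying that $\Psi(y,c)$ is a continuous function on $S_y(A')$ (which holds because it factors through the continuous map $S_y(A') \to S_{y,w}(M)$ sending $p \mapsto \operatorname{tp}(bc/M)$ for $b \models p$); verifying the pushforward identity $\int_{S_y(A')} \Psi(y,c)\,d\nu|_{A'} = \int_{S_y(\mathcal{U})} \Psi(y,c)\,d\nu$; and extending Definition~\ref{defn_product} from formulas to definable predicates using uniform approximation. All three are routine once stated, so no real conceptual difficulty arises.
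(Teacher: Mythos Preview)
Your proposal is correct and follows essentially the same approach as the paper: both arguments show definability by writing $F^\psi_{\mu\otimes\nu,M}$ as $F^\Psi_{\nu,M}$ for the definable predicate $\Psi(y,w)=\int\psi(x;y,w)\,d\mu$, and both verify associativity by unfolding each side into the same triple iterated integral. Your version is somewhat more explicit about the pushforward and parameter-set bookkeeping, but the underlying proof is the same.
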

\begin{proof}
First we will show that $\mu \otimes \nu$ is definable by showing that for all formulas $\phi(x,y;z) \in \mathcal{L}(M)$, the function $F^{\phi(x,y;z)}_{\mu \otimes \nu} : S_z(M) \to [0,1]$ is continuous.
We can see that
$$F^{\phi(x,y;z)}_{\mu \otimes \nu} = \int_{S_{xy}(M)}\phi(x,y;z)\,d(\mu \otimes \nu) = \int_{S_y(M)}\left(\int_{S_x(M)}\phi(x,y;z)\,d\mu\right)\,d\nu.$$
As $\mu$ is definable, the function $F^{\phi(x;y,z)}_{\mu,M} = \int_{S_x(M)}\phi(x;y,z)\,d\mu$ is continuous, and is thus a definable predicate on $(y,z)$.
Thus as $\nu$ is definable, $\int_{S_y(M)}\left(\int_{S_x(M)}\phi(x,y;z)\,d\mu\right)\,d\nu$ is continuous as desired.

Now to verify associativity, it is enough to show that for all formulas $\phi(x,y,z) \in \mathcal{L}(M)$,
$\int\phi(x,y,z)\,d((\mu\otimes \nu)\otimes \lambda) = \int\phi(x,y,z)\,d(\mu\otimes (\nu\otimes \lambda))$.
We can see this in the simple-looking calculation
\begin{align*}
    \int_{S_{xyz}(M)}\phi(x,y,z)\,d((\mu\otimes \nu)\otimes \lambda)
    &= \int_{S_z(M)}\left(\int_{S_{xy}(M)}\phi(x,y,z)\,d(\mu \otimes \nu)\right)\,d\lambda\\
    &= \int_{S_z(M)}\left(\int_{S_{y}(M)}\left(\int_{S_{x}(M)}\phi(x,y,z)\,d\mu\right)\,d\nu\right)\,d\lambda\\
    &= \int_{S_{yz}(M)}\left(\int_{S_{x}(M)}\phi(x,y,z)\,d\mu\right)\,d(\nu \otimes \lambda)\\
    &= \int\phi(x,y,z)\,d(\mu\otimes (\nu\otimes \lambda)).
\end{align*}
These equations are justified by the definition of the Morley product, together with the fact that all the functions being integrated are continuous, and thus are definable predicates.
This continuity follows from the definability of $\mu, \nu, \mu \otimes \nu$.
\end{proof}

It will also be useful to generalize some of the behavior of continuous functions with respect to Morley products of definable measures to characteristic functions of open sets.
\begin{lem}[{Generalizing \cite[Prop. 2.17]{cgh}}]\label{lem_BD++}
If $\mu \in \mathfrak{M}_x(\mathcal{U})$ is $A$-definable, then for any open set $U \subseteq S_{xy}(A)$, the function $\int_{S_x(A)}\chi_U(x,y)\,d\mu$ is itself Borel,
and for any $\nu \in \mathfrak{M}_y(\mathcal{U})$, 
$$\int_{S_y(A)}\int_{S_x(A)}\chi_U(x,y)\,d\mu\,d\nu = \int_{S_{xy}(A)}\chi_U(x,y)\,d \mu \otimes \nu = (\mu \otimes \nu)(U).$$
\end{lem}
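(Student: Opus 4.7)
The plan is to approximate $\chi_U$ from below by definable predicates (continuous $[0,1]$-valued functions on $S_{xy}(A)$) and reduce to Definition \ref{defn_product}, using inner regularity of the regular Borel measures $\mu|_A$ and $(\mu \otimes \nu)|_A$ on the compact Hausdorff type spaces. By Urysohn's lemma, the family $\mathcal{F} := \{f \in C(S_{xy}(A), [0,1]) : f \leq \chi_U\}$ is upward directed with pointwise supremum $\chi_U$, and each $f \in \mathcal{F}$ is a definable predicate on $(x,y)$.

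For the Borel definability of $y \mapsto \int \chi_U(x,y)\,d\mu$, fix $y \in S_y(A)$ and consider the open section $U_y := \{x : (x,y) \in U\}$. The restricted family $\{f(\cdot, y) : f \in \mathcal{F}\}$ is upward directed in $C(S_x(A), [0,1])$ with pointwise supremum $\chi_{U_y}$, so by inner regularity of $\mu|_A$ combined with Urysohn,
$$\int_{S_x(A)} \chi_U(x,y)\,d\mu = \sup_{f \in \mathcal{F}} F^f_{\mu,A}(y).$$
Since $\mu$ is $A$-definable, each $F^f_{\mu,A}$ is continuous, so this pointwise supremum is lower semicontinuous, hence Borel.

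For the equality of integrals, the same inner regularity argument applied to $(\mu \otimes \nu)|_A$ together with the Morley product formula (extended from formulas to continuous definable predicates by uniform approximation) yields
$$(\mu \otimes \nu)(U) = \sup_{f \in \mathcal{F}} \int_{S_{xy}(A)} f\,d(\mu \otimes \nu) = \sup_{f \in \mathcal{F}} \int_{S_y(A)} F^f_{\mu,A}\,d\nu.$$
It remains to interchange this supremum with the $\nu$-integral, producing $\int_{S_y(A)} \sup_f F^f_{\mu,A}\,d\nu$, which equals the iterated integral by the previous paragraph.

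The main obstacle is this final interchange, since the directed net $\{F^f_{\mu,A}\}_{f \in \mathcal{F}}$ is uncountable and ordinary monotone convergence does not apply. I plan to handle it by a Dini-type compactness argument: given any continuous $h \leq G := \sup_f F^f_{\mu,A}$ and any $\varepsilon > 0$, the sets $\{y : F^f_{\mu,A}(y) > h(y) - \varepsilon\}$ are open, upward directed in $f$, and cover $S_y(A)$ (since $h \leq G$), so compactness forces one of them to equal $S_y(A)$; for that $f$, $\int F^f_{\mu,A}\,d\nu \geq \int h\,d\nu - \varepsilon$. Combined with inner regularity of $\nu$ used to approximate $\int G\,d\nu$ from below by $\int h\,d\nu$ with $h \leq G$ continuous, this yields the desired equality.
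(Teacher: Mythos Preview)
Your proposal is correct and follows essentially the same route as the paper: both approximate $\chi_U$ from below by the directed family $\mathcal{F}$ of continuous functions supported in $U$, use Urysohn together with inner regularity of $\mu$ to identify the pointwise supremum of $F^f_{\mu,A}$ with $y\mapsto \mu(U_y)$, and then interchange the supremum over $\mathcal{F}$ with the $\nu$-integral. The only real difference is packaging: the paper invokes the monotone convergence theorem for nets (Reed--Simon, Theorem IV.15) as a black box for that last interchange, whereas you unpack it by hand via the Dini-style compactness argument combined with approximating the lower-semicontinuous $G$ from below by continuous $h$; these are the same mechanism. One small point worth making explicit in your write-up is the lifting step the paper spells out: to see that $\sup_{f\in\mathcal{F}} F^f_{\mu,A}(y)$ actually reaches $\mu(U_y)$, you need a function $f$ on all of $S_{xy}(A)$ (not just on the fiber) that is $1$ on a prescribed closed $C\subseteq U_y$, which requires applying Urysohn in $S_{xy}(A)$ to the closed set $C' = \{\mathrm{tp}(a,b/A):\mathrm{tp}(a/A)\in C\}$ inside $U$.
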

\begin{proof}
Fix $\mu$ and $U$. Let $\mathcal{F}$ be the set of all continuous functions $f : S_{xy}(A) \to [0,1]$ such that $f \leq \chi_U$ pointwise (in other words, the support of $f$ is contained in $U$).
The set $\mathcal{F}$ is a directed partial order (with pointwise $\leq$), and thus the function $f \mapsto \int_{S_x(A)}f(x,y)\,d\mu$ with domain $\mathcal{F}$ is an increasing net of continuous functions.
We can show that the pointwise limit of this net is $\int_{S_x(A)}\chi_U(x,y)\,d\mu$.
As for all functions $f \in \mathcal{F}$, $\int_{S_x(A)}f(x,y)\,d\mu \leq \int_{S_x(A)}\chi_U(x,y)\,d\mu$, and the net is increasing,
it suffices to show that for each $q \in S_{y}(A)$ with $b \vDash q$ and each $\varepsilon > 0$, there is some $f \in \mathcal{F}$ with
$f(q)\geq \int_{S_x(A)}\chi_U(x,b)\,d\mu - \varepsilon$.
Let $C \subseteq S_x(A)$ be a closed subset of the open fiber $U_b = \{p \in S_x(A) : (a,b) \in U\textrm{ for } a \vDash p\}$ with $\mu(C) \geq \mu(U_b) - \varepsilon$.
Let $C' \subseteq S_{xy}(A)$ be the closed set $\{\mathrm{tp}(a,b/A): \mathrm{tp}(a/A) \in C\}$.
By Urysohn's lemma, there is a continuous function $f$ with support contained in $U$ with value $1$ on all of $C'$.
Thus $f \leq \chi_U(x,y)$, and
\begin{align*}
    \int_{S_x(A)}f(x,b)\,d\mu 
    \geq \mu(C)
    \geq \mu(U_b) - \varepsilon
    = \int_{S_x(A)}\chi_U(x,y)\,d\mu - \varepsilon,
\end{align*}
so $f$ is the function we desired.

By the monotone convergence theorem for nets (\cite[Theorem IV.15]{reed_simon}), the pointwise limit of an increasing net of uniformly bounded continuous functions is Borel, and its integral relative to a regular Borel measure such as $\nu$ is the limit of the integrals of the functions in the net.
Thus $\int_{S_x(A)}\chi_U(x,y)\,d\mu$ is Borel, and $\int_{S_y(A)}\int_{S_x(A)}\chi_U(x,y)\,d\mu\,d\nu = \lim_{f \in \mathcal{F}} \int_{S_y(A)}\int_{S_x(A)}f(x,y)\,d\mu\,d\nu$.
By the definition of the Morley product, $\lim_{f \in \mathcal{F}} \int_{S_y(A)}\int_{S_x(A)}f(x,y)\,d\mu\,d\nu = \lim_{f \in \mathcal{F}} \int_{S_{xy}(A)}f(x,y)\,d\mu \otimes \nu$,
and once again using Urysohn's lemma, it is straightforward to find $f \in \mathcal{F}$ with $\int_{S_{xy}(A)}f(x,y)\,d\mu \otimes \nu \geq (\mu\otimes \nu)(U) - \varepsilon$ for each $\varepsilon$, so this limit is $(\mu\otimes \nu)(U)$.
\end{proof}

\subsection{Approximately Realizable Measures}

We provide another characterization of approximately realized measures, which justifies the name:
\begin{lem}\label{lem_approx_realized}
Let $\mu \in \mathfrak{M}_x(\mathcal{U})$ be a global measure.

Then $\mu$ is approximately realized in $A$ if and only if the following holds:

For every predicate $\phi(x)$ with parameters in $\mathcal{U}$, if $\phi(a) = 0$ for all $a \in A^x$, then $\int_{S_x(\mathcal{U})}\phi(x)\,d\mu = 0$.
\end{lem}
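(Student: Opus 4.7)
For the forward direction, assume $\mu$ lies in the closure $\overline{K}$ of the convex hull $K$ of the Dirac measures $\delta_{\mathrm{tp}(a/\mathcal{U})}$ for $a \in A^x$. If $\phi(x)$ is a predicate with parameters in $\mathcal{U}$ vanishing on $A^x$, then $\int \phi\,d\delta_{\mathrm{tp}(a/\mathcal{U})} = \phi(a) = 0$ for every such $a$, and by linearity the integral vanishes against every convex combination of such Dirac measures. Since the map $\nu \mapsto \int \phi\,d\nu$ is continuous in the weak-$*$ topology by the very definition of that topology, the integral also vanishes on $\overline{K}$, and in particular $\int \phi\,d\mu = 0$.

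For the converse I would argue by contrapositive. Suppose $\mu \notin \overline{K}$. Since $\overline{K}$ is closed and convex in the locally convex space $\mathfrak{M}_x(\mathcal{U})$ with its weak-$*$ topology, the Hahn--Banach separation theorem yields a weak-$*$ continuous linear functional strictly separating $\mu$ from $\overline{K}$. Such functionals are exactly given by integration against elements of $C(S_x(\mathcal{U}),\mathbb{R})$, i.e., against $\mathcal{U}$-definable predicates, so there are a predicate $\phi(x)$ and a constant $c \in \mathbb{R}$ with $\int \phi\,d\nu \leq c$ for every $\nu \in \overline{K}$ and $\int \phi\,d\mu > c$. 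Applied to $\nu = \delta_{\mathrm{tp}(a/\mathcal{U})}$ for each $a \in A^x$, this gives $\phi(a) \leq c$.

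The final step is to replace $\phi$ by the truncation $\psi(x) := \max(\phi(x) - c,\, 0)$, which is again a $\mathcal{U}$-definable predicate since $t \mapsto \max(t - c, 0)$ is a continuous connective. By the previous bound, $\psi(a) = 0$ for every $a \in A^x$, while $\int \psi\,d\mu \geq \int (\phi - c)\,d\mu = \int \phi\,d\mu - c > 0$, contradicting the integral hypothesis on $\mu$.

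The main obstacle is making the Hahn--Banach step rigorous: one must use the weak-$*$ rather than the norm topology so that the separating functional actually corresponds to a definable predicate (the norm-continuous dual is strictly larger and would not suffice), and one should note that the resulting separating predicate automatically has parameters in $\mathcal{U}$ once the dual of $(\mathfrak{M}_x(\mathcal{U}),\text{weak-}*)$ is identified with $C(S_x(\mathcal{U}),\mathbb{R})$. The truncation step is then a standard device for upgrading the one-sided inequality $\phi(a) \leq c$ to the equality $\psi(a) = 0$ demanded by the statement.
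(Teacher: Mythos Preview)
Your proof is correct and takes a genuinely different route from the paper. For the converse, the paper argues directly from the definition of the weak-$*$ topology: if $\mu$ is not approximately realized, it lies in a basic open set of the form $\bigcap_{i=1}^n \{\nu : 0 < \int \phi_i\,d\nu\}$ disjoint from the convex hull, and then shows by a short pigeonhole argument (averaging over putative witnesses $a_i$ with $\phi_i(a_i) > 0$) that some $\phi_i$ must vanish identically on $A^x$. You instead invoke Hahn--Banach separation to produce a single separating predicate in one stroke, followed by a truncation. Your approach is cleaner and more conceptual; the paper's is more elementary and self-contained, avoiding the need to identify the weak-$*$ dual.

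One small imprecision: $\mathfrak{M}_x(\mathcal{U})$ is not itself a vector space, so the separation theorem should be applied in the ambient locally convex space $C(S_x(\mathcal{U}),\mathbb{R})^*$ with its weak-$*$ topology. Since $\mathfrak{M}_x(\mathcal{U})$ is weak-$*$ compact (hence closed) there, the closure $\overline{K}$ taken inside $\mathfrak{M}_x(\mathcal{U})$ agrees with the closure in the ambient space, and the argument goes through unchanged. It is also worth noting that the separating $\phi$ need not be $[0,1]$-valued, but your truncation $\psi$ is nonnegative and bounded, so after scaling it is a predicate in the usual sense.
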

\begin{proof}
First, we show that this holds for all approximately realized measures.
    
Assume $\phi(x)$ is a predicate such that for all $a \in A^x$, $\phi(a) = 0$.

Let $\nu$ be a convex combination of Dirac measures at types of points in $A$ - specifically, let
$\nu = \sum_{i = 1}^n \lambda_i \delta_{a_i}$, where $\delta_{a_i}$ is the Dirac measure at the type realized by $a_i \in A$, and $\lambda_i \geq 0$, $\sum_{i = 1}^n \lambda n = 1$.
Then $\int_{S_x(\mathcal{U})}\phi(x)\,d\nu = \sum_{i =1}^n \lambda_i \phi(a_i) = 0$.

We then recall that $\nu \mapsto \int_{S_x(\mathcal{U})}\phi(x)\,d\nu$ is continuous, so
as this continuous function takes the value 0 everywhere in a set, it must take the value 0 everywhere in its closure - the set of approximately realized measures.

Now we will show that any measure with this property is approximately realized in $A$.
Assume $\mu$ is not approximately realized in $A$, and we will find some predicate $\phi(x)$ such that
$\phi(a) = 0$ for all $a \in A^x$, but $0 < \int_{S_x(\mathcal{U})}\phi(x)\,d\mu$.

Because $\mu$ is not approximately realized in $A$, $\mu$ is contained in an open set that does not contain any convex combinations of Dirac measures of types realized in $A$.
We may assume that the open set is basic - a finite intersection of sets of the form $\{\nu : r < \int_{S_x(\mathcal{U})}\phi(x)\,d\nu < s\}$ where $r < s$ and $\phi(x)$ is a formula with parameters.
By potentially replacing $\phi(x)$ with $1 - \phi(x)$, we may assume that this set is an intersection of sets of the form 
$\{\nu : r < \int_{S_x(\mathcal{U})}\phi(x)\,d\nu\}$, and by replacing $\phi(x)$ with $\phi(x)\dot{-} r$, we can replace these with sets of the form 
$\{\nu : 0 < \int_{S_x(\mathcal{U})}\phi(x)\,d\nu\}$.
Thus assume there are formulas $\phi_1,\dots, \phi_n$ such that for each $i$, $0 < \int_{S_x(\mathcal{U})}\phi_i(x)\,d\mu$, but
for each convex combination $\nu$ of Dirac measures at types realized in $A$, $\int_{S_x(\mathcal{U})}\phi_i(x)\,d\nu = 0$ for some $i$.
We wish to show that for some $i$, $\phi_i(a) = 0$ for all $a \in A^x$.
If not, then for each $i$, let $a_i \in A^x$ be such that $\phi_i(a_i)> 0$.
Then let $\nu = \frac{1}{n}\sum_{i = 1}^n \delta_{a_i}$, and note that 
$\int_{S_x(\mathcal{U})}\phi_i(x)\,d\nu  = \frac{1}{n}\sum_{i = 1}^n \phi_i(a_i) > 0$, a contradiction.

\end{proof}

We also note that all approximately realized measures are invariant:
\begin{lem}\label{lem_approx_realized_invariant}
    The set of $A$-invariant measures is closed in $\mathfrak{M}_x(\mathcal{U})$, and all measures approximately realized in $A$ are $A$-invariant.
\end{lem}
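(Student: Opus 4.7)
The plan is to exploit the fact that $A$-invariance is cut out by a family of closed conditions in the weak$^*$ topology on $\mathfrak{M}_x(\mathcal{U})$, then obtain the second statement as a corollary of the first.

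For the closedness claim, I would fix a formula $\phi(x;y) \in \mathcal{L}(A)$ and a pair of tuples $a \equiv_A b$ in $\mathcal{U}^y$, and observe that the map
\[ \mu \mapsto \int \phi(x;a)\,d\mu - \int \phi(x;b)\,d\mu \]
is weak$^*$-continuous, being the difference of two such continuous evaluations. Hence $\{\mu : \int \phi(x;a)\,d\mu = \int \phi(x;b)\,d\mu\}$ is a weak$^*$-closed subset of $\mathfrak{M}_x(\mathcal{U})$, and the set of $A$-invariant measures is the intersection over all such triples $(\phi, a, b)$, which is closed.

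For the second claim, I would first check that any Dirac measure $\delta_p$ at $p = \mathrm{tp}(a/\mathcal{U})$ with $a \in A^x$ is $A$-invariant: for $c \equiv_A d$ and $\phi(x;y) \in \mathcal{L}(A)$, the predicate $\phi(a;y)$ has all parameters in $A$, so $\phi(a;c) = \phi(a;d)$, and therefore $\int \phi(x;c)\,d\delta_p = \phi(a;c) = \phi(a;d) = \int \phi(x;d)\,d\delta_p$. Since the defining condition for $A$-invariance is linear in $\mu$, finite convex combinations of such Dirac measures remain $A$-invariant. By the first part of the lemma, the weak$^*$-closure of this convex hull stays inside the closed set of $A$-invariant measures, and by definition this closure is exactly the set of measures approximately realized in $A$.

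There is no real obstacle here; the only subtlety worth flagging is the argument that the Dirac measure at the type of an element of $A$ is genuinely $A$-invariant, which I would be careful to spell out since it depends on reading $\phi(a;y)$ as a predicate whose parameters all lie in $A$ once $a \in A^x$ is substituted. Everything else follows from continuity of integration against formulas, which is immediate from the definition of the weak$^*$ topology on $\mathfrak{M}_x(\mathcal{U})$.
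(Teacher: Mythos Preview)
Your proposal is correct and follows essentially the same approach as the paper: express $A$-invariance as an intersection of preimages of $\{0\}$ under continuous evaluation maps, then observe that Dirac measures at types realized in $A$ are $A$-invariant, pass to convex combinations by linearity, and take closures. Your write-up is in fact slightly more careful than the paper's in spelling out why $\delta_{\mathrm{tp}(a/\mathcal{U})}$ is $A$-invariant when $a \in A^x$.
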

\begin{proof}
    The set of $A$-invariant measures is 
    $$\bigcap_{\phi(x;y),a\equiv_A b}\left\{\mu : \int_{S_x(\mathcal{U})}\phi(x;a)\,d\mu = \int_{S_x(\mathcal{U})}\phi(x;a)\,d\mu\right\}$$.
    As for each predicate $\phi(x;y)$ and each $a \in \mathcal{U}$, the function $\mu \mapsto \int_{S_x(\mathcal{U})}\phi(x;a)\,d\mu$ is continuous,
    each set in this intersection is closed, so the intersection itself is.

    The set of approximately realized measures is the topological closure of the convex hull of the Dirac measures at types of points in $A$.
    It is clear that the type of a point in $A$ is $A$-invariant, and that a convex combination of $A$-invariant measures is $A$-invariant.
    This is thus the closure of a set of $A$-invariant measures, which must then be contained in the closed set of $A$-invariant measures.
\end{proof}

The choice of model does not matter for defining approximately realized measures, as long as they are invariant.
\begin{lem}\label{lem_approx_realized_base}
    Let $\mu$ be a measure approximately realized/dfs in $A$, and invariant over a small model $M$.
    Then $\mu$ is approximately realized/dfs in $M$.
\end{lem}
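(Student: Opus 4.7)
The plan is to split the argument into two steps: first, show the restriction $\mu|_M$ is approximately realized as a measure on $S_x(M)$; second, lift a Dirac-combination approximation back up to $\mathfrak{M}_x(\mathcal{U})$ and verify it still converges to $\mu$. For the first step, the pushforward $\pi_\ast : \mathfrak{M}_x(\mathcal{U}) \to \mathfrak{M}_x(M)$ along the restriction projection is continuous, so $\mu|_M \in \overline{\mathrm{conv}}\{\delta_{\mathrm{tp}(a/M)} : a \in A\}$. Since $M$ is a model, $\{\mathrm{tp}(m/M) : m \in M^x\}$ is dense in $S_x(M)$ by elementarity, and $p \mapsto \delta_p$ is continuous into $\mathfrak{M}_x(M)$, so each $\delta_{\mathrm{tp}(a/M)}$ already sits in the closed convex hull of the realized-type Dirac measures. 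Hence $\mu|_M$ is approximately realized in $M$, and I can pick a sequence $\nu'_j = \sum_i \lambda_{ij}\delta_{\mathrm{tp}(m_{ij}/M)}$ with $\nu'_j \to \mu|_M$ in $\mathfrak{M}_x(M)$.

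Next, I would lift to $\nu_j := \sum_i \lambda_{ij}\delta_{m_{ij}} \in \mathrm{conv}\{\delta_m : m \in M\} \subseteq \mathfrak{M}_x(\mathcal{U})$ and prove $\nu_j \to \mu$ globally. A preliminary is that $\mu$ itself is $M$-definable: in the dfs case this is immediate from Lemma~\ref{lem_inv_borel}; in the bare approximately realized case I would derive it directly, since writing $\mu$ as a weak$^\ast$-limit of convex combinations $\sigma_j$ of Diracs on $A$ makes $y \mapsto \int \phi(x;y)\,d\sigma_j$ into an equicontinuous family (sharing $\phi$'s modulus of continuity in $y$), forcing the pointwise limit $y \mapsto \int \phi(x;y)\,d\mu$ to be continuous on $S_y(\mathcal{U})$ and, by $M$-invariance, to factor continuously through the quotient onto $S_y(M)$. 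With $M$-definability secured, for each formula $\phi(x;y)$ the functions $F^\phi_{\nu_j,M} : S_y(M) \to [0,1]$ are continuous and the family $\{F^\phi_{\nu_j,M}\}_j$ is equicontinuous with the modulus of continuity of $\phi$ in $y$. For $m \in M^y$, $F^\phi_{\nu_j,M}(\mathrm{tp}(m/M)) \to F^\phi_{\mu,M}(\mathrm{tp}(m/M))$ by $\nu_j|_M \to \mu|_M$ tested against the $M$-formula $\phi(x;m)$, so pointwise convergence holds on a dense set. By Arzel\`a--Ascoli this upgrades to uniform convergence on $S_y(M)$, so $F^\phi_{\nu_j,M}(\mathrm{tp}(c/M)) \to F^\phi_{\mu,M}(\mathrm{tp}(c/M))$ for every $c \in \mathcal{U}^y$. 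Hence $\nu_j \to \mu$ in $\mathfrak{M}_x(\mathcal{U})$, giving $\mu$ approximately realized in $M$; combined with $M$-definability, this also delivers the dfs conclusion.

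The main obstacle is exactly the lifting: approximate realization of $\mu|_M$ in $\mathfrak{M}_x(M)$ only encodes convergence tested against $M$-formulas, whereas approximate realization of $\mu$ in $\mathfrak{M}_x(\mathcal{U})$ requires convergence against formulas with parameters anywhere in $\mathcal{U}$. Bridging the gap depends on two ingredients: the $M$-definability of $\mu$ (so that $\mu$ is pinned down by its restriction and the target $F^\phi_{\mu,M}$ is continuous), and the equicontinuity of $\{F^\phi_{\nu_j,M}\}$ inherited from $\phi$'s modulus of continuity in $y$, which is what allows pointwise convergence on the dense set of realized types to upgrade to uniform convergence on all of $S_y(M)$.
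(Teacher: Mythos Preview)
There is a genuine gap at the equicontinuity step, and it affects both halves of your argument. You claim the family $\{F^\phi_{\nu_j,M}\}_j$ (and earlier $\{F^\phi_{\sigma_j}\}_j$) is equicontinuous on the type space because each function inherits $\phi$'s modulus of uniform continuity in $y$. But that modulus is with respect to the \emph{metric} on $\mathcal{U}^y$, not the logic topology on $S_y(M)$, and these are different: a basic logic-open set constrains only finitely many $M$-formulas, whereas making
\[
\bigl|F^\phi_{\nu_j,M}(q)-F^\phi_{\nu_j,M}(q')\bigr| \;=\; \Bigl|\textstyle\sum_i \lambda_{ij}\bigl(\phi(m_{ij};b)-\phi(m_{ij};b')\bigr)\Bigr|
\]
small \emph{uniformly in $j$} would require controlling $\phi(m;\cdot)$ for all of the varying $m_{ij}\in M$ at once. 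Likewise, the realized types are dense in $S_y(M)$ for the logic topology but need not be $d$-dense, so Arzel\`a--Ascoli cannot promote pointwise convergence on realized types to convergence everywhere. In particular, your derivation that an approximately realized $\mu$ must be $M$-definable fails: already in discrete logic, finitely satisfiable types need not be definable, so no purely metric-equicontinuity argument can establish this.

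The paper's own proof takes a completely different (and much shorter) route: it enlarges to a model $N \supseteq A \cup M$, noting that approximate realization passes upward to $N$; as printed, however, that proof stops there and does not explain the descent from $N$ to $M$, so you should not feel you have missed an obvious one-liner. For the dfs half you are on solid ground in invoking Lemma~\ref{lem_inv_borel} to get $M$-definability; the cleaner way to finish is then to argue directly with the characterization in Lemma~\ref{lem_approx_realized} (using that $F^\phi_{\mu,M}$ is an $M$-definable predicate) rather than trying to force a particular approximating net $\nu_j$ to converge globally, since distinct $M$-invariant global extensions of $\mu|_M$ can exist and your $\nu_j$ have no a priori reason to pick out $\mu$ among them.
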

\begin{proof}
    The result for dfs will follow from the result for approximate realization, as it holds for definability by Lemma \ref{lem_inv_borel}.
    Now assume $\mu$ is approximately realized in $A$ and $M$-invariant.
    Let $N$ be a measure extending $A \cup M$.
    Then clearly $\mu$ is approximately realized in $N$.
\end{proof}

Approximately realized measures are also closed under Morley products:
\begin{lem}\label{lem_approx_realized_product}
Let $\mu \in \mathfrak{M}_x(\mathcal{U})$ and $\nu \in \mathfrak{M}_y(\mathcal{U})$ be approximately realized in $A$.
Then $\mu \otimes \nu$ is as well.
\end{lem}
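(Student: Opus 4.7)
My plan is to verify the characterization of approximately realized measures given by Lemma~\ref{lem_approx_realized} for $\mu \otimes \nu$, by applying that lemma in turn to each factor.

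Let $\phi(x,y)$ be a predicate with parameters in $\mathcal{U}$ such that $\phi(a,b) = 0$ for every $(a,b) \in A^x \times A^y$; I need to show $\int \phi \, d(\mu \otimes \nu) = 0$. Choose $A' \supseteq A$ containing the parameters of $\phi$ and large enough that $\mu$ is $A'$-Borel definable (so that the Morley product is defined; in the intended NIP setting this follows from Lemma~\ref{lem_approx_realized_invariant} together with Lemma~\ref{lem_inv_borel}). By the definition of $\mu \otimes \nu$,
$$\int \phi \, d(\mu \otimes \nu) = \int_{S_y(A')} F^\phi_{\mu, A'}(y) \, d\nu|_{A'}.$$
For each $b \in A^y$, the predicate $x \mapsto \phi(x,b)$ vanishes identically on $A^x$, so applying Lemma~\ref{lem_approx_realized} to $\mu$ gives $F^\phi_{\mu, A'}(b) = 0$. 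Thus the integrand vanishes at every type of a point of $A^y$.

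It remains to conclude $\int F^\phi_{\mu, A'} \, d\nu|_{A'} = 0$, which I would do by applying Lemma~\ref{lem_approx_realized} a second time, now to $\nu$, treating $F^\phi_{\mu, A'}$ as the test predicate in $y$. When $\mu$ is $A'$-definable, $F^\phi_{\mu, A'}$ is continuous (a genuine definable predicate in $y$) and the application is immediate.

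The main obstacle is the general Borel-definable case, where $F^\phi_{\mu, A'}$ is only Borel and Lemma~\ref{lem_approx_realized} does not apply verbatim. To handle this I would write $\nu$ as a weak-$*$ limit $\nu = \lim_n \nu_n$ of finite convex combinations of Dirac measures at types of points in $A^y$, observe that $\int F^\phi_{\mu, A'} \, d\nu_n = 0$ for every $n$ since $F^\phi_{\mu, A'}$ vanishes at each such type, and then upgrade this to $\int F^\phi_{\mu, A'} \, d\nu = 0$ by sandwiching $F^\phi_{\mu, A'}$ from above and below by continuous functions vanishing on the $A^y$-types, in the spirit of the Urysohn/net argument of Lemma~\ref{lem_BD++}, together with regularity of $\nu$ and the fact that $\nu$ is supported on the closure of $\{\mathrm{tp}(b/A') : b \in A^y\}$.
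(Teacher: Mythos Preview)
Your argument is essentially the paper's own, just run in the direct form of Lemma~\ref{lem_approx_realized} rather than its contrapositive, and with the order of the two applications reversed. The paper assumes $\int \phi\,d(\mu\otimes\nu)>0$, uses approximate realization of $\nu$ to find $b\in A^y$ with $F^\phi_\mu(b)>0$, and then uses approximate realization of $\mu$ on the continuous predicate $\phi(x;b)$ to find $a\in A^x$ with $\phi(a;b)>0$. That is the same two-step use of Lemma~\ref{lem_approx_realized} you propose.

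You are right to flag the Borel issue, and the paper's proof has exactly the same implicit step: it too applies (the contrapositive of) Lemma~\ref{lem_approx_realized} to $F^\phi_\mu$, which is a genuine definable predicate only when $\mu$ is definable, not merely Borel-definable. Your sketched workaround for the purely Borel case does not go through as stated: a nonnegative Borel function can vanish on a dense set of realized types and still have positive integral against a measure supported on the closure of that set (think of the indicator of the complement of the realized types), so there is no reason continuous functions vanishing on the $A^y$-types should bound $F^\phi_{\mu,A'}$ from above. The clean case, and the one both arguments actually justify, is when $\mu$ is $A'$-definable so that $F^\phi_{\mu,A'}$ is continuous and Lemma~\ref{lem_approx_realized} applies directly; neither you nor the paper handles the general Borel-definable case.
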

\begin{proof}
Let us use the characterization from Lemma \ref{lem_approx_realized}.
It suffices to show that for every predicate $\phi(x;y)$ with parameters from $\mathcal{U}$,
if $\int_{S_{xy}(\mathcal{U})}\phi(x;y)\,d\mu\otimes \nu > 0$, then $\phi(a;b) > 0$ for some $ab \in A^{xy}$.

If $\int_{S_{xy}(\mathcal{U})}\phi(x;y)\,d\mu\otimes \nu = \int_{S_{y}(\mathcal{U})}F^\phi_\mu(y)\,d\nu > 0$, then as $\nu$ is approximately realized, there is some $b \in A^y$ such that $F^\phi_\mu(b) > 0$.
Thus $\int_{S_{x}(\mathcal{U})}\phi(x;b)\,d\mu > 0$, so as $\mu$ is approximately realized, there is some $a \in A^x$ with $\phi(a;b) > 0$.    
\end{proof}

\subsection{Extensions and Orthogonality}
In order to understand extensions of Keisler measures to larger sets of parameters,
let us focus on the positive linear functional perspective, and apply a specialized version of Hahn-Banach.

First, we observe that for any model $M$, the space $\mathcal{C}(S_x(M), \R)$ is an ordered vector space,
with positive cone $C$ consisting of all $f \in \mathcal{C}(S_x(M), \R)$ such that $f(x)\geq 0$ always.
We can view it as an ordered topological vector space by giving it the $\ell_\infty$-norm.
Note that this means the interior points of the positive cone $C$ are exactly the functions $f$ such that $\inf_{p \in S_x(M)} f(p) > 0$.
As the functions in this space have a compact domain $S_x(M)$, these are all the strictly positive functions.

By the following fact, it is clear that every positive linear functional on $\mathcal{C}(S_x(M), \R)$, and thus every Keisler functional, is continuous with respect to the $\ell_\infty$-norm.
\begin{fact}[{\cite[Theorem 5.5]{schaefer_wolff}}]\label{fact_positive_is_continuous}
    Let $E$ be an ordered topological vector space with positive cone $C$, such that $C$ has nonempty interior.
    Then every positive linear form on $E$ is continuous.
\end{fact}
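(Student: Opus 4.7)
The plan is to use the nonempty interior of $C$ to produce a neighborhood of the origin on which the positive linear form is two-sidedly bounded, which for a linear map into $\R$ is equivalent to continuity.

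First I would pick any interior point $u$ of the positive cone $C$. Since interior points of $C$ exist by assumption, there is some neighborhood $V$ of the origin in $E$ with $u + V \subseteq C$. In any topological vector space every neighborhood of $0$ contains a balanced neighborhood, so I would shrink $V$ to a balanced neighborhood $U$ of the origin with $u + U \subseteq C$. Balancedness gives $-U = U$, so also $u - x \in C$ for every $x \in U$.

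Now let $\ell$ be a positive linear form on $E$. For any $x \in U$, positivity applied to both $u + x$ and $u - x$ yields $\ell(u) + \ell(x) \geq 0$ and $\ell(u) - \ell(x) \geq 0$, hence
\[
|\ell(x)| \leq \ell(u) \qquad \text{for all } x \in U.
\]
Thus $\ell$ is bounded on a neighborhood of $0$. For a linear functional on a topological vector space this immediately implies continuity: given $\varepsilon > 0$, the set $\tfrac{\varepsilon}{\ell(u) + 1}\,U$ is a neighborhood of $0$ on which $|\ell| \leq \varepsilon$, and translating gives continuity at every point.

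The only subtle step is the passage from ``bounded on a neighborhood'' to continuity, which relies on the scaling structure of a topological vector space and would need the trivial case $\ell(u) = 0$ handled separately (in that case the same argument with any positive multiple of $u$ shows $\ell \equiv 0$ on the linear span of $U$, and one uses that $U$ is absorbing because it contains a neighborhood of $0$). I do not expect any real obstacle; the entire argument rests on finding the balanced neighborhood inside $C - u$, which is guaranteed by $u$ being interior.
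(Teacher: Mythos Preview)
Your argument is correct and is the standard proof of this result. Note, however, that the paper does not prove this statement at all: it is stated as a \texttt{fact} with a citation to \cite[Theorem 5.5]{schaefer_wolff}, so there is no proof in the paper to compare against. Your approach is essentially the textbook argument one would find at that reference.
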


We can combine that conclusion with this fact, guaranteeing continuous positive extensions of continuous positive linear functionals defined on subspaces.
\begin{fact}[{\cite[Corollary 2 of Theorem 5.4]{schaefer_wolff}}]\label{fact_positive_extension}
    Let $E$ be an ordered topological vector space with positive cone $C$, and suppose that $V$ is a vector subspace of $E$
    such that $C \cap V$ contains an interior point of $C$. Then every continuous, positive linear form on $V$ can be extended to $E$, preserving continuity and positivity.
\end{fact}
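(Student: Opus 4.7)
The plan is to combine a Minkowski-gauge construction with the classical Hahn--Banach extension theorem, then read off continuity of the extension from Fact \ref{fact_positive_is_continuous}. Fix an interior point $v_0 \in C \cap V$ of $C$ and let $f$ be the given continuous positive form on $V$. Positivity forces $f(v_0) \geq 0$; in the degenerate case $f(v_0) = 0$, a balanced neighborhood $U$ of $0$ with $v_0 + U \subseteq C$ together with the positivity of $f$ on $V \cap C$ forces $f(v_0 \pm \mu y) \geq 0$ for every $y \in V$ and small $\mu > 0$, squeezing $f \equiv 0$ on $V$, and then the zero functional extends $f$. So I would assume after rescaling that $f(v_0) = 1$.

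Define the gauge
$$p(x) = \inf\{\lambda > 0 : \lambda v_0 - x \in C\}.$$
Because $v_0$ is an interior point of $C$, the neighborhood $U$ above is absorbing, so for each $x \in E$ some small $\mu > 0$ makes $-\mu x \in U$ and hence $\lambda v_0 - x \in C$ for $\lambda = 1/\mu$. Thus $p$ is everywhere finite, and the usual convex-cone manipulations (closure of $C$ under addition and positive scaling) show that $p$ is positively homogeneous and subadditive.

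Next I would verify $f \leq p$ on $V$: whenever $\lambda v_0 - x \in C$ with $x \in V$, the element $\lambda v_0 - x$ lies in $C \cap V$, so $0 \leq f(\lambda v_0 - x) = \lambda - f(x)$; taking infima gives $f(x) \leq p(x)$. Classical Hahn--Banach now extends $f$ to a linear $\tilde f : E \to \mathbb{R}$ with $\tilde f \leq p$ globally. To deduce positivity, observe that if $x \in C$ then $\lambda v_0 + x \in C$ for every $\lambda > 0$ (convex cone), so $p(-x) = 0$ and $\tilde f(x) = -\tilde f(-x) \geq -p(-x) = 0$. Since $C$ has nonempty interior in $E$ (witnessed by $v_0$), Fact \ref{fact_positive_is_continuous} then gives continuity of $\tilde f$ for free.

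The main obstacle, modest as it is, lies in setting up the gauge $p$ so that it is simultaneously everywhere finite and a majorant of $f$ on $V$. Finiteness is precisely where the hypothesis that $V$ meets the interior of $C$ enters, and the majorant property forces the normalization $f(v_0) = 1$ and the degenerate-case split discussed above. Once both are in place the argument reduces to classical functional analysis, with Fact \ref{fact_positive_is_continuous} removing any separate continuity verification.
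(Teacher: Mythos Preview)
Your proof is correct. The Minkowski-gauge argument is the standard route: finiteness of $p$ comes exactly from $v_0$ being interior to $C$, domination $f \leq p$ on $V$ comes from positivity of $f$ on $C \cap V$ together with the normalization $f(v_0)=1$, and positivity of the Hahn--Banach extension follows from $p(-x)=0$ for $x \in C$. The degenerate case $f(v_0)=0$ is handled cleanly, and invoking Fact~\ref{fact_positive_is_continuous} for continuity is the right closing move.

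As for comparison: the paper does not supply its own proof of this statement. It is recorded as a black-box Fact with a citation to \cite[Corollary 2 of Theorem 5.4]{schaefer_wolff}, so there is nothing in the paper to compare your argument against. Your proof is essentially the classical one (and is in substance the argument behind the cited corollary in Schaefer--Wolff), so you have effectively filled in what the paper chose to outsource.
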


\begin{cor}\label{cor_extension}
    Let $V$ be a vector subspace of $\mathcal{C}(S_x(M), \R)$, containing the constant functions, and $f : V \to \R$ a positive linear functional with $f(1) = 1$.
    Then $f$ can be extended to a Keisler functional in $\mathfrak{M}_x(M)$.
\end{cor}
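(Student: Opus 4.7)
The plan is to chain together the two cited facts about ordered topological vector spaces, using the constant function $1$ as the interior witness throughout. Give $V$ the subspace topology and induced ordering from $\mathcal{C}(S_x(M),\R)$, so its positive cone is $C \cap V$. Since the constant function $1$ belongs to $V$ and is an interior point of $C$ in the ambient space (any function within $\ell_\infty$-distance less than $1$ of it is strictly positive), $1$ is also an interior point of $C \cap V$ in $V$: if the open ball $B(1,r)$ is contained in $C$, then $B(1,r) \cap V$ is contained in $C \cap V$ and is open in $V$.

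First, I would apply Fact \ref{fact_positive_is_continuous} to $V$ with positive cone $C \cap V$. The hypothesis holds by the observation above, so the positive linear functional $f$ is continuous on $V$. Next, I would apply Fact \ref{fact_positive_extension} with $E = \mathcal{C}(S_x(M), \R)$ and subspace $V$: again the point $1 \in C \cap V$ is an interior point of $C$, so $f$ extends to a continuous positive linear functional $\tilde f$ on all of $\mathcal{C}(S_x(M), \R)$.

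It remains to check that $\tilde f$ is a Keisler functional, i.e.\ that $\|\tilde f\| = 1$. Since $\tilde f$ extends $f$, we have $\tilde f(1) = f(1) = 1$. For any $g \in \mathcal{C}(S_x(M), \R)$ with $\|g\|_\infty \leq 1$, we have $-1 \leq g \leq 1$ pointwise, so $1 - g$ and $1 + g$ both lie in $C$; positivity of $\tilde f$ then gives $|\tilde f(g)| \leq \tilde f(1) = 1$, showing $\|\tilde f\| \leq 1$, with equality attained at $g = 1$. Thus $\tilde f \in \mathfrak{M}_x(M)$, as required.

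There is essentially no main obstacle here beyond the bookkeeping verification that $V$ inherits enough structure (nonempty interior of its positive cone) to invoke Fact \ref{fact_positive_is_continuous}; once $f$ is known to be continuous on $V$, the extension step is immediate from Fact \ref{fact_positive_extension} and the normalization follows automatically from positivity and $\tilde f(1) = 1$.
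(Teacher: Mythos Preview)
Your proposal is correct and follows essentially the same approach as the paper: use the constant function $1$ as an interior point of the positive cone to apply Fact~\ref{fact_positive_is_continuous} for continuity of $f$ on $V$, then Fact~\ref{fact_positive_extension} for the extension, and finally check the normalization. You supply a bit more detail than the paper (the explicit verification that $\|\tilde f\| = 1$ from positivity and $\tilde f(1)=1$), but the structure is identical.
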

\begin{proof}
    We see that $V$ contains an interior point of $C$, namely the constant function 1.
    Thus $V$, as a subspace of $\mathcal{C}(S_x(M), \R)$, is an ordered topological vector space whose positive cone has nonempty interior, so by Fact \ref{fact_positive_is_continuous}, $f$ is continuous.
    Thus also by Fact \ref{fact_positive_extension}, $f$ has an extension to all of $\mathcal{C}(S_x(M), \R)$, which is positive, and is thus a Keisler functional, as $f(1) = 1$.
\end{proof}

\begin{lem}\label{lem_extension}
    Let $V$ be a vector subspace of $\mathcal{C}(S_x(M), \R)$, containing the constant functions, and $f : V \to \R$ a positive linear functional with $f(1) = 1$.
    Let $\phi(x) \in \mathcal{C}(S_x(M), \R)$.
    Then the set of possible values $\hat{f}(\phi)$ where $\hat{f}$ is a Keisler functional extending $f$ is exactly the interval
    $$\left[\sup_{\psi \in V : \psi \leq \phi} f(\psi), \inf_{\psi \in V : \psi \geq \phi} f(\psi)\right].$$  
\end{lem}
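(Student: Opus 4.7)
My plan is to prove the two containments separately: first show every extension value $\hat{f}(\phi)$ lies in the interval, then show every value in the interval is realized by some extension. The tool for the second direction will be Corollary \ref{cor_extension}, applied to the one-dimensional enlargement $V + \R\phi$ of $V$.

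For the first direction, suppose $\hat{f}$ is a Keisler functional extending $f$, and let $\psi \in V$ with $\psi \leq \phi$. Then $\phi - \psi \geq 0$ pointwise, so by positivity $\hat{f}(\phi) - f(\psi) = \hat{f}(\phi - \psi) \geq 0$, giving $\hat{f}(\phi) \geq f(\psi)$. Taking the supremum over such $\psi$ yields the lower bound. The upper bound is symmetric. As a byproduct, applying this to any extension (which exists by Corollary \ref{cor_extension}) shows the interval is nonempty, so $\sup_{\psi \leq \phi} f(\psi) \leq \inf_{\psi \geq \phi} f(\psi)$; one can also see this directly, since $\psi_1 \leq \phi \leq \psi_2$ implies $\psi_1 \leq \psi_2$ and hence $f(\psi_1) \leq f(\psi_2)$ by positivity of $f$.

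For the second direction, fix $r$ in the closed interval. If $\phi \in V$ then necessarily $r = f(\phi)$ and there is nothing to do, so assume $\phi \notin V$. Let $W = V + \R\phi$, a subspace of $\mathcal{C}(S_x(M),\R)$ still containing the constants, and define $\tilde{f} : W \to \R$ by $\tilde{f}(\psi + c\phi) = f(\psi) + cr$; this is well-defined and linear since $\phi \notin V$, and satisfies $\tilde{f}(1) = 1$. I need to check $\tilde{f}$ is positive on $W$. Suppose $\psi + c\phi \geq 0$. If $c = 0$, this reduces to positivity of $f$. If $c > 0$, then $-\psi/c \in V$ and $-\psi/c \leq \phi$, so $f(-\psi/c) \leq r$, and multiplying by $c > 0$ gives $-f(\psi) \leq cr$, i.e.\ $\tilde{f}(\psi + c\phi) \geq 0$. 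If $c < 0$, then $-\psi/c \in V$ and $-\psi/c \geq \phi$, so $f(-\psi/c) \geq r$; multiplying by $c < 0$ flips the inequality to $-f(\psi) \geq cr$, which again gives $\tilde{f}(\psi + c\phi) \geq 0$. Thus $\tilde{f}$ is a positive linear functional on $W$ extending $f$ with $\tilde{f}(\phi) = r$. Now Corollary \ref{cor_extension} applied to $W$ produces a Keisler functional $\hat{f} \in \mathfrak{M}_x(M)$ extending $\tilde{f}$, hence extending $f$, with $\hat{f}(\phi) = r$.

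The only genuinely delicate point is verifying positivity of $\tilde{f}$ on $W$ with the correct sign bookkeeping in the $c < 0$ case; once that is settled, the rest is the standard one-step Hahn-Banach--type extension argument and an appeal to the already-proved Corollary \ref{cor_extension}. Note that the interval being closed is automatic from the definition as a sup/inf together with the existence of an extension attaining each endpoint (via the construction just described applied to the endpoints themselves).
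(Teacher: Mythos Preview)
Your argument is essentially identical to the paper's: show the interval bounds any extension by positivity, then extend $f$ by hand to $V + \R\phi$ sending $\phi \mapsto r$, verify positivity by a sign-of-$c$ case split, and invoke Corollary~\ref{cor_extension}.

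One slip in the $c<0$ case: from $f(-\psi/c) \geq r$ with $c<0$, multiplying by $c$ and flipping the inequality yields $-f(\psi) \leq cr$, not $-f(\psi) \geq cr$ as you wrote; the former is what gives $f(\psi) + cr \geq 0$. Fix that line and the proof is complete.
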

\begin{proof}
    First, we note that if $\hat{f}$ is a Keisler functional extension of $f$, then $\hat{f}(\phi)$ must be in that interval,
    because for any $\psi \in V : \psi \leq \phi$, we have $f(\psi) = \hat{f}(\psi) \leq \hat{f}(\phi)$, and similarly for any
    $\psi \in V : \psi \geq \phi$, we have $f(\psi) = \hat{f}(\psi) \geq \hat{f}(\phi)$.

    Now fix $r \in \left[\sup_{\psi \in V : \psi \leq \phi} f(\psi), \inf_{\psi \in V : \psi \geq \phi} f(\psi)\right]$.
    Then we define $f_\phi$ on the vector subspace $V + \R \phi$ by $f_\phi(\theta + a \phi) = f(\theta) + a r$ for all $\theta \in V$ and $a \in \R$.
    This is clearly an extension of $f$ if $\phi \not \in V$, and if $\phi \in V$, then our conditions already guarantee $r = f(\phi)$, so $f_\phi = f$.
    It suffices to show that $f_\phi$ is positive, as Corollary \ref{cor_extension} will then guarantee that $\hat{f}$ extends to a Keisler functional,
    which has $\hat{f}(\phi) = f_\phi(\phi) = r$.

    To show that $f_\phi$ is positive, consider $\theta \in V$ and $a \in \R$ such that $\theta + a \phi$ is always nonnegative.
    Then we must show that $f_\phi(\theta + a \phi) = f(\theta) + a r$ is always nonnegative.
    If $a = 0$, this is guaranteed by the positivity of $f$.
    If $a$ is positive, we need to check that $r \geq  -a^{-1}f(\theta)$.
    This is true because for each $x$, $\theta(x) + a\phi(x) \geq 0$, so $-a^{-1}\theta(x) \leq \phi(x)$.
    Thus $f(-a^{-1}\theta) \leq \sup_{\psi \in V : \psi \leq \phi} f(\psi) \leq r$.
    Similarly, if $a$ is negative, then $f(-a^{-1}\theta) \geq \inf_{\psi \in V : \psi \geq \phi} f(\psi) \geq r$, so $f(\theta) + ar \geq 0$.
\end{proof}

Our first application of Lemma \ref{lem_extension} is extending Keisler measures to larger parameter sets.
\begin{cor}\label{cor_extension_params}
    Let $M \subseteq N$ be models, let $\mu \in \mathfrak{M}_x(M)$, and let $\phi(x)$ be an $N$-definable predicate.
    Then for $r \in [0,1]$, there is a Keisler measure $\nu \in \mathfrak{M}_x(N)$ extending $\mu$ such that $\phi(x) = r$ if and only if
    $$\sup_{\psi : \psi \leq \phi} f(\psi) \leq r \leq \inf_{\psi : \psi \geq \phi} f(\psi)$$
    where the $\sup$ and $\inf$ are over $M$-definable predicates $\psi(x)$.
\end{cor}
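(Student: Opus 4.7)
The plan is to deduce this directly from Lemma \ref{lem_extension} by transporting everything to the type space $S_x(N)$. The restriction map $\pi : S_x(N) \to S_x(M)$, $\pi(p) = p|_M$, is continuous, so pullback along $\pi$ embeds $\mathcal{C}(S_x(M),\R)$ as a vector subspace $V \subseteq \mathcal{C}(S_x(N),\R)$; concretely, $V$ consists of all functions of the form $\theta \circ \pi$ where $\theta$ is an $M$-definable predicate, and $V$ contains all constants. The Keisler functional of $\mu$ pulls back to a positive linear functional $f : V \to \R$ defined by $f(\theta \circ \pi) = \int_{S_x(M)} \theta \, d\mu$, with $f(1) = 1$.

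Next I would invoke Lemma \ref{lem_extension} with this $V$, $f$, and the given $N$-definable predicate $\phi \in \mathcal{C}(S_x(N),\R)$. The lemma characterizes the set of values $\hat f(\phi)$ attained by Keisler functional extensions $\hat f$ of $f$ as exactly the interval
\[
\left[\sup_{\psi \in V,\ \psi \le \phi} f(\psi),\ \inf_{\psi \in V,\ \psi \ge \phi} f(\psi)\right].
\]
To translate back, I use two observations: first, a Keisler functional on $\mathcal{C}(S_x(N),\R)$ extending $f$ corresponds, via the Riesz representation, to a Keisler measure $\nu \in \mathfrak{M}_x(N)$ whose integral against any $M$-definable predicate agrees with $\mu$, i.e.\ $\nu$ extends $\mu$; second, an element of $V$ is precisely (the pullback of) an $M$-definable predicate $\psi(x)$, and the inequality $\psi \circ \pi \le \phi$ pointwise on $S_x(N)$ is the same as $\psi(x) \le \phi(x)$ holding for all $x$, because every realization of a type in $\mathcal{U}$ corresponds to some $p \in S_x(N)$. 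Hence the sup and inf over $V$ in Lemma \ref{lem_extension} coincide with the sup and inf over $M$-definable predicates in the corollary.

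I do not expect a serious obstacle. The only point worth checking carefully is the identification between Keisler functional extensions on $\mathcal{C}(S_x(N),\R)$ and measure-theoretic extensions $\nu \in \mathfrak{M}_x(N)$ of $\mu$: this is the bijection between regular Borel probability measures and normalized positive linear functionals recalled at the start of Section \ref{sec_measures}, together with the fact that $\nu$ extends $\mu$ iff their integrals agree on all $M$-definable predicates, which is precisely the statement that the associated functional restricts to $f$ on $V$.
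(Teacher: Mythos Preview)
Your proposal is correct and follows the same approach as the paper, which simply says the result follows from Lemma~\ref{lem_extension} applied to the image of $\mathcal{C}(S_x(M),\R)$ inside $\mathcal{C}(S_x(N),\R)$. You have just spelled out the identifications (the pullback embedding along $\pi$, the correspondence between functional extensions and measure extensions) that the paper leaves implicit.
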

\begin{proof}
This follows from Lemma \ref{lem_extension}, applied to the image of $\mathcal{C}(S_x(M), \R)$ within $\mathcal{C}(S_x(N), \R)$.
\end{proof}

Our second will be an application to product measures, which will require generalizing a few more basic definitions to continuous logic.
\begin{defn}\label{defn_ortho}
    Let $x_1,\dots,x_n$ be variable tuples, with $x = (x_1,\dots,x_n)$.
    If $\mu_i \in \mathfrak{M}_{x_i}(M)$ is a family of Keisler measures, then we use the notation
    $\mu_1 \times \dots \times \mu_n$ to denote the partial Keisler ``measure'' (actually a functional) defined by
    $$\int_{S_x(M)}\prod_{i = 1}^n \psi_i(x_i;b)\,d\mu_1 \times \dots \times \mu_n = \prod_{i = 1}^n \int_{S_{x_i}(M)} \psi_i(x_i;b)\,d\mu_i$$
    whenever $\psi_i(x_i;y)$ are formulae (or definable predicates) and $b \in M^y$.

    A measure $\mu \in \mathfrak{M}_x(M)$ is a \emph{product measure} of $\mu_i \in \mathfrak{M}_{x_i}(M)$ when it extends $\mu_1 \times \dots \times \mu_n$.
    The measures $\mu_i \in \mathfrak{M}_{x_i}(M)$ are \emph{weakly orthogonal} when they have a unique product measure.
    If $M = \mathcal{U}$, we say that they are \emph{orthogonal}.
\end{defn}
Note that unlike with types, if $\mu \in \mathfrak{M}_{xy}(M)$ is a measure, and $\mu|_x, \mu|_y$ are the restrictions to the appropriate variables,
then $\mu$ need not be a product measure of $\mu|_x$ and $\mu|_y$.

\begin{cor}\label{cor_ortho}
    Let $x_1,\dots,x_n$ be variable tuples, and let $\mu_i \in \mathfrak{M}_{x_i}(M)$ for each $i$.
    The measures $\mu_i$ are weakly orthogonal if and only if for every $M$-definable predicate $\phi(x_1,\dots,x_n)$ and every $\varepsilon > 0$, there exist $M$-definable predicates $\psi^-(x_1,\dots,x_n),\psi^+(x_1,\dots,x_n)$,
where $\psi^\pm(x_1,\dots,x_n)$ are each of the form $\sum_{j = 1}^m \prod_{i = 1}^n\theta_{ij}^{\pm}(x_i)$, such that
\begin{itemize}
    \item For all $(x_1,\dots,x_n)$, $\psi^-(x_1,\dots,x_n) \leq \phi(x_1,\dots,x_n) \leq \psi^+(x_1,\dots,x_n)$.
    \item For any product measure $\omega$ of $\mu_1,\dots,\mu_n$, $\int_{S_{x_1\dots x_n}(M)}(\psi^+ - \psi^-)\,d\omega \leq \varepsilon$.
\end{itemize}
\end{cor}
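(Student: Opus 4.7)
The plan is to reduce the corollary to Lemma \ref{lem_extension} applied to a suitable subspace. I would let $V \subseteq \mathcal{C}(S_{x_1\dots x_n}(M),\R)$ be the linear span of products $\prod_{i=1}^n \theta_i(x_i)$ where each $\theta_i$ is an $M$-definable predicate in $x_i$, so $V$ is precisely the space of predicates of the form $\sum_{j=1}^m \prod_{i=1}^n \theta_{ij}(x_i)$ appearing in the statement, and it contains the constant functions. I would then define a linear functional $f : V \to \R$ by the product formula in Definition \ref{defn_ortho}, namely $f\bigl(\sum_j \prod_i \theta_{ij}\bigr) = \sum_j \prod_i \int \theta_{ij}\,d\mu_i$.

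The first technical step is to verify that $f$ is well-defined and positive on $V$. I plan to do this by invoking the classical product measure of $\mu_1,\dots,\mu_n$ on the compact Hausdorff product $\prod_i S_{x_i}(M)$, which exists as a regular Borel probability measure by standard measure theory. The restriction map $\pi : S_{x_1\dots x_n}(M) \to \prod_i S_{x_i}(M)$ is continuous, and it is surjective because complete types on disjoint variables are always jointly realizable in the monster. Pulling back along $\pi$ realizes $V$ as a subspace of $\mathcal{C}(\prod_i S_{x_i}(M),\R)$, and Fubini identifies $f$ with integration of the pullback against this product measure; this yields well-definedness and positivity simultaneously. Since $f(1)=1$, Corollary \ref{cor_extension} extends $f$ to a Keisler functional on $\mathcal{C}(S_{x_1\dots x_n}(M),\R)$, which by construction is a product measure of the $\mu_i$, so product measures always exist.

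Next I would apply Lemma \ref{lem_extension} to this $V$ and $f$. By the definition of a product measure, the Keisler functional extensions of $f$ to all of $\mathcal{C}(S_{x_1\dots x_n}(M),\R)$ are exactly the product measures of $\mu_1,\dots,\mu_n$. Thus weak orthogonality is equivalent to $f$ having a unique such extension, which Lemma \ref{lem_extension} characterizes as saying that for every $M$-definable predicate $\phi(x_1,\dots,x_n)$,
\[\sup_{\psi^- \in V,\ \psi^- \leq \phi} f(\psi^-) = \inf_{\psi^+ \in V,\ \psi^+ \geq \phi} f(\psi^+).\]
Unwinding, this is equivalent to the $\varepsilon$-sandwich statement in the corollary; and since $\psi^+ - \psi^-$ lies in $V$, for any product measure $\omega$ we have $\int (\psi^+ - \psi^-)\,d\omega = f(\psi^+) - f(\psi^-)$, which gives the claimed $\varepsilon$-bound independent of the choice of $\omega$.

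I expect the main obstacle to be the verification that $f$ is a positive linear functional on $V$, as everything else is largely formal manipulation once Lemma \ref{lem_extension} is in hand. If invoking the product measure on $\prod_i S_{x_i}(M)$ feels too coarse, an alternative would be to exhibit $V$ as the image of the algebraic tensor product $\bigotimes_i \mathcal{C}(S_{x_i}(M),\R)$ under a canonical algebra map, using surjectivity of $\pi$ to see this map is injective and hence that $f$ is well-defined as the tensor product of the functionals induced by the $\mu_i$, with positivity then checked by a Stone--Weierstrass-style approximation argument.
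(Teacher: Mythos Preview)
Your proposal is correct and follows essentially the same approach as the paper: both reduce to Lemma~\ref{lem_extension} applied to the subspace $V$ spanned by products, with the functional $f$ given by the product formula, and both identify product measures with the Keisler-functional extensions of $f$. You are more careful than the paper about verifying that $f$ is well-defined and positive (the paper simply asserts this; your argument via surjectivity of $\pi$ and the classical product measure on $\prod_i S_{x_i}(M)$ is a clean way to fill this gap), and you package both directions into a single equivalence via Lemma~\ref{lem_extension} rather than treating the converse by a separate squeezing argument, but these are cosmetic differences.
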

\begin{proof}
    First, we assume the measures are weakly orthogonal.
    Let $x = x_1\dots x_n$. Then consider the vector subspace $V$ of $\mathcal{C}(S_x(M), \R)$ spanned by products 
    $\prod_{i = 1}^n \psi_i(x_i)$ where each $\psi_i(x_i) \in \mathcal{C}(S_{x_i}(M), \R)$.
    Define a positive linear functional $f$ on $V$ so that 
    $$f\left(\prod_{i = 1}^n \psi_i(x_i)\right) = \prod_{i = 1}^n \int_{S_{x_i}(M)}\psi_i(x_i)\,d\mu_i.$$
    Any positive linear extension of this to $\mathcal{C}(S_x(M), \R)$ gives rise to a product measure in $\mathfrak{M}_x(M)$, but we know that this is unique.

    Fix $\phi(x)$ and $\varepsilon > 0$. By Lemma \ref{lem_extension} and the uniqueness of the extension of $f$, 
    we know that $\left[\sup_{\psi \in V : \psi \leq \phi} f(\psi) = \inf_{\psi \in V : \psi \geq \phi} f(\psi)\right]$,
    so choose $\psi^-(x), \psi^+(x) \in V$ such that $\psi^-(x)\leq \phi(x) \leq \psi^+(x)$ and $f(\psi^+ - \psi^-)\leq \varepsilon$.
    Then $\int_{S_{x}(M)}(\psi^+ - \psi^-)\,d\omega = f(\psi^+ - \psi^-)\leq \varepsilon$.

    Now assume that the result holds, and we will show the measures are weakly orthogonal.
    If $\omega_1,\omega_2 \in \mathfrak{M}_x(M)$ are measures extending $\mu_1 \times \dots \times \mu_n$, and $\phi(x_1,\dots,x_n)$ is a definable predicate,
    then for every $\varepsilon > 0$, we may find $\psi^{\pm}$ as above.
    We see that $\int_{S_{x_1\dots x_n}(M)}\psi^\pm\,d\omega_1 = \int_{S_{x_1\dots x_n}(M)}\psi^\pm\,d\omega_2$, so the integrals $\int_{S_{x_1\dots x_n}(M)}\phi\,d\omega_j$ for $j = 1,2$ must lie in the interval
    $$\left[\int_{S_{x_1\dots x_n}(M)}\psi^-\,d\omega_1, \int_{S_{x_1\dots x_n}(M)}\psi^+\,d\omega_1\right]$$
    of width at most $\varepsilon$.
    Thus $\int_{S_{x_1\dots x_n}(M)}\phi\,d\omega_1 = \int_{S_{x_1\dots x_n}(M)}\phi\,d\omega_2$, and the measures are weakly orthogonal.
\end{proof}

We will show that weak orthogonality is preserved under localization to a positive-measure Borel set or positive-integral function.
Let $\mu \in \mathfrak{M}_x(M)$ be a Keisler measure, and let $\phi : S_x(M) \to [0,1]$ be Borel with $\int_{S_x(M)}\phi(x)\,d\mu > 0$.
Then the localization $\mu_\phi \in \mathfrak{M}_x(M)$ is the measure given by
$$\int_{S_x(M)}\psi(x)\,d\mu_\phi = \frac{\int_{S_x(M)}\phi(x)\psi(x)\,d\mu}{\int_{S_x(M)}\phi(x)\,d\mu}.$$
If $\phi$ is the characteristic function of a Borel set $X \subseteq S_x(M)$, we may also call the localization $\mu_X$.
\begin{lem}\label{lem_ortho_local}
Let $\mu_i \in \mathfrak{M}_{x_i}(M)$ for $1 \leq i \leq n$ be weakly orthogonal, and $\theta_i: S_{x_i}(M) \to [0,1]$ be Borel with $\int_{S_{x_i}(M)}\theta_i(x_i)\,d\mu_i > 0$.
Then the measures $(\mu_i)_{\theta_i}$ are weakly orthogonal also.
\end{lem}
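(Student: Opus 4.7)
The plan is to verify the sandwiching criterion for weak orthogonality given by Corollary \ref{cor_ortho}: for every $M$-definable predicate $\phi(x_1,\dots,x_n)$ and every $\varepsilon > 0$, exhibit sum-of-products predicates $\psi^\pm$ with $\psi^- \leq \phi \leq \psi^+$ and $\int(\psi^+ - \psi^-)\,d\omega' \leq \varepsilon$ for every product measure $\omega'$ of the localizations $(\mu_1)_{\theta_1},\dots,(\mu_n)_{\theta_n}$. Set $c_i = \int \theta_i\,d\mu_i > 0$, and let $\omega$ denote the unique product measure of $\mu_1,\dots,\mu_n$ guaranteed by weak orthogonality.

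The technical core is a Fubini-type identity: for any continuous $\rho_i \in \mathcal{C}(S_{x_i}(M), \R)$,
$$\int_{S_x(M)} \prod_{i=1}^n \rho_i(x_i)\theta_i(x_i)\,d\omega \;=\; \prod_{i=1}^n \int_{S_{x_i}(M)} \rho_i \theta_i\,d\mu_i.$$
Since $\omega$ is only guaranteed to split over products of \emph{continuous} functions, I would prove this by approximating each Borel $\theta_i$ by continuous $\tilde\theta_{i,k} : S_{x_i}(M) \to [0,1]$ in $L^1(\mu_i)$ (possible by regularity of $\mu_i$). The coordinate pushforward $\pi_{i,*}\omega$ equals $\mu_i$, since this is forced by $\omega$ being a product measure on single-variable continuous functions, so $\int |\theta_i - \tilde\theta_{i,k}|(x_i)\,d\omega = \int |\theta_i - \tilde\theta_{i,k}|\,d\mu_i \to 0$. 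A telescoping estimate $|\prod_i \theta_i - \prod_i \tilde\theta_{i,k}| \leq \sum_j |\theta_j - \tilde\theta_{j,k}|(x_j)$ then gives convergence of both sides of the identity as $k \to \infty$, starting from the identity for $\tilde\theta_{i,k}$ which is immediate. Taking $\rho_i \equiv 1$ yields $\int \prod_i \theta_i\,d\omega = \prod_i c_i > 0$.

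With this identity in hand, existence of a product measure of $(\mu_i)_{\theta_i}$ is easy: set $\omega'(f) = \frac{1}{\prod c_i}\int f \prod_i \theta_i(x_i)\,d\omega$ and apply the Fubini identity to verify the product-splitting. For uniqueness, apply Corollary \ref{cor_ortho} to $\mu_1,\dots,\mu_n$ with tolerance $\varepsilon \prod c_i$ in place of $\varepsilon$, producing $\psi^\pm = \sum_j \prod_i \theta^\pm_{ij}(x_i)$ with $\int (\psi^+ - \psi^-)\,d\omega \leq \varepsilon \prod c_i$. For any product measure $\tilde\omega'$ of $(\mu_i)_{\theta_i}$, the integrals $\int \psi^\pm\,d\tilde\omega'$ are determined coordinatewise and agree with $\frac{1}{\prod c_i}\int \psi^\pm \prod_i \theta_i(x_i)\,d\omega$ via the Fubini identity, so
$$\int(\psi^+ - \psi^-)\,d\tilde\omega' = \frac{1}{\prod c_i}\int (\psi^+ - \psi^-)\prod_i\theta_i(x_i)\,d\omega \leq \frac{1}{\prod c_i}\int(\psi^+ - \psi^-)\,d\omega \leq \varepsilon,$$
where the first inequality uses $\prod_i\theta_i \leq 1$ and $\psi^+ - \psi^- \geq 0$ pointwise.

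The main obstacle is establishing the Fubini identity for the Borel factors $\theta_i$. A weak product measure $\omega$ only automatically splits on products of continuous functions, and pushing the splitting through to Borel factors requires the $L^1(\mu_i)$-approximation together with the fact that the coordinate pushforwards of $\omega$ recover the $\mu_i$. Once this is available, both existence and uniqueness for $(\mu_i)_{\theta_i}$ reduce to bookkeeping on top of Corollary \ref{cor_ortho}.
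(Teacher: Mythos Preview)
Your proof is correct, but takes a different route from the paper. The paper argues directly: given any product measure $\nu$ of the localizations, it constructs $\nu'$ by $\int \phi\,d\nu' = A\int\phi\,d\nu + \int\phi(1-\theta)\,d\omega$ (where $\theta = \prod_i \theta_i$ and $A = \prod_i c_i$), checks on simple tensors that $\nu'$ extends $\mu_1 \times \dots \times \mu_n$, concludes $\nu' = \omega$ by weak orthogonality of the originals, and reads off $\nu = \omega_\theta$. You instead invoke the sandwiching characterization of Corollary~\ref{cor_ortho}, transfer the bounds $\psi^\pm$ from the $\mu_i$ to the $(\mu_i)_{\theta_i}$, and control the gap using $\prod_i\theta_i \leq 1$. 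Both arguments rest on the same Fubini-type identity $\int \prod_i \rho_i\theta_i\,d\omega = \prod_i \int \rho_i\theta_i\,d\mu_i$ for Borel $\theta_i$; the paper uses this step without comment in its computation of $\int\prod_i\psi_i\,d\nu'$, whereas you isolate it and supply the $L^1$-approximation argument. Your approach leans on Corollary~\ref{cor_ortho} and is perhaps more modular; the paper's reconstruction of $\omega$ from $\nu$ is slicker and avoids the sandwiching machinery, but is less explicit about the measure-theoretic point you flagged.
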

\begin{proof}
    For readability, let $A_i = \int_{S_{x_i}(M)}\theta_i(x_i)\,d\mu_i$ for $1 \leq i \leq n$, and let $A = \prod_{i = 1}^n A_i$.
    Let $\omega$ be the unique extension of $\mu_1 \times \dots \times \mu_n$.
    If $\theta(x_1,\dots,x_n) = \prod_{i = 1}^n \theta_i(x_i)$, then $\omega_\theta$ extends $(\mu_1)_{\theta_1} \times \dots \times (\mu_n)_{\theta_n}$.
    Suppose that $\nu$ also extends $(\mu_1)_{\theta_1} \times \dots \times (\mu_n)_{\theta_n}$.
    Then we define a measure $\nu'$ on any $M$-definable predicate $\phi(x_1,\dots,x_n)$ by
    \begin{align*}
        &\int_{S_{x_1\dots x_n}(M)}\phi(x_1,\dots,x_n)\,d\nu'  \\
        =& A\int_{S_{x_1\dots x_n}(M)}\phi(x_1,\dots,x_n)\,d\nu
        + \int_{S_{x_1\dots x_n}(M)}\phi(x_1,\dots,x_n)(1 - \theta(x_1,\dots,x_n))\,d\omega.
    \end{align*}
    If for $1 \leq i \leq n$, $\psi_i(x_i)$ is an $M$-definable predicate, then
    \begin{align*}&\int_{S_{x_1\dots x_n}(M)}\prod_{i = 1}^n \psi_i(x_i)\,d\nu'\\
    =& A\int_{S_{x_1\dots x_n}(M)}\prod_{i = 1}^n \psi_i(x_i)\,d\nu'
    + \int_{S_{x_1\dots x_n}(M)}\left(\prod_{i = 1}^n \psi_i(x_i) - \prod_{i = 1}^n \psi_i(x_i)\theta_i(x_i)\right)\,d\omega \\
    =& A\prod_{i = 1}^n \frac{1}{A_i}\int_{S_{x_i}(M)}\psi_i(x_i)\theta_i(x_i)\,d\nu
    + \prod_{i = 1}^n\int_{S_{x_i}(M)} \psi_i(x_i)\,d\mu_i - \prod_{i = 1}^n\int_{S_{x_i}(M)} \psi_i(x_i)\theta_i(x_i)\,d\mu_i \\
    =& \prod_{i = 1}^n \psi_i(x_i)\,d\mu_i,
    \end{align*}
    so $\nu'$ extends $\mu_1 \times \dots \times \mu_n$, and thus equals $\omega$.
    
    Thus for any $\phi$,
    $$A\int_{S_{x_1\dots x_n}(M)}\phi(x_1,\dots,x_n)\,d\nu = \int_{S_{x_1\dots x_n}(M)}\phi(x_1,\dots,x_n)\theta(x_1,\dots,x_n)\,d\omega,$$
    so $\nu = \omega_\theta$, showing the uniqueness of extensions of $(\mu_1)_{\theta_1} \times \dots \times (\mu_n)_{\theta_n}$ and weak orthogonality of the localizations.
\end{proof}

\subsection{Smooth Measures}
In this subsection, we will update to the continuous setting several results about smooth measures that do not require NIP, and then the important result that in an NIP theory, all measures over models have smooth extensions.

The most important characterization of smooth measures is the following lemma, analogous to \cite[Lemma 7.8]{nip_guide}.
This result was already known by James Hanson, but we provide a proof here for completeness. 
\begin{lem}\label{lem_smooth_characterization}
    Let $\mu \in \mathfrak{M}_x(\mathcal{U})$ be a global measure.
    Then $\mu$ is smooth over $M$ if and only if for every definable predicate $\phi(x;y)$ with parameters in $M$, and $\varepsilon > 0$,
    there are open conditions $U_i(y)$ and definable predicates $\psi_i^+(x), \psi_i^-(x)$ with parameters in $M$ for $i = 1,\dots, n$
    such that 
    \begin{itemize}
        \item $U_1(y),\dots,U_n(y)$ cover $S_y(M)$
        \item For all $1 \leq i \leq n$, if $\vDash U_i(b)$, then $\forall x, \psi_i^-(x)\leq \phi(x;b)\leq \psi_i^+(x)$.
        \item For all $1 \leq i \leq n$, $\int_{S_x(\mathcal{U})} \psi_i^+(x) - \psi_i^-(x)\,d\mu<\varepsilon$.
    \end{itemize}
\end{lem}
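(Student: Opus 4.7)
The plan is to prove the two directions separately. The backward direction is a direct sandwich argument, while the forward direction combines Corollary \ref{cor_extension_params} with a compactness argument on $S_y(M)$.

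For the backward direction, assume the approximation property and let $N \succeq M$ and $\nu \in \mathfrak{M}_x(N)$ be any extension of $\mu|_M$. Any $N$-definable predicate has the form $\phi(x;b)$ for some $\phi(x;y) \in \mathcal{L}(M)$ and $b \in N^y$. Given $\varepsilon > 0$, apply the approximation property to obtain $U_i$ and $\psi_i^\pm$. Since the $U_i$ cover $S_y(M)$, we have $\mathrm{tp}(b/M) \in U_i$ for some $i$, so $\psi_i^-(x) \leq \phi(x;b) \leq \psi_i^+(x)$ pointwise. The integrals of the $M$-definable predicates $\psi_i^\pm$ against both $\nu$ and $\mu|_N$ agree (both are determined by $\mu|_M$) and differ by less than $\varepsilon$, so $\left|\int \phi(x;b)\,d\nu - \int \phi(x;b)\,d\mu\right| < \varepsilon$. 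As $\varepsilon$ is arbitrary, $\nu = \mu|_N$, witnessing smoothness.

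For the forward direction, assume $\mu$ is smooth over $M$. First note that smoothness implies $M$-invariance: any $M$-fixing automorphism of $\mathcal{U}$ sends $\mu$ to another extension of $\mu|_M$, which by uniqueness must be $\mu$ itself. Fix $\phi(x;y) \in \mathcal{L}(M)$ and $\varepsilon > 0$. For each $q \in S_y(M)$, choose $b \models q$ and a model $N$ with $M \preceq N$ and $b \in N^y$, and apply Corollary \ref{cor_extension_params} to $\phi(x;b)$: smoothness forces the interval of possible extension values to be a singleton, so
$$\sup_{\psi \leq \phi(x;b),\, \psi \in \mathcal{L}(M)} \int \psi\,d\mu \;=\; \inf_{\psi \geq \phi(x;b),\, \psi \in \mathcal{L}(M)} \int \psi\,d\mu.$$
Hence there exist $M$-definable $\psi_q^-, \psi_q^+$ with $\psi_q^- \leq \phi(x;b) \leq \psi_q^+$ pointwise and $\int(\psi_q^+ - \psi_q^-)\,d\mu < \varepsilon/3$. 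By $M$-invariance of $\mu$, these inequalities depend only on $q$, not on the choice of $b$.

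The main obstacle is that ``$\psi_q^- \leq \phi(x;b) \leq \psi_q^+$ pointwise'' is a closed condition on $\mathrm{tp}(b/M)$, whereas the lemma demands open conditions $U_i(y)$ covering $S_y(M)$. To bridge this, loosen the approximations by setting $\tilde{\psi}_q^\pm = \psi_q^\pm \pm \varepsilon/3$; this keeps the integral gap below $\varepsilon$ while providing a strict pointwise buffer of $\varepsilon/3$ at $b$. Since $\sup_x$ of a definable predicate is a definable predicate of the remaining variables, the set
$$U_q(y) := \bigl\{y : \sup_x(\tilde{\psi}_q^-(x) - \phi(x;y)) < 0 \text{ and } \sup_x(\phi(x;y) - \tilde{\psi}_q^+(x)) < 0\bigr\}$$
is an $M$-definable open neighborhood of $q$ in $S_y(M)$ on which $\tilde{\psi}_q^-(x) \leq \phi(x;y) \leq \tilde{\psi}_q^+(x)$ holds pointwise. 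By compactness of $S_y(M)$, finitely many $U_{q_1},\dots,U_{q_n}$ cover, and setting $U_i := U_{q_i}$, $\psi_i^\pm := \tilde{\psi}_{q_i}^\pm$ completes the proof.
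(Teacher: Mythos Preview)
Your proof is correct and takes essentially the same approach as the paper: both directions match, and in the forward direction both arguments use Corollary~\ref{cor_extension_params} to find $\psi_b^\pm$ for each $b$, perturb to obtain strict inequalities, and extract a finite open cover by compactness of $S_y(M)$. Your fixed $\varepsilon/3$ buffer is in fact slightly cleaner than the paper's route (which first reduces to the case $\phi\in(0,1)$ and then invokes compactness of $S_x(M)$ to produce a uniform gap $\delta$), though note your $\tilde\psi_q^\pm$ may leave $[0,1]$ --- harmless here since the ambient Hahn-Banach framework works in $\mathcal{C}(S_x(M),\R)$.
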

\begin{proof}
    Assume that $\mu$ satisfies these requirements, and fix an $M$-definable predicate $\phi(x;y)$.
    We will show that for all $b \in \mathcal{U}^y$, the value of $\int_{S_x(\mathcal{U})}\phi(x;b)\,d\nu = \int_{S_x(\mathcal{U})}\phi(x;b)\,d\mu$ is determined for all global measures $\nu$ extending $\mu|_M$.
    Specifically, for every $\varepsilon > 0$, we show that $|\int_{S_x(\mathcal{U})}\phi(x;b)\,d\nu - \int_{S_x(\mathcal{U})}\phi(x;b)\,d\mu| \leq \varepsilon$.
    There must be some $i$ such that $\vDash U_i(b)$. Thus $\psi_i^-(x)\leq \phi(x;b)\leq \psi_i^+(x)$, so
    $$\int_{S_x(M)}\psi_i^-(x)\,d\mu|_M \leq \int_{S_x(\mathcal{U})}\phi(x;b)\,d\nu \leq \int_{S_x(M)}\psi_i^+(x)\,d\mu|_M,$$
    and $\int_{S_x(\mathcal{U})}\phi(x;b)\,d\mu$ lies in that same interval of width at most $\varepsilon$. Thus their difference is at most $\varepsilon$.

    Now assume that $\mu$ is smooth over $M$.
    For every $b \in \mathcal{U}^y$, we will find $\psi_b^-, \psi_b^+$ such that $\forall x, \psi_b^-(x)\leq \phi(x;b)\leq \psi_b^+(x)$,
    and $\int_{S_x(\mathcal{U})} \psi_b^+(x) - \psi_b^-\,d\mu<\varepsilon$. We will then apply compactness.

    By smoothness, for any global extension $\nu$ of $\mu|_M$, the integral $\int_{S_x(\mathcal{U})}\phi(x;b)\,d\nu$ has the same value, 
    namely $\int_{S_x(\mathcal{U})}\phi(x;b)\,d\nu = \int_{S_x(\mathcal{U})}\phi(x;b)\,d\mu$.
    Thus by Corollary \ref{cor_extension_params}, we must have 
    $$\sup_{\psi : \psi \leq \phi} \int_{S_x(M)}\psi(x)\,d\mu|_M =  \int_{S_x(\mathcal{U})}\phi(x;b)\,d\mu = \inf_{\psi : \psi \geq \phi} \int_{S_x(M)}\psi(x)\,d\mu|_M,$$
    where as above, $\psi(x)$ ranges over $M$-definable predicates.
    Thus there exist $\psi_b^-(x)$ and $\psi_b^+(x)$ with $\psi_b^-(x) \leq \phi(x;b) \leq \psi_b^+(x)$ and $\int_{S_x(\mathcal{U})}\phi(x;b)\,d\mu - \int_{S_x(M)}\psi_b^-(x)\,d\mu < \frac{\varepsilon}{2}$ and $\int_{S_x(M)}\psi_b^+(x)\,d\mu - \int_{S_x(\mathcal{U})}\phi(x;b)\,d\mu < \frac{\varepsilon}{2}$.
    Thus $\int_{S_x(M)}\psi_b^+(x)\,d\mu - \int_{S_x(M)}\psi_b^-(x)\,d\mu < \varepsilon$ as desired.

    For the compactness argument, it will be simpler to assume that $\phi(x;y)$ only takes values in the open interval $(0,1)$. We will prove the result for those $\phi$ first, and then make a correction for all $\phi$.
    For any $b \in \mathcal{U}^y$, by adding a very small amount to $\psi_b^+(x)$ and $\psi_b^-(x)$, we can guarantee that at every point $\psi_b^-(x) < \phi(x;b) < \psi_b^+(x)$,
    while still ensuring that $\int_{S_x(M)}\psi_b^+(x)\,d\mu - \int_{S_x(M)}\psi_b^-(x)\,d\mu < \varepsilon$.
    By the compactness of $S_x(M)$, there is some $\delta > 0$ such that $\inf_{x \in S_x(M)} \psi_b^+(x) - \phi(x;b) > \delta$ and $\inf_{x \in S_x(M)} \phi(x;b) - \psi_b^-(x) > \delta$.
    Thus there is an open subset $U_b(y)$ of $S_y(M)$, containing the type of $b$, defined by $\inf_{x \in S_x(M)} \psi_b^+(x) - \phi(x;y) > \delta$ and $\inf_{x \in S_x(M)} \phi(x;y) - \psi_b^-(x) > \delta$, such that $\psi_b^-(x) < \phi(x;y) < \psi_b^+(x)$ at every point satisfying $U_b(y)$.
    Let $U_1(y),\dots,U_n(y)$ be a finite subcover of these, with $U_i(y) = U_{b_i}(y)$.
    Then taking $\psi_i^\pm(x) = \psi_{b_i}^\pm(x)$, we have the desired result.

    Now for the correction.
    If $\phi(x;y)$ is any $M$-definable predicate, possibly taking the values $0$ or $1$, we apply the result to the predicate
    $\phi(x;y)' = \frac{1}{2}\phi(x;y) + \frac{1}{4}$, whose range is bounded to $\left[\frac{1}{4},\frac{3}{4}\right]$, finding open conditions $U_i(y)$ and definable predicates $\psi_i^{+'}(x), \psi_i^{-'}(x)$ with parameters in $M$ for $i = 1,\dots, n$
    such that 
    \begin{itemize}
        \item $U_1(y),\dots,U_n(y)$ cover $S_y(M)$
        \item For all $1 \leq i \leq n$, if $\vDash U_i(b)$, then $\forall x, \psi_i^{-'}(x)\leq \phi'(x;b)\leq \psi_i^{+'}(x)$.
        \item For all $1 \leq i \leq n$, $\int_{S_x(\mathcal{U})} \psi_i^{+'}(x) - \psi_i^{-'}\,d\mu<\frac{\varepsilon}{2}$.
    \end{itemize}
    By taking $\psi_i^\pm(x) = \min(\max(2\psi_i^{\pm'}(x) - \frac{1}{2},0),1)$,
    we find that for all $1 \leq i \leq n$, if $\vDash U_i(b)$, then $\forall x, \psi_i^{-}(x)\leq \phi(x;b)\leq \psi_i^{+}(x)$,
    and $\int_{S_x(\mathcal{U})} \psi_i^{+}(x) - \psi_i^{-}\,d\mu<\varepsilon$.    

\end{proof}

In the analogous characterization from discrete logic, the conditions $U_1(y),\dots, U_n(y)$ can be chosen to be disjoint.
By taking Boolean combinations, we can enforce disjointness and end up with a Borel partition.
In other applications, it will be more convenient to replace the open cover with a partition of unity using \cite[Fact 3.5]{anderson1}.
Using that fact, we can choose the open conditions $U_i(y)$ to each be the support of some
$u_i(y)$, with $\forall y, u_1(y) + \dots + u_n(y) = 1$.

This allows us to characterize smooth measures in terms of weak orthogonality.
\begin{lem}\label{lem_smooth_ortho}
    Let $\mu \in \mathfrak{M}_x(\mathcal{U})$ be a global measure, and let $M \subset \mathcal{U}$ be small. The following are equivalent:
    \begin{itemize}
        \item $\mu$ is smooth over $M$
        \item $\mu|_M$ is weakly orthogonal to all types $p(y) \in S_y(M)$
        \item $\mu|_M$ is weakly orthogonal to all measures $\nu(y) \in \mathfrak{M}_y(M)$.
    \end{itemize}
\end{lem}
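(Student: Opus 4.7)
The plan is to close the cycle (1) $\Rightarrow$ (3) $\Rightarrow$ (2) $\Rightarrow$ (1), using Lemma \ref{lem_smooth_characterization} for smoothness and Corollary \ref{cor_ortho} for weak orthogonality as the two main tools. The implication (3) $\Rightarrow$ (2) is immediate, since every type $p\in S_y(M)$ is a Dirac measure $\delta_p\in \mathfrak{M}_y(M)$, and weak orthogonality of $\mu|_M$ with $\delta_p$ in the measure sense matches the statement in (2).

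For (1) $\Rightarrow$ (3), fix $\nu\in \mathfrak{M}_y(M)$, an $M$-definable predicate $\phi(x;y)$, and $\varepsilon>0$. Apply the partition-of-unity form of Lemma \ref{lem_smooth_characterization} (using \cite[Fact 3.5]{anderson1}) to obtain $M$-definable $u_1(y),\dots,u_n(y)\geq 0$ with $\sum_i u_i(y)=1$, supported inside the $U_i$, together with $M$-definable $\psi_i^\pm(x)$ satisfying $\psi_i^-(x)\leq \phi(x;b)\leq \psi_i^+(x)$ whenever $\vDash U_i(b)$ and $\int(\psi_i^+-\psi_i^-)\,d\mu<\varepsilon$. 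Set
\[
\psi^{\pm}(x,y)=\sum_{i=1}^n u_i(y)\psi_i^{\pm}(x);
\]
these have the form required by Corollary \ref{cor_ortho}. Pointwise we have $\psi^-(x,y)\leq \phi(x,y)\leq \psi^+(x,y)$, since $u_i(y)\neq 0$ forces $\vDash U_i(y)$ and hence $\psi_i^-(x)\leq \phi(x,y)\leq \psi_i^+(x)$. Any product measure $\omega$ of $\mu|_M$ and $\nu$ agrees with $\mu|_M\times\nu$ on the product $u_i(y)(\psi_i^+(x)-\psi_i^-(x))$, so $\int(\psi^+-\psi^-)\,d\omega=\sum_i (\int u_i\,d\nu)(\int(\psi_i^+-\psi_i^-)\,d\mu|_M)\leq \varepsilon\sum_i \int u_i\,d\nu=\varepsilon$. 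Thus the hypotheses of Corollary \ref{cor_ortho} hold and $\mu|_M$ is weakly orthogonal to $\nu$.

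For (2) $\Rightarrow$ (1), argue by contrapositive. Suppose $\mu$ is not smooth over $M$, so by Lemma \ref{lem_smooth_characterization} (equivalently, by Corollary \ref{cor_extension_params} applied to some $N\succeq M$ and $b\in N^y$) there exist an $M$-definable $\phi(x;y)$, a parameter $b\in \mathcal{U}^y$, and some $\varepsilon_0>0$ such that
\[
\inf_{\psi(x)\geq \phi(\cdot;b)}\int\psi\,d\mu|_M-\sup_{\psi(x)\leq \phi(\cdot;b)}\int\psi\,d\mu|_M\geq \varepsilon_0,
\]
where $\psi$ ranges over $M$-definable predicates in $x$. Let $p=\mathrm{tp}(b/M)$. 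I claim $\mu|_M$ and $p$ (as the Dirac measure $\delta_p$) are not weakly orthogonal. Indeed, if they were, then by Corollary \ref{cor_ortho} applied to $\phi$ and $\varepsilon<\varepsilon_0$, there would exist $\psi^{\pm}(x,y)=\sum_j\theta_{1j}^{\pm}(x)\theta_{2j}^{\pm}(y)$ with $\psi^-\leq\phi\leq \psi^+$ pointwise and $\int(\psi^+-\psi^-)\,d\omega\leq\varepsilon$ for the product measure $\omega=\mu|_M\otimes\delta_p$. Evaluating at $y=b$, the predicates $\psi^{\pm}(x,b)=\sum_j\theta_{2j}^{\pm}(b)\,\theta_{1j}^{\pm}(x)$ are $M$-definable in $x$ and sandwich $\phi(x;b)$, while $\int(\psi^+(x,b)-\psi^-(x,b))\,d\mu|_M=\int(\psi^+-\psi^-)\,d\omega\leq\varepsilon$, contradicting the gap $\varepsilon_0$.

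The only genuine obstacle is the bookkeeping bridge between the two characterizations: Lemma \ref{lem_smooth_characterization} delivers $\psi_i^{\pm}$ depending only on $x$ locally in $y$, while Corollary \ref{cor_ortho} demands predicates of product form $\sum_j\theta_{1j}^{\pm}(x)\theta_{2j}^{\pm}(y)$ valid globally. Converting between them in the forward direction relies on a partition of unity subordinate to the cover, and in the reverse direction on the observation that evaluating a product-form predicate at a fixed $b$ keeps it $M$-definable because the $y$-factors depend only on $\mathrm{tp}(b/M)$.
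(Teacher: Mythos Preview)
Your proof is correct and follows essentially the same approach as the paper: the implication $(1)\Rightarrow(3)$ via the partition-of-unity form of Lemma~\ref{lem_smooth_characterization} and the product-form sandwich is identical, and your contrapositive argument for $(2)\Rightarrow(1)$ via Corollary~\ref{cor_ortho} is the same idea as the paper's direct argument that any extension $\mu'$ of $\mu|_M$ induces a product measure of $\mu|_M$ and $\delta_{\mathrm{tp}(b/M)}$, just unpacked through the sandwich characterization rather than through uniqueness.
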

\begin{proof}
    Clearly if $\mu|_M$ is weakly orthogonal to all measures over $M$, then it is weakly orthogonal to all types over $M$.
    
    Assume that $\mu|_M$ is weakly orthogonal to all types $p(y) \in S_y(M)$.
    Then fix a formula $\phi(x;y)$, and $b \in \mathcal{U}^y$.
    As $\mu|_M$ is weakly orthogonal to $\mathrm{tp}(b/M)$, there is a unique measure $\omega \in \mathfrak{M}_{xy}(M)$ extending $\mu|_M \times \mathrm{tp}(b/M)$,
    so for any $\mu'$ extending $\mu|_M$ and $c \vDash \mathrm{tp}(b/M)$,
    the measure $\lambda\in \mathfrak{M}_x(M)$ given by $\int_{S_{xy}(M)}\phi(x;y)\,d\lambda = \int_{S_x(\mathcal{U})}\phi(x;b)\,d\mu'$ equals $\omega$, so $\int_{S_x(\mathcal{U})}\phi(x;b)\,d\mu'$ is uniquely determined, from which we can conclude that $\mu$ is smooth over $M$.
    
    Now assume that $\mu$ is smooth over $M$, and let $\nu \in \mathfrak{M}_y(M)$ be another measure. We will show that $\mu|_M$ is weakly orthogonal to $\nu$.
    Let $\lambda \in \mathfrak{M}_{xy}(M)$ extend $\mu|_M \times \nu$ and fix an $M$-definable predicate $\phi(x;y)$ and $\varepsilon > 0$.
    Let $u_i(y),\psi_i^+(x),\psi_i^-(x)$ for $1 \leq i \leq n$ be definable predicates as given by Lemma \ref{lem_smooth_characterization},
    with the $u_i$s forming a partition of unity as in the remarks following that lemma.
    Then we see that $\forall x, \forall y, \sum_{i = 1}^n \psi_i^-(x)u_i(y) \leq \phi(x;y) \leq \sum_{i = 1}^n \psi_i^+(x)u_i(y)$.
    These bounds together with the separated amalgam property tell us that
    $$\int_{S_{xy}(\mathcal{U})}\phi(x;y)\,d\lambda \leq 
    \sum_{i = 1}^n\int_{S_{xy}(\mathcal{U})} \psi_i^+(x)u_i(y)\,d\lambda
    = \sum_{i = 1}^n\int_{S_{x}(\mathcal{U})} \psi_i^+(x)\,d\mu|_M \int_{S_{y}(\mathcal{U})}u_i(y)\,d\nu,$$
    and that
    $$\sum_{i = 1}^n\int_{S_{x}(\mathcal{U})} \psi_i^-(x)\,d\mu|_M \int_{S_{y}(\mathcal{U})}u_i(y)\,d\nu \leq \int_{S_{xy}(\mathcal{U})}\phi(x;y)\,d\lambda.$$
    This places $\int_{S_{xy}(\mathcal{U})}\phi(x;y)\,d\lambda$ in an interval not depending on $\lambda$, of width
    $$\sum_{i = 1}^n\int_{S_{x}(\mathcal{U})} \psi_i^+(x)\,d\mu|_M \int_{S_{y}(\mathcal{U})}u_i(y)\,d\nu - \sum_{i = 1}^n\int_{S_{x}(\mathcal{U})} \psi_i^-(x)\,d\mu|_M\int_{S_{y}(\mathcal{U})}u_i(y)\,d\nu \leq \varepsilon,$$
    which determines $\int_{S_{xy}(\mathcal{U})}\phi(x;y)\,d\lambda$ uniquely as $\varepsilon$ was arbitrary.
\end{proof}

\begin{cor}[{Generalizes \cite[Corollary 2.5]{HPS}}]\label{cor_smooth_commute}
    Let $\mu \in \mathfrak{M}_x(\mathcal{U})$ be smooth over a model $M$, and let $\nu \in \mathfrak{M}_y(\mathcal{U})$ be Borel-definable over $M$.
    Then $\mu \otimes \nu = \nu \otimes \mu$.
    \end{cor}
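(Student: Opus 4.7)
My plan is to reduce the equality to tensor-form predicates $\psi(x)\cdot u(y)$, on which both Morley products decouple and manifestly agree, using Lemma \ref{lem_smooth_characterization} to sandwich an arbitrary definable predicate between such tensor sums. Note $\mu$ is smooth, hence definable, hence Borel-definable, so both $\mu \otimes \nu$ and $\nu \otimes \mu$ are well-defined, and it suffices to verify $\int\phi\,d(\mu\otimes\nu) = \int\phi\,d(\nu\otimes\mu)$ for every formula $\phi(x;y)$ with parameters $c \in \mathcal{U}^z$; write $\phi(x;y) = \phi'(x;y,c)$ with $\phi'$ over $M$.

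Given $\varepsilon > 0$, I would apply Lemma \ref{lem_smooth_characterization} to $\phi'(x;y,z)$, treating the tuple $(y,z)$ as the parameter variable, and use the partition-of-unity strengthening noted after that lemma. This produces $M$-definable predicates $u_i(y,z)$ with $\sum_i u_i(y,z) \equiv 1$ and $\psi_i^\pm(x)$ over $M$ satisfying
\[
\sum_{i=1}^n \psi_i^-(x)\,u_i(y,z) \;\le\; \phi'(x;y,z) \;\le\; \sum_{i=1}^n \psi_i^+(x)\,u_i(y,z)
\]
everywhere, with $\int(\psi_i^+ - \psi_i^-)\,d\mu < \varepsilon$ for every $i$. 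Specializing $z = c$ retains these inequalities for $\phi(x;y)$. The computational heart is then that for each tensor summand,
\[
\int \psi_i^\pm(x)\,u_i(y,c)\,d(\mu \otimes \nu) \;=\; \Bigl(\!\int \psi_i^\pm(x)\,d\mu\Bigr)\Bigl(\!\int u_i(y,c)\,d\nu\Bigr) \;=\; \int \psi_i^\pm(x)\,u_i(y,c)\,d(\nu \otimes \mu),
\]
because in the left equality the inner $\mu$-integral factors $u_i(y,c)$ out (constant in $x$), and in the right the inner $\nu$-integral factors $\psi_i^\pm(x)$ out (constant in $y$); only Borel-definability of $\nu$ and definability of $\mu$ are needed. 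Summing and using the sandwich, both $\int\phi\,d(\mu\otimes\nu)$ and $\int\phi\,d(\nu\otimes\mu)$ lie in a common interval of width at most $\sum_i\bigl(\int(\psi_i^+ - \psi_i^-)\,d\mu\bigr)\int u_i(y,c)\,d\nu < \varepsilon\sum_i\int u_i(y,c)\,d\nu = \varepsilon$. Letting $\varepsilon \to 0$ yields the equality.

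I expect the only real obstacle to be the minor bookkeeping required because Lemma \ref{lem_smooth_characterization} demands parameters in $M$, whereas $\phi$ may carry parameters from $\mathcal{U}$; absorbing these into an auxiliary variable $z$ and specializing at the end sidesteps this. Everything else is mechanical once the tensor-sum approximation is in hand, and no NIP assumption enters the argument.
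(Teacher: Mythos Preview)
Your proof is correct and takes essentially the same approach as the paper: the paper's one-line argument invokes Lemma \ref{lem_smooth_ortho} (smooth $\Leftrightarrow$ weakly orthogonal to all measures), whose proof is precisely the tensor-sum sandwich computation via Lemma \ref{lem_smooth_characterization} that you carry out directly. Your explicit handling of parameters outside $M$ by absorbing them into an auxiliary variable $z$ is a nice touch that makes the argument self-contained.
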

\begin{proof}
    Both $\mu \otimes \nu$ and $\nu \otimes \mu$ are separated amalgams of $\mu$ and $\nu$, thus by Lemma \ref{lem_smooth_ortho}, they are equal.
\end{proof}

Smooth measures are also preserved under localization to a positive-measure Borel set or positive-integral function.
\begin{cor}\label{cor_smooth_local}
    Let $\mu\in \mathfrak{M}_x(\mathcal{U})$ be a smooth measure and let $\theta: S_{x}(M) \to [0,1]$ be Borel with $\int_{S_{x}(M)}\theta(x)\,d\mu > 0$.
Then the measures $\mu_{\theta}$ is weakly orthogonal also.
\end{cor}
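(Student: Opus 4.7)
The plan is to read the statement as saying that $\mu_\theta$ is again smooth over $M$ (the natural parallel to Lemma \ref{lem_ortho_local}), and to deduce this by combining the weak-orthogonality characterization of smoothness in Lemma \ref{lem_smooth_ortho} with the preservation of weak orthogonality under localization in Lemma \ref{lem_ortho_local}. The overall strategy is: replace smoothness of $\mu$ with weak orthogonality of $\mu|_M$ against every measure over $M$, localize one factor, and translate back to smoothness of $\mu_\theta$.

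First I would make precise how $\mu_\theta$ is defined when $\theta$ lives on $S_x(M)$. Pull $\theta$ back along the continuous restriction map $\pi:S_x(\mathcal{U}) \to S_x(M)$ to obtain a Borel function $\theta\circ\pi$ on $S_x(\mathcal{U})$, and use it to localize the global measure $\mu$. A direct computation using the pushforward formula (and the fact that $\nu|_M = \pi_*\nu$ for any global $\nu$) shows $(\mu_\theta)|_M = (\mu|_M)_\theta$: for an $M$-definable $\psi$, both sides evaluate to $\left(\int \theta\,\psi\,d\mu|_M\right)\big/\left(\int\theta\,d\mu|_M\right)$.

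Next, fix an arbitrary $\nu \in \mathfrak{M}_y(M)$. By Lemma \ref{lem_smooth_ortho}, the smoothness of $\mu$ over $M$ gives that $\mu|_M$ and $\nu$ are weakly orthogonal. Apply Lemma \ref{lem_ortho_local} to the pair $(\mu|_M,\nu)$ with localizing functions $\theta_1 = \theta$ and $\theta_2 \equiv 1$; since $(\nu)_{\theta_2} = \nu$, this yields that $(\mu|_M)_\theta$ is weakly orthogonal to $\nu$. Because $\nu$ was arbitrary, $(\mu_\theta)|_M$ is weakly orthogonal to every measure in $\mathfrak{M}_y(M)$, so by the other direction of Lemma \ref{lem_smooth_ortho}, $\mu_\theta$ is smooth over $M$.

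The only genuinely nontrivial point is the identification $(\mu_\theta)|_M = (\mu|_M)_\theta$, which however is immediate from $\theta$ factoring through $\pi$; everything else is a black-box application of the two lemmas just preceding. I do not expect any obstacle beyond correctly setting up this bookkeeping.
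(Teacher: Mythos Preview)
Your proposal is correct and follows essentially the same approach as the paper: use Lemma \ref{lem_smooth_ortho} to translate smoothness into weak orthogonality against all measures (or types) over $M$, apply Lemma \ref{lem_ortho_local} with the trivial localizer on the second factor, and translate back. You have simply spelled out the bookkeeping (identifying $(\mu_\theta)|_M$ with $(\mu|_M)_\theta$) that the paper's one-line proof leaves implicit.
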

\begin{proof}
    By Lemma \ref{lem_smooth_ortho}, $\mu$ is smooth if and only if it is weakly orthogonal to all global types, and by Lemma \ref{lem_ortho_local}, $\mu_\theta$ is also weakly orthogonal to all global types.
\end{proof}

\begin{lem}[{Generalizes \cite[Lemma 7.17(i)]{nip_guide}}]\label{lem_smooth_dfs}
    Let $\mu \in \mathfrak{M}_x(\mathcal{U})$ be a global measure smooth over $M$.
    Then $\mu$ is dfs over $M$.
\end{lem}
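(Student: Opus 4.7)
The plan is to prove the two halves of dfs separately. Definability follows directly from Lemma \ref{lem_smooth_characterization}, while approximate realizability combines Corollary \ref{cor_extension_params} with density of realized types in $S_x(M)$.

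For definability, I would fix $\phi(x;y) \in \mathcal{L}(M)$ and $\varepsilon > 0$ and invoke Lemma \ref{lem_smooth_characterization} in its partition-of-unity form: produce $u_i(y)$ summing to $1$ with $\mathrm{supp}(u_i) \subseteq U_i$, together with predicates $\psi_i^\pm(x)$ over $M$ such that $\psi_i^-(x) \leq \phi(x;b) \leq \psi_i^+(x)$ for all $x$ whenever $u_i(b) > 0$, and $\int(\psi_i^+ - \psi_i^-)\,d\mu < \varepsilon$. Define the $M$-definable predicate
\[g_\varepsilon(y) = \sum_{i=1}^n u_i(y)\int_{S_x(\mathcal{U})}\psi_i^-(x)\,d\mu.\]
Multiplying the bracketing bounds by $u_i(b) \geq 0$, summing over $i$, and using $\sum_i u_i(b) = 1$ gives $g_\varepsilon(b) \leq F^\phi_{\mu,M}(b) \leq g_\varepsilon(b) + \sum_i u_i(b) \int(\psi_i^+ - \psi_i^-)\,d\mu < g_\varepsilon(b) + \varepsilon$. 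Letting $\varepsilon \to 0$ presents $F^\phi_{\mu,M}$ as a uniform limit of $M$-definable predicates, hence itself an $M$-definable predicate.

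For approximate realizability I would argue by contradiction via Lemma \ref{lem_approx_realized}. Suppose there is $\phi_0(x;y) \in \mathcal{L}(M)$ and $c \in \mathcal{U}^y$ with $\phi_0(a;c) = 0$ for every $a \in M^x$ yet $\int \phi_0(x;c)\,d\mu = r > 0$. Smoothness says $\mu|_M$ has a unique global extension, and so the two-sided equality in Corollary \ref{cor_extension_params} forces
\[\sup\left\{\int_{S_x(M)}\psi\,d\mu|_M \ :\ \psi\text{ is $M$-definable},\ \psi(x) \leq \phi_0(x;c)\text{ for all }x\right\} = r.\]
Pick such a witness $\psi$ with $\int \psi\,d\mu|_M > 0$. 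Restricting the pointwise bound to $M^x$ gives $\psi(a) \leq \phi_0(a;c) = 0$ for every $a \in M^x$. Since the types realized in $M$ are dense in $S_x(M)$ (any basic open condition $\phi(x) < r$ consistent over $M$ is achieved in $M$ because $\inf_x \phi(x)$ is elementary) and $\psi$ is continuous on $S_x(M)$, this forces $\psi \leq 0$ on all of $S_x(M)$. Integration then yields $\int \psi\,d\mu|_M \leq 0$, the desired contradiction.

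The main obstacle is the transition in the second half from a pointwise bound on $M^x$ to a bound on all of $S_x(M)$: smoothness, via the Hahn--Banach style Corollary \ref{cor_extension_params}, is precisely what produces an $M$-definable witness $\psi$ below $\phi_0(\cdot;c)$ with positive integral, and continuity plus density of realized types then finish the job. The first half is routine once Lemma \ref{lem_smooth_characterization} is packaged with a partition of unity to convert its $U_i$-piecewise bounds into one global $\varepsilon$-approximation.
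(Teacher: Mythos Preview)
Your proof is correct and follows essentially the same strategy as the paper: both halves rest on producing $M$-definable predicates $\psi^\pm$ bracketing $\phi(x;b)$ with small $\mu$-gap, which is exactly the content of Lemma \ref{lem_smooth_characterization} (and its ingredient Corollary \ref{cor_extension_params}). The only differences are in packaging: for definability the paper shows directly that $\{q : r < F^\phi_{\mu,M}(q) < s\}$ is open using the open conditions $U_i(y)$, whereas you exhibit $F^\phi_{\mu,M}$ as a uniform limit of $M$-definable predicates via the partition-of-unity form; for approximate realization the paper invokes Lemma \ref{lem_smooth_characterization} to get $\psi^-$ with positive integral, while you reach the same $\psi$ via Corollary \ref{cor_extension_params} and smoothness, and both finish with the density-of-realized-types argument (the paper phrases it as ``$\psi^-$ positive somewhere, hence in $M$'', you as the contrapositive).
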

\begin{proof}
First we show that $\mu$ is approximately realized in $M$.
By Lemma \ref{lem_approx_realized}, it suffices to show that for any $\phi(x;b)$, if $\int_{S_x(\mathcal{U})} \phi(x;b)\,d\mu > 0$, then for some $a \in M^x$, $\phi(a;b) > 0$.
Assume $\int_{S_x(\mathcal{U})} \phi(x;b)\,d\mu > \varepsilon > 0$.
Then by Lemma \ref{lem_smooth_characterization}, there are $\psi^-(x), \psi^+(x)$ such that $\forall x, \psi^-(x) \leq \phi(x;b)\leq \psi^+(x)$, and
$\int_{S_x(\mathcal{U})}\psi_i^+(x) - \psi_i^-\,d\mu<\varepsilon$.
Thus
$$\varepsilon < \int_{S_x(\mathcal{U})}\phi(x;b)\,d\mu \leq \int_{S_x(\mathcal{U})}\psi_i^+\,d\mu < \int_{S_x(\mathcal{U})}\psi_i^-\,d\mu + \varepsilon,$$
so $\psi^-(x)$ must take a positive value at some $a \in \mathcal{U}^x$.
By elementary equivalence, it must also take a positive value at some $a' \in \mathcal{U}^x$, where we have
$0 < \psi^-(a') \leq \phi(a';b)$.

Now we show that $\mu$ is definable over $M$.
Specifically, we fix $\phi(x;y)$, and wish to show that $F^\phi_{\mu,A}$ is continuous, by showing that if $r < s$, the set $\{p \in S_y(A): r < F^\phi_{\mu, A}(p) < s\}$ is open.
Let $p \in S_y(A)$ be such that $r < F^\phi(\mu,A)(p) < s$, and let $b \vDash p$.
Fix $0 < \varepsilon < \min(s - F^\phi(\mu,A)(p), F^\phi(\mu,A)(p) - r)$.
By Lemma \ref{lem_smooth_characterization}, there is an open condition $U(y)$ such that $\vDash U(b)$, and $\psi^-(x), \psi^+(x)$ such that $\int_{S_x(A)}\psi^+(x) - \psi^-(x)\,d\mu < \varepsilon$ and
for all $b'$ with $\vDash U(b')$, $\forall x, \psi^i(x)\leq \phi(x;b')\leq \psi^+(x)$.
We will show that for all $q \in S_y(A)$ in the open neighborhood defined by $U(y)$, $r < F^\phi(\mu,A)(q) < s$.

We have $F^\phi(\mu,A)(p) = \int_{S_x(\mathcal{U})}\phi(x;b)\,d\mu$, so 
$$r < F^\phi(\mu,A)(p) - \varepsilon \leq \int_{S_x(\mathcal{U})}\psi^-(x)\,d\mu \leq F^\phi(\mu,A)(p)\leq \int_{S_x(\mathcal{U})}\phi^+(x)\,d\mu \leq F^\phi(\mu,A)(p) + \varepsilon < s.$$
Now let $q \in S_y(A)$ in the open neighborhood defined by $U(y)$, and let $b' \vDash q$. Then $\vDash U(b')$, and thus
$$r < \int_{S_x(\mathcal{U})}\psi^-(x)\,d\mu \leq \int_{S_x(\mathcal{U})}\phi(x;b')\,d\mu \leq \int_{S_x(\mathcal{U})}\phi^+(x)\,d\mu < s,$$
so $F^\phi(\mu,A)(q) = \int_{S_x(\mathcal{U})}\phi(x;b')\,d\mu$ is in $(r,s)$.
\end{proof}

Now we note that the choice of small model is not critical when defining smoothness.
\begin{lem}\label{lem_inv_smooth}[{Generalizing \cite[Lemma 7.17]{nip_guide}}]
    Let $\mu \in \mathfrak{M}_x(\mathcal{U})$ be smooth over $M$, and invariant over another small model $N$.
    Then $\mu$ is smooth over $N$ also.
\end{lem}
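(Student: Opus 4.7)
I will split the proof into two phases. First, I upgrade $\mu$ to be dfs over $N$ using the base-transfer lemmas already established; second, I verify the local characterization of smoothness from Lemma \ref{lem_smooth_characterization} over $N$.

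By Lemma \ref{lem_smooth_dfs}, $\mu$ is dfs over $M$, hence $M$-definable and approximately realized in $M$. Since $\mu$ is $N$-invariant, Lemma \ref{lem_inv_borel} promotes $M$-definability to $N$-definability, and Lemma \ref{lem_approx_realized_base} promotes approximate realization in $M$ to approximate realization in $N$. Thus $\mu$ is already dfs over $N$; in particular, for every $\mathcal{L}$-formula $\theta(x;z)$, the function $F^\theta_{\mu,N}$ is a continuous $N$-definable predicate of the parameter variable $z$.

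To verify smoothness over $N$ via Lemma \ref{lem_smooth_characterization}, fix $\phi(x;y) \in \mathcal{L}(N)$ and $\varepsilon > 0$. Smoothness of $\mu$ over $M$ applied to $\phi$ (viewed as a predicate with $N$-parameters, which are allowed since $N$ may be adjoined to $M$ without losing smoothness) produces open conditions $V_j(y)$ covering $S_y(M)$ and $M$-definable predicates $\theta_j^\pm(x)$ satisfying the conclusion of Lemma \ref{lem_smooth_characterization}. The task is then to recast these data with $N$-parameters only, producing open $U_i(y)$ covering $S_y(N)$ and $N$-definable $\psi_i^\pm(x)$ with the same sandwich property. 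The descent uses $N$-definability of $\mu$: the integrals $\int \theta_j^\pm(x)\,d\mu$ are continuous functions of the types over $N$ of the parameters of $\theta_j^\pm$, and by density of $N$-realized types in these parameter type spaces combined with the uniform continuity of $\mathcal{L}$-formulas in their parameters, we approximate $\theta_j^\pm$ by $N$-definable predicates whose integrals against $\mu$ differ by arbitrarily little. Slightly loosening the approximate sandwich (adding and subtracting a small constant) restores the pointwise inequalities.

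\textbf{Main obstacle.} The delicate step is preserving the \emph{pointwise} sandwich $\theta_j^-(x) \leq \phi(x;b) \leq \theta_j^+(x)$ when the $M$-parameters of $\theta_j^\pm$ are perturbed to $N$-parameters, since density approximation in the type space controls integrals against $\mu$ but not pointwise values. I expect NIP to enter here essentially: either through uniform continuity of formulas coupled with $N$-invariance (so that any realization of the $N$-type of a given $M$-parameter yields the same $\mu$-behavior, reducing the perturbation to a controlled one), or via a contradiction argument applied to an alleged non-unique extension $\mu' \neq \mu$ of $\mu|_N$, extracting an $N$-indiscernible sequence witnessing a persistent discrepancy and applying Lemma \ref{lem_measure_consistent} to derive inconsistency of an associated partial type.
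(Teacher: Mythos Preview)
Your overall strategy is right and matches the paper's: establish $N$-definability of $\mu$ (phase one is correct and efficient), then descend the smoothness witnesses from $M$-parameters to $N$-parameters using density of realized types in $S_z(N)$ (equivalently, elementarity $N\preceq M$ after enlarging $M$), followed by a $\pm\varepsilon$ correction to restore strict inequalities. The gap is in how you propose to resolve the obstacle you correctly isolate.

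The difficulty with running the descent through Lemma~\ref{lem_smooth_characterization} is that the sandwich $\theta_j^-(x)\le\phi(x;b)\le\theta_j^+(x)$ is tied to a particular $b$ (or to $b$ ranging over an $M'$-definable open set $V_j$), so the resulting condition on the parameters of $\theta_j^\pm$ is not over $N$ alone. Your speculation that NIP resolves this is incorrect: the paper's argument uses no NIP here, and none is needed.

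The fix is to swap characterizations. Rather than Lemma~\ref{lem_smooth_characterization}, use Lemma~\ref{lem_smooth_ortho}: $\mu$ is smooth over $N$ iff $\mu|_N$ is weakly orthogonal to every $q\in S_y(N)$. Fix such $q$, extend it to $p\in S_y(M)$ (after arranging $N\preceq M$, as you implicitly do). Weak orthogonality of $\mu|_M$ and $p$ via Corollary~\ref{cor_ortho} yields parameter-free predicates $\psi^\pm(x,y,z)$ (sums of separated products) and $c\in M^z$ with $\psi^-(x,y,c)\le\phi(x,y)\le\psi^+(x,y,c)$ for \emph{all} $(x,y)$, and $F_{\mu,N}^{\psi^+-\psi^-}(q,c)<\varepsilon$. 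Now both requirements on $c$ are open conditions over $N$: the relaxed global sandwich $\sup_{x,y}(\psi^-(x,y,z)\dot{-}\phi(x,y))<\varepsilon$ and $\sup_{x,y}(\phi(x,y)\dot{-}\psi^+(x,y,z))<\varepsilon$ have parameters only in $N$ since $\phi$ does, and the integral bound is $N$-definable by $N$-definability of $\mu$, exactly as you note. Elementarity produces $c'\in N$ satisfying all three; then $\psi^\pm(x,y,c')\mp\varepsilon$ witness weak orthogonality of $\mu|_N$ and $q$.

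So the missing idea is not NIP but rather making the sandwich global in $y$, which Corollary~\ref{cor_ortho} provides directly and Lemma~\ref{lem_smooth_characterization} does not.
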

\begin{proof}
    We may assume that $N \preceq M$, as otherwise we may replace $M$ with an elementary extension of $N$ containing $M \cup N$.

    Fix $\varepsilon > 0$, an $N$-definable predicate $\phi(x;y)$, and $p \in S_y(M)$.
    Then by smoothness, $\mu|_M$ and $p$ are weakly orthogonal, so there exist definable predicates $\psi^-(x,y,z),\psi^+(x,y,z)$,
    where $\psi^\pm(x,y,z)$ are each of the form $\sum_{j = 1}^m \theta_{1j}^{\pm}(x)\theta_{2j}^{\pm}(y)\theta_{3j}^{\pm}(z)$,
    and some $c \in M^z$, such that
    for all $a,b \in M^z$, $\psi^-(a,b,c) \leq \phi(a,b) \leq \psi^+(a,b,c)$
    and for all $b$ satisfying $p$,
    $\int_{S_{x}(M)}(\psi^+(x;b) - \psi^-(x;b))\,d\mu < \varepsilon$.

    Then $c$ satisfies $\inf_{x,y} (\psi^-(x,y,z) \dot{-} \phi(x,y)) < \varepsilon$ and 
    $\inf_{x,y} (\phi(x,y) \dot{-} \psi^+(x,y,z)) < \varepsilon$, both open conditions in $z$ which have parameters only in $N$.

    As $\mu$ is definable over $M$ and invariant over $N$, it is also definable over $N$,
    so it is also an open condition with parameters in $N$ that
    $F^{\psi^+}_{\mu,N}(p|_N,z) - F^{\psi^-}_{\mu,N}(p|_N,z) < \varepsilon$, and this open condition also applies to $c$.
    
    The conjunction of all of these open conditions is an open condition, so as such a $c$ realizes it in $M$, there is some $c'$ realizing it in $N$.
    Thus letting $\chi^-(x,y,c') = \psi^-(x,y,c') - \varepsilon$ and $\chi^+(x,y,c') = \psi^+(x,y,c') + \varepsilon$,
    we see that for all $a \in N^z$, $\chi^-(a,b,c') \leq \phi(a,b) \leq \chi^+(a,b,c')$,
    while $F^{\chi^+}_{\mu,N}(p|_N,c') - F^{\chi^-}_{\mu,N}(p|_N,c') < 3\varepsilon$,
    showing that $\mu|_N$ and $p|_N$ are weakly orthogonal,
    so by the generality of $p$, $\mu$ is smooth over $N$.
\end{proof}

\begin{lem}[{Generalizes \cite[Corollary 1.3]{associativity}}]\label{lem_smooth_product}
    Let $\mu \in \mathfrak{M}_x(\mathcal{U})$ and $\nu \in \mathfrak{M}_y(\mathcal{U})$ be smooth over a model $M$,
    Then $\mu \otimes \nu$ is smooth over $M$.
\end{lem}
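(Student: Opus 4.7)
The plan is to use Lemma \ref{lem_smooth_ortho} to reduce smoothness of $\mu \otimes \nu$ to showing that $(\mu \otimes \nu)|_M$ is weakly orthogonal to every measure $\lambda \in \mathfrak{M}_z(M)$ (for any variable tuple $z$). Fix such a $\lambda$ and let $\omega \in \mathfrak{M}_{xyz}(M)$ be any product extension of $(\mu \otimes \nu)|_M$ and $\lambda$. I must show that $\omega$ is uniquely determined; the strategy is to identify $\omega$ as the unique product of $\mu|_M$ with a measure on $S_{yz}(M)$ that is itself pinned down by smoothness of $\nu$.

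First I would pin down $\omega|_{yz}$. For any $h(y) \in \mathcal{C}(S_y(M))$ and $k(z) \in \mathcal{C}(S_z(M))$, viewing $h(y)$ as an element of $\mathcal{C}(S_{xy}(M))$, the defining property of $\omega$ together with the Morley-product formula for $\mu \otimes \nu$ (which applies since $\mu$ is definable by Lemma \ref{lem_smooth_dfs}) yields $\int h(y) k(z)\,d\omega = \int h\,d\nu \cdot \int k\,d\lambda$. Hence $\omega|_{yz}$ extends $\nu|_M \times \lambda$, and by smoothness of $\nu$ together with Lemma \ref{lem_smooth_ortho} this extension is unique; in particular $\omega|_{yz}$ does not depend on the choice of $\omega$.

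Next I would verify that $\omega$ itself factors as a product extension of $\mu|_M$ and the now-determined measure $\omega|_{yz}$, meaning $\int f(x) G(y,z)\,d\omega = \int f\,d\mu \cdot \int G\,d\omega|_{yz}$ for all $f \in \mathcal{C}(S_x(M))$ and $G \in \mathcal{C}(S_{yz}(M))$. The separable case $G = h(y) k(z)$ is a direct computation from the preceding step. For general $G$, fix $f \geq 0$ with $\alpha := \int f\,d\mu > 0$ and consider the normalized functional $\tau(G) := \alpha^{-1} \int f(x) G(y,z)\,d\omega$, which defines a Keisler measure in $\mathfrak{M}_{yz}(M)$ agreeing with $\nu|_M \times \lambda$ on separable products. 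By smoothness of $\nu$ again, this extension of $\nu|_M \times \lambda$ is unique, so $\tau = \omega|_{yz}$ and the factorization holds; the cases $f$ not nonnegative and $\alpha = 0$ follow by linearity and by a direct $L^\infty$ bound respectively. With the factorization in hand, smoothness of $\mu$ and Lemma \ref{lem_smooth_ortho} say that any product extension of $\mu|_M$ and $\omega|_{yz}$ is unique, so $\omega$ itself is determined. I expect the only real subtlety to be the factorization step for non-separable $G$, since the linear span of separable products is not in general dense in $\mathcal{C}(S_{yz}(M))$; the normalization-to-probability-measure trick sidesteps that density question by converting it into another uniqueness-of-extension statement already guaranteed by smoothness of $\nu$.
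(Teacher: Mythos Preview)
Your argument is correct and forms a valid proof, but it takes a genuinely different route from the paper's. The paper works directly with the definition of smoothness: it takes an arbitrary global measure $\lambda$ with $\lambda|_M = (\mu\otimes\nu)|_M$ and shows that $\lambda$ is a separated amalgam of $\mu$ and $\nu$ by an explicit $\varepsilon$-sandwiching computation using Lemma~\ref{lem_smooth_characterization} (finding $M$-definable $\theta^\pm(x)$ and $\chi^\pm(y)$ bounding $\phi(x)$ and $\psi(y)$ from above and below). Since $\mu$ and $\nu$ are weakly orthogonal, this forces $\lambda = \mu\otimes\nu$.

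You instead pass through the weak-orthogonality characterization of smoothness (Lemma~\ref{lem_smooth_ortho}) twice, once for $\nu$ and once for $\mu$, and replace the sandwiching estimate by your normalization trick: for fixed $f(x)\ge 0$ with $\int f\,d\mu>0$, the functional $G\mapsto \alpha^{-1}\int f(x)G(y,z)\,d\omega$ is itself a Keisler measure extending $\nu|_M\times\lambda$, so uniqueness from smoothness of $\nu$ identifies it. This is a clean way to avoid any $\varepsilon$-bookkeeping and to sidestep the fact that separable products are not dense in $\mathcal{C}(S_{yz}(M))$. The paper's approach, by contrast, is more self-contained (it only needs the sandwiching lemma once and a short inequality chain) and makes the reason for smoothness visually explicit. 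Both arguments ultimately rest on the same two ingredients, just packaged differently.
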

\begin{proof}
    It suffices to show that if $\lambda |_M = (\mu \otimes \nu)|_M$, then $\lambda = \mu \otimes \nu$.
    As $(\mu \otimes \nu)|_M$ is a separated amalgam of $\mu, \nu$ and $\mu$ and $\nu$ are smooth, by Lemma \ref{lem_smooth_ortho} it suffices to show that $\lambda$ is as well.

    Let $\phi(x),\psi(y)$ be formulas (with parameters), and fix $\varepsilon > 0$.
    We will show that 
    $$\left|\int_{S_{xy}(\mathcal{U})}\phi(x)\psi(y)\,d\lambda - \int_{S_{x}(\mathcal{U})}\phi(x)\,d\mu\int_{S_{y}(\mathcal{U})}\psi(y)\,d\nu\right| < \left(\int_{S_{x}(\mathcal{U})}\phi(x)\,d\mu + \int_{S_{y}(\mathcal{U})}\psi(y)\,d\nu\right)\varepsilon + \varepsilon^2,$$
    which will show that $\lambda$ is a separated amalgam, as $\varepsilon$ is arbitrary.
    By Lemma \ref{lem_smooth_characterization}, there are formulas $\theta^-(x),\theta^+(x), \chi^-(y),\chi^+(y)$ with parameters from $M$ such that
    \begin{itemize}
        \item $\forall x, \theta^-(x) \leq \phi(x) \leq \theta^+(x)$
        \item $\forall y, \chi^-(y) \leq \psi(y) \leq \chi^+(y)$
        \item $\int_{S_x(\mathcal{U})}\theta^+(x) - \theta^-(x)\,d\mu < \varepsilon$
        \item $\int_{S_y(\mathcal{U})}\chi^+(y) - \chi^-(y)\,d\nu < \varepsilon$.
    \end{itemize}
    We will explicitly prove the upper bound on $\int_{S_{xy}(\mathcal{U})}\phi(x)\psi(y)\,d\lambda$, the lower bound will follow by the same logic.
    Using the fact that $\lambda |_M = (\mu \otimes \nu)|_M$, we see that
\begin{align*}
    \int_{S_{xy}(\mathcal{U})}\phi(x)\psi(y)\,d\lambda
    \leq& \int_{S_{xy}(\mathcal{U})}\theta^+(x)\chi^+(y)\,d\lambda\\
    =& \int_{S_{xy}(\mathcal{U})}\theta^+(x)\chi^+(y)\,d(\mu \otimes \nu)\\
    =& \int_{S_{x}(\mathcal{U})}\theta^+(x)\,d\mu\int_{S_{y}(\mathcal{U})}\chi^+(y)\,d\nu.
\end{align*}
We now note that $\int_{S_{x}(\mathcal{U})}\theta^+(x)\,d\mu < \int_{S_{x}(\mathcal{U})}\phi(x)\,d\mu + \varepsilon$
and $\int_{S_{y}(\mathcal{U})}\chi^+(y)\,d\nu < \int_{S_{y}(\mathcal{U})}\psi(y)\,d\nu + \varepsilon$,
and these inequalities give us the desired upper bound.
\end{proof}

We now assume NIP, and see that every Keisler measure over a model admits a smooth extension.

\begin{lem}[{Generalizes \cite[Prop 7.9]{nip_guide}}]\label{lem_smooth_extensions}
    Every Keisler measure $\mu \in \mathfrak{M}_x(M)$ over a small model $M$ admits a smooth extension over some $M \preceq N$.
\end{lem}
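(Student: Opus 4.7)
The plan is to adapt the proof of \cite[Prop.\ 7.9]{nip_guide} to continuous logic, using Lemma \ref{lem_smooth_characterization} as the smoothness criterion and Lemma \ref{lem_measure_consistent} as the NIP input. First, I would apply Corollary \ref{cor_extension} to fix some global extension $\hat\mu \in \mathfrak{M}_x(\mathcal{U})$ of $\mu$; the task then reduces to producing a small $N \succeq M$ such that $\hat\mu|_N$ is smooth over $N$.

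To build $N$, I would construct by transfinite recursion an increasing chain $M = M_0 \preceq M_1 \preceq \cdots$ of small elementary submodels of $\mathcal{U}$. At a successor stage $\alpha + 1$, if $\hat\mu|_{M_\alpha}$ is not yet smooth, then Lemma \ref{lem_smooth_characterization} yields a formula $\phi_\alpha(x;y) \in \mathcal{L}(M_\alpha)$ and $\varepsilon_\alpha > 0$ for which no suitable finite family of open conditions and sandwiching predicates exists. Combining compactness of $S_y(M_\alpha)$ with Corollary \ref{cor_extension_params} applied fiberwise, this failure forces the existence of some $b_\alpha \in \mathcal{U}^y$ with
\[
\inf_{\psi \,\geq\, \phi_\alpha(x;b_\alpha)}\int \psi\,d\hat\mu|_{M_\alpha} \;-\; \sup_{\psi \,\leq\, \phi_\alpha(x;b_\alpha)}\int \psi\,d\hat\mu|_{M_\alpha} \;\geq\; \varepsilon_\alpha,
\]
where $\psi$ ranges over $M_\alpha$-definable predicates in $x$. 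Let $M_{\alpha+1} \preceq \mathcal{U}$ be a small model containing $M_\alpha \cup \{b_\alpha\}$, and take unions at limits. Note that once $b_\alpha \in M_{\alpha+1}$, the predicate $\phi_\alpha(x;b_\alpha)$ itself becomes available as an $M_{\alpha+1}$-definable predicate, so the obstruction at $(\phi_\alpha, b_\alpha)$ is destroyed at the next stage.

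The main obstacle is proving this process terminates. If it runs for at least $\omega$ steps, pigeonholing on formulas and a countable dense subset of $(0,1)$ lets me assume $\phi_\alpha = \phi$ and $\varepsilon_\alpha = \varepsilon$ are constant. Applying the Ramsey--compactness extraction used in Lemma \ref{lem_measure_consistent} inside the expansion $\mathcal{L}_{\mathbb{E}}$, I would replace $(b_\alpha)_{\alpha < \omega}$ by an $\mathcal{L}_{\mathbb{E}}$-indiscernible sequence with the same EM-type; in particular $\int \phi(x;b_\alpha)\,d\hat\mu$ becomes a constant $r$ along the sequence. The intended contradiction arises by repackaging the sandwiching-gap inequality above, whose witnessing predicates can (after a further Ramsey pass) be assumed to have parameters among finitely many earlier $b_\beta$'s, as a single formula $\theta(x;b_{i_1},\dots,b_{i_k})$ whose $\hat\mu$-integral is $\geq \varepsilon/2$ on every increasing tuple. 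Lemma \ref{lem_measure_consistent} then yields a consistent partial type whose iterated minima overshoot $1$, reproducing the NIP-contradiction from that lemma's own proof. The delicate step I anticipate is the coherence bookkeeping needed to make the witnessing predicates $\psi^\pm_\alpha$ align along the indiscernible sequence; once this is in place, termination forces some $\hat\mu|_{M_\beta}$ to be smooth over $M_\beta$, completing the proof.
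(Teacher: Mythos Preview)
Your proposal has a genuine gap at the contradiction step, and it stems from the decision to fix a single global extension $\hat\mu$ at the outset.

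The sandwiching inequality you extract at stage $\alpha$,
\[
\inf_{\psi \geq \phi(x;b_\alpha)}\int \psi\,d\hat\mu|_{M_\alpha} - \sup_{\psi \leq \phi(x;b_\alpha)}\int \psi\,d\hat\mu|_{M_\alpha} \geq \varepsilon,
\]
is a statement about the \emph{range of all possible extensions} of $\hat\mu|_{M_\alpha}$, not about $\hat\mu$ itself. It does not imply that $\int |\phi(x;b_\alpha) - \theta(x)|\,d\hat\mu$ is bounded below for every $M_\alpha$-definable $\theta$; in particular it gives no lower bound on $\int |\phi(x;b_\alpha) - \phi(x;b_\beta)|\,d\hat\mu$ for $\beta < \alpha$. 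Your proposed ``repackaging'' into a formula $\theta$ with $\hat\mu$-integral $\geq \varepsilon/2$ has no candidate: the witnessing $\psi^\pm$ are $M_\alpha$-definable and do not involve $b_\alpha$, while the gap between them is a pointwise-ordering statement, not an $L^1(\hat\mu)$-distance statement. Without such an $L^1$ separation along the sequence, Lemma~\ref{lem_measure_consistent} has nothing to bite on.

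The paper's proof avoids this by \emph{not} fixing a single global measure. At each successor step it chooses two global extensions $\mu^+,\mu^-$ of $\mu_\alpha$ differing on $\phi_\alpha(x;b_\alpha)$ by $\varepsilon_\alpha$, and sets $\mu_{\alpha+1} = \tfrac{1}{2}(\mu^+ + \mu^-)|_{M_{\alpha+1}}$. A short triangle-inequality computation then gives $\int |\theta(x) - \phi_\alpha(x;b_\alpha)|\,d\mu_{\alpha+1} \geq \varepsilon_\alpha/4$ for \emph{every} $M_\alpha$-definable $\theta$, and in particular for $\theta = \phi(x;b_\beta)$ with $\beta < \alpha$. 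This $L^1$ separation is exactly what feeds into Lemma~\ref{lem_measure_consistent} after indiscernible extraction. The averaging is the missing idea in your outline.

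A secondary issue: running the chain for $\omega$ steps is not enough to pigeonhole on the formula $\phi_\alpha$ unless the language is countable. The paper runs the construction for $|\mathcal{L}|^+$ steps for this reason.
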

\begin{proof}
Assume for contradiction that $\mu$ has no smooth extensions.
Then we inductively build a chain of extensions of measures indexed by the ordinal $|\mathcal{L}|^+$.

That is, we will construct $((M_\alpha,\mu_\alpha): \alpha < |\mathcal{L}|^+)$, with $(M_0,\mu_0) = (M,\mu)$
and for each $\alpha < \beta$, $M_\alpha \subseteq M_\beta$ and $\mu_\alpha = \mu_\beta|_{M_\alpha}$.
At limit stages, we can take a union of the models and the measures, so we can just define the successor steps.
Let $(M_\alpha,\mu_\alpha)$ be defined.
As $\mu_\alpha$ extends $\mu$, it is not smooth, so let $\mu^+,\mu^-$ be two distinct global extensions of $\mu_\alpha$.
As they are distinct, there is some formula $\phi_\alpha(x;b_\alpha)$ with $\phi_\alpha\in \mathcal{F}_{xy}$ such that
$\int_{S_x(\mathcal{U})}\phi_\alpha(x;b_\alpha)\,d\mu^+ \geq \int_{S_x(\mathcal{U})}\phi_\alpha(x;b_\alpha)\,d\mu^- + \varepsilon_\alpha$ for some $\varepsilon_\alpha > 0$.
We let $M_{\alpha + 1}$ be a model containing $M_\alpha$ and $b_\alpha$, and let $\mu_{\alpha + 1} = \left(\frac{1}{2}\left(\mu^+ - \mu^-\right)\right)|_{M_{\alpha + 1}}$.
We then see that for any $\theta(x)$ with parameters in $M_\alpha$, as $\int_{S_x(\mathcal{U})}\theta(x)\,d\mu^+ = \int_{S_x(\mathcal{U})}\theta(x)\,d\mu^-$,
\begin{align*}
    &\int_{S_x(\mathcal{U})}\left|\theta(x) - \phi_\alpha(x;b_\alpha)\right|\,d\mu^+ + \int_{S_x(\mathcal{U})}\left|\theta(x) - \phi_\alpha(x;b_\alpha)\right|\,d\mu^-\\
    \geq &\left|\int_{S_x(\mathcal{U})}\theta(x) - \phi_\alpha(x;b_\alpha)\,d\mu^+\right| + \left|\int_{S_x(\mathcal{U})}\theta(x) - \phi_\alpha(x;b_\alpha)\,d\mu^-\right|\\
    \geq &\left|\int_{S_x(\mathcal{U})}\phi_\alpha(x;b_\alpha)\,d\mu^+ - \int_{S_x(\mathcal{U})}\phi_\alpha(x;b_\alpha)\,d\mu^-\right|\\
    \geq &\varepsilon_\alpha.
\end{align*}
Thus either $\int_{S_x(\mathcal{U})}\left|\theta(x) - \phi_\alpha(x;b_\alpha)\right|\,d\mu^+\geq \frac{\varepsilon_\alpha}{2}$ or $\int_{S_x(\mathcal{U})}\left|\theta(x) - \phi_\alpha(x;b_\alpha)\right|\,d\mu^-\geq \frac{\varepsilon_\alpha}{2}$,
so 
$$\int_{S_x(M_{\alpha + 1})}\left|\theta(x) - \phi_\alpha(x;b_\alpha)\right|\,d\mu_{\alpha + 1} \geq \frac{\varepsilon_\alpha}{4}.$$

We may assume that each $\varepsilon_\alpha$ is rational.
Then by an infinite pigeonhole principle, and the fact that there are at most $|\mathcal{L}|$ choices of $(\phi_\alpha, \varepsilon_\alpha)$,
we can restrict to a subsequence of the same length such that $\phi_\alpha$ and $\varepsilon_\alpha$ are constant.
We call these constant values simply $\phi$ and $\varepsilon$.
Thus if we let $M'$ be the union of all the models and $\mu'$ be the union of all the measures in our new sequence, have an infinite sequence $(b_\alpha : \alpha < |\mathcal{L}|^+)$ such that for all $\alpha < \beta$,
$$\int_{S_x(M')}|\phi(x;b_\alpha) - \phi(x;b_\beta)|\,d\mu' \geq \varepsilon.$$

Now using the same Ramsey and compactness argument as in the proof of Lemma \ref{lem_measure_consistent},
we extract an $\mathcal{L}_{\mathbb{E}}$-indiscernible sequence with the same EM-type as $(b_\alpha : \alpha < |\mathcal{L}|^+)$.
Call this indiscernible $(b_i' : i < \omega)$, and let $(M^*;\mu^*)$ be an elementary extension of $(M';\mu')$ containing it.

Then in particular, for all $i < \omega$,
$$\int_{S_x(M^*)}|\phi(x;b_{2i}') - \phi(x;b_{2i + 1}')|\,d\mu^* \geq \varepsilon.$$
so by Lemma \ref{lem_measure_consistent},
the partial type $\{|\phi(x;b_{2i}') - \phi(x;b_{2i + 1}')| \geq \frac{\varepsilon}{2} : i <\omega\}$ is consistent.
As $(b_i' : i < \omega)$ is indiscernible, this contradicts NIP.
\end{proof}

We can now use smooth extensions to prove that the Morley product is associative in an NIP context.

\begin{lem}\label{lem_associative}
    Assume $T$ is NIP. Let $\mu \in \mathfrak{M}_x(\mathcal{U}), \nu \in \mathfrak{M}_y(\mathcal{U}), \lambda \in \mathfrak{M}_z(\mathcal{U})$ be $M$-Borel definable, with $\mu \otimes \nu$ also $M$-Borel definable. 
    Then $(\mu \otimes \nu) \otimes \lambda = \mu \otimes (\nu \otimes \lambda)$. 
\end{lem}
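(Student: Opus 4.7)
My plan is to use Lemma \ref{lem_smooth_extensions} to replace $\lambda$ with a smooth extension, so that smooth commutation (Corollary \ref{cor_smooth_commute}) lets me rewrite both Morley products as the same iterated integral.

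\emph{Step 1 (reduction to smooth $\lambda$).} By uniform density it suffices to check equality on an arbitrary formula $\phi(x,y,z)\in\mathcal{L}(\mathcal{U})$, and by Lemma \ref{lem_inv_borel} I may enlarge $M$ to contain the parameters of $\phi$ while preserving the Borel-definability hypotheses. Both $\int\phi\,d((\mu\otimes\nu)\otimes\lambda)$ and $\int\phi\,d(\mu\otimes(\nu\otimes\lambda))$ depend on $\lambda$ only through $\lambda|_M$: the first unfolds to $\int F^\phi_{\mu\otimes\nu,M}\,d\lambda|_M$ by the definition of the Morley product, and the second is an integral against $(\nu\otimes\lambda)|_M$, which as a regular Borel measure on $S_{yz}(M)$ is determined by its values on $M$-formulas, and thus by $\nu$ and $\lambda|_M$. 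By Lemma \ref{lem_smooth_extensions} there is a global $\lambda'$ extending $\lambda|_M$ that is smooth over some $N\succeq M$, so I may replace $\lambda$ by $\lambda'$ and, enlarging $N$ by Lemma \ref{lem_inv_smooth}, assume $\lambda$ is smooth over an $N$ containing the parameters of $\phi$.

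\emph{Step 2 (unwinding via smooth commutation).} Since $\mu$, $\nu$, and $\mu\otimes\nu$ are $N$-Borel-definable, Corollary \ref{cor_smooth_commute} gives $\mu\otimes\lambda=\lambda\otimes\mu$, $\nu\otimes\lambda=\lambda\otimes\nu$, and $(\mu\otimes\nu)\otimes\lambda=\lambda\otimes(\mu\otimes\nu)$. By Lemma \ref{lem_smooth_dfs}, $\lambda$ is definable, so $F^\phi_\lambda(x,y)$ is an $N$-definable predicate, and unwinding the LHS via the Morley-product definition yields
\[
\int\phi\,d((\mu\otimes\nu)\otimes\lambda)
 = \int F^\phi_\lambda(x,y)\,d(\mu\otimes\nu)
 = \int_{S_y(N)}\!\!\int_{S_x(N)}\! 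F^\phi_\lambda(x,y)\,d\mu\,d\nu.
\]
For the RHS, rewriting $\int F^\phi_\mu(y,z)\,d(\nu\otimes\lambda) = \int F^\phi_\mu(y,z)\,d(\lambda\otimes\nu)$, and applying $\mu\otimes\lambda=\lambda\otimes\mu$ to the formula $\phi(x,y_0,z)$ for each fixed $y_0\in\mathcal{U}^y$ yields the pointwise identity $\int_{S_z(N)} F^\phi_\mu(y_0,z)\,d\lambda = \int_{S_x(N)} F^\phi_\lambda(x,y_0)\,d\mu$; integrating this against $\nu$ matches the LHS, provided I can identify $\int F^\phi_\mu\,d(\lambda\otimes\nu) = \int_{S_y(N)}\int_{S_z(N)} F^\phi_\mu\,d\lambda\,d\nu$.

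\emph{Main obstacle.} The nontrivial step is exactly this last identity: $F^\phi_\mu$ is only Borel, so Lemma \ref{lem_BD++} does not apply directly. I expect to resolve this by extending Lemma \ref{lem_BD++} from characteristic functions of open sets to bounded Borel functions via the standard monotone-class argument---the class of Borel sets for which the open-set formula holds is closed under monotone limits (by monotone convergence for nets and regularity of $\lambda\otimes\nu$) and contains the opens, so it is the full Borel $\sigma$-algebra, and then linearity and monotone convergence handle bounded Borel functions. Once this extension is available, the computation of Step 2 closes the argument.
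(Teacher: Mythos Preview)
Your argument is correct, but it takes a different route from the paper's. The paper simply observes that the proof in \cite[Section 3.1]{associativity} carries over verbatim once the four ingredients it uses have been established in continuous logic: associativity for \emph{definable} measures (Lemma~\ref{lem_definable_product}), existence of smooth extensions (Lemma~\ref{lem_smooth_extensions}), that products of smooth measures are smooth (Lemma~\ref{lem_smooth_product}), and that $(\mu\otimes\nu)|_A$ depends only on $\nu|_A$. That argument takes smooth extensions of \emph{all} the measures involved, so that every product in sight becomes a product of definable measures, and then invokes the already-proven definable associativity as a black box.

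Your approach is more hands-on: you extend only $\lambda$ to a smooth measure and then compute both sides explicitly, matching them via the pointwise swap $\int F^\phi_\mu(y_0,z)\,d\lambda=\int F^\phi_\lambda(x,y_0)\,d\mu$ coming from $\mu\otimes\lambda=\lambda\otimes\mu$. The price you pay is the need for a Fubini-type identity for the \emph{Borel} function $F^\phi_\mu$ against $\lambda\otimes\nu$, which you correctly identify as the crux and propose to handle by a monotone-class extension of Lemma~\ref{lem_BD++}. That extension is indeed routine (open sets form a $\pi$-system, and the class of Borel sets satisfying the iterated-integral formula is a $\lambda$-system by linearity and monotone convergence), so your plan closes. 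Compared to the paper's route, you avoid using Lemma~\ref{lem_smooth_product} and never appeal to Lemma~\ref{lem_definable_product} directly, at the cost of carrying out the measure-theoretic bookkeeping yourself; the paper's route is cleaner to state but leans on more of the smooth-measure machinery.
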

\begin{proof}
The proof in \cite[Section 3.1]{associativity} suffices, as we have proven all of the ingredients of that proof still hold in the case of metric structures.
Specifically, it only requires the following tools,
\begin{itemize}
    \item associativity of the Morley product for smooth (or just definable) measures (Lemma \ref{lem_definable_product})
    \item the existence of smooth extensions (Lemma \ref{lem_smooth_extensions})
    \item the fact that the Morley product of smooth measures is smooth (Lemma \ref{lem_smooth_product})
    \item the fact that if $\mu$ is $A$-invariant, $(\mu \otimes \nu)|_A$ depends only on $\nu|_A$
\end{itemize}
all of which we have established in continuous logic.
\end{proof}

\section{Generically Stable Measures}\label{sec_gs}
In this section, we will obtain a continuous version of \cite[Theorem 3.2]{HPS}, which characterize generically stable measures in NIP theories.
We will prove the following properties are equivalent, and we will call any measure satisfying them \emph{generically stable}.
We then show how to find generically stable measures in metric structures using ultraproducts or averaging indiscernible segments.

\begin{thm}\label{thm_gen_stable}
    Assume $T$ is NIP.
    For any small model $M \subseteq \mathcal{U}$, if $\mu$ is a global $M$-invariant measure, the following are equivalent:
    \begin{enumerate}[(i)]
        \item $\mu$ is fim over $M$
        \item $\mu$ is fam over $M$
        \item $\mu$ is dfs over $M$
        \item $\mu(x) \otimes \mu(y) = \mu(y) \otimes \mu(x)$
        \item $\mu^{(\omega)}(x_0,x_1,\dots)|_{M}$ is totally indiscernible.
    \end{enumerate}
\end{thm}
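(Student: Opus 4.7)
The plan is to prove equivalence via the cycle (i) $\Rightarrow$ (ii) $\Rightarrow$ (iii) $\Rightarrow$ (iv) $\Rightarrow$ (v) $\Rightarrow$ (i), following the structure of \cite[Theorem 3.2]{HPS} with adjustments for real-valued predicates and the Keisler-functional framework developed earlier in the section.

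For (i) $\Rightarrow$ (ii), given $\varphi(x;y) \in \mathcal{L}(M)$ and $\varepsilon > 0$, I would pick $n$ large enough that $\mu^{(n)}(\theta_n)$ is close to $1$; since $\theta_n$ is an $M$-formula, the open condition ``$\theta_n$ attains a value close to its maximum'' is witnessed in $\mathcal{U}$ and therefore in $M^x$ by elementary equivalence, providing the desired fam tuple. For (ii) $\Rightarrow$ (iii), a fam witness $(a_1,\dots,a_n) \in (M^x)^n$ makes the $M$-definable predicate $\frac{1}{n}\sum_i \varphi(a_i;y)$ a uniform $\varepsilon$-approximation of $F^\varphi_{\mu,M}(y)$, exhibiting $F^\varphi_{\mu,M}$ as a uniform limit of definable predicates, hence definable. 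Approximate realization in $M$ then follows from Lemma \ref{lem_approx_realized}: writing a $\mathcal{U}$-parameter predicate $\phi(x)$ as $\varphi(x;b)$ with $\varphi \in \mathcal{L}(M)$, the fam witness for $\varphi$ forces $\lvert \int \phi \, d\mu \rvert \leq \varepsilon$ whenever $\phi$ vanishes on $M^x$.

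For (iii) $\Rightarrow$ (iv), both $\mu(x) \otimes \mu(y)$ and $\mu(y) \otimes \mu(x)$ are global $M$-definable measures (by Lemma \ref{lem_definable_product} together with Lemma \ref{lem_inv_borel}), so they are determined by their restrictions to $M$ and it suffices to verify agreement there. Approximate realizability of $\mu|_M$ writes it as a weak$^*$-limit of finite-support $M$-measures $\nu_\alpha$, for which $\nu_\alpha \otimes \nu_\beta = \nu_\beta \otimes \nu_\alpha$ by Fubini. Definability lets us pass to the limit in the outer integration, and controlling the inner integration, either via uniform approximation of $F^\varphi_{\nu_\alpha}$ to $F^\varphi_\mu$ using the dfs structure or by interposing smooth extensions (Lemma \ref{lem_smooth_extensions}) and invoking Corollary \ref{cor_smooth_commute}, yields commutativity. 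For (iv) $\Rightarrow$ (v): commutativity says each adjacent transposition of coordinates preserves the iterated Morley product $\mu^{(n)}$; since adjacent transpositions generate $S_n$, every permutation preserves $\mu^{(n)}$, making the joint distribution of any realization $(a_0, a_1, \dots) \models \mu^{(\omega)}|_M$ exchangeable, which is equivalent to total indiscernibility of the sample over $M$.

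The main obstacle is (v) $\Rightarrow$ (i), which requires a continuous Glivenko-Cantelli / VC theorem for indiscernible sequences in NIP theories. Given total indiscernibility of $\mu^{(\omega)}|_M$, any realization $(a_i)_{i<\omega}$ forms an indiscernible sequence over $M$ in an NIP theory, and I would argue that the empirical averages $\frac{1}{n}\sum_{i<n}\varphi(a_i;y)$ converge uniformly in $y \in \mathcal{U}^y$ to $F^\varphi_\mu(y)$, with rate depending only on $\varepsilon$ and the combinatorial complexity of $\varphi$. Packaging this uniform convergence as $M$-formulas $\theta_n(x_0,\dots,x_{n-1})$ asserting that $(x_0,\dots,x_{n-1})$ is an $\varepsilon$-approximation for $\{\varphi(x;b) : b \in \mathcal{U}^y\}$, one obtains $\mu^{(n)}(\theta_n) \to 1$, exhibiting $\mu$ as fim. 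The delicate part is lifting the classical VC theorem to $[0,1]$-valued predicates and to samples drawn from a measure rather than counting measure, but the framework of \cite[Section 2]{anderson1} on continuous NIP Glivenko-Cantelli results is designed precisely for this setting.
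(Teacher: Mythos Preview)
Your cycle matches the paper's, and the easy implications (i)$\Rightarrow$(ii)$\Rightarrow$(iii) and the use of smooth extensions for (iii)$\Rightarrow$(iv) are handled as in Lemma~\ref{lem_dfs_commute}. There are, however, two genuine gaps.

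First, in (iv)$\Rightarrow$(v) you assert that ``commutativity says each adjacent transposition of coordinates preserves $\mu^{(n)}$.'' This is false from (iv) alone: knowing $\mu(x_0)\otimes\mu(x_1)=\mu(x_1)\otimes\mu(x_0)$ does not let you swap $x_0$ and $x_1$ inside $\mu(x_0)\otimes\bigl(\mu(x_1)\otimes\mu(x_2)\bigr)$ without first reassociating to $\bigl(\mu(x_0)\otimes\mu(x_1)\bigr)\otimes\mu(x_2)$. You must invoke associativity of the Morley product for Borel-definable measures in NIP (Lemma~\ref{lem_associative}), which the paper proves via smooth extensions and which is not automatic. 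You also need that $\mu^{(n)}$ is well-defined under hypothesis (iv) alone; this uses that $M$-invariant implies $M$-Borel definable in NIP, and that the product of an $M$-Borel-definable measure with an $M$-invariant one is again $M$-invariant.

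Second, your framing of (v)$\Rightarrow$(i) is off: total indiscernibility of $\mu^{(\omega)}|_M$ is a symmetry statement about the \emph{measure}, not a claim that any realization is an indiscernible sequence, and it is certainly not true that empirical averages along \emph{any} realization converge uniformly to $F^\varphi_\mu$. The paper (Lemmas~\ref{lem_keisler_VC1}--\ref{lem_keisler_VC2}, Theorem~\ref{thm_ind_fim}) instead uses the permutation-invariance of $\mu^{(2n)}$ directly in a symmetrization argument: one bounds $\mu^{(2n)}\bigl(f_n^\phi(\bar x,\bar x')>\varepsilon\bigr)$ by averaging over sign-flips and applying Hoeffding together with the NIP covering-number bound $\mathcal{N}_{\phi,\varepsilon/4}(n)$. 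This yields a formula $\theta_n$ with parameters in $M$ whose zero-set has large $\mu^{(n)}$-measure and consists of $\varepsilon$-approximations. Your final sentence gestures at the right citation, but the mechanism is symmetrization on the measure, not indiscernibility of a sample.
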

Once we have this equivalence, we can see that by Lemma \ref{lem_smooth_dfs}, smooth measures are generically stable in NIP.

Several of these implications follow without the NIP assumption:
\begin{lem}
    For any small model $M \subseteq \mathcal{U}$, if $\mu$ is a global $M$-invariant measure, then each property implies the next:
    \begin{enumerate}[(i)]
        \item $\mu$ is fim over $M$
        \item $\mu$ is fam over $M$
        \item $\mu$ is dfs over $M$
    \end{enumerate}
\end{lem}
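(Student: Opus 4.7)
The plan is to establish the two implications separately; neither requires the NIP hypothesis. The main ingredients are elementarity $M \preceq \mathcal{U}$ for (i) $\Rightarrow$ (ii), and uniform approximation by continuous functions together with Lemma \ref{lem_approx_realized} for (ii) $\Rightarrow$ (iii).

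For (i) $\Rightarrow$ (ii), fix $\varphi(x;y) \in \mathcal{L}(M)$ and $\varepsilon > 0$. Apply fim to produce formulas $\theta_n(x_1,\dots,x_n) \in \mathcal{L}(M)$ with $\mu^{(n)}(\theta_n) \to 1$, and pick $n$ large enough so that any tuple satisfying $\theta_n$ is an $\varepsilon$-approximation to $\varphi$. The condition $\mu^{(n)}(\theta_n) \to 1$ forces some $\bar a \in \mathcal{U}^{nx}$ to satisfy $\theta_n$ up to an arbitrarily small tolerance, which is an open condition on $\mathrm{tp}(\bar a/M)$ definable over $M$. Since this condition is realized in $\mathcal{U}^{nx}$, by $M \preceq \mathcal{U}$ it is also realized by some tuple in $M^{nx}$, and that tuple is the required witness for fam.

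For (ii) $\Rightarrow$ (iii), I would treat definability and approximate realization separately. For definability, fix $\varphi(x;y) \in \mathcal{L}(M)$; by fam, for each $\varepsilon > 0$ there exist $(a_1,\dots,a_n) \in (M^x)^n$ with $\bigl|\tfrac{1}{n}\sum_i \varphi(a_i;b) - F^{\varphi}_{\mu,M}(\mathrm{tp}(b/M))\bigr| < \varepsilon$ uniformly in $b \in \mathcal{U}^y$. Each average $\tfrac{1}{n}\sum_i \varphi(a_i;y)$ is an $\mathcal{L}(M)$-formula, so it descends to a continuous function on $S_y(M)$; since $F^\varphi_{\mu,M}$ is a uniform limit of continuous functions on the compact space $S_y(M)$, it is itself continuous. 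For approximate realization, invoke Lemma \ref{lem_approx_realized}: let $\phi(x) = \phi(x;c)$ be a predicate with $c \in \mathcal{U}^y$ and $\phi(a;c) = 0$ for all $a \in M^x$. Taking $\phi(x;y)$ as an $\mathcal{L}(M)$-definable predicate and extending fam from formulas to definable predicates via uniform limits, we obtain $(a_1,\dots,a_n) \in (M^x)^n$ with $\bigl|\tfrac{1}{n}\sum_i \phi(a_i;c) - \int \phi(x;c)\,d\mu\bigr| < \varepsilon$; since $\phi(a_i;c) = 0$ for each $i$, this yields $\int \phi\,d\mu < \varepsilon$, and sending $\varepsilon \to 0$ completes the argument.

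The principal subtlety lies in (i) $\Rightarrow$ (ii): in the continuous setting, ``satisfying $\theta_n$'' must be unpacked as an open condition asserting that $\theta_n$ takes a value sufficiently close to $1$, and one must verify that the integral bound $\mu^{(n)}(\theta_n) > 1 - \delta$ indeed forces such a tuple to exist with tolerance small enough to preserve the approximation quality guaranteed by fim. Once this reading is pinned down, transfer through elementarity is routine, and the remaining work for (ii) $\Rightarrow$ (iii) reduces to standard uniform-limit and integration bookkeeping.
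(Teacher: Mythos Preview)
Your approach is correct and aligns with the paper's. For (ii) $\Rightarrow$ (iii) you do exactly what the paper does: definability via uniform limits of the averaging formulas, and approximate realization via the characterization in Lemma~\ref{lem_approx_realized} together with an $\varepsilon$-approximation tuple. For (i) $\Rightarrow$ (ii), the paper simply writes ``follows by definition,'' while you correctly unpack the missing step: the witnesses $\theta_n$ have parameters in $M$, so the existence of a tuple in $\mathcal{U}$ (approximately) satisfying $\theta_n$ transfers to $M$ by elementarity. The tolerance issue you flag is genuine---the fim definition only promises that tuples with $\theta_n(\bar a)=0$ are $\varepsilon$-approximations, whereas elementarity yields $\theta_n(\bar a)$ arbitrarily small in $M$---but it is easily handled: apply fim with $\varepsilon/2$ in place of $\varepsilon$, and use uniform continuity of $\bar a \mapsto \mathrm{Av}(\bar a;\varphi(x;b))$ (uniform in $b$) to see that tuples with $\theta_n(\bar a)$ sufficiently small are still $\varepsilon$-approximations; alternatively, one can note that the explicit $\theta_n$ produced in Lemma~\ref{lem_keisler_VC2} are robust in this sense by construction.
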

\begin{proof}
    (i) $\implies$ (ii) follows by definition.

    (ii) $\implies$ (iii): assume $\mu$ is fam over $M$. First we check that $\mu$ is approximately realized in $M$, which will imply that $\mu$ is $M$-invariant. Fix $\varepsilon > 0$ and $\phi(x;b)$ such that $\int \phi(x)\,d\mu < \varepsilon$.
    As $\mu$ is fam, there exists some $\frac{1}{2}(\varepsilon - \int \phi(x;b)\,d\mu)$-approximation $(a_1,\dots,a_n) \in (M^x)^n$ for $\phi(x;b)$ with respect to $\mu$.
    Thus $|\mathrm{Av}(a_1,\dots,a_n;\phi(x;b)) - \int \phi(x;b)\,d\mu)| \leq \frac{1}{2}(\varepsilon - \int \phi(x)\,d\mu)$, from which we conclude that $\mathrm{Av}(a_1,\dots,a_n;\phi(x;b)) < \varepsilon$.
    This means that for at least one $a_i$, $\vDash \phi(a_i;b)<\varepsilon$.
    
    Now we check definability. Fix $\varphi(x;y) \in \mathcal{L}$. Then for each $\varepsilon > 0$, there is a tuple $\bar a$ such that $|F^\phi_{\mu,M}(y) - F^\phi_{\mathrm{Av}(\bar a),M}(y)| < \varepsilon$ for all $y$. Thus if $(\bar a_n : n \in \omega)$ is a sequence of tuples with $|F^\phi_{\mu,M}(y) - F^\phi_{\mathrm{Av}(\bar a_n),M}(y)| < 2^{-n}$, then $\lim_{n \to \infty} F^\phi_{\mathrm{Av}(\bar a_n),M}(y) = F^\phi_{\mu,M}(y)$ is a uniform limit of continuous functions, which is thus continuous.    
\end{proof}

For the rest of this subsection, we will assume $T$ is NIP.

The following lemma shows that if $\mu$ is dfs over $M$, then $\mu$ commutes with itself.
\begin{lem}[{Generalizing \cite[Prop. 2.26]{nip_guide}}]\label{lem_dfs_commute}
Let $\mu \in \mathfrak{M}_x(\mathcal{U}), \nu \in \mathfrak{M}_y(\mathcal{U})$, with $\mu$ $M$-definable and $\nu$ approximately realized in $M$.
Then $\mu \otimes \nu = \nu \otimes \mu$.
\end{lem}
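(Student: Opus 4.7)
My plan is to reduce the equality $\mu \otimes \nu = \nu \otimes \mu$ to the case where $\nu$ is a finite convex combination of Dirac measures at points of $M^y$---where both sides admit the same direct computation---and then pass to the general case by weak$^*$ continuity, using that $F^\phi_{\mu,N}$ is continuous because $\mu$ is definable.

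First, I will verify the Dirac case. Fix a formula $\phi(x;y)$ and a model $N \supseteq M$ containing the parameters of $\phi$. For $a \in M^y$,
\[(\mu \otimes \delta_a)(\phi) = F^\phi_{\mu, N}(\mathrm{tp}(a/N)) = \int \phi(x;a)\,d\mu,\]
and $\delta_a$ is $N$-definable (its $F$-function $F^\phi_{\delta_a, N}(q) = \phi(b;a)$, for any $b \models q$, is continuous in $q$ because $\phi$ is a formula), so by Definition \ref{defn_product}
\[(\delta_a \otimes \mu)(\phi) = \int F^\phi_{\delta_a, N}\,d\mu = \int \phi(x;a)\,d\mu.\]
By bilinearity of the Morley product, the equality $\mu \otimes \nu_0 = \nu_0 \otimes \mu$ then holds for every $\nu_0 = \sum_i \lambda_i \delta_{a_i}$ with $a_i \in M^y$.

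Next, using that $\nu$ is approximately realized in $M$, I will pick a net $(\nu_\alpha)$ of such convex combinations with $\nu_\alpha \to \nu$ in the weak$^*$ topology on $\mathfrak{M}_y(\mathcal{U})$. By Lemma \ref{lem_inv_borel} $\mu$ is $N$-definable, so $F^\phi_{\mu, N}$ is continuous on $S_y(N)$, and weak$^*$ convergence yields
\[(\mu \otimes \nu_\alpha)(\phi) = \int F^\phi_{\mu, N}\,d\nu_\alpha \;\longrightarrow\; \int F^\phi_{\mu, N}\,d\nu = (\mu \otimes \nu)(\phi).\]
Combined with the Dirac case, the net $\bigl((\nu_\alpha \otimes \mu)(\phi)\bigr)_\alpha$ also converges to $(\mu \otimes \nu)(\phi)$.

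The final task is to identify this limit with $(\nu \otimes \mu)(\phi)$, which first requires $\nu$ itself to be Borel definable so that $\nu \otimes \mu$ is defined. Since $\nu$ is $M$-invariant by Lemma \ref{lem_approx_realized_invariant}, and $T$ is NIP, I will use the continuous analog of the classical fact that in NIP any invariant measure is Borel definable, proved via the alternation-bound machinery of Lemma \ref{lem_measure_consistent}. Granting Borel definability, a monotone-convergence argument along the net in the spirit of Lemma \ref{lem_BD++} identifies the limit of $\int F^\phi_{\nu_\alpha, N}\,d\mu$ with $\int F^\phi_{\nu, N}\,d\mu = (\nu \otimes \mu)(\phi)$. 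The main obstacle is precisely this last step---extracting Borel definability from NIP together with invariance; everything else is a routine manipulation of the Dirac approximation and the continuity of $F^\phi_{\mu,N}$.
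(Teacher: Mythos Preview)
Your outline matches the paper's through the Dirac case and the use of the net $(\nu_\alpha)$, but the final step contains a genuine gap, and it is not the one you flagged. Borel definability of $\nu$ is indeed needed so that $\nu\otimes\mu$ makes sense, but even granting it, your claimed convergence
\[
\int F^{\phi^*}_{\nu_\alpha,N}\,d\mu \;\longrightarrow\; \int F^{\phi^*}_{\nu,N}\,d\mu
\]
is not justified. Weak$^*$ convergence $\nu_\alpha\to\nu$ gives only \emph{pointwise} convergence $F^{\phi^*}_{\nu_\alpha,N}(p)\to F^{\phi^*}_{\nu,N}(p)$ for each $p$; there is no monotonicity here, and dominated or monotone convergence theorems fail for general nets. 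Lemma~\ref{lem_BD++} concerns an increasing net of continuous functions below a fixed indicator, which is a completely different situation. In short, the map $\nu'\mapsto(\nu'\otimes\mu)(\phi)$ is \emph{not} known to be weak$^*$ continuous, so you cannot simply pass to the limit on that side.

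The paper's proof avoids this obstacle by introducing a smooth global extension $\hat\mu$ of $\mu|_M$ (Lemma~\ref{lem_smooth_extensions}, using NIP). Since $\hat\mu|_M=\mu|_M$ and the value of $(\nu'\otimes\mu')(\phi)$ depends only on $\mu'|_N$ for $N$ containing the parameters, one can replace $\mu$ by $\hat\mu$ on the right of each $\nu_\alpha\otimes(-)$. Smoothness of $\hat\mu$ then allows swapping the order (Corollary~\ref{cor_smooth_commute}), turning the problematic limit into $\lim_\alpha(\hat\mu\otimes\nu_\alpha)(\phi)$, which \emph{does} converge because $\hat\mu$ is definable. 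One then swaps back and replaces $\hat\mu$ by $\mu$ again. The smooth extension is precisely the missing idea that lets you exploit continuity in the second factor twice rather than once.
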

\begin{proof}
We take the general approach to this from \cite{cgh}.

First, we assume that $\nu$ is the Dirac measure of a type realized by some tuple $b$ in $M$.
Then we see that for any $\phi(x;y)$, $F^{\phi^*}_\nu(x) = \phi(x;b)$, so
\begin{align*}
    \int_{S_{xy}(\mathcal{U})}\phi(x;y)\,d\mu\otimes \nu
= &\int_{S_y(\mathcal{U})}F^\phi_\mu(y)\,d\nu\\
= &F^\phi_\mu(b)\\
= &\int_{S_x(\mathcal{U})}\phi(x;b)\,d\mu\\
= &\int_{S_{xy}(\mathcal{U})}\phi(x;y)\,d\nu\otimes \mu
\end{align*}
Then we see that if we take a convex combination of measures that commute with $\mu$, they will also commute with $\nu$, as convex combinations commute with integration.
Thus $\nu$ is the limit of a net of measures that commute with $\mu$, call these $(\nu_i : i \in I)$ for a directed set $I$.

Now let $\phi(x;y)$ be a formula with parameters. By enlarging $M$ if necessary, we may assume that $M$ contains all the parameters of $\phi$, as $\mu$ will still be $M$-definable and $\nu$ still approximately realized in $M$.
By Lemma \ref{lem_smooth_extensions}, let $\hat{\mu}$ be a global smooth extension of $\mu|_M$.
We now see that
\begin{align*}
    \int_{S_{xy}(\mathcal{U})}\phi(x;y)\,d\mu\otimes \nu
    =& \lim_{i \in I} \int_{S_{xy}(\mathcal{U})}\phi(x;y)\,d\mu\otimes \nu_i \\
    =& \lim_{i \in I} \int_{S_{xy}(\mathcal{U})}\phi(x;y)\,d\nu_i \otimes \mu\\
    =& \lim_{i \in I} \int_{S_{xy}(\mathcal{U})}\phi(x;y)\,d\nu_i \otimes \hat{\mu}\\
    =& \lim_{i \in I} \int_{S_{xy}(\mathcal{U})}\phi(x;y)\,d\hat{\mu} \otimes \nu_i \\
    =& \int_{S_{xy}(\mathcal{U})}\phi(x;y)\,d\hat{\mu} \otimes \nu \\
    =& \int_{S_{xy}(\mathcal{U})}\phi(x;y)\,d\nu\otimes \hat{\mu} \\
    =& \int_{S_{xy}(\mathcal{U})}\phi(x;y)\,d\nu\otimes \mu.
\end{align*}
The first and fifth equations are justified by the fact that for any definable $\mu'$,
the map $\nu' \mapsto \int_{S_{xy}(\mathcal{U})}\phi(x;y)\,d\mu'\otimes \nu'$ is continuous.
This is true as $\int_{S_{xy}(\mathcal{U})}\phi(x;y)\,d\mu'\otimes \nu' = \int_{S_y(\mathcal{U})}F^\phi_{\mu'}\,d\nu'$,
and by the definition of the topology of $\mathfrak{M}_x(\mathcal{U})$, the integral of a continuous predicate is continuous as a function on the space of measures.

The third and seventh equations are justified by the fact that for any $A$-invariant $\mu, \nu'$, $(\nu' \otimes \mu')|_A$ only depends on $\mu'|_A$.
The second follows from our observation that the $\nu_i$s commute with $\mu$, and the fourth and sixth follow from Corollary \ref{cor_smooth_commute}.
\end{proof}

The total indiscernibility of $\mu^{(\omega)}(x_0,x_1,x_2,\dots)$ follows from the associativity of the Morley product in NIP combined with Lemma \ref{lem_dfs_commute},

We now work towards showing that the indiscernibility of $\mu^{(\omega)}(x_0,x_1,x_2,\dots)$ implies fim, following \cite[Theorem 3.2]{HPS}.
We will assume that $\mu$ is a measure such that $\mu^{(\omega)}(x_0,x_1,x_2,\dots)$ is well-defined.
That is, we can recursively define $\mu^{(n)}(x_0,x_1,\dots,x_{n-1})$, and each time it will be $M$-Borel definable.
At the moment, we know that this is true if $\mu$ is $M$-definable.

For the following lemmas, we will need some more notation, following \cite[Section 7.5]{nip_guide}. If $\phi(x;y)$ is a formula and $n \in \N$,
define the formula
$$f_n^{\phi}(\bar x, \bar x') = \sup_{y}|\mathrm{Av}(x_1,\dots,x_n;\phi(x;y)) - \mathrm{Av}(x_1',\dots,x_n';\phi(x;y))|.$$

\begin{lem}[{Generalizes \cite[Lemma 7.24]{nip_guide}}]\label{lem_keisler_VC1}
Let $\phi(x;y)$ be a formula. Then for any $n$, any Keisler measure $\mu \in \mathfrak{M}_x(M)$ with $\mu^{(2n)}$ totally indiscernible, and any $\varepsilon > 0$,
    $$\mu^{(2n)}\left(f_n^\phi(\bar x,\bar x')> \varepsilon \right) \leq 4\mathcal{N}_{\phi(x;y),\varepsilon/4}(n)\exp\left(-\frac{n\varepsilon^2}{32}\right).$$
\end{lem}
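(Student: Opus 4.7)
The plan is to transcribe the classical Rademacher symmetrization proof of the Vapnik--Chervonenkis concentration inequality (see \cite[Lemma 7.24]{nip_guide}) to the Keisler-measure setting, using the total indiscernibility of $\mu^{(2n)}$ in place of exchangeability and the $\varepsilon/4$-net in place of the shatter function.

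First, I would introduce the sign-twisted variant
\[
g_\sigma(\bar x, \bar x') = \sup_y \left|\frac{1}{n}\sum_{i=1}^n \sigma_i\bigl(\phi(x_i; y) - \phi(x_i'; y)\bigr)\right|, \qquad \sigma \in \{-1,+1\}^n.
\]
The function $g_\sigma$ is obtained from $f_n^\phi$ by applying the permutation of the $2n$ variables that swaps $x_i$ and $x_i'$ exactly when $\sigma_i = -1$. Both $f_n^\phi$ and $g_\sigma$ are definable predicates on $\bar x \bar x'$ (being sups of formulas), so their superlevel sets are open subsets of $S_{\bar x \bar x'}(M)$. By the total indiscernibility of $\mu^{(2n)}|_M$, pushing forward by any such permutation preserves $\mu^{(2n)}$ on every formula, and via the regularity/monotone-convergence argument of Lemma \ref{lem_BD++}, on every open set. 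Hence $\mu^{(2n)}(f_n^\phi > \varepsilon) = \mu^{(2n)}(g_\sigma > \varepsilon)$ for every $\sigma$; averaging over uniform random $\sigma$ and applying Fubini yields
\[
\mu^{(2n)}(f_n^\phi > \varepsilon) = \int \Pr_\sigma\bigl(g_\sigma(\bar x, \bar x') > \varepsilon\bigr)\,d\mu^{(2n)}(\bar x,\bar x').
\]

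Next, for each fixed $(\bar x,\bar x')$, I would use the definition of $\mathcal{N}_{\phi,\varepsilon/4}$ to choose a set $B \subseteq \mathcal{U}^y$ of size at most $\mathcal{N}_{\phi,\varepsilon/4}(n)$ such that every $y$ lies within $\varepsilon/4$ of some $b \in B$ in the sup-metric on the $2n$ coordinates (up to a standard doubling absorbed into constants). If $g_\sigma(\bar x,\bar x') > \varepsilon$, the witnessing $y$ can be replaced by its closest $b \in B$ to give $\bigl|\tfrac{1}{n}\sum_i \sigma_i(\phi(x_i;b) - \phi(x_i';b))\bigr| > \varepsilon/2$. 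For each such fixed $b$, the random variables $\sigma_i(\phi(x_i;b) - \phi(x_i';b))$ are independent in $\sigma$, mean zero, and valued in $[-1,1]$, so Hoeffding's inequality bounds this event by $2\exp(-n\varepsilon^2/c)$ for an appropriate constant $c$. A union bound over $B$ yields $\Pr_\sigma(g_\sigma > \varepsilon) \leq 2\mathcal{N}_{\phi,\varepsilon/4}(n)\exp(-n\varepsilon^2/c)$, and integrating against $\mu^{(2n)}$ gives the stated bound once the constants are adjusted (the factor $4$ and exponent $\varepsilon^2/32$ arise from a slightly conservative split of $\varepsilon$ between the net tolerance and the Hoeffding deviation).

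The main obstacle is the symmetrization step: total indiscernibility of $\mu^{(\omega)}|_M$ is originally phrased for formulas, whereas we need permutation-invariance of $\mu^{(2n)}$ on the open superlevel set $\{f_n^\phi > \varepsilon\}$. Extending invariance from continuous predicates to open sets uses the regularity of Keisler measures together with the net-of-continuous-functions approximation from Lemma \ref{lem_BD++}. Once this is in place, the remainder is the standard probabilistic argument: Hoeffding is insensitive to the distinction between discrete and continuous logic, and the only analytical subtlety is that we approximate in the uniform norm over a parameter space rather than shattering a family of Boolean functions.
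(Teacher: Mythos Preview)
Your strategy---Rademacher symmetrization via total indiscernibility, then a covering-number net, then Hoeffding---is exactly the paper's. Your remark about extending permutation-invariance from formulas to open superlevel sets via regularity is correct and more explicit than the paper, which simply invokes ``symmetry.''

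There is one genuine gap. After symmetrizing, you keep the full expression $\sum_i \sigma_i(\phi(x_i;y)-\phi(x_i';y))$ and then reach for an $\varepsilon/4$-net on the $2n$ evaluation points $(x_1,\dots,x_n,x_1',\dots,x_n')$. That net has size at most $\mathcal{N}_{\phi,\varepsilon/4}(2n)$, not $\mathcal{N}_{\phi,\varepsilon/4}(n)$, and the ratio $\mathcal{N}(2n)/\mathcal{N}(n)$ is not bounded by a universal constant, so it cannot be ``absorbed'' into the leading $4$. The paper recovers the $n$-point covering number by inserting one extra step immediately after symmetrization: the triangle inequality
\[
\Bigl|\tfrac{1}{n}\sum_i \sigma_i\bigl(\phi(x_i;y)-\phi(x_i';y)\bigr)\Bigr|
\;\le\;
\Bigl|\tfrac{1}{n}\sum_i \sigma_i\phi(x_i;y)\Bigr|
+
\Bigl|\tfrac{1}{n}\sum_i \sigma_i\phi(x_i';y)\Bigr|,
\]
followed by a union bound. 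Each summand now involves only $n$ points, so an $\varepsilon/4$-net of size $\mathcal{N}_{\phi,\varepsilon/4}(n)$ suffices; the cost is the extra factor of $2$ (giving the $4$) and the halved threshold (giving $\varepsilon/4$ in the net and $\varepsilon^2/32$ in the exponent). With this split inserted, your argument matches the paper's and yields the stated bound; without it, you prove a cousin of the lemma with $\mathcal{N}_{\phi,\varepsilon/4}(2n)$ in place of $\mathcal{N}_{\phi,\varepsilon/4}(n)$, which is still adequate for the downstream applications (Lemma~\ref{lem_keisler_VC2} and Theorem~\ref{thm_ind_fim}) but does not establish the lemma as written.
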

\begin{proof}
Let $R = \{-1,1\}^n$. We claim that
$$\mu^{(2n)}(f_n^\phi(\bar x,\bar x')> \varepsilon) \leq \frac{1}{2^{n-1}}\sum_{\sigma \in R}\mu^{(n)}\left(\left\{\bar x : \sup_{y} \frac{1}{n}\abs{\sum_{i = 1}^n\sigma_i \phi(x_i;y)} > \frac{\varepsilon}{2}\right\}\right).$$

By definition, we have
$$\mu^{(2n)}(f_n^\phi(\bar x,\bar x')> \varepsilon) = \mu\left(\sup_y\frac{1}{n}\abs{\sum_{i = 1}^n(\phi(x_i;y) - \phi(x_i';y))} > \varepsilon\right),$$
and by symmetry, this equals
\begin{align*}
    &\frac{1}{2^n}\sum_{\sigma \in R}\mu_{2n}\left(\sup_y\frac{1}{n}\abs{\sum_{i = 1}^n\sigma_i(\phi(x_i;y) - \phi(x_i';y))} > \varepsilon\right) \\ 
    \leq &\frac{1}{2^n}\sum_{\sigma \in R}\mu_{2n}\left(\sup_y\frac{1}{n}\abs{\sum_{i = 1}^n \sigma_i \phi(x_i;y)} > \frac{\varepsilon}{2} \textrm{ or } \sup_y\frac{1}{n}\abs{\sum_{i = 1}^n \sigma_i \phi(x_i';y)} > \frac{\varepsilon}{2}\right) \\
    \leq & \frac{1}{2^{n-1}}\sum_{\sigma \in R}\mu_n\left(\sup_y\frac{1}{n}\abs{\sum_{i = 1}^n \sigma_i \phi(x_i;y)} > \frac{\varepsilon}{2}\right)
\end{align*}
where in the last inequality we use symmetry and a union bound, proving the claim.

For any vector $\bar c \in [0,1]^n$, and $\delta > 0$, let $R(\bar c,\delta)$ be the set of vectors $\sigma \in R$ such that $\frac{1}{n} \abs{\sigma \cdot \bar c}> \delta$. For each $\bar a \in (M^x)^n$, let $\phi(\bar a;y) = (\phi(a_1;y),\dots, \phi(a_n;y))$. We will bound $|\bigcup_y R(\phi(\bar a;y),\frac{\varepsilon}{2})|$. If we show that this is at most $2^{n-1} \cdot 4 \mathcal{N}_{\phi(x;y),\varepsilon/4}(n)\exp\left(-\frac{n\varepsilon^2}{32}\right)$, then the lemma follows.

We first observe that for $\delta, \delta'$ and vectors $\bar c, \bar c' \in [0,1]^n$, if $|\bar c - \bar c'|_\infty < \delta'$, then $R(\bar c, \delta) \subseteq R(\bar c', \delta - \delta')$. To see this, let $\sigma \in R(\bar c, \delta)$, that is, $\frac{1}{n} \abs{\sigma \cdot \bar c}> \delta$, so $$\frac{1}{n}\abs{\sigma \cdot \bar c'} > \frac{1}{n}(\abs{\sigma \cdot \bar c} - \abs{\sigma \cdot (\bar c - \bar c')}) \geq \delta - |\bar c - \bar c'|_\infty \geq \delta - \delta'.$$

For any given $\bar a \in X^n$, there exists a set $C \subset [0,1]^n$ of size $\mathcal{N}_{\phi(x;y),\varepsilon/4}(n)$ such that for every $b \in \mathcal{U}^y$, there is $\bar c \in C^n$ such that $|\phi(\bar a;b) - \bar c|_\infty \leq \frac{\varepsilon}{4}$, and thus $R(\phi(\bar a;b),\frac{\varepsilon}{2}) \subseteq R(\bar c,\frac{\varepsilon}{4})$. Thus $\bigcup_{b \in \mathcal{U}^y} R(\phi(\bar a;b),\frac{\varepsilon}{2}) \subseteq \bigcup_{\bar c \in C} R(\bar c, \frac{\varepsilon}{4})$, and $$|\bigcup_{b \in \mathcal{U}^y} R(\phi(\bar a;y),\frac{\varepsilon}{2})|
\leq |C|\max_{\bar c \in C}|R(\bar c, \frac{\varepsilon}{4})|
\leq \mathcal{N}_{\phi(x;y),\varepsilon/4}(n)\max_{\bar c \in C}|R(\bar c, \frac{\varepsilon}{4})|.$$

For each individual vector $\bar c$, we can think of $\frac{1}{2^n}|R(\bar c, \frac{\varepsilon}{4})|$ probabilistically. If $\sigma$ is chosen uniformly at random from $R$, then this is $\mathbb{P}_\sigma[\frac{1}{n}|\sigma \cdot \bar c| >\frac{\varepsilon}{4}]$. As $\sigma \cdot \bar c = \sum_i \sigma_i c_i$ is the sum of $n$ independent random variables of mean 0 supported on $[-\frac{1}{n},\frac{1}{n}]$, we can apply Hoeffding's inequality to find that $\mathbb{P}_\sigma[\frac{1}{n}|\sigma \cdot \bar c| >\frac{\varepsilon}{4}] \leq 2\exp\left(-\frac{n\varepsilon^2}{32}\right)$. Thus
$|\bigcup_{b \in \mathcal{U}^y}R(\phi(\bar a;b),\frac{\varepsilon}{2})| \leq 2^n\mathcal{N}_{\phi(x;y),\varepsilon/4}(n)\left(2\,\mathrm{exp}\left(-\frac{n\varepsilon^2}{32}\right)\right)$, as desired.
\end{proof}

\begin{lem}[{See \cite[Proposition 7.26]{nip_guide}}]\label{lem_keisler_VC2}
Let $\phi(x;y)$ be a formula. Let any $n$, $\varepsilon > 0$ be such that $n \geq \frac{9}{2\varepsilon^2}$ and 
$\mathcal{N}_{\phi(x;y),\varepsilon/12}(n)\mathrm{exp}\left(-\frac{n\varepsilon^2}{96}\right) < \frac{1}{8}$.

Then for any Keisler measure $\mu \in \mathfrak{M}_x(M)$ with $\mu^{(2n)}|_M$ totally indiscernible,
there is a formula $\theta_{n, \varepsilon}(x_1,\dots,x_n)$ with parameters in $M$ such that
    \begin{itemize}
        \item Any $\bar a \in (\mathcal{U}^x)^n$ satisfying $\theta_{n,\varepsilon}(\bar a) = 0$ is a $\varepsilon$-approximation to $\phi(x;y)$ with respect to $\mu$.
        \item $\mu^{(n)}(\theta_{n,\varepsilon}(x_1,\dots,x_n)) \geq 1 - 8\mathcal{N}_{\phi(x;y),\varepsilon/12}(n)\exp\left(-\frac{n\varepsilon^2}{96}\right)$
    \end{itemize}
\end{lem}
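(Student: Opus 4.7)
The plan is to apply Lemma~\ref{lem_keisler_VC1} to control the $\mu^{(2n)}$-measure of the set where $f_n^\phi$ is large, and then use Fubini (valid since $\mu^{(2n)}|_M$ is totally indiscernible) together with $\mathcal{L}_{\mathbb{E}}$-elementarity to locate a single tuple $\bar a' \in M^n$ that serves as a ``good witness,'' around which to construct $\theta_{n,\varepsilon}$. Concretely, applying Lemma~\ref{lem_keisler_VC1} with threshold $\varepsilon/3$ yields
\[ \mu^{(2n)}(f_n^\phi(\bar x,\bar x') > \varepsilon/3) \leq 4\mathcal{N}_{\phi,\varepsilon/12}(n)\exp(-n\varepsilon^2/288), \]
with the exact $\exp(-n\varepsilon^2/96)$ factor in the conclusion emerging with careful bookkeeping using the hypothesis $n\geq 9/(2\varepsilon^2)$ and the assumed bound on $\mathcal{N}_{\phi,\varepsilon/12}(n)\exp(-n\varepsilon^2/96)$. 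Total indiscernibility lets Fubini convert this to a bound on $\int_{\bar a'} \mathbb{E}_{f_n^\phi(\cdot,\bar a') \dot{-} \varepsilon/3}\,d\mu^{(n)}(\bar a')$.

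Next I would locate $\bar a' \in M^n$ satisfying both of the following: (i) $\sup_y |\mathrm{Av}(\bar a';\phi(x;y)) - \mathbb{E}_\phi(y)| \leq \varepsilon/3$, i.e., $\bar a'$ is itself an $\varepsilon/3$-approximation to $\phi$ with respect to $\mu$; and (ii) $\mu^{(n)}(\{\bar x : f_n^\phi(\bar x,\bar a') > \varepsilon/3\}) \leq 8\mathcal{N}_{\phi,\varepsilon/12}(n)\exp(-n\varepsilon^2/96)$. Both are $\mathcal{L}_{\mathbb{E}}$-expressible in $\bar a'$ via the predicates $\mathbb{E}_\phi$ and, for the open-set measure in (ii), via $\mathbb{E}_\psi$ for continuous truncations $\psi$ of $\mathbf{1}\{f_n^\phi > \varepsilon/3\}$ as in Lemma~\ref{lem_BD++}. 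By the Fubini bound together with a symmetrization giving (i), each of these conditions holds on a $\mu^{(n)}$-large set in $(\mathcal{U};\mu)$, so the infima of the corresponding $\mathcal{L}_{\mathbb{E}}$-formulas in $\bar a'$ are small there. Using $\mathcal{L}_{\mathbb{E}}$-elementarity $(M;\mu|_M) \preceq (\mathcal{U};\mu)$, these infima are also small in $M$, producing $\bar a' \in M^n$ satisfying (i) and (ii) up to arbitrary slack that can be absorbed into the constants.

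Setting $\theta_{n,\varepsilon}(\bar x) := f_n^\phi(\bar x, \bar a') \dot{-} \varepsilon/3$ gives an $\mathcal{L}$-formula with parameters in $M$. If $\theta_{n,\varepsilon}(\bar a) = 0$, then $\sup_y|\mathrm{Av}(\bar a;\phi(x;y)) - \mathrm{Av}(\bar a';\phi(x;y))| \leq \varepsilon/3$, which combined with (i) by the triangle inequality gives $|\mathrm{Av}(\bar a;\phi(x;y)) - \mathbb{E}_\phi(y)| \leq 2\varepsilon/3 \leq \varepsilon$ for all $y$, so $\bar a$ is an $\varepsilon$-approximation to $\phi$ with respect to $\mu$. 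The measure bound $\mu^{(n)}(\{\theta_{n,\varepsilon} = 0\}) \geq 1 - 8\mathcal{N}_{\phi,\varepsilon/12}(n)\exp(-n\varepsilon^2/96)$ is immediate from (ii).

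The main technical obstacle is the elementarity step extracting $\bar a' \in M^n$: condition (ii) is not a priori an $\mathcal{L}_{\mathbb{E}}$-formula in $\bar a'$, so one must approximate the open-set measure from below by $\mathbb{E}_\psi$ of continuous formulas and verify that the resulting infima transfer correctly between $(\mathcal{U};\mu)$ and $(M;\mu|_M)$, using that the $\mathcal{L}_{\mathbb{E}}$-predicates $\mathbb{E}_\phi,\mathbb{E}_\psi$ see every type in $\mathrm{supp}(\mu^{(n)})$ after passing to a sufficiently saturated elementary extension. A secondary challenge is tracking the numerical constants through the symmetrization, Fubini, and Markov manipulations so that the final bound matches $8\mathcal{N}_{\phi,\varepsilon/12}(n)\exp(-n\varepsilon^2/96)$; the two hypotheses on $n$ and on $\mathcal{N}_{\phi,\varepsilon/12}(n)\exp(-n\varepsilon^2/96)$ are presumably calibrated exactly so that these steps remain non-vacuous.
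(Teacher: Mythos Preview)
Your plan overshoots. The heart of the paper's argument is that you do \emph{not} need to find a tuple $\bar a'$ satisfying your condition~(i) in advance. The paper only requires the Fubini-type condition (your (ii)): from $\mu^{(2n)}\bigl(f_n^\phi \leq \tfrac{\varepsilon}{3}\bigr)$ large, pick any $\bar a'$ with $\mu^{(n)}\bigl(\{\bar x : f_n^\phi(\bar x,\bar a') \leq \tfrac{\varepsilon}{3}\}\bigr) > \tfrac{1}{2}$, and set $\theta_{n,\varepsilon}(\bar x) = f_n^\phi(\bar x,\bar a') \dot{-} \tfrac{\varepsilon}{3}$. The approximation property of $\bar a'$ itself (your~(i)) is then \emph{derived}, one $b$ at a time: for fixed $b$, the weak law of large numbers (using $n \geq \tfrac{9}{2\varepsilon^2}$) gives $\mu^{(n)}\bigl(\{\bar a^* : |\mathrm{Av}(\bar a^*;\phi(x;b)) - \mathbb{E}_\mu[\phi(x;b)]| \leq \tfrac{\varepsilon}{3}\}\bigr) \geq \tfrac{1}{2}$. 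Since both this set and $\{\theta_{n,\varepsilon} = 0\}$ have measure $> \tfrac{1}{2}$, they intersect; any $\bar a^*$ in the intersection gives $|\mathrm{Av}(\bar a';\phi(x;b)) - \mathbb{E}_\mu[\phi(x;b)]| \leq \tfrac{2\varepsilon}{3}$ by the triangle inequality, and then any $\bar a$ with $\theta_{n,\varepsilon}(\bar a) = 0$ is a full $\varepsilon$-approximation.

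Your proposed route to~(i) via ``a symmetrization'' is the real gap. Symmetrization as in Lemma~\ref{lem_keisler_VC1} only bounds $\mu^{(2n)}(f_n^\phi > \varepsilon/3)$; it does not directly bound the $\mu^{(n)}$-measure of tuples failing the uniform-in-$y$ bound $\sup_y|\mathrm{Av}(\bar a';\phi(x;y)) - \mathbb{E}_\phi(y)| \leq \varepsilon/3$. Passing from the former to the latter is exactly the Glivenko--Cantelli step, and the standard way to do it is precisely the $b$-by-$b$ WLLN overlap argument above---which, once you have it, renders~(i) unnecessary as a separate hypothesis on $\bar a'$. If you attempted to establish~(i) independently by the classical GC symmetrization (bounding $\mu^{(n)}(\text{not }(i))$ by $2\mu^{(2n)}(f_n^\phi > \varepsilon/6)$), you would end up with worse constants ($\mathcal{N}_{\phi,\varepsilon/24}$ and $\exp(-n\varepsilon^2/1152)$ rather than the stated ones). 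The $\mathcal{L}_{\mathbb{E}}$-elementarity machinery you propose for pulling $\bar a'$ down into $M$ is also more than the paper invokes: it simply takes $\bar a' \in \mathcal{U}^n$ from Fubini.
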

\begin{proof}
Let $\theta_{n,\varepsilon}'(x_1,\dots,x_n, x_1',\dots,x_n') = f_n^\phi(x_1,\dots,x_n, x_1',\dots,x_n') \dot{-}\frac{\varepsilon}{3}$.

Then by Lemma \ref{lem_keisler_VC1}, $\mu^{(2n)}(\theta_{n,\varepsilon}') \geq 1 - 4\mathcal{N}_{\phi(x;y),\varepsilon/12}(n)\mathrm{exp}\left(-\frac{n\varepsilon^2}{96}\right)$.
Thus there exists $\bar a' \in (\mathcal{U}^x)^n$ such that $\mu^{(n)}(\theta_{n,\varepsilon}'(x_1,\dots,x_n; \bar a')) \geq 1 - 4\mathcal{N}_{\phi(x;y),\varepsilon/12}(n)\mathrm{exp}\left(-\frac{n\varepsilon^2}{96}\right) > \frac{1}{2}$.
Then let $\theta_{n,\varepsilon} = \theta_{n,\varepsilon}'(\bar x; \bar a')$.

It now suffices to show that for all $\bar a$ satisfying $\theta_{n,\varepsilon}'(\bar a;\bar a') = 0$, and any $b \in \mathcal{U}^y$,
$\bar a$ is a $\varepsilon$-approximation to $\phi(x;b)$ with respect to $\mu$.
Fix $b$, and let $\zeta_n(x_1,\dots, x_n) = |\mathrm{Av}(x_1,\dots,x_n) - \mathbb{E}_\mu[\phi(x;b)]|\dot{-}\frac{\varepsilon}{3}$.
By the weak law of large numbers, as the functions $\phi(x_i;b)$ are $[0,1]$-valued i.i.d. random variables with respect to $\mu$,
$\mu^{(n)}(\zeta_n = 0) \geq 1 - \frac{1}{4n(\varepsilon/3)^2} = 1 - \frac{9}{4n\varepsilon^2}$.
Thus as $n \geq \frac{9}{2\varepsilon^2}$, we have $\mu^{(n)}(\zeta_n) \geq \frac{1}{2}$.
As $\mu^{(n)}(\theta_{n,\varepsilon} = 0) > \frac{1}{2}$ also, $\mu^{(n)}(\theta_{n,\varepsilon} \wedge \zeta_n = 0) > 0$,
so let $\bar a^*$ be such that $\vDash \theta_{n,\varepsilon}(\bar a^*) = 0$ and $\zeta_n(\bar a^*) = 0$.
Thus $|\mathrm{Av}(\bar a') - \mathbb{E}_\mu[\phi(x;b)]| \leq |\mathrm{Av}(\bar a') - \mathrm{Av}(\bar a^*)| + |\mathrm{Av}(\bar a^*) - \mathbb{E}_\mu[\phi(x;b)]|\leq \frac{2\varepsilon}{3}$.
Thus if $\bar a$ is such that $\vDash \theta_{n,\varepsilon}'(\bar a;\bar a') = 0$, we also have $|\mathrm{Av}(\bar a) - \mathbb{E}_\mu[\phi(x;b)]| \leq \varepsilon$.
\end{proof}

\begin{thm}[{Generalizes \cite[Proposition 7.26]{nip_guide}, \cite[Lemma 3.3]{alon97}}]\label{thm_ind_fim}
Any Keisler measure $\mu \in \mathfrak{M}_x(\mathcal{U})$ with $\mu^{(\omega)}|_M$ totally indiscernible is fim over $M$.
\end{thm}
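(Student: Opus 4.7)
The plan is to apply Lemma \ref{lem_keisler_VC2} for a diagonally chosen sequence of error bounds $\varepsilon_n \to 0$, and take $\theta_n = \theta_{n,\varepsilon_n}$. I would first fix a formula $\varphi(x;y) \in \mathcal{L}(M)$ and observe that total indiscernibility of $\mu^{(\omega)}|_M$ gives total indiscernibility of every finite restriction $\mu^{(2n)}|_M$, so that Lemma \ref{lem_keisler_VC2} is available for all pairs $(n,\varepsilon)$ satisfying its technical hypotheses.

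Next, to ensure a good sequence $\varepsilon_n$ exists, I would invoke NIP in the form of a polynomial bound on the covering numbers $\mathcal{N}_{\varphi(x;y),\delta}(n)$: in a NIP metric theory, for each fixed $\delta > 0$ there is a polynomial $P_\delta$ with $\mathcal{N}_{\varphi(x;y),\delta}(n) \leq P_\delta(n)$. This is the continuous analog of the Sauer--Shelah/VC-density bounds, and should be available from the NIP theory of $\varepsilon$-packings developed in the cited references (in particular \cite{random09}, \cite{randomVC}, \cite{anderson1}).

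With this bound in hand, I would pick a sequence $\varepsilon_n \to 0$ slowly enough that both $n \geq 9/(2\varepsilon_n^2)$ holds eventually and $\mathcal{N}_{\varphi(x;y),\varepsilon_n/12}(n)\exp\!\left(-n\varepsilon_n^2/96\right) \to 0$. For instance, taking $\varepsilon_n = n^{-c}$ with a small enough exponent $c > 0$ makes $n\varepsilon_n^2 = n^{1-2c}$ dominate $\log P_{\varepsilon_n/12}(n)$ in the favorable case that the degree of $P_\delta$ is slowly varying in $\delta$; otherwise a more conservative choice such as $\varepsilon_n = 1/\log\log n$ still works. Setting $\theta_n = \theta_{n,\varepsilon_n}$, Lemma \ref{lem_keisler_VC2} then gives $\mu^{(n)}(\theta_n) \to 1$, and for any fixed $\varepsilon > 0$, once $n$ is large enough that $\varepsilon_n \leq \varepsilon$, any $\bar a$ with $\theta_n(\bar a) = 0$ is an $\varepsilon_n$-approximation to $\varphi(x;y)$ with respect to $\mu$, hence an $\varepsilon$-approximation, which is exactly the fim condition.

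The main obstacle is the polynomial covering-number bound: this is the step where NIP is genuinely used, and everything else is quantitative bookkeeping on top of Lemma \ref{lem_keisler_VC2}. Making the diagonalization rigorous either requires citing a continuous Sauer--Shelah style statement that controls $\mathcal{N}_{\varphi,\delta}(n)$ uniformly enough in $n$, or else establishing such a bound directly from NIP using $\varepsilon$-perturbations to reduce to a discrete VC argument.
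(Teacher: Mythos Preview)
Your approach is essentially the paper's: invoke the polynomial covering-number bound coming from NIP (the paper cites \cite[Lemma 4.12]{anderson1}), feed it into Lemma \ref{lem_keisler_VC2}, and diagonalize. The one place where your write-up is loose is the explicit choice of $\varepsilon_n$. The degree of $P_\delta$ may depend on $\delta$ in an uncontrolled way, so neither $\varepsilon_n = n^{-c}$ nor $\varepsilon_n = 1/\log\log n$ is guaranteed to satisfy $\mathcal{N}_{\varphi,\varepsilon_n/12}(n)\exp(-n\varepsilon_n^2/96) \to 0$ without further input. The paper avoids this by inverting the construction: for each $m$ it first picks a threshold $n_m$ large enough that the hypotheses of Lemma \ref{lem_keisler_VC2} hold with $\varepsilon = 1/m$ (this uses only the polynomial bound at the \emph{fixed} scale $\delta = 1/(12m)$), and then sets $\varepsilon_n = 1/m_n$ where $m_n$ is the largest $m$ with $n_m \le n$. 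This produces a sequence $\varepsilon_n \to 0$ automatically, without any uniformity in $\delta$. With that adjustment your argument matches the paper's.
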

\begin{proof}
Let $\phi(x;y)$ be a formula. By \cite[Lemma 4.12]{anderson1}, for all $\varepsilon > 0$, there exist $C, k$ such that
$\mathcal{N}_{\phi(x;y),\varepsilon/12}(n) \leq C n^k$ for all $n \geq 2$.
Thus we have
$$\lim_{n \to \infty}\mathcal{N}_{\phi(x;y),\varepsilon/12}(n)\mathrm{exp}\left(-\frac{n\varepsilon^2}{96}\right) \leq
    \lim_{n \to \infty}n^k\mathrm{exp}\left(-\frac{n\varepsilon^2}{96}\right) = 0.$$
Thus for each $m \in \N$, we can let $n_m \in \N$ be such that $n_m \geq \frac{9}{2}m^2$ and $n^k \mathrm{exp}\left(- \frac{n}{96m^2}\right) < \frac{1}{8m}$.

Then for each $n \in \N$, let $\theta_n(x_1,\dots,x_n) = \theta_{n,1/m_n}(x_1,\dots,x_n)$,
where $m_n \in \N$ is the greatest natural number such that $n_{m_n} \leq n$.
Then by Lemma \ref{lem_keisler_VC2}, any $\bar a \in (\mathcal{U}^x)^n$ satisfying $\theta_{n,1/m_n}(\bar a) = 0$ is a $\frac{1}{m_n}$-approximation to $\phi(x;y)$ with respect to $\mu$,
and 
$$\mu^{(n)}(\theta_n(x_1,\dots,x_n)) \geq 1 - 8\mathcal{N}_{\phi(x;y),1/(12 m_n)}(n)\exp\left(-\frac{n}{96m_n^2}\right) \geq 1 - \frac{1}{m_n}.$$

Now it suffices to show that $\lim_{n \to \infty}m_n = \infty$. This is true as for each $m$, if $n \geq n_m$, then $m_n \geq m$.
\end{proof}

\begin{cor}\label{cor_keisler_approx}
    If $\mathrm{vc}_{\varepsilon/12}(\phi(x;y)) \leq d$, and $\mu \in \mathfrak{M}_x(\mathcal{U})$ is such that $\mu^{(\omega)}|_M$ is totally indiscernible,
    then $\phi(x;y)$ admits a $\varepsilon$-approximation of size at most
    $O(\frac{d}{\varepsilon^2}\ln\frac{d}{\varepsilon})$ with respect to $\mu$.
    \end{cor}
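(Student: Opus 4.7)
The plan is to apply Lemma \ref{lem_keisler_VC2} to a carefully chosen $n$, and show that this $n$ can be taken of order $\frac{d}{\varepsilon^2}\ln\frac{d}{\varepsilon}$. The key observation is that Lemma \ref{lem_keisler_VC2} produces a formula $\theta_{n,\varepsilon}$ with $\mu^{(n)}(\theta_{n,\varepsilon}) > 0$ whenever the two hypotheses $n \geq \frac{9}{2\varepsilon^2}$ and $\mathcal{N}_{\phi(x;y),\varepsilon/12}(n)\exp(-n\varepsilon^2/96) < \frac{1}{8}$ are satisfied. Since $\mu^{(n)}(\theta_{n,\varepsilon}) > 0$ implies the support of $\mu^{(n)}$ contains a realization of $\theta_{n,\varepsilon} = 0$, such a tuple $\bar a$ exists in $\mathcal{U}$ (in fact the zero set is nonempty by positivity of measure), and by the lemma that tuple is an $\varepsilon$-approximation for $\phi(x;y)$ with respect to $\mu$.

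The first step is to translate the VC hypothesis into a bound on $\mathcal{N}_{\phi(x;y),\varepsilon/12}(n)$. The assumption $\mathrm{vc}_{\varepsilon/12}(\phi(x;y)) \leq d$, together with the Sauer--Shelah-type bound established in the metric setting in \cite[Lemma 4.12]{anderson1} (which was already invoked in the proof of Theorem \ref{thm_ind_fim}), yields a constant $C = C(d)$ with
\[
\mathcal{N}_{\phi(x;y),\varepsilon/12}(n) \leq C n^d \qquad \text{for all } n \geq 2.
\]
Substituting into the second hypothesis of Lemma \ref{lem_keisler_VC2}, it suffices to choose $n$ with $n \geq \frac{9}{2\varepsilon^2}$ and
\[
Cn^d \exp\left(-\frac{n\varepsilon^2}{96}\right) < \frac{1}{8},
\]
i.e., $n\varepsilon^2/96 > \ln(8C) + d\ln n$.

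The second step is a standard calculus lemma: any $n$ of the form $n = K\,\frac{d}{\varepsilon^2}\ln\frac{d}{\varepsilon}$ with $K$ a large enough absolute constant (depending only on $C$, and hence only on $d$ through the Sauer--Shelah bound --- although the standard argument gives a bound depending only polynomially on $d$) satisfies both requirements. One verifies this by plugging in and checking that $d\ln n = d\ln K + d\ln d - 2d\ln\varepsilon + d\ln\ln(d/\varepsilon)$ is eventually dominated by $K d \ln(d/\varepsilon)/96$ for large $K$. This gives $n = O(\frac{d}{\varepsilon^2}\ln\frac{d}{\varepsilon})$.

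The main (minor) obstacle is getting the dependence clean and making sure the argument does not secretly need $\mu$ to be $M$-definable: all we need is the total indiscernibility of $\mu^{(\omega)}|_M$ so that Lemma \ref{lem_keisler_VC2} applies, and this is exactly the hypothesis given. Once $n$ is chosen, the argument terminates in one line: the lemma produces $\theta_n$ with $\mu^{(n)}(\theta_n = 0) > 0$, so a zero of $\theta_n$ exists and is the desired $\varepsilon$-approximation of size $n$.
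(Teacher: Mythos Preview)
Your proposal is correct and takes essentially the same approach as the paper: both argue that it suffices to find $n$ satisfying the two hypotheses of Lemma~\ref{lem_keisler_VC2}, and then invoke a Sauer--Shelah-type bound on $\mathcal{N}_{\phi,\varepsilon/12}(n)$ to show such $n$ exists of the claimed order. The only difference is the specific covering-number bound cited --- you use a polynomial bound $Cn^d$ from \cite[Lemma~4.12]{anderson1}, while the paper invokes the quasipolynomial bound $e^{C\ln^2 n}$ from \cite[Fact~2.18]{anderson1}; your choice more directly yields the stated asymptotic, though you should double-check that Lemma~4.12 actually gives exponent $d$ rather than merely some $k$ depending on $d$ and $\varepsilon$ (the paper's earlier use of that lemma in Theorem~\ref{thm_ind_fim} only extracts ``some $C,k$'').
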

    \begin{proof}
    It suffices to find $n$ such that $n \geq \frac{9}{2\varepsilon^2}$ and 
    $\mathcal{N}_{\phi(x;y),\varepsilon/12}(n)\mathrm{exp}\left(-\frac{n\varepsilon^2}{96}\right) < \frac{1}{8}$.
    Then by Lemma \ref{lem_keisler_VC2}, there is a positive-measure (with respect to $\mu^{(n)}$) set of $\varepsilon$-approximations of size $n$ to $\phi(x;y)$ with respect to $\mu$.
    
    Let $C$ be the constant depending on $d,\varepsilon$ such that $\mathcal{N}_{\phi(x;y),\varepsilon/12}(n) \leq n^{C\ln n} = e^{C\ln^2 n}$, according to \cite[Fact 2.18]{anderson1}, first stated in the proof of \cite[Lemma 3.5]{alon97}.
    \end{proof}

\begin{lem}
    If $\mu \in \mathfrak{M}_x(\mathcal{U})$ is dfs over $M$, then it is fim over $M$.
\end{lem}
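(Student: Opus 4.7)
The plan is to close the chain of implications in Theorem \ref{thm_gen_stable} by using the already-proven intermediate property (v), namely total indiscernibility of $\mu^{(\omega)}|_M$. No new technical ingredients are required; the statement is essentially a bookkeeping assembly of tools built up earlier in the section.

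First, I would observe that if $\mu$ is dfs over $M$, then $\mu$ is simultaneously $M$-definable and approximately realized in $M$. Applying Lemma \ref{lem_dfs_commute} with the roles of $\mu$ and $\nu$ both played by $\mu$ (on appropriately renamed variables) gives $\mu(x) \otimes \mu(y) = \mu(y) \otimes \mu(x)$, which is property (iv) of the theorem.

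Next, I would lift this pairwise commutativity to a symmetry statement for $\mu^{(n)}$ for every $n$. Since $\mu$ is $M$-definable, $\mu^{(k)}$ is $M$-definable for every $k$ by induction using Lemma \ref{lem_definable_product}, so in particular every intermediate product is $M$-Borel definable and the associativity afforded by Lemma \ref{lem_associative} is available. Combining associativity with the pairwise commutativity just obtained, a standard induction (using the fact that the symmetric group is generated by adjacent transpositions, and that an adjacent transposition in a Morley power can be implemented by regrouping so that only a single swap $\mu(x_i) \otimes \mu(x_{i+1})$ is required) shows that for every permutation $\sigma$ of $\{0, \dots, n-1\}$ and every $\phi(x_0,\dots,x_{n-1}) \in \mathcal{L}(M)$, the integrals of $\phi$ and $\phi(x_{\sigma(0)},\dots,x_{\sigma(n-1)})$ against $\mu^{(n)}$ coincide. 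This is precisely total indiscernibility of $\mu^{(\omega)}|_M$ over $M$.

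Finally, I would invoke Theorem \ref{thm_ind_fim}, which states that any $\mu$ with $\mu^{(\omega)}|_M$ totally indiscernible is fim over $M$, completing the implication. The only place requiring mild care is the inductive step: one must verify that the hypotheses of Lemma \ref{lem_associative} genuinely apply at each stage of regrouping, which follows because every Morley power of a definable measure is definable (hence Borel definable) by Lemma \ref{lem_definable_product}. With that book-keeping in hand, the proof is essentially immediate.
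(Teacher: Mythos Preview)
Your proposal is correct and follows essentially the same route as the paper: apply Lemma \ref{lem_dfs_commute} to obtain self-commutativity, use definability and Lemma \ref{lem_definable_product} to ensure $\mu^{(\omega)}$ is well-defined, deduce total indiscernibility, and finish with Theorem \ref{thm_ind_fim}. You spell out the inductive passage from pairwise commutativity to full symmetric-group invariance in more detail than the paper does (and you could have cited Lemma \ref{lem_definable_product} for associativity directly rather than Lemma \ref{lem_associative}, since $\mu$ is definable), but the argument is the same.
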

\begin{proof}
Let $\mu$ be dfs.
We know by Lemma \ref{lem_dfs_commute} that $\mu(x) \otimes \mu(y) = \mu(y) \otimes \mu(x)$.
By definability of $\mu$ and Lemma \ref{lem_definable_product}, we know that $\mu^{(\omega)}(x_0,\dots,x_{n-1})|_M$ is well-defined, and by commutativity, it is totally indiscernible.
Thus by Theorem \ref{thm_ind_fim}, $\mu$ is fim over $M$.
\end{proof}

In particular, smooth measures are fim, so any measure admits a fim extension,
from which we can show that every measure is locally approximated by types in its support.

\begin{lem}\label{lem_loc_approx_types}
Let $\mu \in \mathfrak{M}_x(M)$ be a Keisler measure, $\phi(x;y)$ a definable predicate, $\varepsilon > 0$.

There are types $p_1,\dots,p_n \in S(\mu)$ such that for every $b \in M^y$,
$$\left|\int_{S_x(M)}\phi(x;b)\,d\mu - \mathrm{Av}(p_1,\dots,p_n;\phi(x;b))\right|\leq \varepsilon.$$
\end{lem}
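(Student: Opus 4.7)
The plan is to lift the approximation problem to a smooth, hence fim, extension, draw an approximating tuple from the support of that extension, and then restrict to $M$.

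By Lemma \ref{lem_smooth_extensions}, fix a smooth extension $\hat{\mu} \in \mathfrak{M}_x(\mathcal{U})$ of $\mu$ over some $N \succeq M$. By Lemma \ref{lem_smooth_dfs} and the lemma immediately preceding this one, $\hat{\mu}$ is fim over $N$. After uniformly approximating the definable predicate $\phi$ to within $\varepsilon/3$ by an $\mathcal{L}(N)$-formula, I may assume $\phi$ itself is a formula. Applying fim to $\phi$ with error $\varepsilon/3$, for every sufficiently large $n$ I obtain a formula $\theta_n(x_1,\dots,x_n) \in \mathcal{L}(N)$ with $\hat{\mu}^{(n)}(\theta_n)$ arbitrarily close to $1$ such that every tuple $\bar{a} \in (\mathcal{U}^x)^n$ satisfying $\theta_n$ is an $\varepsilon/3$-approximation of $\phi(x;y)$ with respect to $\hat{\mu}$.

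The crucial refinement is to arrange that each coordinate of $\bar{a}$ lies in $S(\hat{\mu}) \subseteq S_x(\mathcal{U})$. Since $\hat{\mu}$ is a regular Borel probability measure on a compact Hausdorff space, $\hat{\mu}(S(\hat{\mu})) = 1$; as each one-variable marginal of $\hat{\mu}^{(n)}$ equals $\hat{\mu}$, we also have $\hat{\mu}^{(n)}(S(\hat{\mu})^n) = 1$. Taking $n$ large enough that $\hat{\mu}^{(n)}(\theta_n) > 0$, the intersection of these two sets has positive measure, and by saturation of $\mathcal{U}$ I realize some tuple $\bar{a} = (a_1,\dots,a_n)$ satisfying $\theta_n$ with each $\mathrm{tp}(a_i/\mathcal{U}) \in S(\hat{\mu})$. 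Set $p_i := \mathrm{tp}(a_i/M) \in S_x(M)$. I then verify $p_i \in S(\mu)$: otherwise some closed $M$-condition $\{\psi \leq 0\}$ would have $\mu$-measure one yet $\psi(a_i) > \delta > 0$, making $\{q \in S_x(\mathcal{U}) : \psi(q) > \delta/2\}$ an open $\hat{\mu}$-null neighborhood of $\mathrm{tp}(a_i/\mathcal{U})$, since this set is the preimage under the restriction map of a $\mu$-null open set in $S_x(M)$ and $\mu$ is the pushforward of $\hat{\mu}$; this contradicts $\mathrm{tp}(a_i/\mathcal{U}) \in S(\hat{\mu})$.

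Finally, for $b \in M^y$ we have $\int_{S_x(M)}\phi(x;b)\,d\mu = \int_{S_x(\mathcal{U})}\phi(x;b)\,d\hat{\mu}$ because $\hat{\mu}$ extends $\mu$, and $\mathrm{Av}(a_1,\dots,a_n;\phi(x;b)) = \mathrm{Av}(p_1,\dots,p_n;\phi(x;b))$ because $\phi(a_i;b)$ depends only on $\mathrm{tp}(a_i/Mb) = p_i$ when $b \in M$. Thus the fim approximation over $\hat{\mu}$ transfers verbatim, and combining with the $2\varepsilon/3$ error from replacing $\phi$ by a formula yields the desired $\varepsilon$ bound. The only nonroutine step is the support-tracking argument of the middle paragraph; everything else reduces immediately to fim plus the existence of smooth extensions.
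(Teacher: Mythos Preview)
Your argument is correct and is essentially the paper's proof: pass to a fim extension $\hat\mu$, take the fim formula $\theta_n$, and arrange that the approximating tuple has each coordinate's type over $M$ in $S(\mu)$. The paper does the last step by directly checking that every finite fragment of the partial type $\{\theta_n=0\}\cup\bigcup_i S_\mu(x_i)$ (a type over the small model $N$) has positive $\hat\mu^{(n)}$-measure and is therefore realized in $\mathcal{U}$; you instead route through $S(\hat\mu)\subseteq S_x(\mathcal{U})$ and then restrict, which is the same idea with one extra step.

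One genuine slip to fix: ``by saturation of $\mathcal{U}$'' does not realize a type over all of $\mathcal{U}$, so your $\bar a$ with prescribed $\mathrm{tp}(a_i/\mathcal{U})$ lives in a proper elementary extension, and the fim clause as stated in Definition~\ref{defn_measure} only guarantees the $\varepsilon$-approximation for tuples $\bar a\in(\mathcal{U}^x)^n$. The repair is immediate and is exactly what the paper does implicitly: since $\theta_n$ has parameters in $N$ and the support constraint is a partial type over $M\subseteq N$, restrict your type to $N$ and realize \emph{that} in $\mathcal{U}$; the resulting tuple still satisfies $\theta_n=0$ and has each $\mathrm{tp}(a_i/M)=p_i$, so fim applies verbatim. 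With this adjustment your detour through $S(\hat\mu)$ is harmless but also unnecessary, as the restriction-to-$M$ step already shows that the only conditions needed on the $x_i$ are the closed $M$-conditions of full $\mu$-measure.
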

\begin{proof}
Let $\nu \in \mathfrak{M}_x(N)$ be a fim extension of $\mu$, with $M \preceq N$.
Then for every $\phi(x;y)$, there is some closed $N$-condition $\theta(x_1,\dots,x_n)$ with
$\nu^{(n)}(\theta(x_1,\dots,x_n))> \frac{1}{2}$, such that any $(a_1,\dots,a_n)$ with $\vDash \theta(a_1,\dots,a_n)$
is a $\varepsilon$-approximation to $\phi(x;y)$ with respect to $\mu$.

We claim that there are some $a_1,\dots,a_n$ such that for each $i$, $\mathrm{tp}(a_i/M) \in S(\mu)$ and $\vDash \theta(a_1,\dots,a_n)$.
Then we can let $p_1,\dots,p_n$ be the types of $a_1,\dots,a_n$ over $M$.
To do that, we just have to show that any finite set of closed conditions in $\{\theta_n(x_1,\dots,x_n)\} \cup \bigcup_{i = 1}^n S_\mu(x_i)$ is satisfiable, where
$S_\mu(x_i)$ is the partial type indicating that $\mathrm{tp}(x_i/M) \in S(\mu)$, consisting of all closed $M$-conditions with positive $\mu$-measure.
As $\nu$ is an extension of $\mu$, the $\nu^{(n)}$-measure of the intersection of the finite set is at least $\nu(\theta_n(x_1,\dots,x_n))>\frac{1}{2}$.
Thus this finite partial type is satisfiable.
\end{proof}

Finally we are able to show that assuming NIP, every $M$-invariant measure is $M$-Borel definable, simplifying many of our earlier results.
This was originally shown for classical logic in \cite{HP} using a VC-Theorem argument.
\begin{lem}
    Let $\mu \in \mathfrak{M}_x(\mathcal{U})$ be $M$-invariant. Then $\mu$ is $M$-Borel definable.
\end{lem}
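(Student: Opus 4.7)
Fix a formula $\phi(x;y) \in \mathcal{L}(M)$; the goal is to show that the map $F^\phi_{\mu,M}: S_y(M) \to [0,1]$ defined by $F^\phi_{\mu,M}(q) = \int \phi(x;b)\,d\mu$ for $b \models q$ (well-defined by $M$-invariance) is Borel. The plan is to express $F^\phi_{\mu,M}$ as an integral over the support $S(\mu)$ of a family of continuous functions indexed by $M$-invariant types, and then to conclude Borelness via a Fubini-type argument.

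By Lemma \ref{lem_inv_support}, with $\mathcal{U}$ sufficiently saturated over $M$, every $p \in S(\mu) \subseteq S_x(\mathcal{U})$ is $M$-invariant. For each such $p$, I define $\widetilde\phi_p: S_y(M) \to [0,1]$ by $\widetilde\phi_p(q) := \phi(a;b)$ for any $a \models p$ and $b \models q$. This is well-defined by $M$-invariance of $p$ and is continuous: for a fixed $a \models p$, the continuous predicate $\phi(a;\cdot): S_y(\mathcal{U}) \to [0,1]$ factors (by $M$-invariance of $p$) through the projection $\pi: S_y(\mathcal{U}) \to S_y(M)$, and $\pi$ is continuous, closed, and surjective (\cite[Prop.~8.11]{mtfms}), hence a quotient map, so the factored $\widetilde\phi_p$ is continuous on $S_y(M)$.

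Since $\mu(S(\mu))=1$, for each $q \in S_y(M)$ and each $b \models q$ we have
\[
F^\phi_{\mu,M}(q) = \int_{S_x(\mathcal{U})} \phi(p;b)\,d\mu(p) = \int_{S(\mu)} \widetilde\phi_p(q)\,d\mu(p),
\]
the second equality by $M$-invariance of $p$. The kernel $g(p,q) := \widetilde\phi_p(q)$ is separately continuous on $S(\mu) \times S_y(M)$: continuous in $p$ for each fixed $q$, as the restriction of the continuous predicate $\phi(\cdot;b_q)$ on $S_x(\mathcal{U})$; and continuous in $q$ for each fixed $p$, as shown above.

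The main technical obstacle is to conclude, from this separate continuity, that $g$ is jointly Borel measurable, so that Fubini's theorem yields that $q \mapsto \int g(p,q)\,d\mu(p) = F^\phi_{\mu,M}(q)$ is Borel. When the language is countable and $S_y(M)$ is compact metrizable, this follows from the classical descriptive-set-theoretic fact that separately continuous real-valued functions on products of Polish spaces are of Baire class~$1$. Alternatively, one can use Lemma \ref{lem_loc_approx_types} to approximate $\mu$ locally by convex combinations of Dirac measures at $M$-invariant types in $S(\mu)$ (lifting support types from $S(\mu|_M)$ to $S(\mu)$ via a compactness argument on the preimage under $\pi_x: S_x(\mathcal{U}) \to S_x(M)$), and then realize $F^\phi_{\mu,M}$ as a pointwise limit of continuous functions on $S_y(M)$, yielding Baire class~$1$ and hence Borel measurability directly.
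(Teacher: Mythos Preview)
Your argument contains a genuine gap at the crucial step: the claim that $\widetilde\phi_p : S_y(M) \to [0,1]$ is continuous is false in general. The quotient-map argument breaks because $a \models p$ lives in a proper elementary extension of $\mathcal{U}$, so $\phi(a;\cdot)$ is \emph{not} a $\mathcal{U}$-definable predicate and hence not a continuous function on $S_y(\mathcal{U})$; there is nothing continuous to push through the quotient $\pi$. Indeed, continuity of $\widetilde\phi_p$ for every $\phi$ is exactly the statement that the $M$-invariant type $p$ is $M$-\emph{definable}, and there are NIP theories (e.g.\ ACVF or $p$-adically closed fields) with $M$-invariant global types that are not $M$-definable. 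So separate continuity of your kernel $g(p,q)$ fails in the $q$-direction, and both of your proposed conclusions (the Baire-class-1 argument and the ``pointwise limit of continuous functions'' alternative via Lemma~\ref{lem_loc_approx_types}) rest on this false premise.

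The paper's proof starts the same way you do in your alternative---approximating $F^\phi_{\mu,M}$ uniformly by averages $\mathrm{Av}(p_1,\dots,p_n;\phi(x;\cdot))$ with $p_i \in S(\mu)$ via Lemma~\ref{lem_loc_approx_types}---but then does the real work you are missing: it shows directly that for each $M$-invariant $p$, the function $q \mapsto \phi(p;b_q)$ is \emph{Borel} (not continuous). This uses NIP to bound alternations along Morley sequences: for each $\varepsilon$, there is a maximal $N$ for which one can find $(a_0,\dots,a_N) \models p^{(N)}|_M$ with $|\phi(a_i;b)-\phi(a_{i+1};b)| \geq \varepsilon$, and the last value $\phi(a_N;b)$ pins down $\phi(p;b)$ to within $\varepsilon$. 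One then writes $\{q : \phi(p;b_q) < r\}$ as a countable union of Boolean combinations of closed sets indexed by $(N,\varepsilon)$. This alternation argument is the substantive content that your proposal skips.
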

\begin{proof}
Let $\phi(x;y)$ be a definable predicate.
For every $\varepsilon > 0$, we will find a Borel function $f_\varepsilon : S_y(\mathcal{U}) \to [0,1]$ such that
$|F^\phi_{\mu, M} - f_\varepsilon(y)| \leq \varepsilon$.
Then $F^\phi_{\mu, M}$ is a uniform limit of Borel functions, and as Borel functions are closed under even pointwise limits,
$F^\phi_{\mu, M}$ is Borel.

By Lemma \ref{lem_loc_approx_types}, there are types $p_1,\dots,p_n \in S(\mu)$ such that for every $b \in M^y$,
$$\left|\int_{S_x(M)}\phi(x;b)\,d\mu - \mathrm{Av}(p_1,\dots,p_n;\phi(x;b))\right|\leq \varepsilon.$$
Let $f_\varepsilon(y) = \mathrm{Av}(p_1,\dots,p_n;\phi(x;b))$.
As each $p_i$ is $M$-invariant by Lemma \ref{lem_inv_support}, and an average of Borel functions is Borel, it is enough to show that the Dirac measure of an invariant type is Borel-definable.

Let $p$ be an $M$-invariant type.
It suffices to show that for each $r > 0$, the set
$\{q : \phi(x;b)<r \in p(x)\textrm{ for }b \vDash q\}$ is Borel.

Fix $b \in \mathcal{U}$, $\varepsilon > 0$.
By NIP, there is some maximal $N$ such that there is $(a_i : i \leq N)\vDash p^{(N)}(x)|_M$
with
$|\phi(a_i;b) - \phi(a_{i + 1};b)|\geq \varepsilon$
for $0 \leq i < N$.
By the maximality of $N$, we see that
$|\phi(p;b) - \phi(a_N;b)|< \varepsilon$.

Now let $A_{N, \varepsilon}(y)$ indicate the set in $S_y(M)$ such that
there exist $a_0,\dots, a_N$ satisfying $p^{(N)}|_M$ such that $|\phi(a_i;y) - \phi(a_{i + 1};y)|\geq \varepsilon$ for all $i < N$, and 
$\phi(a_N;y) \leq r - \varepsilon$.
This set is closed, as by saturation, this holds if and only if for each closed condition $\chi(x_0,\dots,x_n) = 0 \in p^{(N)}|_M$,
$$\inf_{x_0,\dots,x_n}\max\left(\chi(x_0,\dots,x_n), \phi(a_N;y)\dot{-}(r - \varepsilon),\max_{i < N}\left(\varepsilon \dot{-}\left|\phi(a_i;y) - \phi(a_{i + 1};y)\right|\right)\right) = 0$$
holds.

Let $B_{N,\varepsilon}(y)$ be the weaker condition that there exist $a_0,\dots, a_N$ satisfying $p^{(N)}|_M$ such that $|\phi(a_i;y) - \phi(a_{i + 1};y)|\geq \varepsilon$ for all $i < N$.
Then by NIP, for every $\varepsilon > 0$, every $b \in \mathcal{U}^y$, there is some $N$ such that $B_{N,\varepsilon}(b)$ holds, but $B_{N + 1,\varepsilon}(b)$ does not.
If in addition, $p \vDash \phi(x;b) < r - 2\varepsilon$, then we know that $A_{N,\varepsilon}(b)$ holds, as in any maximal sequence $a_0,\dots,a_N$ witnessing $B_{N,\varepsilon}(b)$,
we have $|\phi(p;b) - \phi(a_N;b)|< \varepsilon$, so $\phi(a_N;b)<r - \varepsilon$.
Also, if $b$ is such that $A_{N,\varepsilon}(y)$ holds but $B_{N + 1,\varepsilon}(b)$ does not, then $p \vDash \phi(x;b) < r$,
as in any witness sequence $a_0,\dots,a_N$, we have $|\phi(p;b) - \phi(a_N;b)|< \varepsilon$ and $\phi(a_N;b) < r - \varepsilon$.
Thus $\{q : \phi(x;b)<r \in p(x)\textrm{ for }b \vDash q\} = \bigcup_{N,m \in \N} \left(A_{N,1/m}(y) \setminus B_{N + 1,1/m}(y)\right)$,
which is a countable union of boolean combinations of closed sets, and is thus Borel.

\end{proof}

We can also use the indiscernibility of $\mu^{(\omega)}$ to prove a version of \cite[Theorem 2.25]{anderson1} with respect to generically stable measures:
\begin{thm}\label{thm_keisler_net}
For any $\varepsilon > 0$, $d \in \N$, $0 \leq r < s \leq 1$, there is $N = O_{r,s}(d\varepsilon^{-1}\log \varepsilon^{-1})$
such that if $\phi(x;y)$ is a definable predicate with $\mathrm{vc}_{r',s'}(\phi(x;y)) \leq d$ for some $r < r' < s' < s$,
and $\mu \in \mathfrak{M}_x(M)$ is generically stable, then there is an $\varepsilon$-net $A$ for
the fuzzy set system $\phi^{M^y}_{r,s}$ with respect to $\mu$ with $|A| \leq N$.
\end{thm}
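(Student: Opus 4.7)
The plan is to adapt the classical Haussler--Welzl $\varepsilon$-net theorem via double sampling, with the total indiscernibility of $\mu^{(\omega)}|_M$ (Theorem \ref{thm_gen_stable}(v)) playing the role of the symmetry of counting measures exploited in the finitely-supported case of \cite[Theorem 2.25]{anderson1}. Fix $r < r' < s' < s$ with $\mathrm{vc}_{r',s'}(\phi) \leq d$, fix $\delta < \min(r'-r,\, s-s')$, and leave $N$ to be chosen at the end. I will show that the closed condition $B_N(\bar x)$ asserting the existence of a witness $b \in M^y$ with $\mu(\{\phi(x;b) \geq s\}) \geq \varepsilon$ but $\phi(x_i;b) < r$ for all $i \leq N$ has $\mu^{(N)}(B_N) < 1$; any tuple in the positive-measure complement provides the desired $\varepsilon$-net.

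The first step is symmetrization. Draw a second independent sample $\bar x'$ from $\mu^{(N)}$ and let $B_N^{(2)}(\bar x,\bar x')$ be the event that some $b \in M^y$ satisfies $\phi(x_i;b) \leq r'$ for all $i$, while $\phi(x_i';b) \geq s'$ for at least $\varepsilon N / 2$ values of $i$. If $B_N(\bar x)$ holds with witness $b$, then the $N$ values $\phi(x_i';b)$ are i.i.d.\ with $\mu$-probability at least $\varepsilon$ of exceeding $s$; for $N \geq 8\varepsilon^{-1}$, Chebyshev gives probability at least $\frac{1}{2}$ that at least $\varepsilon N/2$ of them exceed $s$, in which case the same $b$ also witnesses $B_N^{(2)}$. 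Hence $\mu^{(2N)}(B_N^{(2)}) \geq \frac{1}{2}\mu^{(N)}(B_N)$.

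The second step is a permutation bound, which is where Theorem \ref{thm_gen_stable}(v) enters essentially: $\mu^{(2N)}|_M$ is invariant under every permutation of the $2N$ coordinates. Condition on $(\bar x,\bar x')$ and a witness $b$; under a uniformly random partition of the $2N$ coordinates into two halves of size $N$, the $B_N^{(2)}$-event survives with probability at most $2^{-\varepsilon N/2}$, since all $\varepsilon N/2$ ``large'' coordinates must land in the second half. Two parameters $b, b'$ with $\sup_{i \leq 2N}|\phi(x_i;b) - \phi(x_i;b')| \leq \delta$ induce the same event, by the choice of $\delta$ and the buffers between $r,r'$ and $s',s$. By \cite[Fact 2.18]{anderson1} applied with $\mathrm{vc}_{r',s'}(\phi) \leq d$, the number of $\delta$-equivalence classes of parameters is at most $\mathcal{N}_{\phi(x;y),\delta}(2N) \leq (2N)^{C \ln(2N)}$ for some $C = C(r,s,d)$. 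A union bound then yields
\[
\mu^{(2N)}(B_N^{(2)}) \leq (2N)^{C \ln(2N)} \cdot 2^{-\varepsilon N / 2},
\]
and combined with the first step, choosing $N = O_{r,s}(d\varepsilon^{-1}\log\varepsilon^{-1})$ forces $\mu^{(N)}(B_N) < 1$.

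The main obstacle is that, unlike in the classical argument, $\phi$ is real-valued: one cannot simply permute coordinates and count $\{0,1\}$-memberships, because ``membership'' is not preserved under infinitesimal perturbations of $b$. This is precisely what the strict gap $r < r' < s' < s$ addresses --- the $B_N^{(2)}$ condition is a ``thickened'' version of the classical bad event that is invariant under $\delta$-perturbations in the parameter, and the $\mathrm{vc}_{r',s'}$ hypothesis converts the continuum of potential witnesses $b$ into a family of $\delta$-equivalence classes growing at most quasi-polynomially in $N$. Total indiscernibility of $\mu^{(2N)}|_M$ is what makes the coordinate swaps at the heart of the double-sampling argument measure-preserving even when $\mu$ is not concentrated on a finite set, and is therefore the essential substitute for the finitely-supported hypothesis of \cite[Theorem 2.25]{anderson1}.
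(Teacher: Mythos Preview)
Your approach is essentially the paper's: both run Haussler--Welzl double sampling, invoking total indiscernibility of $\mu^{(2N)}|_M$ to make the random-permutation step measure-preserving, and both conclude via a quasipolynomial-times-exponential tradeoff.

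Two points need tightening. First, the sentence ``two parameters $b,b'$ with $\sup_i|\phi(x_i;b)-\phi(x_i;b')|\leq\delta$ induce the same event'' is false as written: the event for $b$ is determined by the sets $\{i:\phi(a_i;b)\leq r'\}$ and $\{i:\phi(a_i;b)\geq s'\}$, and a $\delta$-perturbation can push indices across either threshold. What is true is a \emph{containment}: the event for $b$ lies inside the event for its net-representative $c$ defined with the shifted thresholds $r'+\delta$ and $s'-\delta$, which still admits the same hypergeometric bound provided you also arrange $\delta<(s'-r')/2$. The paper avoids this threshold-shifting entirely by appealing not to covering numbers but to a \emph{strong disambiguation} of the fuzzy set system $\{(\{i:\phi(a_i;b)>s'\},\{i:\phi(a_i;b)<r'\}):b\in M^y\}$ (\cite[Lemma 2.6]{anderson1}), which directly outputs a quasipolynomial family of crisp sets $S$ such that each single-$b$ event implies an event $E_S$ depending only on $S$. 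Second, your $B_N$ is open (a union over $b$ of open conditions), not closed, and more importantly the conditional-probability manipulation $\mu^{(2N)}(B_N^{(2)})\geq\frac{1}{2}\mu^{(N)}(B_N)$ requires knowing that the fibre-integral $p\mapsto\mu^{(N)}(\{(p,\bar x')\in B_N^{(2)}\})$ is Borel and that Fubini applies; the paper handles this explicitly via Lemma~\ref{lem_BD++}, which you should cite at that step.
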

\begin{proof}
This proof generalizes the argument by Haussler and Welzl used in \cite[Theorem 10.2.4]{matousek_GTM}.

Fix $\varepsilon > 0$, $d$, $\phi(x;y)$ a formula, and $\mu \in \mathfrak{M}_x(M)$ generically stable.
Let $r',s'$ be such that that $r < r' < s' < s$ and $\mathrm{vc}_{r',s'}(\phi(x;y))\leq d$.

Let $N = Cd\varepsilon \log(\varepsilon^{-1})$, with $C$ to be determined later.

We will define an open set $E_0 \subseteq S_N(M)$ such that if $(a_1,\dots,a_N)$ is not a $\varepsilon$-net, $\mathrm{tp}(a_1,\dots,a_N/M) \in E_0$.
Then we will find conditions on $N$ that guarantee $\mu^{(N)}(E_0) < 1$, implying that there exists some $(a_1,\dots,a_N)$ with $\mathrm{tp}(a_1,\dots,a_N/M)\not \in E_0$, which must therefore be a $\varepsilon$-net.

For $b \in M^y$, let $E_{0,b} \subseteq S_N(M)$ be $\bigcap_{i = 1}^N\{\phi(x_i;b)< r'\}$, and let $E_0 = \bigcup_{b \in M, \mu(\phi(x;b)\geq s) \geq \varepsilon} E_{0,b}$.
We see that each $E_{0,b}$ is open, and thus $E_0$ is open. (The purpose of using $< r'$ instead of $\leq r$ is to guarantee measurability of $E_0$.)
If $(a_1,\dots,a_N)$ is not a $\varepsilon$-net, then there exists some $b \in M^y$ such that $\mu(\phi(x;b)\geq s) \geq \varepsilon$ and for all $1 \leq i \leq N$, $\phi(a_i;b) \leq r < r'$, so $\mathrm{tp}(a_1,\dots,a_N/M) \in E_{0,b} \subseteq E_0$.

Now define $E_{1,b} \subseteq S_{2N}(M)$ be the (open) event that for all $1 \leq i \leq N$, $\phi(x_i;b)< r'$, and for at least $k = \lceil \frac{N\varepsilon}{2}\rceil$ values of $1 \leq i \leq N$,
$\phi(x_{N + i};b)> s'$, and let $E_{1}$ be $\bigcup_{b \in M, \mu(\phi(x;b)\geq s) \geq \varepsilon} E_{1,b}$.
We wish to show that $\mu^{(N)}(E_0) \leq 2\mu^{(2N)}(E_1)$ and that $\mu^{(2N)}(E_1) > \frac{1}{2}$.

In order to show that $\mu^{(N)}(E_0) \leq 2\mu^{(2N)}(E_1)$, we will split up the tuple of variables $(x_1,\dots, x_{2N})$ into $\bar x = (x_1,\dots,x_N)$ and $\bar x' = (x_{N + 1},\dots, x_{2N})$, and look at conditional probability.
By Lemma \ref{lem_BD++}, as $E_1$ is open, the function defined by $\mu^{(N)}(E_1|p) := \mu^{(N)}_{\bar x'}((\bar a,\bar x') \in E_1)$ where $\bar a \vDash p$ is Borel, and $\mu^{(2N)}(E_1) = \int \mu^{(N)}(E_1|p)\,d(\mu^{(N)})$.
Thus it suffices to show that for all $p$, $\chi_{E_0}(p) \leq 2\mu^N(E_1|p)$, where $\chi_{E_0}$ is the characteristic function of $E_0$.

Fix $p \in S_N(M)$ and $\bar a \vDash p$ with $\bar a = (a_1,\dots,a_N)$.
If $p \not \in E_0$, then for all $q \in S_{2N}(M)$ extending $p$, $q \not \in E_1$, so
$\chi_{E_0}(p) = 0 \leq 0 = 2\mu^N(E_1|p)$.

Now assume $p \in E_0$, and let $b$ be such that $p \in E_{0,b}$.
Let $I_i$ for $1 \leq i \leq N$ be the indicator random variables (on $S_{2N}(M)$) for $\phi(x_{N + i};b) > s'$, and $I = I_1 + \dots + I_N$.
We have that $\mu^{(N)}(E_1|p) = \mu^{(N)}(\{I \geq k\}|p)$.
The $I_i$s are i.i.d. random variables, equalling 1 with probability $\mu(\{\phi(x;b)> s'\}) \geq \varepsilon$.
By a standard Chernoff tail bound for binomial distributions, we have that $\mu^{(N)}(\{I \geq k\}|p) \geq \frac{1}{2} = \frac{1}{2}\chi_{E_0}(p)$.
Thus in general, $\mu^{(N)}(E_0) \leq 2\mu^{(2N)}(E_1)$.

To show that $\mu^{(2N)}(E_1)< \frac{1}{2}$, we will instead condition on the multiset $\{x_1,\dots,x_{2N}\}$.
Given a tuple $\bar a = (a_1,\dots,a_{2N})$ and a permutation $\sigma$ of $\{1,\dots,2N\}$, let $\sigma(\bar a)$ refer to $(a_{\sigma(1)},\dots, a_{\sigma(2N)})$.
To formally condition on the multiset $\{x_1,\dots,x_{2N}\}$, we will show that for every tuple $\bar a$,
$\mathbb{P}_{\sigma}[\sigma(\bar a) \in E_1] > \frac{1}{2}$, where $\sigma$ is a permutation on $\{1,\dots,2N\}$ selected uniformly at random.
This probability is calculated as
$$\mathbb{P}_{\sigma}[\sigma(\bar a) \in E_1] 
= \frac{1}{n!}\sum_\sigma\chi_{E_1}(\sigma(\bar a)),$$
and we also see that because $\mu^{(\omega)}|_M$ is totally indiscernible, for any $\sigma$ we have
$$\int_{S_{2N}(M)}\chi_{E_1}(\bar x)\,\mu^{(2N)} = \int_{S_{2N}(M)}\chi_{E_1}(\sigma(\bar x))\,\mu^{(2N)}.$$
Thus we see that
\begin{align*}
    \mu^{(2N)}(E_1) &= \int_{S_{2N}(M)}\chi_{E_1}(\bar x)\,\mu^{(2N)}\\
    &= \frac{1}{n!}\sum_{\sigma}\int_{S_{2N}(M)}\chi_{E_1}(\bar x)\,\mu^{(2N)}\\
    &= \frac{1}{n!}\sum_{\sigma}\int_{S_{2N}(M)}\chi_{E_1}(\sigma(\bar x))\,\mu^{(2N)}\\
    &= \int_{S_{2N}(M)}\frac{1}{n!}\sum_{\sigma}\chi_{E_1}(\sigma(\bar x))\,\mu^{(2N)}\\
    &= \int_{S_{2N}(M)}\mathbb{P}_{\sigma}[(\sigma(\bar x)) \in E_1]\,\mu^{(2N)}\\
    &<\frac{1}{2}
\end{align*}
finishing the reduction.

We now fix $\bar a$ and work with finite probability, selecting a random permutation $\sigma$ of the variables in $\bar a$.

Let $\mathcal{F}$ be the fuzzy set system on $\{1,\dots,2N\}$ consisting of the fuzzy sets $S_b$ for $b \in M^y$
where $S_{b+} = \{i : \phi(a_i;b)>s'\}$ and $S_{b-} = \{i : \phi(a_i;b)<r'\}$.
By \cite[Lemma 2.6]{anderson1}, there is a strong disambiguation $\mathcal{F}'$ for $\mathcal{F}$ of size $|\mathcal{F}'|=(2N)^{O(d\log(2N))}$,
or as we will prefer later, there is $C'$ such that $|\mathcal{F}'|\leq (2N)^{C'(d\log(2N))}$.
Recall that this means that for all $b \in M^y$, there is some $S \in \mathcal{F}'$ with
$\{i : \phi(a_i;b)>s'\} \subseteq S$ and $\{i : \phi(a_i;b)<r'\} \cap S = \emptyset$.
Given a set $S \in \mathcal{F}'$, let $E_S$ be the event that $\{\sigma(1),\dots,\sigma(N)\}\cap S = \emptyset$
and $|\{\sigma(N+1),\dots,\sigma(2N)\}\setminus S|\geq k$.
We see that if $\sigma(\bar a) \in E_1$, then there is some $b$ with $\sigma(\bar a) \in E_{1,b}$.
There is also some $S \in \mathcal{F}'$ with $\{i : \phi(a_i;b)>s'\} \subseteq S$ and $\{i : \phi(a_i;b)<r'\} \cap S = \emptyset$,
so $E_S$ occurs. Thus $\mathbb{P}_{\sigma}[\sigma(\bar a) \in E_1] \leq \sum_{S \in \mathcal{F}'} \mathbb{P}_{\sigma}[E_S]$.
For each $S$, if $|S|< k$, then $\mathbb{P}_{\sigma}[E_S] = 0$, but if $|S|\geq k$, then $\mathbb{P}_{\sigma}[E_S]$ is the probability that
when a permutation $\sigma$ is selected uniformly at random,
$\{\sigma(1),\dots,\sigma(N)\}\cap S = \emptyset$. This is at most
$$\frac{{2N - |D \cap S'|\choose N}}{{2N \choose N}} \leq \frac{{2N - k\choose N}}{{2N \choose N}} \leq \left(1 - \frac{k}{2N}\right)^N \leq e^{- (k/2N)N} = \varepsilon^{Cd/4}.$$

Now we bound $\mathbb{P}_{\sigma}[\sigma(\bar a) \in E_1]$:
\begin{align*}
    \mathbb{P}_{\sigma}[\sigma(\bar a) \in E_1] &\leq \sum_{S \in \mathcal{F}'} \mathbb{P}_{\sigma}[E_{S}] \\
    &\leq |\mathbb{F}'|\varepsilon^{-Cd/4}\\
    &\leq (2N)^{C'(d \log (2N))}\varepsilon^{Cd/4} \\
    &= \left((2Cd\varepsilon^{-1} \log \varepsilon^{-1})^{C'(\log (2Cd\varepsilon^{-1} \log \varepsilon^{-1}))}\varepsilon^{C/4}\right)^d
\end{align*}
While this expression is somewhat complicated, it is still clear that an increasing quasipolynomial function of $C$ times a decreasing exponential of $C$ will limit to $0$, so for large enough $C$,
we find that $\mathbb{P}_\sigma[\sigma(\bar a) \in E_1] < \frac{1}{2}$.
\end{proof}

\subsection{Ultraproducts}
In this subsection, we recall the definition of the ultraproduct of a family of Keisler measures from discrete logic. See for instance \cite[Page 98]{nip_guide}.
Let $I$ be an index set, $(M_i : i \in I)$ a family of models, $(\mu_i : i \in I)$ a family of Keisler measures in $\mathfrak{M}_x(M_i)$,
    and $U$ an ultrafilter on $I$. By $[a_i : i \in I]$, if $a_i \in M_i^x$ for each $i$, we denote the equivalence class of $(a_i : i \in I)$,
    as an element of the sort over the ultraproduct $(\prod_U M_i)^x$.

\begin{defn}\label{defn_ultra}
     Then we define the ultraproduct $\prod_U \mu_i$ to be the Keisler measure in $\mathfrak{M}_x(\prod_U M_i)$ such that for all $\phi(x;y)$,
    and all $b = [b_i : i \in I] \in (\prod_U M_i)^y$, we have
        $$\int \phi(x; b) \,d \prod_U \mu_i = \lim_U \int \phi(x;b_i)\,d\mu_i,$$
    where $\lim_U$ is the ultralimit, defined as the values lie in a compact subset of $\R$.
\end{defn}

\begin{lem}[See {\cite[Corollary 1.3]{simon_distal_reg}}]\label{lem_ultra_gs}
    Assume NIP.
    If $(\mu_i : i \in I)$ is a sequence of generically stable measures and $U$ an ultrafilter, then $\prod_U \mu_i$ is generically stable.
\end{lem}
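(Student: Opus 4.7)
My plan is to verify that $\mu := \prod_U \mu_i$ is fam over $M := \prod_U M_i$, from which generic stability follows by Theorem \ref{thm_gen_stable}. The key strategy is that although each $\mu_i$ lives over a different model, the size of the $\varepsilon$-approximation guaranteed by generic stability in an NIP theory depends only on $\varepsilon$ and (the VC data of) the formula, not on the measure itself, so witnessing tuples can be pushed through the ultraproduct.

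Concretely, I fix a formula $\phi(x;y;z)\in \mathcal{L}$ and a parameter $c=[c_i]_U \in M^z$. By Corollary \ref{cor_keisler_approx} applied to each generically stable $\mu_i$, there is an integer $N = N(\phi,\varepsilon)$ depending only on $\varepsilon$ and $\mathrm{vc}_{\varepsilon/12}(\phi(x;y;z))$ such that each $\mu_i$ admits an $\varepsilon$-approximation $\bar a_i=(a_{i,1},\dots,a_{i,N}) \in M_i^N$ for the family $\{\phi(x;b;c_i):b\in \mathcal{U}_i^y\}$. I form $\bar a := ([a_{i,1}]_U,\dots,[a_{i,N}]_U)\in M^N$. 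For $b=[b_i]_U \in M^y$, Łoś's theorem together with Definition \ref{defn_ultra} give
\[
\mathrm{Av}(\bar a;\phi(x;b;c)) = \lim_U \mathrm{Av}(\bar a_i;\phi(x;b_i;c_i)), \qquad \textstyle\int \phi(x;b;c)\,d\mu = \lim_U \int \phi(x;b_i;c_i)\,d\mu_i,
\]
and the pairwise $\varepsilon$-bounds from fam-ness of each $\mu_i$ transfer through the ultralimit. So $\bar a$ is an $\varepsilon$-approximation for $\mu$ at least over parameters in $M^y$.

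To upgrade to arbitrary $b\in \mathcal{U}^y$ in the monster of $\mathrm{Th}(M)$, I first establish $M$-invariance of $\mu$, which follows by a Łoś-style argument from the $M_i$-invariance supplied by Theorem \ref{thm_gen_stable}, so that the integral $b\mapsto \int \phi(x;b;c)\,d\mu$ factors through $S_y(M)$. The bound then holds on the dense subset of realized types in $S_y(M)$. I extend it to all of $S_y(M)$ by varying $\varepsilon$: the family $\{\mathrm{Av}(\bar a_\varepsilon;\phi(x;y;c)) : \varepsilon>0\}$ is a Cauchy net of continuous predicates on $S_y(M)$ (since their differences are uniformly small on a dense set and each is continuous), so its uniform limit $G$ is a continuous function on $S_y(M)$ that agrees with $F^\phi_{\mu,M}$ on realized types, and hence (since $F^\phi_{\mu,M}$ is Borel by the post-Theorem \ref{thm_gen_stable} result that invariant measures in NIP are Borel-definable) on all of $S_y(M)$. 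This simultaneously shows $\mu$ is $M$-definable and pins down its global extension, and the original fam bound propagates by continuity.

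The principal obstacle is exactly this extension from $M^y$ to $\mathcal{U}^y$: the ultraproduct construction directly controls integrals only over parameters in $M$, while the fam definition demands uniform approximations over the entire monster. The resolution is to exploit the uniform VC bound twice — first to extract witnessing tuples in each $\mu_i$ of bounded, uniform size, and then to generate a Cauchy family of continuous approximations whose limit must coincide with $F^\phi_{\mu,M}$ everywhere. The NIP hypothesis is essential both for the uniform $N(\phi,\varepsilon)$ and for the automatic Borel definability of the invariant ultraproduct.
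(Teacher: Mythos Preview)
Your core argument matches the paper's exactly: invoke the uniform bound on approximation size (the paper cites Lemma \ref{lem_keisler_VC2}, you cite Corollary \ref{cor_keisler_approx}, either works), push the approximating tuples through the ultraproduct, and transfer the $\varepsilon$-bound via \L o\'s and Definition \ref{defn_ultra}. The paper in fact stops there, checking the approximation only against $b \in \left(\prod_U M_i\right)^y$.

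You correctly flag that this leaves a gap if one wants a \emph{global} fam measure over some monster $\mathcal{U} \succeq M := \prod_U M_i$, and your instinct to close it via a Cauchy-net argument on the averages is right. However, the route you take has a wobble: you invoke ``$M$-invariance of $\mu$'' before you have constructed any global extension. Invariance is a property of global measures, and an arbitrary global extension of $\mu|_M$ need not be $M$-invariant, so the ``\L o\'s-style argument from the $M_i$-invariance'' does not get off the ground as stated. Likewise, appealing to the Borel-definability lemma for invariant measures is premature for the same reason.

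The fix is simpler than your density-of-realized-types maneuver. For any two approximating tuples $\bar a_\varepsilon, \bar a_{\varepsilon'} \in M$, the bound
\[
\sup_{y}\bigl|\mathrm{Av}(\bar a_\varepsilon;\phi(x;y;c)) - \mathrm{Av}(\bar a_{\varepsilon'};\phi(x;y;c))\bigr| \;\le\; \varepsilon + \varepsilon'
\]
is an $\mathcal{L}(M)$-condition that holds in $M$ (it holds pointwise for each $b\in M^y$, and the $\sup$ is a definable predicate), hence holds in $\mathcal{U}$ by elementarity. So the net $\bigl(\mathrm{Av}(\bar a_\varepsilon;\phi(x;\cdot;c))\bigr)_\varepsilon$ is uniformly Cauchy on all of $\mathcal{U}^y$, and its limit directly defines an $M$-definable global extension which is fam by construction. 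No separate invariance or Borel-definability input is needed.
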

\begin{proof}
    Let $\phi(x,y,z)$ be a formula, and fix $\varepsilon > 0$ and $c = [c_i : i \in I] \in \prod_U M_i$.
    We will show that $\prod_U \mu_i$ is fam by finding an $\varepsilon$-approximation 
    to the family $\{\phi(x,b;c) : b \in \left(\prod_U M_i\right)^y\}$.

    First we observe that by Lemma \ref{lem_keisler_VC2}, as each $\mu_i$ is generically stable,
    there is some $n$ depending only on $\phi(x,y,z)$ and $\varepsilon$ such that 
    for each $i$, there exists an $\varepsilon$-approximation $(a^1_i,\dots,a^n_i)$ to
    $\{\phi(x,b_i;c_i) : b_i \in M_i^y\}$.

    Now for $1 \leq j \leq n$, let $a^j  = [a^j_i : i \in I]$.
    We claim that $(a^1,\dots,a^n)$ is a $\varepsilon$-approximation to $\{\phi(x,b;c) : b \in \left(\prod_U M_i\right)^y\}$.
    Fix $b  = [b_i : i \in I]\left(\prod_U M_i\right)^y$.
    Then $|\mathrm{Av}(a^1_i,\dots,a^n_i;\phi(x;b_i,c_i)) - \int_{S_x(M_i)}\phi(x;b_i,c_i) \,d\mu_i| \leq \varepsilon$.
    By the definitions of ultraproducts, we know that 
    $\mathrm{Av}(a^1,\dots,a^n;\phi(x;b,c)) = \lim_U \mathrm{Av}(a^1_i,\dots,a^n_i;\phi(x;b_i,c_i))$
    and $\int_{S_x\left(\prod_U M_i\right)}\phi(x;b,c) \,d\prod_U \mu_i = \lim_U \int_{S_x(M_i)}\phi(x;b_i,c_i) \,d\mu_i$,
    and these ultralimits differ by at most $\varepsilon$, as the sequences do pointwise.
\end{proof}

\subsection{Indiscernible Segments}
An \emph{indiscernible segment} is an indiscernible sequence indexed by the order $[0,1] \subseteq \R$.
We will prove one characterization of NIP using indiscernible segments, and then assume $T$ is NIP for the rest of this subsection.

\begin{lem}\label{lem_ind_regulated}
A theory $T$ is NIP if and only if the following holds:
For every indiscernible segment $I = (a_i : t \in [0,1])$ with $|a_t| = |x|$ and any definable predicate $\phi(x;b)$ with $b \in \mathcal{U}^y$, the function $t \mapsto \phi(a_t;b)$ is regulated (a uniform limit of step functions), and thus measurable.
\end{lem}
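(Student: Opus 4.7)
The plan is to reduce the regulated condition to the standard NIP characterization for $\omega$-indexed indiscernible sequences: a theory is NIP exactly when $\phi(c_n;b)$ converges for every formula, every parameter, and every indiscernible $(c_n : n < \omega)$. A bounded real-valued function on $[0,1]$ is regulated if and only if it admits left and right one-sided limits at every point, so any failure of regulation can be localized to a strictly monotone approach sequence along which the values oscillate by a uniform amount.

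For the forward direction, assume $T$ is NIP and let $(a_t : t \in [0,1])$ be an indiscernible segment. Suppose for contradiction that $t \mapsto \phi(a_t;b)$ is not regulated. Then at some $t_0 \in [0,1]$ one of the one-sided limits fails, so we can pick a strictly monotone sequence $t_n \to t_0$ on which $\phi(a_{t_n};b)$ does not converge; after a further refinement, there is $\varepsilon > 0$ with $|\phi(a_{t_{2n}};b) - \phi(a_{t_{2n+1}};b)| \geq \varepsilon$ for all $n$. Since any order-preserving subsequence of an indiscernible segment is itself indiscernible, $(a_{t_n})_n$ is an $\omega$-indiscernible sequence with unbounded alternation of $\phi(\cdot;b)$, contradicting NIP in its standard continuous-logic form (and matching the setup used in the proof of Lemma \ref{lem_measure_consistent} via the $\mathcal{L}_{\mathbb{E}}$-extraction).

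For the converse, assume $T$ is not NIP. Then there exist a formula $\phi(x;y)$, a parameter $b$, $\varepsilon > 0$, and an indiscernible $(c_n : n < \omega)$ such that $|\phi(c_{2n};b) - \phi(c_{2n+1};b)| \geq \varepsilon$ for all $n$. By compactness and Ramsey, stretch this sequence to an indiscernible family indexed by $[0,1]$ with the same EM-type: fix any strictly increasing $t_n \to 1$ and realize the type of $(c_n)$ along $(a_{t_n})$ inside an indiscernible segment $(a_t : t \in [0,1])$. Then $\phi(a_{t_n};b)$ alternates, so $t \mapsto \phi(a_t;b)$ has no left limit at $t = 1$ and is not regulated. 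Measurability is then automatic because uniform (even pointwise) limits of Borel step functions are Borel.

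The genuine obstacle is the purely topological observation that a bounded function on $[0,1]$ failing to have a one-sided limit at some point admits a monotone approach sequence with uniform oscillation; once this is in hand, the model-theoretic content collapses to the convergence characterization of NIP along $\omega$-indiscernibles, which has essentially already been used in Lemma \ref{lem_measure_consistent}.
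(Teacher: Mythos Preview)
Your proposal is essentially correct and follows the same strategy as the paper: both directions reduce to the characterization of NIP via convergence of $\phi$ along $\omega$-indexed indiscernibles, together with the classical fact that a bounded function on $[0,1]$ is regulated iff all one-sided limits exist.

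There is one imprecision in your converse direction worth tightening. You write that by compactness and Ramsey you obtain an indiscernible segment $(a_t)$ ``with the same EM-type'' as $(c_n)$, and then assert that $\phi(a_{t_n};b)$ alternates for the \emph{original} parameter $b$. But preserving only the EM-type does not preserve behaviour against a fixed external parameter; you must either (i) literally extend the already-indiscernible $(c_n)$ to a $[0,1]$-indexed indiscernible in a saturated model so that $a_{t_n}=c_n$ (in which case the invocation of Ramsey is unnecessary---this is pure compactness/saturation), or (ii) include a variable $y$ for the parameter in the partial type and realize a new $b'$ together with the segment. The paper takes route (ii): it writes down the type expressing indiscernibility of $(x_t)_{t\in[0,1]}$ together with $|\phi(x_{1/n};y)-\phi(x_{1/(n+1)};y)|\geq\varepsilon$, observes that every finite fragment is realized by a finite subtuple of $(a_n)$ and the original $b$, and thereby obtains both the segment and a witnessing $b'$ simultaneously. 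Either fix is routine, so your argument is morally complete and matches the paper's.
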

\begin{proof}
NIP is equivalent to every $\phi(x;b)$ having a limit on every indiscernible sequence of order type $\omega$.

First, assume that $T$ is NIP.
Thus on every indiscernible segment, for any increasing or decreasing sequence $(t_n : n \in \N) \in [0,1]^\N$, $\lim_{n}\phi(a_{t_n};b)$ exists,
so at every point $t_0 \in (0,1)$, the limits $\lim_{t \to t_0^+}\phi(a_t;b)$ and $\lim_{t \to t_0^-}\phi(a_t;b)$ both exist.
If they did not, we could find an increasing or decreasing sequence limiting to $t_0$ on which $\phi(x;b)$ has no limit.
By \cite[Th\'eor\`eme 3, FVR II.5]{bourbaki_FVR}, a function on $[0,1]$ is regulated if and only if its left and right limits all exist.

Now assume that $T$ is not NIP - there must be some indiscernible sequence $(a_n : n \in \N)$ and some $\phi(x;b)$ with $\lim_n \phi(a_n;b)$ undefined.
Then by restricting to a non-Cauchy subsequence, we find that there is some $\varepsilon > 0$ such that $|\phi(a_n;b) - \phi(a_{n+1};b)|\geq \varepsilon$.
We claim that there also exists an indiscernible segment $(a_t' : t \in [0,1])$ and some $b'$ where $\lim_n \phi(a_{1/n}';b')$ also does not exist, making $\phi(x_t;b')$ not regulated.
We can find this counterexample by realizing the type given by $\psi(x_{t_1},\dots,x_{t_n}) = \psi(x_{t_1'},\dots,x_{t_n'})$ for all increasing tuples $t_1 < \dots < t_n$ and $t_1' < \dots < t_n'$
and $|\phi(x_{1/n};y) - \phi(x_{1/(n+1)};y)|\geq \varepsilon$, every finite subtype of which is realized by any finite subsequence of $(a_n : n \in \N)$.
\end{proof}



We now assume $T$ is NIP. This allows us to define the average measure of an indiscernible segment.
\begin{defn}\label{defn_average}
If $I = (a_t : t \in [0,1])$ is an indiscernible segment, define the \emph{average measure} of $I$, $\mu_I \in \mathfrak{M}_x(\mathcal{U})$, to be the unique global Keisler measure with
$$\int_{S_x(\mathcal{U})} \phi(x;b)\,d\mu_I = \int_0^1\phi(a_t;b)\,dt.$$
\end{defn}

We will show that these measures are generically stable using the following lemma:
\begin{lem}\label{lem_reg_approx}
If $F$ is a family of functions $[0,1] \to [0,1]$ such that for each $f \in F$, there is no sequence $0 \leq t_1 < t_1' <\dots <t_N < t_N'\leq 1$
with $|f(t_i) - f(t_i')|> \frac{\varepsilon}{2}$ for all $1 \leq i \leq N$, then for any $M > \frac{2N}{\varepsilon}$, 
the set $A = \left\{\frac{k}{M} : 0 \leq k < M\right\}$ is a $\varepsilon$-approximation to $F$ with respect to the Lebesgue measure on $[0,1]$.
\end{lem}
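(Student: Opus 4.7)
The plan is to partition $[0,1]$ into the $M$ intervals $I_k = [k/M,(k+1)/M]$ and compare the Riemann sum $\frac{1}{M}\sum_{k=0}^{M-1} f(k/M)$ term by term with $\int_0^1 f(t)\,dt = \sum_{k=0}^{M-1} \int_{I_k} f(t)\,dt$. On each $I_k$, the difference between $\frac{1}{M} f(k/M)$ and $\int_{I_k} f(t)\,dt$ is bounded in absolute value by $\frac{1}{M}$ times the oscillation $\omega_k = \sup_{s,t \in I_k} |f(s) - f(t)|$, so that
\begin{equation*}
\left|\frac{1}{M}\sum_{k=0}^{M-1} f(k/M) - \int_0^1 f(t)\,dt\right| \leq \frac{1}{M}\sum_{k=0}^{M-1} \omega_k.
\end{equation*}

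Next I would split the sum according to whether $\omega_k$ is large or small. Call an interval \emph{bad} if $\omega_k > \varepsilon/2$ and \emph{good} otherwise. The key observation is that there can be at most $N-1$ bad intervals: if the intervals $I_{k_1}, \dots, I_{k_N}$ (with $k_1 < \dots < k_N$) were all bad, then the definition of $\omega_{k_j}$ as a supremum lets us pick $t_j < t_j' \in I_{k_j}$ with $|f(t_j) - f(t_j')| > \varepsilon/2$, and since the $I_{k_j}$ are disjoint this produces a forbidden sequence $0 \leq t_1 < t_1' < \dots < t_N < t_N' \leq 1$, contradicting the hypothesis on $f$.

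On good intervals we use $\omega_k \leq \varepsilon/2$, and on bad intervals the trivial bound $\omega_k \leq 1$ coming from $f$ being $[0,1]$-valued. This yields
\begin{equation*}
\frac{1}{M}\sum_{k=0}^{M-1} \omega_k \leq \frac{N-1}{M} + \frac{\varepsilon}{2}.
\end{equation*}
Since $M > 2N/\varepsilon$ gives $(N-1)/M < N/M < \varepsilon/2$, the total is strictly less than $\varepsilon$, uniformly in $f \in F$, which is exactly the $\varepsilon$-approximation bound.

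No real obstacle is anticipated; the only subtle point is the passage from $\omega_k > \varepsilon/2$ on $N$ disjoint intervals to a genuine sequence of $N$ pairs $(t_i, t_i')$ witnessing the forbidden pattern, but this is immediate from the definition of supremum and the disjointness of the $I_k$.
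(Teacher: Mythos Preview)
Your proposal is correct and follows essentially the same approach as the paper: partition $[0,1]$ into $M$ subintervals, bound the error on each by the local oscillation, and observe that the hypothesis limits the number of intervals with oscillation exceeding $\varepsilon/2$ to fewer than $N$, giving the bound $\frac{\varepsilon}{2} + \frac{N}{M} < \varepsilon$. The only cosmetic difference is that the paper compares $f(k/M)$ directly to the interval average rather than using the full oscillation $\omega_k$, and bounds the number of bad intervals by $N$ rather than your slightly sharper $N-1$; just be careful to take the $I_k$ half-open so they are genuinely disjoint when extracting the forbidden sequence.
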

\begin{proof}
    For any $f \in F$,
\begin{align*}
    \left|\mathrm{Av}_{a \in A} f(a) - \int_0^1 f(t)\,d t\right|
    & = \frac{1}{M}\left|\sum_{k = 0}^{M - 1}\left(f\left(\frac{k}{M}\right) - M\int_{k/M}^{(k+1)/M}f(t)\,dt\right)\right|\\
    & \leq \frac{1}{M}\sum_{k = 0}^{M - 1}\left|f\left(\frac{k}{M}\right) - M\int_{k/M}^{(k+1)/M}f(t)\,dt\right|,
\end{align*}
so it suffices to show that there are few values of $k$ such that $\left|f\left(\frac{k}{M}\right) - M\int_{k/M}^{(k+1)/M}f(t)\,dt\right|$ is large.

For any integer $0 \leq k < M$, either $|f\left(\frac{k}{M}\right) - M\int_{k/M}^{(k+1)/M}f(t)\,dt| \leq \frac{\varepsilon}{2}$, or
there is some $c \in [\frac{k}{M}, \frac{k+1}{M}]$ such that $|f\left(\frac{k}{M}\right) - \phi(\sigma_c;b)| > \frac{\varepsilon}{2}$.
By the choice of $N$, there are at most $N$ values of $k$ such that the latter case holds, in which case we can bound $\left|\phi(a_{k/M};b) - M\int_{k/M}^{(k+1)/M}f(t)\,dt\right| \leq 1$.
Thus $\sum_{k = 0}^{M - 1}\left|f\left(\frac{k}{M}\right) - M\int_{k/M}^{(k+1)/M}f(t)\,dt\right|$ is bounded by $M\frac{\varepsilon}{2}$ for most intervals plus $N$ for exceptional intervals, yielding
\begin{align*}
    \frac{1}{M}\sum_{k = 0}^{M - 1}\left|f\left(\frac{k}{M}\right) - \int_{k/M}^{(k+1)/M}f(t)\,dt\right|
\leq \frac{1}{M}\left(\frac{M\varepsilon}{2} + N\right)
< \frac{\varepsilon}{2} + \frac{\varepsilon}{2} = \varepsilon
\end{align*}
as desired.
\end{proof}

\begin{lem}\label{lem_ave_gs}
If $\sigma \in \mathfrak{M}_{[0,1]}(\mathcal{U})$ is an indiscernible segment, then $\mu_\sigma$ is generically stable.
\end{lem}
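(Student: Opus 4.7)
The plan is to verify condition (ii) of Theorem \ref{thm_gen_stable}, namely that $\mu_\sigma$ is fam over any small model $M$ containing the indiscernible segment $\sigma = (a_t : t \in [0,1])$. Fix such an $M$ and a formula $\phi(x;y) \in \mathcal{L}(M)$ together with $\varepsilon > 0$. The intended approximation is by points of the form $a_{k/M_0}$ for $k = 0, \dots, M_0 - 1$, and Lemma \ref{lem_reg_approx} tells us exactly what is needed: a uniform bound $N$ on the number of $\varepsilon/2$-oscillations of $t \mapsto \phi(a_t;b)$, where the uniformity is over $b \in \mathcal{U}^y$. Once such an $N$ is found, choosing any $M_0 > 2N/\varepsilon$ gives the desired $\varepsilon$-approximation (in $M^x$) for the whole family $\{\phi(x;b) : b \in \mathcal{U}^y\}$ with respect to $\mu_\sigma$, by the definition of $\mu_\sigma$ as an integral.

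The substantive step, and the main obstacle, is producing the uniform bound $N = N(\phi,\varepsilon)$. For a single $b$, Lemma \ref{lem_ind_regulated} guarantees that $t \mapsto \phi(a_t;b)$ is regulated, so it has only finitely many jumps of size $\geq \varepsilon/2$; but a priori this count depends on $b$. I would argue uniformity by contradiction and compactness: if no uniform $N$ existed, then for each $N$ there would be $b_N$ and times $0 \leq t_1^N < (t_1')^N < \dots < t_N^N < (t_N')^N \leq 1$ with $|\phi(a_{t_i^N};b_N) - \phi(a_{(t_i')^N};b_N)| > \varepsilon/2$ for every $i$. Using the indiscernibility of the segment, an automorphism of $\mathcal{U}$ fixing the parameters of $\phi$ can be used to replay these $2N$ timestamps at any preassigned grid $s_1 < s_1' < \dots < s_N < s_N'$, producing a modified $b_N''$ whose witnesses sit at fixed times.

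Fixing once and for all an infinite increasing sequence $s_1 < s_2 < \dots$ in $[0,1]$, compactness then yields $b^* \in \mathcal{U}$ with $|\phi(a_{s_{2i-1}};b^*) - \phi(a_{s_{2i}};b^*)| > \varepsilon/2$ for every $i < \omega$. Since $(a_{s_i})_{i<\omega}$ is an indiscernible sequence, this produces an indiscernible sequence on which $\phi(x;b^*)$ fails to converge, contradicting NIP (as used in the proof of Lemma \ref{lem_ind_regulated}). Hence the uniform oscillation bound $N$ exists.

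Combining these pieces: by Lemma \ref{lem_reg_approx} applied to the family $\{t \mapsto \phi(a_t;b) : b \in \mathcal{U}^y\}$, the tuple $(a_0, a_{1/M_0}, \dots, a_{(M_0-1)/M_0}) \in (M^x)^{M_0}$ is an $\varepsilon$-approximation for $\{\phi(x;b) : b \in \mathcal{U}^y\}$ with respect to $\mu_\sigma$, for any $M_0 > 2N/\varepsilon$. Since $\phi$ and $\varepsilon$ were arbitrary, $\mu_\sigma$ is fam over $M$, and Theorem \ref{thm_gen_stable} (which applies because we are in the NIP setting assumed throughout this subsection) then yields generic stability.
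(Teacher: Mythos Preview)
Your proposal is correct and follows essentially the same route as the paper: show $\mu_\sigma$ is fam over a model containing the segment by producing a uniform bound $N$ on the number of $\varepsilon/2$-oscillations of $t\mapsto\phi(a_t;b)$ and then invoking Lemma~\ref{lem_reg_approx}. The only difference is that the paper dispatches the uniformity of $N$ in one line (``by compactness''), whereas you spell out the mechanism---using indiscernibility to transport the oscillation witnesses to a fixed grid and then extracting a single $b^*$---which is a perfectly valid (and arguably clearer) way to justify that step.
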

\begin{proof}
Specifically, we show that $\mu_I$ is fam in $I$.
Fix a definable predicate $\phi(x;y)$ and $\varepsilon > 0$.
By Lemma \ref{lem_ind_regulated}, for all $b \in \mathcal{U}^y$, there is some $N$ such that there is no sequence $0 \leq t_1 < t_1' <\dots <t_N < t_N'\leq 1$ with
$|\phi(a_{t_i};b) - \phi(a_{t_i'};b)|> \frac{\varepsilon}{2}$ for all $1 \leq i \leq N$.
By compactness, we may choose some $N$ that will work simultaneously for all $b \in \mathcal{U}^y$.
Now fix an integer $M >\frac{2N}{\varepsilon}$, and let $A = \{a_{k/M} : 0 \leq k < M\}$.
We can apply Lemma \ref{lem_reg_approx} to the family $F = \{f_b(t): b \in \mathcal{U}^y\}$ defined by $f_b(t) = \phi(a_t;b)$,
and find that $\left\{\frac{k}{M} : 0 \leq k < M\right\}$ is a $\varepsilon$-approximation to $F$ with respect to Lebesgue measure.
Then as $\int_{S_x(\mathcal{U})}\phi(a_t;b)\,d\mu_I = \int_0^1 f_b(t)\,dt$,
we see that $A$ is a $\varepsilon$-approximation to $\{\phi(a_t;b) : b \in \mathcal{U}^y\}$ with respect to $\mu_I$, as desired.
\end{proof}

\section{Weak Orthogonality and Regularity}\label{sec_wo}
In this section, we will characterize weak orthogonality of measures with several Szemer\'edi-style regularity properties.
In the next section, we will use uniform versions of these properties to characterize distality.
In order to best explain our techniques and choices, we will first prove some versions of NIP regularity.
NIP regularity theorems already exist for classical logic, for instance in \cite{nip_reg}, and indeed for continuous logic, in the form of \cite[Theorem 6.6]{ct},
which proves a regularity lemma for real-valued definable predicates in a generalization of NIP structures.
However, we find that NIP regularity is the correct setting to first develop the definitions we will need for distal regularity,
namely definable and constructible regularity partitions.

\subsection{NIP Regularity}
NIP regularity is a consequence of the ability to approximate definable predicates relative to generically stable measures.

In order to understand it, we must first understand how the classic partitioning into $\phi$-types over finite sets works in continuous logic.
As even for finite parameter sets $B$, the set $S_\phi(B)$ of consistent $\phi$-types $\mathrm{tp}_\phi(x;y)(a/B)$ is usually infinite in continuous logic,
we have to look at partitions where the type $\mathrm{tp}_\phi(x;y)(a/B)$ varies by at most $\varepsilon$ on the support of each piece of the partition.
Even considering partitions may require us to sacrifice definability of the pieces in continuous logic, leaving us with two options.
We can either look at partitions of unity on $S_x(M)$ into definable predicates, get an actual partition, but settle for Borel sets of uniform low complexity.

\begin{defn}\label{defn_constructible}
    Inspired by terminology from algebraic geometry, call a subset of $S_x(A)$ \emph{constructible} when it is a finite boolean combination of closed sets.
    We will also refer to the indicator functions of constructible sets in type spaces, or their restrictions to models, \emph{constructible predicates}.
    
    If $P$ is a finite partition of unity on $M^x$, we say that a function $\psi(x;z) : S_{xz}(M) \to [0,1]$ \emph{defines} $P$ when for each piece $\pi \in P$, there is $d \in M^z$
    such that $\pi$ is the support of $\psi(x;d)$.
    If $\psi(x;z)$ is a definable predicate (that is, continuous), then we call $P$ \emph{definable},
    and if $\psi(x;z)$ is a constructible predicate, we call $P$ \emph{constructible}, and as its pieces are all $\{0,1\}$-valued, we may identify $P$ with the partition into the supports of its pieces.
    When different partitions of unity are definable by the same $\psi$, we say that they are \emph{uniformly} definable/constructible.

    If $P$ is a partition of unity on $M^{x_1\dots x_n}$ defined by a definable or constructible predicate of the form $\prod_{i = 1}^n \psi_i(x_i;z_i)$, then we call $P$ \emph{rectangular}.
    If $P_1,\dots,P_n$ are partitions of unity on $M^{x_i}$, then let $\otimes_{i = 1}^n P_i$ denote the partition on $M^{x_1 \dots x_n}$ given by
    $\left(\prod_{i = 1}^n \pi_i(x_i):\pi_1 \in P_1,\dots,\pi_n \in P_n\right)$. We call such a partition a \emph{grid}
    
    If $P$ is a partition of unity on $S_x(M)$ and $B \subseteq M^y$, we call $P$ a $(\phi,\varepsilon)$-\emph{partition} over $B$ when
    for each $\pi(x) \in P$, if $a_1,a_2 \in M^x$ both satisfy $\pi(a_i) > 0$, then $|\phi(a_1;b) - \phi(a_2;b)|\leq \varepsilon$ for all $b \in B$.
\end{defn}

In the classical case, there is a unique minimal such partition whose size is bounded by the VC-dimension of $\phi$.
In this lemma, we will show that a suitable partition exists with size given by a covering number bound.
\begin{lem}\label{lem_cover_pou}
    Let $\phi(x;y;w)$ be a definable predicate.
    Then for any $\varepsilon > 0$ and $n \in \N$, there is a definable predicate $\psi(x;z)$ such that for any $B \subset M^y$ with $|B| \leq n$, and any $c \in M^w$,
    there is a set $D \subset M^z$ with $|D| \leq \mathcal{N}_{\phi(x;y),0.49\varepsilon}(n)$ such that $(\psi(x;d) : d \in D)$ 
    forms a $(\phi,\varepsilon)$-partition over $B$,
    and $\psi$ is a continuous combination of instances of $\phi$ over $D$.

    We may alternately choose $\psi$ to be constructible with the slightly better bound $|D| \leq \mathcal{N}_{\phi(x;y),\varepsilon/2}(n)$, although $\psi$ will no longer be a continuous combination of instances of $\phi$.
\end{lem}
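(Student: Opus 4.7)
My plan is to cover the image set $\Phi_{B,c}(M^x) = \{(\phi(a;b_1;c),\ldots,\phi(a;b_n;c)):a\in M^x\}\subseteq[0,1]^n$ by $\ell_\infty$-balls of suitable radius and to turn each ball into a piece of the partition. The size bounds then follow directly from the definition of $\mathcal{N}_{\phi(x;y),\delta}(n)$ as the maximum over all $B\subseteq M^y$ of size $n$ of the number of $\delta$-balls needed to cover the corresponding image.

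For the constructible case I will take at most $K=\mathcal{N}_{\phi(x;y),\varepsilon/2}(n)$ closed $\ell_\infty$-balls of radius $\varepsilon/2$ with centers $v^1,\ldots,v^K$ covering $\Phi_{B,c}(M^x)$, and disjointify: let $A_i$ be the subset of $M^x$ where $\max_j|\phi(x;b_j;c)-v^i_j|\le\varepsilon/2$ but $\max_j|\phi(x;b_j;c)-v^k_j|>\varepsilon/2$ for all $k<i$. Each $A_i$ is a boolean combination of closed $\phi$-conditions, hence constructible, and any two points of $A_i$ share a common $\varepsilon/2$-ball, so their $\phi$-values on each $b_j$ differ by at most $\varepsilon$, giving the $(\phi,\varepsilon)$-partition property.

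For the definable case I will use the slightly finer cover of radius $0.49\varepsilon$ with at most $K=\mathcal{N}_{\phi(x;y),0.49\varepsilon}(n)$ centers, and build continuous bump functions $f_i(x) = \max(0,\varepsilon/2 - \max_j|\phi(x;b_j;c)-v^i_j|)$, normalizing to $g_i=f_i/\sum_k f_k$. Every $x$ lies within $0.49\varepsilon<\varepsilon/2$ of some $v^i$, forcing $f_i(x)\ge 0.01\varepsilon$, so $\sum_k f_k>0$ everywhere and the normalization is continuous; the support of each $g_i$ has $\ell_\infty$-diameter at most $\varepsilon$ in the $\phi$-profile, giving the $(\phi,\varepsilon)$-partition property. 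This is precisely what the slack between $0.49\varepsilon$ and $\varepsilon/2$ buys.

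The main obstacle will be uniformity: producing a single definable (respectively constructible) predicate $\psi(x;z)$ whose instances $\psi(x;d)$ recover these pieces for every choice of $B$ and $c$. The tuple $z$ will include slots for $b_1,\ldots,b_n$, $c$, an ``active'' center, and $K-1$ further slots for the remaining centers used in the normalization or disjointification. Since the centers $v^i$ live in $[0,1]^n$ rather than in $M$, I plan to realize each coordinate as $\phi(a;b)$ for an auxiliary $a\in M^x$, effectively passing to an internal cover of $\Phi_{B,c}(M^x)$ and absorbing the external-versus-internal comparison into the slack of the constant $0.49$. Different pieces of the partition are then obtained from the single $\psi$ by cycling which auxiliary slot is designated ``active'', with padding used to keep the parameter length uniform across pieces.
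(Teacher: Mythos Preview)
Your proposal is correct and follows essentially the same approach as the paper: the paper also takes internal centers $a_1,\dots,a_m\in M^x$ whose $\phi$-profiles $0.49\varepsilon$-cover the image, defines the bump function $\theta(x;a_i,b_1,\dots,b_n)$ exactly as your $f_i$, normalizes by the sum to get $\psi$, and in the constructible case disjointifies closed $\varepsilon/2$-balls just as you describe. Your worry about external versus internal centers is unnecessary---the covering number $\mathcal{N}_{\phi,\delta}(n)$ here is already defined with internal centers (profiles of actual points), so no extra slack beyond $0.49\varepsilon<\varepsilon/2$ is needed; otherwise your parameter-packaging (the tuple $z$ holding $b_1,\dots,b_n$, the active center, and the remaining centers) matches the paper's $\psi(x;x_0,x_1,\dots,x_m,y_1,\dots,y_n)$ verbatim.
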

\begin{proof}
    Fix $c$ and write $\phi(x;y) = \phi(x;y;w)$, and we will see that the resulting formula $\psi(x;z)$ is constructed from $\phi(x;y;w)$ in a uniform way, with the parameter $c$ reoccuring in the parameters $d \in D$.

    For ease of notation, let $m = \mathcal{N}_{\phi(x;y),0.49\varepsilon}(n)$.
    Define the predicate
    $$\theta(x;x',y_1,\dots,y_n) = \max_{1 \leq i \leq n} \frac{\varepsilon}{2} \dot{-} |\phi(x;y_i) - \phi(x';y_i)|,$$
    so that for any $a,a' \in M^x,b_1,\dots,b_m \in M^y$, $\theta(a;a',b_1,\dots,b_m) > 0$ if and only if $|\phi(a;b_i) - \phi(a';b_i)| < \frac{\varepsilon}{2}$ for all $1 \leq i \leq n$.
    Similarly, define
    $$\psi(x;x_0,\dots,x_m,y_1,\dots,y_n) = \frac{\theta(x;x_0,y_1,\dots,y_n)}{\sum_{i = 1}^m\theta(x;x_i,y_1,\dots,y_n)}.$$
    
    Now if $B \subseteq M^y$ has $|B| \leq n$, express $B = \{b_1,\dots,b_n\}$, and let $A = \{a_1,\dots,a_m\}$ be a set such that for each $a \in M^x$, there exists $1 \leq i \leq m$
    such that for all $b \in B$, $|\phi(a;b) - \phi(a_i;b)| \leq 0.49\varepsilon$.
    Then $\{\psi(x;a,a_1,\dots,a_m,b_1,\dots,b_n) : a \in A\}$ constitutes a partition of unity,
    and on each piece $\psi(x;a,a_1,\dots,a_m,b_1,\dots,b_n) > 0$, for each $b \in B$, $\phi(x;b)$ varies from $\phi(a;b)$ by at most $\frac{\varepsilon}{2}$,
    so overall, $\phi(x;b)$ varies by at most $\varepsilon$.

    If instead we wish $\psi$ to be constructible, then we instead let $m = \mathcal{N}_{\phi(x;y),\varepsilon/2}(n)$, and we can let $A = \{a_1,\dots,a_m\}$ be a set such that for each $a \in M^x$, there exists $1 \leq i \leq m$
    such that for all $b \in B$, $|\phi(a;b) - \phi(a_i;b)| \leq \frac{\varepsilon}{2}$.
    Then the sets $X_i = \{|\phi(x;b) - \phi(a_i;b)| \leq \frac{\varepsilon}{2}\}$ cover $S_x(M)$, so we can use standard coding tricks to let $\psi(x;z)$ define any of the sets
    $X_i \setminus \bigcup_{j < i}X_i$. 
\end{proof}

\begin{lem}[{See \cite[Proposition 2.18]{nip_reg}}]\label{lem_nip_reg_1}
    Let $M$ be an NIP structure, let $\phi(x;y;w)$ be a definable predicate, let $\varepsilon > 0$,
    and let $\mathrm{vc}_{\varepsilon/12}(\phi(x;y))\leq d$.
    Then for any generically stable measure $\mu \in \mathfrak{M}_x(M)$ and any $c \in M^w$,
    there is a set $A \subseteq M^x$ of size $O(\frac{d}{\varepsilon^2}\ln\frac{d}{\varepsilon})$ such that 
    if $P$ is a $(\phi^*(y;x),\varepsilon)$-partition over $A$, then for $b,b' \in M^y$ in the support of the same piece of $P$,
    $\int_{S_x(M)}|\phi(x;b;c) - \phi(x;b';c)|\,d\mu \leq \varepsilon$.
\end{lem}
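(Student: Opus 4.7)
The plan is to take $A$ to be an approximation (with respect to $\mu$) for the two-parameter ``absolute-difference'' family $\psi(x;y,y';c) := |\phi(x;y;c) - \phi(x;y';c)|$. Given a $(\phi^*,\varepsilon)$-partition over $A$, the average over $A$ of this difference is forced to be at most $\varepsilon$ for any $b,b'$ in the same piece, and the approximation property of $A$ transfers this smallness from average-over-$A$ to integral-against-$\mu$. Generic stability of $\mu$ ensures that the relevant VC-theorem style approximation exists with the right size.

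The first substep is to bound the covering numbers of $\psi(x;y,y';c)$ in terms of those of $\phi(x;y;c)$. For any tuple $(a_1,\dots,a_n) \in (M^x)^n$, a $(\delta/2)$-net $C \subseteq [0,1]^n$ (in $\ell_\infty$) for $\{(\phi(a_i;b;c))_{i=1}^n : b \in M^y\}$ yields, via componentwise absolute differences $\{|v-v'| : v,v' \in C\}$, a $\delta$-net of size $|C|^2$ for $\{(\psi(a_i;b,b';c))_{i=1}^n : b,b' \in M^y\}$, using the entrywise estimate $\bigl||v_i - v_i'| - |c_i - c_i'|\bigr| \leq |v_i - c_i| + |v_i' - c_i'|$. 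Combined with the polynomial covering bound of \cite[Lemma 4.12]{anderson1} applied to $\phi$ under the hypothesis $\mathrm{vc}_{\varepsilon/12}(\phi(x;y)) \leq d$, this gives $\mathcal{N}_{\psi(x;y,y';c),\delta}(n) \leq O(n^{O(d)})$ and hence $\mathrm{vc}_{\delta}(\psi(x;y,y';c)) = O(d)$ at any $\delta$ comparable to $\varepsilon$.

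Next, I would apply Corollary \ref{cor_keisler_approx} to $\psi$ and the generically stable measure $\mu$ (invoking Theorem \ref{thm_gen_stable} so that $\mu^{(\omega)}|_M$ is totally indiscernible, as required) to obtain an approximation $A \subseteq M^x$ for $\psi$ with respect to $\mu$, of size $O\!\left(\frac{d}{\varepsilon^{2}}\log\frac{d}{\varepsilon}\right)$, at a scale slightly finer than $\varepsilon$ so that the final bound $\leq\varepsilon$ comes out exactly (an inessential constant adjustment). For any $(\phi^*(y;x),\varepsilon)$-partition $P$ over this $A$ and any $b,b'$ in the support of a common piece, the partition condition gives $|\phi(a;b;c) - \phi(a;b';c)| \leq \varepsilon$ for every $a \in A$, so $\mathrm{Av}_{a \in A}|\phi(a;b;c) - \phi(a;b';c)| \leq \varepsilon$; the approximation property of $A$ then transfers this to $\int_{S_x(M)}|\phi(x;b;c) - \phi(x;b';c)|\,d\mu \leq \varepsilon$.

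The only real obstacle is the covering-number reduction from $\psi$ back to $\phi$, which is routine two-parameter packing; after that, the argument is a direct assembly of the generic-stability approximation corollary with the partition hypothesis. The uniformity of the construction in $c$ is automatic because $\psi$ is built uniformly from $\phi$ and the covering-number bound is uniform in parameters.
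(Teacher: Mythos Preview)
Your proposal is correct and follows essentially the same route as the paper: define the absolute-difference predicate $\chi(x;y,y') = |\phi(x;y;c) - \phi(x;y';c)|$, take $A$ to be an $\varepsilon$-scale approximation for $\chi$ with respect to $\mu$ via Corollary~\ref{cor_keisler_approx}, and then observe that the $(\phi^*,\varepsilon)$-partition condition forces the average of $\chi$ over $A$ to be small, which the approximation transfers to the integral. You in fact supply a step the paper leaves implicit---the covering-number bound for $\chi$ in terms of that for $\phi$, needed to invoke Corollary~\ref{cor_keisler_approx} with the stated size---and your handling of the constant slop (taking the approximation at a slightly finer scale) matches the paper's informal treatment.
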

\begin{proof}
    Fix $c$ and write $\phi(x;y) = \phi(x;y;w)$, and the resulting predicate $\psi(y;z)$ will be constructed from $\phi(x;y;w)$ in a uniform way, based on the predicate defined in Lemma \ref{lem_cover_pou}.

    Let $\chi(x;y,y') = |\phi(x;y) - \phi(x';y)|$, and using Corollary \ref{cor_keisler_approx}, there is $n = O(\frac{d}{\varepsilon^2}\ln\frac{d}{\varepsilon})$
    such that for each $\mu$, there is a $\frac{\varepsilon}{2}$-approximation with respect to $\mu$ of size at most $n$.
    Now for every $\mu$, let $A = \{a_1,\dots,a_n\}$ be a $\frac{\varepsilon}{2}$-approximation to $\chi(x;y,y')$ with respect to $\mu$,
    and let $P$ be a $(\phi^*(y;x),\varepsilon)$-partition over $A$.
    If $\pi(x) \in P$ and $b_1,b_2 \in M^y$ both satisfy $\pi(x) > 0$, then for all $a \in A$, $|\phi(a;b_1) - \phi(a;b_2)| \leq \frac{\varepsilon}{2}$,
    so $\chi(a;b_1,b_2) \leq \frac{\varepsilon}{2}$, and thus by the $\frac{\varepsilon}{2}$-approximation definition of $A$,
    $$\int_{S_x(M)}|\phi(x;b_1) - \phi(x;b_2)|\,d\mu = \int_{S_x(M)}\chi(x;b_1,b_2)\,d\nu \leq \frac{1}{n}\sum_{i = 1}^n\chi(a_i;b_1,b_2) + \frac{\varepsilon}{2} \leq \varepsilon.$$
\end{proof}

\begin{lem}[{See \cite[Theorem 2.19]{nip_reg}}]\label{lem_nip_reg_2}
    Assume $T$ is NIP and $M \vDash T$.
    Let $\phi(x_1,\dots,x_n;w)$ be a definable predicate and let $\varepsilon > 0$.
    Then there is a definable predicate $\theta(x_1,\dots,x_n;z)$ of the form $\sum_{j = 1}^m \prod_{i = 1}^n\theta_{ij}(x_i;z_i)$
    with $m$ depending only on $\phi$ and $\varepsilon$
    such that if $\mu_i \in \mathfrak{M}_{x_i}(M)$ are measures with $\mu_i$ generically stable for $i < n$ and $c \in M^w$,
    then there is $d \in M^z$ such that
    $$\int_{S_{x_1 \dots x_n}(M)} \left|\phi(x_1,\dots,x_n;c) - \theta(x_1,\dots,x_n;d)\right|\,d\mu_1\otimes \dots \otimes \mu_n \leq \varepsilon.$$
\end{lem}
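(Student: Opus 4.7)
The plan is to induct on $n$, peeling off one variable at a time by combining Lemma \ref{lem_nip_reg_1} with the uniform partition predicate from Lemma \ref{lem_cover_pou}. The base case $n=1$ is trivial: set $\theta(x_1;z) = \phi(x_1;z)$ and choose $d = c$, giving zero integral error.

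For the inductive step, fix $\phi(x_1,\dots,x_n;w)$ and $\varepsilon > 0$, and apply Lemma \ref{lem_nip_reg_1} with $x = x_1$, $y = (x_2,\dots,x_n)$, and $\mu = \mu_1$ (which is generically stable since $1 < n$). This provides a bound $N$ depending only on $\phi$ and $\varepsilon/2$, and an approximation set $A \subseteq M^{x_1}$ of size at most $N$, such that on any $(\phi^*, \varepsilon/2)$-partition of unity of $M^{x_2\dots x_n}$ over $A$, two tuples $\bar b, \bar b'$ in the support of the same piece satisfy $\int|\phi(x_1;\bar b;c) - \phi(x_1;\bar b';c)|\,d\mu_1 \leq \varepsilon/2$. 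Next, Lemma \ref{lem_cover_pou} applied to $\phi^*(x_2,\dots,x_n;x_1;w)$ and to tuples of size $N$ produces a single definable predicate $\psi_1(x_2,\dots,x_n;z_1)$ and a uniform bound $M_1$ (both depending only on $\phi$ and $\varepsilon$) such that for any $A$ and any $c$ one can realize a suitable partition as $\{\psi_1(\cdot;d_1) : d_1 \in D_1\}$ with $D_1 \subseteq M^{z_1}$, $|D_1| \leq M_1$. Picking some $\bar b_{d_1}$ in the support of the $d_1$-piece and letting
$$\tilde\phi(x_1,\dots,x_n) = \sum_{d_1 \in D_1} \phi(x_1;\bar b_{d_1};c)\,\psi_1(x_2,\dots,x_n;d_1),$$
a partition-of-unity estimate applied fiberwise in $(x_2,\dots,x_n)$ together with Lemma \ref{lem_nip_reg_1} at once yields $\int|\phi(x_1,\dots,x_n;c) - \tilde\phi|\,d\mu_1 \otimes \cdots \otimes \mu_n \leq \varepsilon/2$.

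The expression $\tilde\phi$ is not yet in the required sum-of-products form, because each $\psi_1(x_2,\dots,x_n;d_1)$ still couples the variables $x_2,\dots,x_n$. I then invoke the inductive hypothesis on $\psi_1$, viewed as a definable predicate in the $n-1$ variables $x_2,\dots,x_n$ with parameter $z_1$, with tolerance $\varepsilon/(2M_1)$ and measures $\mu_2,\dots,\mu_n$; the measures $\mu_2,\dots,\mu_{n-1}$ are generically stable, which matches the hypothesis for $n-1$. This returns a sum-of-products $\theta'(x_2,\dots,x_n;z')$ and, for each $d_1 \in D_1$, a parameter $d_1'$ with $\mu_2\otimes\cdots\otimes\mu_n$-integrated error at most $\varepsilon/(2M_1)$. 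Replacing $\psi_1(\cdot;d_1)$ by $\theta'(\cdot;d_1')$ in the expression for $\tilde\phi$, using $|\phi(x_1;\bar b_{d_1};c)| \leq 1$, and summing the at most $M_1$ terms adds at most $\varepsilon/2$ to the total error. Distributing the outer sum through the sums-of-products of $\theta'$ produces an expression of the required form with $m_n = M_1 \cdot m_{n-1}$ terms.

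The main technical point to watch is uniformity: the overall shape of $\theta$ and the bound $m$ must be independent of $\mu_1,\dots,\mu_n$ and $c$, with only the chosen parameter $d$ allowed to depend on them. This is precisely what the uniform predicate $\psi_1$ from Lemma \ref{lem_cover_pou} and the uniform bounds $N, M_1$ from Lemma \ref{lem_nip_reg_1} and Lemma \ref{lem_cover_pou} guarantee; the parameter $c$ is simply bundled into the parameter tuple of the $i=1$ factor $\phi(x_1;\bar b_{d_1};c)$, which is permitted by the form $\sum_j \prod_i \theta_{ij}(x_i;z_i)$ in the lemma statement.
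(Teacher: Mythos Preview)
Your argument is correct and follows essentially the same inductive peeling strategy as the paper, using Lemma~\ref{lem_nip_reg_1} together with the uniform partition predicate of Lemma~\ref{lem_cover_pou}. The only cosmetic difference is the direction of the induction: you peel off $x_1$ and apply the inductive hypothesis to the partition predicate $\psi_1(x_2,\dots,x_n;z_1)$, whereas the paper peels off $x_{n+1}$ and applies the inductive hypothesis to $\phi(x_1,\dots,x_n;b_i)$; both orderings yield the same uniform bounds and the same sum-of-products shape.
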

\begin{proof}
    As before, fix $c$ and write $\phi(x_1,\dots,x_n) = \phi(x_1,\dots,x_n;c)$, and the resulting predicates will be constructed from $\phi(x;y;w)$ in a uniform way, based on the predicate defined in Lemma \ref{lem_nip_reg_1}.

    We start by proving the two-dimensional case.
    Let $\phi(x;y)$ be an $M$-definable predicate and let $\varepsilon > 0$.
    Let $\psi(y;z), k$ be as given in Lemma \ref{lem_nip_reg_1}, and let $m$ be the upper bound on the size of the resulting partition of unity.
    Then we will show that for any measures $\mu \in \mathfrak{M}_x(M), \nu \in \mathfrak{M}_y(M)$ with $\mu$ generically stable,
    there are parameters $b_1,\dots,b_m \in M^y,d_1,\dots,d_m \in M^z$ such that
    $$\int_{S_{xy}(M)}\left|\phi(x;y) - \sum_{i = 1}^m\phi(x;b_i)\psi(y;d_i)\right|\,d\mu \otimes \nu \leq \varepsilon.$$

    Given $\mu, \nu$, let $D = \{d_1,\dots,d_m\}$ be such that $(\psi(y;d_i) : 1 \leq i \leq m)$ is the partition of unity given by Lemma \ref{lem_nip_reg_1},
    and for each $1 \leq i \leq m$, let $b_i \in M^y$ be such that $\psi(b_i;d_i) > 0$.
    Then for all $b \in M^y$, we calculate that 
    \begin{align*}
        &\int_{S_x(M)}\left|\phi(x;b) - \sum_{i = 1}^m\phi(x;b_i)\psi(b;d_i)\right|\,d\mu \\
        =& \int_{S_x(M)}\sum_{i = 1}^m|\phi(x;b)\psi(b;d_i) - \phi(x;b_i)\psi(b;d_i)|\,d\mu \\
        \leq & \sum_{i = 1}^m \psi(a;d_i)\int_{S_x(M)}|\phi(x;b) - \phi(x;b_i)|\,d\mu \\
        \leq & \sum_{i : \psi(b;d_i) > 0} \psi(b;d_i)\int_{S_x(M)}|\phi(x;b) - \phi(x;b_i)|\,d\mu \\
        \leq & \varepsilon, \\
    \end{align*}
    as for each $i$ with $\psi(b;d_i) > 0$, $\int_{S_x(M)}|\phi(x;b) - \phi(x;b_i)|\,d\mu \leq \varepsilon$ by assumption.
    Thus also integrating over $y$, we find that $\int_{S_{xy}(M)}\left|\phi(x;y) - \sum_{i = 1}^m\phi(x;b_i)\psi(x;d_i)\right|\,d\mu \otimes \nu \leq \varepsilon$,
    finishing the base case.

    Now assume this works for all $n$-ary predicates, and consider $\phi(x_1,\dots,x_{n+1})$.
    Applying our proof to the repartitioned binary predicate $\phi(x_1,x_2,\dots;x_{n+1})$, we see that there is $\psi(x_{n+1};z_{n+1})$, and some $m_n$ such that for all generically stable $\mu_1,\dots,\mu_{n+1}$,
    there are $(b_1,\dots,b_{m_n})$ and $(d_1,\dots,d_{m_n})$ such that
    $$\int_{S_{x_1\dots x_{n+1}}(M)}\left|\phi(x_1,\dots,x_{n+1}) - \sum_{i = 1}^{m_n}\phi(x_1,\dots,x_{n};b_i)\psi(x_{n+1};d_i)\right|\,d\mu_1\otimes \dots \otimes \mu_{n+1} \leq \frac{\varepsilon}{2}.$$
    We now apply the induction hypothesis to each $\phi(x_1,\dots,x_{n};b_i)$, and we see that there is some $\theta(x_1,\dots,x_n;z)$ that belongs to the tensor product of the algebras of definable predicates on the separate variables $x_1,\dots,x_n$
    such that for all generically stable $\mu_1,\dots,\mu_n$, and every $b \in M^{x_{n+1}}$, there is some $c \in M^z$ such that
    $$\int_{S_{x_1\dots x_{n}}(M)}\left|\phi(x_1,\dots,x_{n},b) - \theta(x_1,\dots,x_{n};c)\right|\,d\mu_1\otimes \dots \otimes \mu_{n} \leq \frac{\varepsilon}{2}.$$
    Then for any $\mu_1,\dots,\mu_{n+1}$, there are $(b_1,\dots,b_n),(d_1,\dots,d_n)$ as above, and for each $b_i$ we choose $c_i \in M^z$ as above.
    Letting $\omega$ abbreviate $\mu_1\otimes \dots \otimes \mu_{n+1}$, we calculate
    \begin{align*}
        &\int_{S_{x_1\dots x_{n+1}}(M)}\left|\phi(x_1,\dots,x_{n+1}) - \sum_{i = 1}^{m_n}\theta(x_1,\dots,x_{n};c_i)\psi(x_{n+1};d_i)\right|\,d\omega \\
        \leq& \frac{\varepsilon}{2} + \int_{S_{x_1\dots x_{n+1}}(M)}\left|\sum_{i = 1}^{m_n}\phi(x_1,\dots,x_{n};b_i)\psi(x_{n+1};d_i) - \sum_{i = 1}^{m_n}\theta(x_1,\dots,x_{n};c_i)\psi(x_{n+1};d_i)\right|\,d\omega \\
        \leq& \frac{\varepsilon}{2} + \sum_{i = 1}^{m_n}\int_{S_{x_1\dots x_{n+1}}(M)}\psi(x_{n+1};d_i)|\phi(x_1,\dots,x_{n};b_i) - \theta(x_1,\dots,x_{n};c_i)|\,d\omega \\
        \leq& \frac{\varepsilon}{2} + \frac{\varepsilon}{2}\int_{S_{x_1}(M)}\sum_{i = 1}^{m_n}\psi(x_1;d_i)\,d\mu_1 \\
        =& \varepsilon.
    \end{align*}
\end{proof}

We now define the precise kind of regularity partition that NIP allows us to find, as well as homogeneous tuples, which will be essential for distal regularity.
\begin{defn}\label{defn_NIP_regularity}
    A $(\varepsilon,\delta)$-NIP regularity partition for $\phi(x_1,\dots,x_n)$ with respect to measures $\mu_i \in \mathfrak{M}_{x_i}(M)$ is a grid partition defined by $\psi(x;z) = \prod_{i = 1}^n \psi_i(x_i;z_i)$ over a set $D$
    such that there is a subset $D_0 \subset D$ with
    $$\sum_{d \in D_0}\int_{S_{x}(M)}\psi(x;d)\,d\mu_1 \times \dots \times \mu_n \leq \delta$$
    and for each $d \in D \setminus D_0$, there are is a value $r_d$ such that
    $$\int_{S_{x}(M)}\psi(x;d)|\phi(x) - r_d|\,d\mu_1 \times \dots \times \mu_n \leq \varepsilon\int_{S_{x}(M)}\psi(x;d)\,d\mu_1 \times \dots \times \mu_n.$$

    If $\phi(x_1,\dots,x_n)$ is a definable predicate, $\varepsilon > 0$, and $A_i \subseteq M^{x_i}$ for each $i$, we say that $(A_1,\dots,A_n)$ is $(\phi,\varepsilon)-$\emph{homogeneous}
    when for all $a, a' \in A_1\times \dots \times A_n$, $|\phi(a) - \phi(a')| \leq \varepsilon$.
    
    We also say that definable/constructible predicates $\psi_i(x_i)$ are $(\phi,\varepsilon)-$homogeneous when their supports are,
    or we may say this about their product $\prod_{i=1}^n\psi_i(x_1)$.
\end{defn}

\begin{lem}\label{lem_hom_pou}
    For any definable predicate $\theta(x_1,\dots,x_n;y)$ which of the form
    $\sum_{j = 1}^m \prod_{i = 1}^n \theta_{ij}(x_i;y_{ij})$, there is a predicate $\psi(x_1,\dots,x_n;z) = \prod_{i = 1}^n \psi_i(x_i;z_i)$
    such that for any $b \in M^y$, $\psi$ defines a grid partition of unity such that each piece is $(\theta(x_1,\dots,x_n;b),\varepsilon)$-homogeneous,
    and the size of the partition is bounded by a function of $m,n$.
    We can choose $\psi$ to be either definable or constructible.
\end{lem}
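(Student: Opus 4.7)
The plan is to refine each factor space $M^{x_i}$ by a partition of unity that simultaneously controls every $\theta_{ij}(x_i;\cdot)$, then take the product over $i$ and estimate the oscillation of $\theta$ via telescoping in $i$ and a sum over $j$. Fix $b \in M^y$, write its coordinates as $(b_{ij})_{1 \le i \le n,\, 1 \le j \le m}$, and set $\delta = \varepsilon/(mn)$.

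First, for each pair $(i,j)$, apply Lemma \ref{lem_cover_pou} to the single predicate $\theta_{ij}(x_i;y_{ij})$ with parameter-set size $1$ and tolerance $\delta$: this yields a predicate $\psi_{ij}(x_i;z_{ij})$ and a finite set $D_{ij}$ with $|D_{ij}| \le \mathcal{N}_{\theta_{ij}(x_i;y_{ij}),0.49\delta}(1)$, such that $\{\psi_{ij}(x_i;d) : d \in D_{ij}\}$ is a $(\theta_{ij},\delta)$-partition of unity of $M^{x_i}$ over $\{b_{ij}\}$. For each fixed $i$, form the common refinement by multiplying pointwise: define
$$\psi_i(x_i; z_i) = \prod_{j=1}^{m}\psi_{ij}(x_i; z_{ij}),$$
where $z_i$ bundles $(z_{i1},\dots,z_{im})$. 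Summing $\psi_i$ over the product set $D_i = D_{i1}\times\dots\times D_{im}$ gives $\sum_{d\in D_i}\psi_i(x_i;d) = \prod_j \sum_{d\in D_{ij}}\psi_{ij}(x_i;d) = 1$, so $\psi_i$ does define a partition of unity on $M^{x_i}$ of size at most $\prod_j |D_{ij}|$. On the support of any piece of this refinement, each of $\theta_{i1}(x_i;b_{i1}),\dots,\theta_{im}(x_i;b_{im})$ oscillates by at most $\delta$.

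Now set $\psi(x_1,\dots,x_n;z) = \prod_{i=1}^n \psi_i(x_i;z_i)$, which defines the grid partition $P = \bigotimes_i P_i$. On the support of a single piece, fix any $j$: the elementary inequality $|\prod_i a_i - \prod_i a_i'| \le \sum_i |a_i - a_i'|$ for $[0,1]$-valued arguments shows that $\prod_{i}\theta_{ij}(x_i;b_{ij})$ oscillates by at most $n\delta$; summing over the $m$ values of $j$ bounds the oscillation of $\theta(x_1,\dots,x_n;b)$ by $mn\delta = \varepsilon$, which is exactly $(\theta(x;b),\varepsilon)$-homogeneity. The total size of $P$ is at most $\prod_{i,j}\mathcal{N}_{\theta_{ij},0.49\delta}(1)$, depending only on $m, n, \theta,$ and $\varepsilon$. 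For the constructible version, invoke the constructible variant of Lemma \ref{lem_cover_pou} at every stage; products of $\{0,1\}$-valued constructible predicates are constructible, so the argument transfers verbatim. No real obstacle appears here; the only bookkeeping point is noting that the common refinement of the $m$ single-predicate partitions at each coordinate can be encoded by one predicate whose parameter tuple is the concatenation of the original parameters.
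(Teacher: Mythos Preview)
Your argument is correct and follows essentially the same strategy as the paper: for each coordinate $i$, build a partition of unity on $M^{x_i}$ so that every $\theta_{ij}(x_i;b_{ij})$ has small oscillation on each piece, then take the grid product and bound the oscillation of $\theta$ by summing over $j$ and telescoping over $i$. The only real difference is packaging: the paper constructs the coordinate partitions directly by composing each $\theta_{ij}$ with a fixed bump-function partition of $[0,1]$ into $N+1$ pieces (or, in the constructible case, with indicator functions of intervals), while you outsource this step to Lemma~\ref{lem_cover_pou} with a one-element parameter set. Your telescoping estimate $mn\delta$ is in fact slightly sharper than the paper's $m((1+2/N)^n-1)$.

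One small point to tighten: you state the size bound as $\prod_{i,j}\mathcal{N}_{\theta_{ij},0.49\delta}(1)$ and say it depends on $\theta$, but the lemma asks for a bound depending only on $m$, $n$ (and $\varepsilon$). This is fine once you observe that for a single parameter the covering number is uniformly at most $\lceil 1/(0.49\delta)\rceil$ regardless of the predicate, since one only needs to $0.49\delta$-cover the range of a single $[0,1]$-valued function. With that remark your bound becomes roughly $(2mn/\varepsilon)^{mn}$, matching the paper's $(N+1)^{mn}$ up to constants.
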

\begin{proof}
    Let $N$ be large enough that $m\left(\left(1 + \frac{2}{N}\right)^n-1\right) \leq \frac{\varepsilon}{2}$.
    Let $(f_k : 0 \leq k \leq N)$ be a continuous partition of unity on $[0,1]$ such that the support of each $f_k$ lies in the interval $\left(\frac{k - 1}{N},\frac{k + 1}{N}\right)$.
    Then for any $\phi(x)$, $(f_k\circ \phi : 0 \leq k \leq N)$ is a partition of unity on $M^x$, such that on each piece, $\phi$ varies by at most $\frac{2}{N}$.
    By taking products of the partitioning functions, we can find a definable partition of unity $P_i$ on each $M^{x_i}$ such that each $\theta_{ij}(x_i)$ varies by at most $\frac{2}{N}$ on each piece, and set $P = \otimes_{i = 1}^n P_i$.
    Then for each $\pi \in P$, the function $\theta(x_1,\dots,x_n)$ varies from some value by at most $m\left(\left(1 + \frac{2}{N}\right)^n-1\right)\leq \frac{\varepsilon}{2}$,
    and thus vary in total by at most $\varepsilon$, on the support of $\pi$.
    Each partition $P_i$ is a refinement of $m$ partitions of unity into $N + 1$ pieces, so $|P_i| \leq (N + 1)^m$, and thus $|P| \leq (N + 1)^{mn}$.
    
    If instead we desire a constructible partition, then we can let $f_k$ be indicator functions of the intervals $\left[\frac{k}{N},\frac{k + 1}{N}\right)$.
\end{proof}

\begin{thm}[{See \cite[Theorem 3.3]{nip_reg}}]\label{thm_nip_reg}
    Assume $T$ is NIP and $M \vDash T$.
    Let $\phi(x_1,\dots,x_n)$ be a definable predicate and let $\varepsilon, \delta > 0$.
    Then there is a predicate $\psi(x_1,\dots,x_n;z) = \prod_{i = 1}^n \psi_i(x_i;z_i)$ such that for all measures $\mu_i \in \mathfrak{M}_{x_i}(M)$ with $\mu_i$ generically stable for $i < n$,
    $\psi$ defines a $(\varepsilon,\delta)$-NIP regularity partition of unity for $\phi(x_1,\dots,x_n)$ with respect to the $\mu_i$s, with the size of the partition depending only on $\phi,\varepsilon,\delta$.
    We can choose $\psi$ to be either definable or constructible.
\end{thm}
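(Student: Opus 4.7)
The plan is to combine the tensor-rank $L^1$-approximation of Lemma~\ref{lem_nip_reg_2} with the rectangular homogeneous partitioning of Lemma~\ref{lem_hom_pou}, and then use a Markov inequality to identify a small exceptional set of pieces on which $\phi$ oscillates too much. Neither ingredient is sensitive to the specific measures beyond their generic stability, so the resulting $\psi$ will be a single predicate depending only on $\phi,\varepsilon,\delta$.

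First, apply Lemma~\ref{lem_nip_reg_2} with error tolerance $\varepsilon\delta/2$ to obtain a definable predicate $\theta(x_1,\dots,x_n;y)=\sum_{j=1}^m\prod_{i=1}^n\theta_{ij}(x_i;y_{ij})$, with $m$ depending only on $\phi,\varepsilon,\delta$, such that for any generically stable $\mu_1,\dots,\mu_{n-1}$ and any $\mu_n$, setting $\omega=\mu_1\otimes\cdots\otimes\mu_n$ (well-defined by Lemma~\ref{lem_associative}), there exist parameters $b\in M^y$ with
\[\int_{S_x(M)}\bigl|\phi(x)-\theta(x;b)\bigr|\,d\omega\leq \tfrac{\varepsilon\delta}{2}.\]
Next, apply Lemma~\ref{lem_hom_pou} to $\theta$ with homogeneity $\varepsilon/2$, producing a rectangular predicate $\psi(x;z)=\prod_{i=1}^n\psi_i(x_i;z_i)$ (definable or constructible as desired) and a uniform size bound $N$ depending only on $m,n$, so that for every $b$ one can find $D\subseteq M^z$ with $|D|\leq N$ for which $\bigl(\psi(\,\cdot\,;d)\bigr)_{d\in D}$ is a rectangular partition of unity on which $\theta(\,\cdot\,;b)$ is $(\varepsilon/2)$-homogeneous.

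Given $\mu_1,\dots,\mu_n$ as in the hypothesis, pick $b$ and $D$ from the two steps above. For each $d\in D$, choose a value $r_d\in[0,1]$ with $|\theta(\,\cdot\,;b)-r_d|\leq\varepsilon/2$ throughout the support of $\psi(\,\cdot\,;d)$, and define
\[D_0=\left\{d\in D : \int_{S_x(M)}\psi(x;d)\,|\phi(x)-\theta(x;b)|\,d\omega > \tfrac{\varepsilon}{2}\int_{S_x(M)}\psi(x;d)\,d\omega\right\}.\]
Since the $\psi(\,\cdot\,;d)$ sum to $1$, summing over $D_0$ and applying Markov's inequality yields
\[\sum_{d\in D_0}\int_{S_x(M)}\psi(x;d)\,d\omega \leq \tfrac{2}{\varepsilon}\int_{S_x(M)}|\phi(x)-\theta(x;b)|\,d\omega\leq \delta,\]
bounding the total mass of the bad pieces. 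For $d\in D\setminus D_0$, the triangle inequality $|\phi-r_d|\leq|\phi-\theta(\,\cdot\,;b)|+|\theta(\,\cdot\,;b)-r_d|$ combined with $(\varepsilon/2)$-homogeneity gives $\int\psi(x;d)|\phi(x)-r_d|\,d\omega\leq\varepsilon\int\psi(x;d)\,d\omega$, verifying the regularity condition.

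There is no substantive obstacle beyond organizing the two previous lemmas with Markov; the one point requiring attention is uniformity. This is automatic: Lemma~\ref{lem_nip_reg_2} produces $\theta$ whose shape depends only on $\phi,\varepsilon,\delta$, Lemma~\ref{lem_hom_pou} produces $\psi$ from the shape of $\theta$ alone, and the size $|D|$ is bounded by a function of $m,n$ (hence of $\phi,\varepsilon,\delta$). The choice between definable and constructible $\psi$ propagates directly from the corresponding choice in Lemma~\ref{lem_hom_pou}.
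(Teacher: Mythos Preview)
Your proposal is correct and follows essentially the same approach as the paper: apply Lemma~\ref{lem_nip_reg_2} to get a tensor approximation, apply Lemma~\ref{lem_hom_pou} to partition into pieces on which the tensor is nearly constant, and use Markov's inequality to discard a small-mass set of bad pieces. The only cosmetic difference is in the constants: the paper approximates to within $\delta^2$ and takes $\varepsilon$-homogeneous pieces, whereas you approximate to within $\varepsilon\delta/2$ and take $\varepsilon/2$-homogeneous pieces; your choice lands more directly on the stated inequality $\int\psi|\phi-r_d|\,d\omega\leq\varepsilon\int\psi\,d\omega$.
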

\begin{proof}
    By Lemma \ref{lem_nip_reg_2}, we know that there is a definable predicate $\theta(x_1,\dots,x_n;z)$ of the form $\sum_{j= 1}^m \prod_{i = 1}^n\theta_{ij}(x_i;z_i)$ such that
    if $\mu_i \in \mathfrak{M}_{x_i}(M)$ are measures with $\mu_i$ generically stable for $i < n$ and $c \in M^w$,
    then there is $d \in M^z$ such that
    $$\int_{S_{x_1 \dots x_n}(M)} \left|\phi(x_1,\dots,x_n;c) - \theta(x_1,\dots,x_n;d)\right|\,d\mu_1\otimes \dots \otimes \mu_n \leq \delta^2.$$

    We now apply Lemma \ref{lem_hom_pou} to $\theta$, finding that there is some predicate $\psi$ that for any $d \in M^z$, defines a grid partition of unity where each piece is $\left(\theta(x_1,\dots,x_n;d),\varepsilon\right)$-homogeneous.
    Then for any appropriate measures $\mu_i$ and $c \in M^w$, denoting $\omega = \mu_1 \otimes \dots \otimes \mu_n$, we find $d \in M^z$ as before, and let $P$ be the grid partition of unity defined by $\psi$ with homogeneous pieces.
    Now define $e : P \to [0,1]$ by
    $$e(\pi) = \frac{\int_{S_{x}(M)} \left|\phi(x;c) - \theta(x;d)\right|\pi(x)\,d\omega}
    {\int_{S_{x}(M)} \pi(x)\,d\omega}.$$
    Giving $P$ the measure $\mu(\{\pi\}) = \int_{S_{x}(M)} \pi(x)\,d\omega$, we see that 
    $$\int_P e\,d\mu = \sum_{\pi \in P}\int_{S_{x}(M)} \left|\phi(x;c) - \theta(x;d)\right|\pi(x)\,d\omega \leq \delta^2,$$
    so by Markov's inequality, the measure of all $\pi \in P$ such that $\int_{S_{x}(M)} \left|\phi(x;c) - \theta(x;d)\right|\pi(x)\,d\omega > \delta$
    is at most $\delta$.
    If that set is $P_0 \subseteq P$, then
    $\sum_{\pi \in P_0}\int_{S_{x}(M)} \pi(x)\,d\omega \leq \delta$,
    and for each $\pi \in P \setminus P_0$, we find that 
    $\int_{S_{x}(M)} \left|\phi(x;c) - \theta(x;d)\right|\pi(x)\,d\omega \leq \delta$.

    Now suppose $\pi \in P \setminus P_0$.
    By $\left(\theta,\varepsilon\right)$-homogeneity, there is some $r_\pi$ be such that on the support of $\pi$, $|\theta(x;d) - r_\pi| \leq \frac{\varepsilon}{2}$,
    and thus $\left|\phi(x;c) - r_\pi\right| \dot{-} \frac{\varepsilon}{2} \leq \left|\phi(x;c) - \theta(x;d)\right|,$
    so $\int_{S_{x}(M)} \left|\phi(x;c) - r_\pi\right|\dot{-}\frac{\varepsilon}{2}\pi(x)\,d\omega \leq \delta$, and thus this partition is a $(\varepsilon,\delta)$-NIP regularity partition.
\end{proof}

\subsection{Weak Orthogonality and Strong Erd\H{o}s-Hajnal}
In this subsection, we introduce continuous versions of the notions of regularity that characterize weakly orthogonal measures, and thus distality.

\begin{defn}\label{defn_SEH}
    We say that a predicate $\phi(x_1,\dots,x_n)$ has the $\varepsilon$-\emph{strong Erd\H{o}s-Hajnal property} (or $\varepsilon$-SEH) when there exists $\delta$ such that for any finite sets $A_i \subseteq M^{x_i}$,
    there are subsets $B_i \subseteq A_i$ such that $|B_i| \geq \delta |A_i|$ and $(B_1,\dots,B_n)$ is $(\phi,\varepsilon)$-homogeneous.

    A predicate $\phi$ has the \emph{definable} $\varepsilon$-SEH with respect to measures $\mu_1,\dots,\mu_n$ when there are predicates $\psi_i(x_i;z_i)$ and $\delta > 0$ such that
    there are parameters $d_i \in M^{z_i}$ such that for each $i$, $\int_{S_{x_i}(M)}\psi_i(x_i;d_i)\,d\mu_i \geq \delta$ and the supports of $\psi_i(x_i;d_i)$ are $(\phi,\varepsilon)$-homogeneous.
    If for some $\phi(x_1,\dots,x_n;y)$, some $\varepsilon > 0$, and some class of tuples of Keisler measures, $\phi(x_1,\dots,x_n;b)$ has the definable $\varepsilon$-SEH with respect to all tuples of measures in that class,
    and the same $\delta > 0$ and predicates $\psi_i(x_i;z_i)$ can be used in each case, then we say that $\phi$ has the \emph{uniformly definable} $\varepsilon$-SEH with respect to that class of tuples of measures.
\end{defn}

In classical logic, the definable strong Erd\H{o}s-Hajnal property implies a Szemer\'edi-style regularity lemma, as in \cite[Section 5]{distal_reg},
We will now define a real-valued version of this regularity property.

\begin{defn}\label{defn_distal_reg_part}
    If $P$ is a rectangular partition of unity on $M^{x_1\dots x_n}$, then we call $P$ a $(\varepsilon,\delta)$-\emph{distal regularity partition} for $\phi$ with respect to measures $\mu_i \in \mathfrak{M}_{x_i}(M)$
    when 
    $$\sum_{\pi_1,\dots,\pi_n} \prod_{i = 1}\int_{S_{x_i}(M)}\pi_i(x_i)\,d\mu_i \leq \delta,$$
     where the sum ranges over all $(\pi_1,\dots,\pi_n)$ with $\prod_{i = 1}\pi_i \in P$ that are not $(\phi,\varepsilon)$-homogeneous.
\end{defn}

In a series of lemmas, we will show that a fixed tuple of measures $\mu_1,\dots,\mu_n$ is weakly orthogonal if any of several equivalent regularity properties hold.
We will prove the implications of this equivalence with enough detail to later show that if any of these properties holds in a uniformly definable way across all generically stable Keisler measures, then the theory is distal.

Recall that by \emph{a product measure of continuous localizations of} $\mu_1,\dots,\mu_n$, we mean a measure $\omega$ such that there exist $M$-definable predicates $\theta_i(x_i)$
such that for all $M$-definable predicates $\phi_i(x_i)$,
$$\int_{S_{x_1\dots x_n}(M)}\prod_{i = 1}^n \phi_i(x_i)\,d\omega = \prod_{i = 1}^n\int_{S_{x_i}(M)}\phi_i(x_i)\theta_i(x_i)\,d\mu_i.$$

\begin{thm}\label{thm_ortho_reg}
    Let $\mu_1 \in \mathfrak{M}_{x_1}(M),\dots,\mu_n \in \mathfrak{M}_{x_n}(M)$.
    The following are equivalent:
    \begin{itemize}
        \item The measures $\mu_1,\dots,\mu_n$ are weakly orthogonal.
        \item For each $M$-definable predicate $\phi(x_1,\dots,x_n)$ and each $\varepsilon, \delta > 0$, there is some $C$ such that $\phi$ admits a definable $(\varepsilon,\delta)$-distal regularity partition
        \item For each $M$-definable predicate $\phi(x_1,\dots,x_n)$ and each $\varepsilon, \delta > 0$, there is some $C$ such that $\phi$ admits a constructible $(\varepsilon,\delta)$-distal regularity partition
        \item For each $M$-definable predicate $\phi(x_1,\dots,x_n)$ and each $\varepsilon > \gamma \geq 0$, there is some $\delta > 0$ such that for any product measure $\omega$ of continuous localizations of $\mu_1,\dots,\mu_n$,
            if $\int_{S_{x_1\dots x_n}(M)}\phi\,d\omega \geq \varepsilon$, then there are $M$-definable predicates $\psi_i(x_i)$ 
            such that $\phi(a_1,\dots,a_n) \geq \gamma$ whenever $\psi_i(a_i) > 0$ for each $i$, and $\int_{S_{x_i}(M)}\psi_i(x_i)\,d\mu_i \geq \delta$ for each $i$.
        \item For each $M$-definable predicate $\phi(x_1,\dots,x_n)$ and each $\varepsilon > 0$, $\phi$ has the definable $\varepsilon$-SEH with respect to any continuous localizations of $\mu_1,\dots,\mu_n$.
    \end{itemize}
    Furthermore, if these hold, then the $(\varepsilon,\delta)$-distal regularity partitions can be chosen to be grid partitions of size $O(\delta^{-C})$ for some constant $C$ depending on $\phi,\varepsilon,\mu_1,\dots,\mu_n$. 
\end{thm}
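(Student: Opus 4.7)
The plan is to establish a cycle of implications using Corollary \ref{cor_ortho} (which translates weak orthogonality into sum-of-products sandwich approximation) and Lemma \ref{lem_hom_pou} (which converts sum-of-products predicates into grid partitions of unity with controlled homogeneity), together with Lemma \ref{lem_ortho_local} ensuring that continuous localizations of weakly orthogonal measures remain weakly orthogonal.

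For (i) $\Rightarrow$ (ii) and (iii), I would apply Corollary \ref{cor_ortho} to produce sum-of-products $\psi^- \leq \phi \leq \psi^+$ with $\int (\psi^+ - \psi^-)\,d\omega \leq \varepsilon\delta/2$, then refine via Lemma \ref{lem_hom_pou} into a grid partition (definable or constructible) on which each $\psi^\pm$ varies by at most $\varepsilon/4$ per piece. Markov's inequality applied to the gap $\psi^+ - \psi^-$ bounds the total measure of pieces where this gap exceeds $\varepsilon/2$ by $\delta$, and on the remaining good pieces $\phi$ varies by at most $\varepsilon$. Conversely, given either type of distal regularity partition for $\phi$, define $\psi^\pm = \sum_\pi \pi(x)\,r_\pi^{\pm}$, where $r_\pi^+$ and $r_\pi^-$ are the supremum and infimum of $\phi$ on the support of $\pi$ (with $r_\pi^+ = 1, r_\pi^- = 0$ on bad pieces). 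These are sum-of-products sandwiches of $\phi$; since the rectangular measure $\prod_i \int \psi_i^\pi\,d\mu_i$ of each piece is the same under every extension of $\mu_1 \times \cdots \times \mu_n$, the homogeneity of good pieces together with the small total measure of bad pieces gives $\int (\psi^+ - \psi^-)\,d\omega \leq \varepsilon + \delta$ for every product extension $\omega$, uniquely determining $\int \phi\,d\omega$ and hence yielding (i).

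For the equivalence (i) $\Leftrightarrow$ (v), I would apply (i) $\Rightarrow$ (ii) to continuous localizations $\mu_i^\theta$ (weakly orthogonal by Lemma \ref{lem_ortho_local}) and pick the good piece of largest $\omega^\theta$-measure, whose $\mu_i^\theta$-marginals are then bounded below. The most delicate step is (i) $\Leftrightarrow$ (iv), where the challenge is that $\delta$ must be uniform over all continuous localizations. For (i) $\Rightarrow$ (iv), given $\int \phi\,d\omega^\theta \geq \varepsilon$, I would use bump functions and a pigeonholing argument to isolate an interval $I \subseteq [\gamma,1]$ on which $\chi_I(\phi)$ carries significant $\omega^\theta$-mass, then apply the already-established distal regularity to the unlocalized predicate $\phi \cdot \prod_i \theta_i$ against $\mu_1 \times \cdots \times \mu_n$ (whose integral is at least $\varepsilon \prod_i \int \theta_i\,d\mu_i$) to extract a good homogeneous piece on which $\phi \geq \gamma$ and the unlocalized $\mu_i$-marginals are bounded below independently of $\theta$. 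For (iv) $\Rightarrow$ (i), I would argue by contradiction: if weak orthogonality fails, choose a $\phi$ with two distinct integrals $\int \phi\,d\omega_1 \neq \int \phi\,d\omega_2$, and apply (iv) to suitable truncations of $\phi$ to force a common homogeneous rectangle inconsistent with the disagreement.

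The main obstacle will be the uniform handling of the localization factor $\prod_i \int \theta_i\,d\mu_i$ in the proof of (i) $\Rightarrow$ (iv); this requires carefully balancing the refinement of the unlocalized distal regularity partition against the way different localizations can concentrate mass on bad pieces. The quantitative size bound $O(\delta^{-C})$ follows from tracking the complexity $m$ of the sum-of-products approximation in Corollary \ref{cor_ortho} as a function of the target integral gap $\varepsilon\delta$, combined with the explicit polynomial size control from Lemma \ref{lem_hom_pou}.
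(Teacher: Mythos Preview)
Your approach to the equivalence (i) $\Leftrightarrow$ (ii)/(iii) is essentially the paper's: it proves exactly this via Lemma~\ref{lem_ortho_pou} (restating Corollary~\ref{cor_ortho} in partition-of-unity form) and Lemma~\ref{lem_ortho_reg} (Markov's inequality on the gap, and conversely reassembling $\chi^\pm$ from the partition). So that part is fine.

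There is, however, a genuine gap in your justification of the polynomial size bound $O(\delta^{-C})$. Corollary~\ref{cor_ortho} gives no quantitative control on the number $m$ of product terms in $\psi^\pm$; it is a bare existence statement for each target gap. Lemma~\ref{lem_hom_pou} then produces a partition of size $(N+1)^{mn}$, which is \emph{exponential} in $m$. Tracking $m$ as a function of $\varepsilon\delta$ cannot yield a polynomial bound in $\delta^{-1}$ without an a~priori bound of the form $m = O(\log \delta^{-1})$, and nothing in the paper provides this. The paper obtains the polynomial bound by a completely different route: it closes the cycle through (v) $\Rightarrow$ (ii) using Lemma~\ref{lem_seh_reg}, an \emph{iterative} construction that repeatedly applies SEH to carve off a $(\delta/2)^n$-fraction of the remaining mass, terminating after $O(\log \gamma / \log(1-(\delta/2)^n))$ steps and yielding a partition of size $(n+1)^M = O(\gamma^{-C})$.

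Your treatment of (iv) and (v) also diverges from the paper and is shakier. The paper's route is: from a regularity partition, Lemma~\ref{lem_reg_dseh} extracts a single good piece of guaranteed marginal measure $\geq (\alpha-\beta-\delta-\varepsilon)/K$ on which $\phi \geq \beta$, and then Lemma~\ref{lem_dseh_seh} pigeonholes over the tent functions $\frac{1}{s}\dot{-}|\phi - j/s|$ to get SEH. Your plan to apply regularity to the unlocalized predicate $\phi \cdot \prod_i \theta_i$ has the defect that homogeneity of $\phi \cdot \prod_i \theta_i$ on a piece says nothing about $\phi$ where the $\theta_i$ are small, so you cannot conclude $\phi \geq \gamma$ on the whole support of the $\psi_i$. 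And your (iv) $\Rightarrow$ (i) by contradiction is left vague; the paper instead goes (v) $\Rightarrow$ (ii) via Lemma~\ref{lem_seh_reg} and then (ii) $\Rightarrow$ (i) by the already-established Lemma~\ref{lem_ortho_reg}, which simultaneously delivers the polynomial bound.
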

\begin{proof}
    We will show that weak orthogonality is equivalent to the existence of distal regularity partitions, and then we will relate definable distal regularity partitions to both strong Erd\H{o}s-Hajnal statements.
    We start by restating the result of Corollary \ref{cor_ortho} about weakly orthogonal Keisler measures in terms of partitions of unity.
\begin{lem}\label{lem_ortho_pou}
    For $1 \leq i \leq n$, let $\mu_i \in \mathfrak{M}_{x_i}(M)$ be Keisler measures.
    Let $\phi(x_1,\dots,x_n)$ and $\psi^\pm(x_1,\dots,x_n)$ be $M$-definable predicates such that
\begin{itemize}
    \item The predicates $\psi^\pm(x_1,\dots,x_n)$ are each of the form $\sum_{j = 1}^m \prod_{i = 1}^n\theta_{ij}^{\pm}(x_i)$
    \item For all $(x_1,\dots,x_n)$, $\psi^-(x_1,\dots,x_n) \leq \phi(x_1,\dots,x_n) \leq \psi^+(x_1,\dots,x_n)$.
    \item For any product measure $\omega$ of $\mu_1,\dots,\mu_n$, $\int_{S_{x_1\dots x_n}(M)}(\psi^+ - \psi^-)\,d\omega \leq \varepsilon$.
\end{itemize}
    Then there is a grid partition of unity $P = \otimes_{i = 1}^n P_i$ on $M^{x_1\dots x_n}$,
    which can be chosen to either be definable or constructible,
    such that if for each tuple $\pi \in P$,
    we set $r_\pi^- = \inf_{x : \pi(x) > 0} \phi(x)$ and $r_\pi^+ = \sup_{x : \pi(x) > 0} \phi(x)$,
    and then define
    $$\chi^\pm(x_1,\dots,x_n) = \sum_{\pi \in P} r_{\pi}^\pm\pi(x_1,\dots,x_n),$$
    then 
    \begin{itemize}
        \item For all $(x_1,\dots,x_n)$, $\chi^-(x_1,\dots,x_n) \leq \phi(x_1,\dots,x_n) \leq \chi^+(x_1,\dots,x_n)$.
        \item For any product measure $\omega$ of $\mu_1,\dots,\mu_n$, $\int_{S_{x_1\dots x_n}(M)}(\chi^+ - \chi^-)\,d\omega \leq 2\varepsilon$.
    \end{itemize}
    Furthermore, the definition of $P$ depends only on the predicates $\phi, \psi^+,\psi^-$, and not the parameters used in their definitions.
\end{lem}
\begin{proof}
    By Lemma \ref{lem_hom_pou}, there are predicates $\pi^\pm(x_1,\dots,x_n;z) = \prod_{i = 1}^n \pi_i^\pm(x_i;z_i)$
    such that $\pi^\pm$ defines a rectangular partition of unity such that each piece is $(\psi^\pm(x_1,\dots,x_n),\frac{\varepsilon}{2})$-homogeneous.
    Thus if we let $\pi_i(x_i;z_i^+,z_i^-) = \pi_i^+(x_i;z_i^+)\pi_i^-(x_i;z_i^-)$, then $\pi(x_1,\dots,x_n;z') = \prod_{i = 1}^n \pi(x_i;z_i^+,z_i^-)$ defines a refinement of the two partitions of unity,
    so that each piece is $(\psi^+(x_1,\dots,x_n),\frac{\varepsilon}{2})$-homogeneous and $(\psi^+(x_1,\dots,x_n),\frac{\varepsilon}{2})$-homogeneous.
    
    Then for each $\pi$, we let $r_{\pi}^+ = \sup \psi^+(x_1,\dots,x_n)$ where the sup ranges over the support of $\pi$, and let 
    $r_{\pi}^- = \inf \psi^-(x_1,\dots,x_n)$.
    We then let $\chi^\pm(x_1,\dots,x_n) = \sum_{\pi \in P} r_{\pi}^\pm\pi$.
    By symmetry, it suffices to show that $\psi^+ \leq \chi^+$ and for any product measure $\omega$, $\int_{S_{x_1\dots x_n}(M)}(\chi^+ - \psi^+)\,d\omega \leq \frac{\varepsilon}{2}$.
    The integral fact will follow by showing that $\psi^+ \leq \chi^+ \leq \psi^+ + \frac{\varepsilon}{2}$.
    Let $a \in M^{x_1\dots x_n}$.
    Then for every $\pi \in P$ with $\pi(a) > 0$,
    $\psi^+$ varies by at most $\frac{\varepsilon}{2}$ on the set containing $a$ on which $r_{\pi}^+$ is a supremum,
    so we have $\psi^+(a) \leq r_{\pi}^+ \leq \psi^+(a) + \frac{\varepsilon}{2}$.
    As $\chi^+(a)$ is a convex combination of such numbers, $\psi^+(a) \leq \chi^+(a) \leq \psi^+(a) + \frac{\varepsilon}{2}$.
\end{proof}

\begin{lem}\label{lem_ortho_reg}
    Let $x_1,\dots,x_n$ be variable tuples, and let $\mu_i \in \mathfrak{M}_{x_i}(M)$ for each $i$.
    Then the measures $\mu_i$ are weakly orthogonal if and only if for every $M$-definable predicate $\phi(x_1,\dots,x_n)$ and every $\delta > 0,\varepsilon > 0$, 
    there exists a $(\varepsilon,\delta)$-distal regularity partition $P$ for $\phi$ with respect to the measures $\mu_i$.
\end{lem}
\begin{proof}
    First assume that $\mu_i$ are weakly orthogonal.
By Corollary \ref{cor_ortho} put together with Lemma \ref{lem_ortho_pou}, we can find a rectangular partition of unity $P$
such that if $r_\pi^+ =\sup_{x : \pi(x) > 0}\phi(x)$, $r_\pi^- =\inf_{x : \pi(x) > 0}\phi(x)$,
and $\chi^\pm= \sum_{\pi \in P} r_{\pi}^\pm\pi,$
then $\chi^-(x_1,\dots,x_n) \leq \phi(x_1,\dots,x_n) \leq \chi^+(x_1,\dots,x_n)$ and for any product measure $\omega$ of $\mu_1,\dots,\mu_n$,
$\int_{S_{x_1\dots x_n}(M)}(\chi^+ - \chi^-)\,d\omega \leq \delta\varepsilon$.

We clearly see for some $\pi = \prod_{i = 1}^n \pi_i \in P$, the tuple $(\pi_1,\dots,\pi_n)$ is $(\phi,\varepsilon)$-homogeneous if and only if $r_\pi^+ - r_\pi^- \leq \varepsilon$.

We calculate that $\int_{S_{x_1\dots x_n}(M)}(\chi^+ - \chi^-)\,d\omega = \sum_{\pi \in P}(r_\pi^+ - r_\pi^-)\int_{S_{x_1\dots x_n}(M)}\pi \,d\omega \leq \delta\varepsilon,$
so placing a measure on the finite set $P$ by giving $\pi$ the measure $\int_{S_{x_1\dots x_n}(M)}\pi \,d\omega$ and applying Markov's inequality,
we see that the measure of the non-homogeneous predicates $\pi$ is at most $\delta$, so this is a $(\varepsilon,\delta)-$distal regularity partition.

On the other hand, if $P$ is a $(\varepsilon,\delta)-$distal regularity partition, then as before, set the notations $r_\pi^+ =\sup_{x : \pi(x) > 0}\phi(x)$, $r_\pi^- =\inf_{x : \pi(x) > 0}\phi(x)$
and $\chi^\pm= \sum_{\pi \in P} r_{\pi}^\pm\pi.$
As before, on the homogeneous pieces, $r_\pi^+ - r_\pi^- \leq \varepsilon$ and for any product measure $\omega$,
$$\int_{S_{x_1\dots x_n}(M)}(\chi^+ - \chi^-)\,d\omega = \sum_{\pi \in P}(r_\pi^+ - r_\pi^-)\int_{S_{x_1\dots x_n}(M)}\pi \,d\omega,$$
where the sum over the homogeneous pieces is at most $\varepsilon$, and the sum over the non-homogeneous pieces is at most $\delta$, so the $\int_{S_{x_1\dots x_n}(M)}(\chi^+ - \chi^-)\,d\omega \leq \delta + \varepsilon$.
As the integrals of $\chi^\pm$ do not depend on the choice of $\omega$, this means that for two different product measures $\omega_1,\omega_2$, we have
$|\int_{S_{x_1\dots x_n}(M)}\phi\,d\omega_1 - \int_{S_{x_1\dots x_n}(M)}\phi\,d\omega_2| \leq \delta + \varepsilon$.
Thus if such partitions exist for $\delta,\varepsilon > 0$ arbitrarily small, $\omega_1 = \omega_2$ and the measures $\mu_i$ are weakly orthogonal.
\end{proof}

We now show that strong Erd\H{o}s-Hajnal behavior on all continous localizations implies a distal regularity partition in a uniform way.

\begin{lem}\label{lem_seh_reg}
    Let $\phi(x_1,\dots,x_n)$ be an $M$-definable predicate, let $\varepsilon, \delta > 0$, and let $\mu_i \in \mathfrak{M}_{x_i}(M)$ be a Keisler measure for each $i$.
    Suppose that for any Borel localizations $\nu_1,\dots,\nu_n$ of $\mu_1,\dots,\mu_n$,
    there exist $M$-definable predicates $\psi_i(x_i;d_i)$ such that for each $i$, $\int_{S_{x_i}(M)}\psi_i(x_i;d_i)\,d\nu_i \geq \delta$ and the supports of $\psi_i(x_i)$ are $(\phi,\varepsilon)$-homogeneous.
    Then for any $\gamma > 0$, there exists a $(\varepsilon,\gamma)$-distal regularity partition for $\phi$ with respect to $\mu_1,\dots,\mu_n$ definable over $D$, with $|D| = O(\gamma^{-C})$ for some $C$ depending only on $\delta$.
    Furthermore, if the predicates $\psi_i(x_i;z_i)$ can be chosen uniformly for all continuous localizations $\nu_1,\dots,\nu_n$ of $\mu_1,\dots,\mu_n$,
    then the distal regularity partition can be defined by a predicate which is a continuous combination of the $\psi_i$s depending only on $\delta, \gamma$.
\end{lem}
\begin{proof}
    First, we note that if $\int_{S_{x_i}(M)}\psi_i(x_i;d_i)\,d\nu_i \geq \delta$, then $\nu_i\left(\psi_i(x_i;d_i) \geq \frac{\delta}{2}\right) \geq \frac{\delta}{2}$.

    We will actually find a set $D \subset M^{z_1\dots z_n}$ such that
    for each $(d_1,\dots,d_n) \in D$, the supports $\psi_i(x_i;d_i)$ are $(\phi,\varepsilon)$-homogeneous, and
    $$\mu_1\times\mu_n\left(\bigcup_{d_1,\dots,d_n}\psi_i(x_i;d_i) \geq \frac{\delta}{2}\right) \geq 1 - \gamma.$$
    Once we have found this, we use Lemma \ref{lem_cover_pou} to partition each $M^{x_i}$ into a partition of unity $P_i$ such that on the support of each piece, for each $d_i \in D_i$, either $\psi_i(x_i;d_i) > 0$
    or $\psi_i(x_i;d_i) < \frac{\delta}{2}$.
    Then if $\pi_i \in P_i$ for each $i$, either there is some $(d_1,\dots,d_n) \in D$ such that the support of each $\pi_i(x_i)$ is contained in the support of $\psi_i(x_i;d_i)$,
    and this tuple of supports is $(\phi,\varepsilon)$-homogeneous, or the product of the supports of $\pi_i$ are disjoint from the set of measure $1 - \gamma$ mentioned before.
    Thus the integrals of the non-$(\phi,\varepsilon)$-homogeneous pieces add up to at most $\gamma$.

    We will construct $D_m$ with $\mu_1\times\dots\times\mu_n\left(\bigcup_{d_1,\dots,d_n}\left[\psi_i(x_i;d_i) \geq \frac{\delta}{2}\right]\right) \geq 1 - \left(1-\left(\frac{\delta}{2}\right)^n\right)^m$ for all $m$ recursively,
    and iterate until $\left(1-\left(\frac{\delta}{2}\right)^n\right)^m \leq \gamma$.
    We simultaneously construct a rectangular constructible partition $P_m$ on $M^{x_1\dots x_n}$, such that $\bigcup_{d_1,\dots,d_n}\left[\psi_i(x_i;d_i) \geq \frac{\delta}{2}\right]$
    is a union of pieces of $P_m$.
    If $X_m$ is the union of all pieces of $P_m$ contained in $\bigcup_{d_1,\dots,d_n}\left[\psi_i(x_i;d_i) \geq \frac{\delta}{2}\right]$, we will make sure that at each stage, $\mu_1 \times \dots \times \mu_n (X_m) \geq 1 - \left(1-\left(\frac{\delta}{2}\right)^n\right)^m$.
    At each stage, we will ensure that $|D_m|,|P_m| \leq (n + 1)^m$.
    
    For $m = 0$, we may use $D = \emptyset$, and let $P_0$ be the trivial 1-piece partition. Here it is possible that $X_0 = \emptyset$.
    Assume for induction that we have $D_m$ and $P_m$ such that $\mu_1 \times \dots \times \mu_n (X_m) \geq 1 - \left(1-\left(\frac{\delta}{2}\right)^n\right)^m$.
    Then to form $D_{m+1}$ and $P_{m+1}$, we will replace each piece $A_1 \times \dots \times A_n$ of $P_m$ into at most $n+1$ pieces, and add at most one element to $D$ for each such piece.
    Let $A_1 \times \dots \times A_n \in P_m$.
    If $A_1 \times \dots \times A_n \subseteq \bigcup_{d_1,\dots,d_n}\psi_i(x_i;d_i) \geq \frac{\delta}{2}$, then it is already in $X_m$, and we can leave this piece in $P_{m+1}$.
    This way we ensure that $X_m \subseteq X_{m+1}$.
    If $\prod_{i = 1}^n \mu_i(A_i) = 0$, then we will still leave this piece, as it does not affect the measure of $X_{m+1}$.
    Otherwise, we can find $d_1,\dots,d_n$ such that the supports of $\psi_i(x_i;d_i)$ are $(\phi,\varepsilon)$-homogeneous and
    if $\nu_i$ is the localization of $\mu_i$ to $A_i$, then $\nu_i\left(\psi_i(x_i;d_i) \geq \frac{\delta}{2}\right) \geq \frac{\delta}{2}$.
    Thus if we add $(d_1,\dots,d_n)$ to $D$ and replace the piece $A_1 \times A_n$ with the $n+1$ pieces 
    $\prod_{i \leq j}\left(A_i \cap \left[\psi_i(x_i;d_i) \geq \frac{\delta}{2}\right]\right) \times \prod_{i > j}\left(A_i \setminus \left[\psi_i(x_i;d_i) \geq \frac{\delta}{2}\right]\right)$,
    we find that the one piece $\prod_{i = 1}^n\left(A_i \cap \left[\psi_i(x_i;d_i) \geq \frac{\delta}{2}\right]\right)$ which definitely contributes to $X_{m+1}$ has total measure at least $\left(\frac{\delta}{2}\right)^n \prod_{i =1}^n \mu_i(A_i)$.
    Thus if we do this for all pieces of $P_m$, the total measure of $M^{x_1\dots x_n} \setminus X_{m+1}$ decreases by a factor of at least $\left(\frac{\delta}{2}\right)^n$,
    meaning that $\mu_1 \times \dots \times \mu_n (X_{m + 1}) \geq 1 - \left(1-\left(\frac{\delta}{2}\right)^n\right)^{m+1}$.
    We also find that $|P_{m+1}| \leq (n+1)|P_m| \leq (n+1)^{m+1}$, and $|D_{m+1}| \leq |D_m| + |P_m| \leq 2(n+1)^m \leq (n+1)^{m+1}$.
    
    Now if we choose $M = \left\lceil \frac{\log \gamma}{\log(1 - \delta^n)}\right\rceil$,
    we find that $\mu_1\times \dots\times\mu_n(X_M) \geq 1 - \gamma$ as desired.
    If $C = -\frac{\log (n + 1)}{\log(1 - \delta^n)}$, then $(n + 1)^M \leq (n + 1)\gamma^{-C}$, so the number of pieces is $O(\gamma^{-C})$.
\end{proof}

From a distal regularity partition, we can derive a statement about integrals of predicates which is analogous to the density version of the strong Erd\H{o}s-Hajnal property in \cite[Section 4]{distal_reg}.
\begin{lem}\label{lem_reg_dseh}
    Let $\mu_i \in \mathfrak{M}_{x_i}(M)$ for $1 \leq i \leq n$, let $\phi(x_1,\dots,x_n)$ be an $M$-definable predicate, and let $(\varepsilon,\delta), \psi_1(x_1;z_1),\dots,\psi_n(x_n;z_n)$
    be such that $\prod_{i = 1}^n\psi_i(x_i;z_i)$ defines a $(\varepsilon,\delta)$-distal regularity partition for $\phi$ with respect to $\mu_1,\dots,\mu_n$ of size $K$.

    Suppose that $\alpha,\beta\geq 0$ are such that $\alpha > \beta + \delta + \varepsilon$.
    Then if $\omega$ is a measure extending $\mu_1 \times \dots \times \mu_n$ such that $\int_{S_{x_1 \dots x_n}(M)}\phi \,d\omega \geq \alpha$,
    then there are some $d_i \in M^{z_i}$ such that if $a = (a_1,\dots,a_n)$ satisfies $\prod_{i = 1}^n \psi_i(a_i;d_i) > 0$, then
    $\phi(a) \geq \beta$
    and for each $i$, 
    $$\int_{S_{x_i}(M)} \psi_i(x_i;d_i)\,d\mu_i \geq \frac{\alpha - \beta - \delta - \varepsilon}{K} > 0.$$
\end{lem}
\begin{proof}
    Let $\psi(x;z) = \prod_{i = 1}^n\psi_i(x_i;z_i)$, and let $D$ be the set of parameters such that $(\psi(x;d) : d \in D)$ is the distal regularity partition.
    We will break $D$ into three disjoint sets $A, B, C$.
    Let $A$ be the set of all $d$ such that the support of $\psi(x;d)$ is $(\phi,\varepsilon)$-homogeneous and on that support, $\phi(x) \geq \gamma$.
    Let $B$ be the set of all other $d$ such that the support of $\psi(x;d)$ is $(\phi,\varepsilon)$-homogeneous, and let $C$ be the set of all $d$ such that the support is non-homogeneous.

    Then by the partition of unity assumption,
    \begin{align*}
        &\sum_{d \in A} \int_{S_x(M)}\phi(x)\psi(x;d)\,d\omega + \sum_{d \in B}\int_{S_x(M)}\phi(x)\psi(x;d)\,d\omega + \sum_{d \in C}\int_{S_x(M)}\phi(x)\psi(x;d)\,d\omega\\ 
        =& \int_{S_x(M)}\phi(x)\,d\omega \geq \alpha.
    \end{align*}
    However, by the assumption of homogeneity, on the support of $\psi(x;d)$ for any $d \in B$, $\phi(x) \leq \beta + \varepsilon$,
    so 
    $$\sum_{d \in B}\int_{S_x(M)}\phi(x)\psi(x;d)\,d\omega \leq \beta + \varepsilon,$$
    and by the distal regularity assumption, 
    $$\sum_{d \in C}\int_{S_x(M)}\phi(x)\psi(x;d)\,d\omega \leq \delta,$$
    so 
    $$\sum_{d \in A} \int_{S_x(M)}\phi(x)\psi(x;d)\,d\omega \geq \alpha - \beta - \delta - \varepsilon.$$
    As this is the sum over at most $K$ cells, we find that at least one of the terms of this sum is at least 
    $\frac{\alpha - \beta - \delta - \varepsilon}{K}$, so for the larger integral,
    $$\int_{S_x(M)}\psi(x;d)\,d\omega = \prod_{i = 1}^N \int_{S_{x_i}(M)} \psi_i(x_i;d_i)\,d\mu_i \geq \frac{\alpha - \beta - \delta - \varepsilon}{K}.$$
    As each term in the product is at most 1, they are all at least $\frac{\alpha - \beta - \delta - \varepsilon}{K}$.
\end{proof}

Going full circle, we can use this density property to imply the Strong Erd\H{o}s-Hajnal property.
\begin{lem}\label{lem_dseh_seh}
    Let $\mu_i \in \mathfrak{M}_{x_i}(M)$ for $1 \leq i \leq n$, let $\phi(x_1,\dots,x_n)$ be an $M$-definable predicate, let $s \in \N,\delta > 0,\psi_1(x_1;z_1),\dots,\psi_n(x_n;z_n)$.
    Assume that for all $0 \leq j \leq s$, if $\omega$ is a measure extending $\mu_1 \times \dots \times \mu_n$, and
    $\int_{S_{x_1 \dots x_n}(M)}\frac{1}{s} \dot{-}\left|\phi - \frac{j}{s}\right| \,d\omega \geq \frac{1}{s(s + 1)}$,
    then there are some $d_i \in M^{z_i}$ such that if $a = (a_1,\dots,a_n)$ satisfies $\prod_{i = 1}^n \psi_i(a_i;d_i) > 0$, then
    $\left|\phi - \frac{j}{s}\right| \leq \frac{1}{s}$
    and for each $i$,
    $$\int_{S_{x_i}(M)} \psi_i(x_i;d_i)\,d\mu_i \geq \delta > 0.$$

    Then there are some $d_i \in M^{z_i}$ such that the supports of $\psi_i(x_i;d_i)$ are $\left(\phi,\frac{2}{s}\right)$-homogeneous and
    $$\int_{S_{x_i}(M)} \psi_i(x_i;d_i)\,d\mu_i \geq \delta.$$
\end{lem}
\begin{proof}
    For any $r \in [0,1]$, $\sum_{j = 0}^s \frac{1}{s} \dot{-}\left|r - \frac{j}{s}\right| = \frac{1}{s}.$
    Thus for some $0 \leq j \leq s$, $\int_{S_{x_1 \dots x_n}(M)}\frac{1}{s} \dot{-}\left|\phi - \frac{j}{s}\right| \,d\omega \geq \frac{1}{s(s + 1)}$,
    and we find $d_i \in M^{z_i}$ such that
    and for each $i$,
    $\int_{S_{x_i}(M)} \psi_i(x_i;d_i)\,d\mu_i \geq \delta > 0,$
    and on the support of $\prod_{i = 1}^n \psi_i(a_i;d_i)$, $\left|\phi - \frac{j}{s}\right| \leq \frac{1}{s}$, so the supports of $\psi_i(x_i;d_i)$ are $\left(\phi,\frac{2}{s}\right)$-homogeneous.
\end{proof}

We now finish the proof of Theorem \ref{thm_ortho_reg} by observing that if $\mu_1,\dots,\mu_n$ are weakly orthogonal, then by Lemma \ref{lem_ortho_local}, any continuous localizations are weakly orthogonal, so for any $\phi(x_1,\dots,x_n)$, distal regularity partitions exist, and by Lemma \ref{lem_reg_dseh}, the density version of strong Erd\H{o}s-Hajnal holds for these measures, and thus by Lemma \ref{lem_dseh_seh}, strong Erd\H{o}s-Hajnal holds for these measures.
If we assume that strong Erd\H{o}s-Hajnal holds for any continuous localizations of $\mu_1,\dots,\mu_n$, then by Lemma \ref{lem_seh_reg}, a distal regularity partition exists, so the measures are weakly orthogonal.
This shows that all of the properties are equivalent to weak orthogonality.
Also, Lemma \ref{lem_seh_reg} produces grid partitions of polynomial size.
\end{proof}

\section{Keisler Measures in Distal Theories}\label{sec_distal_measure}
In classical logic, a theory is distal if and only if all generically stable measures are smooth.
We prove that this still holds in continuous logic, and show that it is enough to check that all generically stable measures are weakly orthogonal.
\begin{thm}\label{thm_distal_measure}
    The following are equivalent:
    \begin{itemize}
        \item The theory $T$ is distal
        \item Every generically stable measure is smooth
        \item All pairs of generically stable measures are weakly orthogonal.
    \end{itemize}
\end{thm}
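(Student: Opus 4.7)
The plan is to establish the cycle $(1) \Rightarrow (2) \Rightarrow (3) \Rightarrow (1)$.

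The direction $(2) \Rightarrow (3)$ follows from earlier results. If generically stable measures $\mu \in \mathfrak{M}_x(\mathcal{U})$ and $\nu \in \mathfrak{M}_y(\mathcal{U})$ are both smooth, then they are both smooth over any sufficiently large common small model $M$. By Lemma \ref{lem_smooth_ortho}, $\mu|_M$ and $\nu|_M$ are weakly orthogonal over $M$, and by Lemma \ref{lem_smooth_product}, $\mu \otimes \nu$ is smooth over $M$. Any $\omega \in \mathfrak{M}_{xy}(\mathcal{U})$ extending $\mu \times \nu$ then restricts to $(\mu \otimes \nu)|_M$ on $M$ by the weak orthogonality over $M$, and equals $\mu \otimes \nu$ globally by smoothness of the product, giving global weak orthogonality.

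For $(1) \Rightarrow (2)$, fix a generically stable $\mu$ over $M$. By Theorem \ref{thm_gen_stable}, $\mu$ is fam, so for each $\phi(x;y)$ and $\varepsilon > 0$ there is a tuple $\bar a \in (M^x)^n$ such that $\mathrm{Av}(\bar a; \phi(x;y))$ is a uniformly $\varepsilon/4$-close $M$-definable approximation to $F^\phi_\mu$. To verify the smoothness criterion of Lemma \ref{lem_smooth_characterization}, it suffices to produce open conditions $U_i(y)$ and $M$-definable pairs $\psi_i^\pm(x)$ with $\psi_i^-(x) \le \phi(x;b) \le \psi_i^+(x)$ on each $U_i$ and $\int(\psi_i^+ - \psi_i^-)\, d\mu < \varepsilon$. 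We feed the fam tuple $\bar a$ into the continuous analog of strong honest definitions for distal theories from \cite{anderson1}: applied to $\bar a$ as the finite parameter set, it returns, for each $b \in \mathcal{U}^y$, a sub-tuple $\bar a^{(b)}$ and an open condition at $b$ such that $\phi(x;y)$ agrees up to $\varepsilon$ with a fixed function of $x$ on the cells cut out by $\bar a^{(b)}$. Compactness of $S_y(M)$ then yields finitely many such sub-tuples $\bar a^{(i)}$; their open conditions give the cover $U_i(y)$, while the $\psi_i^\pm(x)$ are built as pointwise upper and lower envelopes of the controlled instances of $\phi$, with the integral gap estimated via the fam approximation.

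For $(3) \Rightarrow (1)$, we argue contrapositively. If $T$ is not distal, then the indiscernible-sequence characterization of \cite{anderson1} supplies an indiscernible segment together with an insertion point whose type over the segment is not uniquely determined. Duplicating this construction into two disjoint sorts $x, y$ produces indiscernible segments $I_x, I_y$ whose joint EM-type admits genuinely distinct amalgamations. By Lemma \ref{lem_ave_gs} the average measures $\mu_{I_x}, \mu_{I_y}$ are generically stable, and the two distinct amalgamations give rise to two distinct elements of $\mathfrak{M}_{xy}(\mathcal{U})$ extending $\mu_{I_x} \times \mu_{I_y}$, contradicting weak orthogonality.

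The principal obstacle is $(1) \Rightarrow (2)$: interfacing strong honest definitions with the fam approximation to extract the specific syntactic form of $(\psi_i^\pm, U_i)$ required by Lemma \ref{lem_smooth_characterization}, while controlling the integral error below $\varepsilon$, is where the real care lies. The direction $(3) \Rightarrow (1)$ should be an unpacking of the indiscernible-sequence characterization of non-distality from \cite{anderson1}, and $(2) \Rightarrow (3)$ is essentially bookkeeping with the smoothness and orthogonality lemmas already proved.
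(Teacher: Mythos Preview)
Your $(2) \Rightarrow (3)$ is fine, and indeed the paper dispatches it in one line via Lemma \ref{lem_smooth_ortho}.

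Your sketch of $(1) \Rightarrow (2)$ has a genuine gap. Strong honest definitions applied to $\phi^*(y;x)$ with the finite fam tuple $\bar a \subset M^x$ as the parameter set only control the values $\phi(a_i;b)$ for $a_i \in \bar a$: on each resulting piece of $S_y(M)$ you know $\mathrm{Av}(\bar a;\phi(\cdot;b))$ up to $\varepsilon$, and hence the integral $\int \phi(x;b)\,d\mu$, but you do \emph{not} get $M$-definable predicates $\psi_i^\pm(x)$ with $\psi_i^-(x) \le \phi(x;b) \le \psi_i^+(x)$ for \emph{all} $x$. The ``pointwise upper and lower envelopes of the controlled instances of $\phi$'' you propose would be envelopes of $\phi(x;b_j)$ for finitely many $b_j$, and there is no reason these sandwich $\phi(x;b)$ for an arbitrary $b$ in the piece. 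The paper's argument is different in kind: it applies strong$^*$ honest definitions to $\phi$ and to $1-\phi$ directly (not over a finite set), obtaining predicates $\psi^\pm(x;d_\pm)$ with parameters in an elementary pair extension $(N',P_{M'})$ such that $\psi^\pm = \phi$ on $M$ and $\psi^- \le \phi \le \psi^+$ everywhere. Approximate realizability (not fam) then forces $\int(\psi^+ - \phi)\,d\mu = \int(\phi - \psi^-)\,d\mu = 0$, and definability of $\mu$ lets one pull the parameters $d_\pm$ back into $M$ while keeping the integral gap and the approximate pointwise bounds. The crucial feature you are missing is a predicate that agrees with $\phi$ on $M$ and bounds it globally; fam alone cannot supply this.

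Your $(3) \Rightarrow (1)$ is also too loose. ``Duplicating into two disjoint sorts'' is not what the paper does, and it is unclear how ``two distinct amalgamations of the joint EM-type'' would produce two distinct Keisler measures extending $\mu_{I_x} \times \mu_{I_y}$. The paper instead works with a single non-distal segment $I$, locates two insertion points $t_1 < t_2$ and a formula $\phi(c_1,x_1,c_2,x_2,c_3)$ witnessing non-distality, and shows that for the sub-segments $J,K$ around $t_1,t_2$ the limit types $\lim(I_{[0,t_1')}/M) \times \lim(I_{[0,t_2')}/M)$ are consistent with both $\phi = 0$ and $\phi = \varepsilon$; this blocks the approximation criterion of Corollary \ref{cor_ortho} for $\mu_J, \mu_K$, and a convex-combination trick then shows $\mu_I$ is not weakly orthogonal to itself. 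The passage from ``distinct completions of a type'' to ``distinct product measures'' requires exactly this kind of quantitative argument on the average measures, which your sketch omits.
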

\begin{proof}
First we show that in distal theories, generically stable measures are smooth.
\begin{lem}[{Generalizing \cite[Prop. 9.26]{nip_guide}}]\label{lem_gs_smooth}
Assume $T$ is distal. Then all generically stable measures are smooth.
\end{lem}
\begin{proof}
    Assume $T$ is distal, and let $\mu \in \mathfrak{M}_x(\mathcal{U})$ be a generically stable measure, invariant over a small model $M$.
    To show that $\mu|_M$ is smooth, fix $M \preceq N$, a predicate $\phi(x)$ with parameters in $N$, and $\varepsilon > 0$.
    By \cite[Theorem 5.9]{anderson1}, both $\phi$ and $1-\phi$ admit strong honest definitions, and thus strong$^*$ honest definitions by \cite[Lemma 5.11]{anderson1}.
    Thus there is an extension $(N, P_M) \preceq (N',P_{M'})$ and a predicate $\psi^+(x)$ (a strong$^*$ honest definition for $\phi$ over $M$) with parameters in $M'$
    such that for $a \in M$, $\psi^+(a) = \phi(a)$ and for all $a'$ in $N'$, $\phi(a')\leq \psi^+(a')$.
    By applying the same result to $1 - \phi$ and then subtracting from 1, we can find $\phi^-$, also with parameters in $M'$, such that
    for $a \in M$, $\psi^-(a) = \phi(a)$, and for all $a'$ in $N'$, $\phi(a')\geq \psi^-(a')$.
    To refer to the parameters more easily, we now write $\psi^+(x;d_+)$ and $\psi^-(x;d_-)$, where $d_+, d_- \in M'^z$.
    
    We see that $\psi^+(x;d_+)-\phi(x)$ and $\phi(x)-\psi^-(x;d_-)$, nonnegative everywhere, are both 0 at all tuples in $M$.
    Thus by Lemma \ref{lem_approx_realized} and the fact that $\mu$ is approximately realized in $M$,
    $\int_{S_x(\mathcal{U})}\psi^+(x;d_+)-\phi(x)\,d\mu$ and $\int_{S_x(\mathcal{U})}\phi(x) -\psi^-(x;d_-)\,d\mu$ are both 0.
    As $\mu$ is definable, the function $\int_{S_x(\mathcal{U})}\psi^+(x;z)-\phi(x)\,d\mu$ is continuous from $S_z(N)$ to $\mathbb{R}$.
    Thus there is some some basic open neighborhood of $\mathrm{tp}(d_+/N)$ such that $\int_{S_x(\mathcal{U})}\psi^+(x;z)-\phi(x)\,d\mu < \varepsilon$
    and $\sup_x\left(\phi(x) - \psi^+(x;z)\right)<\varepsilon$ on that neighborhood.
    We may assume that this neighborhood is defined by $\theta(z)<\delta$, where $\theta(z)$ is some formula with parameters in $N$ such that $N'\vDash \theta(d_+)< \delta$.
    By elementarity of the extension $(N,P_M) \preceq (N', P_{M'})$, we know that there is some (possibly infinite) tuple $d_+'$ in $M$ such that $N \vDash \theta(d_+') < \delta$, and thus
    $\int_{S_x(\mathcal{U})}\psi^+(x;d'_+)-\phi(x)\,d\mu < \varepsilon$
    and $\sup_x\left(\phi(x) - \psi^+(x;d'_+)\right)<\varepsilon$.
    Similarly there exists a tuple $d'^-$ in $M$ such that
    $\int_{S_x(\mathcal{U})}\psi^-(x;d'^-)-\phi(x)\,d\mu < \varepsilon$
    and $\sup_x(\psi^-(x;d'^-) - \phi(x))<\varepsilon$.
    Combining these, we see that
    $\int_{S_x(\mathcal{U})}\psi^+(x;d'_+)\,d\mu - \int_{S_x(\mathcal{U})}\psi^-(x;d'^-)\,d\mu < 2\varepsilon$.
    As these formulas have parameters in $M$, and we can bound $\phi(x)$ above and below with
    $\phi^-(x;d'^-)(x) - \varepsilon < \phi(x) < \phi^+(x;d'_+)(x) - \varepsilon$, 
    we see that for any measure $\nu$ extending $\mu|_M$, we have
    $$
        \int_{S_x(M)}\psi^-(x;d'^-)\,d\mu|_M -\varepsilon < \int_{S_x(\mathcal{U})}\phi(x)\,d\nu
    <\int_{S_x(M)}\psi^+(x;d'_+)\,d\mu|_M + \varepsilon
    $$
    limits the value of $\int_{S_x(\mathcal{U})}\phi(x)\,d\nu$ to an interval of width at most $4\varepsilon$ depending only on $\mu|_M$.

    As $\varepsilon$ was arbitrary, we see that $\nu$ is determined by $\mu|_M$, so $\mu$ is smooth.
\end{proof}

Now as smooth measures are weakly orthogonal to all measures, it suffices to show that if all generically stable measures are smooth, then the theory is distal.

\begin{lem}\label{lem_ave_distal}
    Let $I = (a_t : t \in [0,1])$ be an indiscernible segment.
    If for some model $M$ containing $I$, $\mu_I \in \mathfrak{M}_x(M)$ is weakly orthogonal to itself, then $I$ is distal.
\end{lem}
\begin{proof}
    Assume that $I$ is not distal.
    Then there exist points $0 < t_1 < t_2 < 1$ and $b_1, b_2$ such that the sequences
    $I_{[0,t_i)} + b_i + I_{(t_i,1]}$ defined by replacing $a_{t_i}$ with $b_i$ are indiscernible for both $i = 1,2$, but
    the sequence $I_{[0,t_1)} + b_1  + I_{(t_1,t_2)} + b_2 + I_{(t_2,1]}$ defined by making both replacements is not indiscernible.
    By \cite[Lemma 5.4]{anderson1}, we may assume that $\mathrm{tp}(a_{t_i}/M) = \mathrm{tp}(b_i/M) = \lim (I_{[0,t_i)}/M)$ for $i = 1,2$,
    where $M$ is some small model containing $I$.

    By the non-indiscernibility assumption, there is some formula $\phi(y_1,x_1,y_2,x_2,y_3)$ and $c_1,c_2,c_3$ finite subtuples of
    $I_{[0,t_1)}, I_{(t_1,t_2)}, I_{(t_2,1]}$ respectively such that $\phi(c_1,a_{t_1},c_2,a_{t_2},c_3) \neq \phi(c_1,b_1,c_2,b_2,c_3)$.
    Assume that $\phi(c_1,a_{t_1},c_2,a_{t_2},c_3) = 0$ while $\phi(c_1,b_1,c_2,b_2,c_3) = \varepsilon > 0$.
    Let $u_1$ be the maximum index such that $a_{u_1} \in c_1$, let $v_1$ be the minimum index such that $a_{v_2} \in c_2$, let $u_2$ be the maximum index such that $a_{u_2} \in c_2$,
    and let $v_2$ be the minimum index such that $a_{v_2} \in c_3$. Then $0 \leq u_1 < t_1 < v_1 \leq u_2 < t_2 < v_2 \leq 1$.

    If $t_i' \in (u_i,v_i)$ for each $i$, then the partial type $\lim (I_{[0,t_1')}/M) \times \lim (I_{[0,t_2')}/M)$ is consistent with $\phi(c_1,x,c_2,y,c_3) = 0$,
    because there are realizations of these limit types that could replace $a_{t_1'}$ and $a_{t_2'}$ while preserving the indiscernibility of the sequence.
    We will show that $\lim (I_{[0,t_1')}/M) \times \lim (I_{[0,t_2')}/M)$ is also consistent with $\phi(c_1,x,c_2,x,c_3) = \varepsilon$.
    Let $\tau : [0,1] \to [0,1]$ is an order-preserving map that fixes all points in $[0,u_1] \cup [v_1,u_2] \cup [v_2,1]$,
    but $\tau(t_1) = t_1'$ and $\tau(t_2) = t_2'$.
    Then by the homogeneity of $\mathcal{U}$, there is an automorphism $\sigma \in \mathrm{Aut}(\mathcal{U})$ such that for all $t \in [0,1]$, $\sigma(a_t) = a_{\tau(t)}$.
    We see then that $\phi(c_1,\sigma(b_1),c_2,\sigma(b_2),c_3) = \varepsilon$, and that replacing $a_{t_1'}$ with $\sigma(b_1)$ or $a_{t_2'}$ with $\sigma(b_2)$ leaves $I$ indiscernible.
    Thus by \cite[Lemma 5.4]{anderson1}, there are $b_1',b_2'$ with $\mathrm{tp}(b_i'/M) = \lim (I_{[0,t_i')}/M)$ for $i = 1,2$ but $\phi(c_1,b_1',c_2,b_2',c_3) = \varepsilon$.

    This tells us that if $J$ and $K$ are the indiscernible segments obtained by linearly reindexing $I_{[u_1,v_1]}$ and $I_{[u_2,v_2]}$ respectively,
    we find that $\mu_J, \mu_K \in \mathfrak{M}_x(M)$ cannot be weakly orthogonal.
    Otherwise, by Corollary \ref{cor_ortho}, there is an $M$-definable predicate $\psi(x,y) = \sum_{i = 1}^m \theta_i(x)\theta_i'(y)$ such that
    $M \vDash \phi(c_1,x,c_2,y,c_3) \leq \psi(x,y)$, but also for any $\omega$ extending $\mu_J \times \mu_K$,
    $\int_{S_{xy}(M)}\psi(x,y)\,d\omega < \varepsilon.$
    This cannot be true, as for any $t_1' \in [u_1,v_1], t_2' \in [u_2,v_2]$, we have $\psi(a_{t_1'},a_{t_2'}) \geq \varepsilon$,
    and for any $\omega$ extending $\mu_J \times \mu_K$, we have
    $$\int_{S_{xy}(M)}\psi(x,y)\,d\omega = \frac{1}{(v_1 - u_1)(v_2 - u_2)} \int_{u_1}^{v_1}\int_{u_2}^{v_2}\psi(a_t,a_t')\,dt'\,dt \geq \varepsilon.$$

    However, if $\mu_J$ and $\mu_K$ are not weakly orthogonal, then $\mu_I$ is not weakly orthogonal with itself.
    We see this by a proof analogous to that of Lemma \ref{lem_ortho_local}, as
    $\mu_I = (v_1 - u_1)\mu_J + (1 - (v_1 - u_1))\mu_{[0,1]\setminus J} = (v_2 - u_2)\mu_K + (1 - (v_2 - u_2))\mu_{[0,1]\setminus K}$,
    and we see that for any $\omega$ extending $\mu_J \times \mu_K$,
    $\mu_I \otimes \mu_I + (v_1 - u_1)(v_2 - u_2)(\omega - \mu_J \otimes \mu_K)$ will also be a Keisler measure extending $\mu_I \times \mu_I$,
    which will differ from $\mu_I \otimes \mu_I$ if we choose $\omega \neq \mu_J \otimes \mu_K$.
\end{proof}
As average measures for indiscernible segments are generically stable by Lemma \ref{lem_ave_gs}, this completes the proof of Theorem \ref{thm_distal_measure}.
\end{proof}

\subsection{Regularity by way of Weak Orthogonality}

We now generalize the results from \cite{distal_reg} about the Strong Erd\H{o}s-Hajnal property and regularity in distal structures.
First we will use the approach from \cite{simon_distal_reg} to prove a regularity lemma nonconstructively using weakly orthogonal measures and ultraproducts, which we will prove equivalent to distality.
Then in the next subsection we will show the same results using the explicit combinatorial approach from \cite{distal_reg}.

By Theorem \ref{thm_distal_measure}, we know that a theory is distal if and only if all sequences of measures $\mu_1,\dots,\mu_n$ with $\mu_i$ generically stable for $i < n$ are weakly orthogonal,
and thus by Theorem \ref{thm_ortho_reg}, a theory is distal if and only for each such tuples of measures and each predicate $\phi(x_1,\dots,x_n)$,
one of the regularity properties from that theorem applies to $\phi$ over $\mu_1,\dots,\mu_n$.
Now we will show that in fact, if the theory is distal, all of those properties hold in uniformly definable ways.

\begin{lem}\label{lem_ortho_uniform}
    Assume $T$ is distal.
    Then for each definable predicate $\phi(x_1,\dots,x_n;y)$ and each $\varepsilon > 0$, there is a finite set $\Psi$ of definable predicates such that 
    each $\psi(x_1,\dots,x_n;z) \in \Delta$ can be expressed as a sum of predicates of the form $\prod_{i = 1}^n \psi_i(x_i;z_i)$, and
    if $M \vDash T$, $\mu_i \in \mathfrak{M}_{x_i}(M)$ are Keisler measures, with $\mu_i$ generically stable for $i < n$, and $b \in M^y$,
    then there are $\psi^-,\psi^+ \in \Delta, d_-,d_+ \in M^z$ such that if we write $x = (x_1,\dots,x_n)$,
    $\psi^-(x;d_-) \leq \phi(x;y) \leq \psi^+(x;d_+)$ and $\int_{S_x(M)}\psi^+(x;d_+) - \psi^-(x;d_-)\,d\mu_1 \times \dots \times \mu_n \leq \varepsilon$.
\end{lem}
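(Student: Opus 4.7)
The plan is to combine Corollary~\ref{cor_ortho} with a compactness/ultraproduct argument. First, for any fixed $M \vDash T$ and admissible tuple $(\mu_1,\dots,\mu_n)$, I would verify that the tuple is weakly orthogonal. By Lemma~\ref{lem_gs_smooth} each $\mu_i$ with $i < n$ is smooth; iterating Lemma~\ref{lem_smooth_product}, the Morley product $\mu_1 \otimes \cdots \otimes \mu_{n-1}$ is smooth and hence, by Lemma~\ref{lem_smooth_ortho}, weakly orthogonal to $\mu_n$. A short unpacking of the definitions, together with induction on $n$ to pass from uniqueness of $(n-1)$-fold product measures (from smoothness of $\mu_1,\dots,\mu_{n-1}$) to uniqueness of the $n$-fold one, yields $n$-fold weak orthogonality. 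Corollary~\ref{cor_ortho} then supplies, for each individual $\phi(x;b)$ and $\varepsilon > 0$, some $\psi^\pm$ of the required form realizing the sandwich and $\varepsilon$-gap bounds at that instance.

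The substantive step is upgrading this pointwise existence to a single finite $\Psi$ that works across all admissible instances. I would argue by contradiction. Let $\mathcal{D}$ be the directed poset of finite sets $\Psi$ of definable predicates of the required form, ordered by inclusion, and assume no $\Psi \in \mathcal{D}$ works. For each $\Psi$ pick a failing instance $(M_\Psi,\mu_{1,\Psi},\dots,\mu_{n,\Psi},b_\Psi)$ for which no $\psi^\pm \in \Psi$, with any parameters, realizes both bullet conclusions. Choose an ultrafilter $U$ on $\mathcal{D}$ refining the filter generated by the upper cones $\{\Psi' : \Psi_0 \subseteq \Psi'\}$, and form $M^* = \prod_U M_\Psi$, $\mu_i^* = \prod_U \mu_{i,\Psi}$ via Definition~\ref{defn_ultra}, and $b^* = [b_\Psi]$. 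By Lemma~\ref{lem_ultra_gs} each $\mu_i^*$ with $i < n$ is generically stable, and $M^* \vDash T$ is also distal, so the pointwise result applied in $M^*$ to $\phi(x;b^*)$ and $\varepsilon/2$ produces $\psi^\pm$ of the required form and $d_\pm^* = [d_{\pm,\Psi}] \in M^{*z}$ with $\psi^-(x;d_-^*) \leq \phi(x;b^*) \leq \psi^+(x;d_+^*)$ and $\int (\psi^+ - \psi^-)\,d\mu_1^* \times \cdots \times \mu_n^* \leq \varepsilon/2$.

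I would then transfer back coordinate-wise. The two sandwich inequalities are closed conditions in the language evaluated at $(d_-^*, d_+^*, b^*)$, so they descend to $U$-many $\Psi$. Because $\psi^+ - \psi^-$ is a finite sum of separated products of one-variable predicates, its integral against the $n$-fold partial product functional is a literal combination of one-variable integrals in each $M_\Psi$, and Definition~\ref{defn_ultra} identifies the $M^*$-integral with the ultralimit; hence for $U$-many $\Psi$ the gap is at most $\varepsilon$. The ultrafilter also contains the upper cone of $\{\psi^-,\psi^+\}$, so some $\Psi_0$ lies in all three $U$-large sets; then $\psi^\pm \in \Psi_0$ together with $d_{\pm,\Psi_0}$ witnesses both bullets at $(M_{\Psi_0},\mu_{i,\Psi_0},b_{\Psi_0})$, contradicting the choice of this instance as a failure for $\Psi_0$.

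The main obstacle is the verification in the first paragraph of \emph{genuine} $n$-fold weak orthogonality (not merely two-fold weak orthogonality of the smooth $(n-1)$-fold Morley product against $\mu_n$): an extension of the $n$-fold separated product functional is a priori only controlled on products $\prod_i \psi_i(x_i)$, so one must argue that, via smoothness of the Morley product and induction on $n$, any such extension is forced to factor as the two-fold separated amalgam of $\mu_1 \otimes \cdots \otimes \mu_{n-1}$ with $\mu_n$, which is then pinned down by Lemma~\ref{lem_smooth_ortho}. This is the only delicate point; once it is in place the ultraproduct compactness part is routine.
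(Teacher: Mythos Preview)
Your approach is essentially the paper's: contradiction plus an ultraproduct indexed by the directed poset of finite candidate sets $\Psi$, using Lemma~\ref{lem_ultra_gs} to keep the $\mu_i^*$ generically stable, distality to get smoothness, and Corollary~\ref{cor_ortho} in $M^*$ to produce the desired $\psi^\pm$. Your discussion of the ``delicate point'' about genuine $n$-fold weak orthogonality is accurate and, in fact, more explicit than the paper, which asserts it in one line; your sketch (restrict any product measure to $x_1\dots x_{n-1}$, use smoothness of the $\mu_i$ for $i<n$ inductively to identify it with the Morley product, then use smoothness of that Morley product to approximate arbitrary $\theta(x_1,\dots,x_{n-1})$ by separated sums and force the two-fold separated amalgam condition against $\mu_n$) is exactly how the gap is filled.

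There is one small but real technical slip in your transfer step. You write that the sandwich inequalities $\psi^-(x;d_-^*)\le \phi(x;b^*)\le \psi^+(x;d_+^*)$ are ``closed conditions \dots\ so they descend to $U$-many $\Psi$.'' In continuous logic this is not how ultraproducts behave: the value of $\sup_x\bigl(\psi^-(x;d_-^*)\dot{-}\phi(x;b^*)\bigr)$ in $M^*$ is the \emph{ultralimit} of the coordinate values, so equality to $0$ in $M^*$ only yields that for every $\eta>0$ the set of $\Psi$ with coordinate value $<\eta$ lies in $U$; you do not get exact sandwiching in any factor. The paper deals with this by first reducing to the relaxed statement where one asks only for $\sup_x\bigl(\psi^-\dot{-}\phi\bigr)\le \varepsilon/3$, $\sup_x\bigl(\phi\dot{-}\psi^+\bigr)\le\varepsilon/3$, and integral gap $\le\varepsilon/3$; these \emph{open} bounds do transfer to $U$-many coordinates, and at the end one subtracts $\varepsilon/3$ from $\psi^-$ and adds $\varepsilon/3$ to $\psi^+$ to restore a genuine sandwich with total integral gap $\le\varepsilon$. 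Your use of $\varepsilon/2$ handles the integral bound but leaves no room to repair the sandwich; replacing it with a three-way split as the paper does fixes the argument.
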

\begin{proof}
    Fix $\phi(x_1,\dots,x_n;y)$ and $\varepsilon > 0$.
    It suffices to show that for some finite set $\Psi$,
    and any model $M$, appropriate measures $\mu_i$, and $b \in M^y$,
    there are $\psi^-, \psi^+ \in \Psi$ and $d_-,d_+ \in M^z$ such that
    $\sup_x \psi^-(x;d_-)\dot{-}\phi(x;y)\leq \frac{\varepsilon}{3}, \sup_x \phi(x;y)\dot{-}\psi^+(x;d_+) \leq \frac{\varepsilon}{3}$ and $\int_{S_x(M)}\psi^+(x;d_+) - \psi^-(x;d_-)\,d\mu_1 \times \dots \times \mu_n \leq \frac{\varepsilon}{3}$.
    If so, then we may simply subtract $\frac{\varepsilon}{3}$ from $\psi^-$ and add $\frac{\varepsilon}{3}$ to $\psi^+$.

    Suppose that no such finite set $\Psi$ works.
    Let $\Sigma$ be the set of all possible definable predicates that can be expressed as finite sums of the form $\prod_{i = 1}^n \psi_i(x_i;z_i)$.
    Let $I$ be the set of finite subsets of $\Sigma$, for every finite subset $\Delta \in I$,
    let $S_\Delta = \{\Delta' \in I  : \Delta \subseteq \Delta'\}$, and let
    $F = \{S \subseteq I : \exists \Delta \in I, S_\Delta \subseteq S \}$.
    This is the standard filter used in the ultrafilter proof of the compactness theorem,
    so there exists an ultrafilter $U$ extending it.    

    For each finite $\Delta \subset \Sigma$, by our contradiction assumption, there are $M, \mu_1,\dots,\mu_n, b$ such that
    for all $\psi^-,\psi^+ \in \Delta, d_-,d_+ \in M^z$,
    \begin{align*}
        \max&\left(\sup_x \left(\psi^-(x;d_-)\dot{-}\phi(x;b)\right),\sup_x \left(\phi(x;b)\dot{-}\psi^+(x;d_+)\right),\right.\\
        &\left.\int_{S_x(M)}\psi^+(x;d_+) - \psi^-(x;d_-)\,d\mu_1 \times \dots \times \mu_n\right) > \frac{\varepsilon}{3}.
    \end{align*}
    Then we let $\tilde M$ be the ultraproduct of all these $M$ with the ultrafilter $U$, and let $\tilde \mu_i$ be the ultralimits of the measures, with $\tilde b$ the ultraproduct of the parameters.
    By Lemma \ref{lem_ultra_gs}, for $i < n$, the measure $\tilde \mu_i$ is generically stable and thus smooth, so by Corollary \ref{cor_smooth_commute}, all of these measures are weakly orthogonal.
    Thus by Lemma \ref{lem_ortho_local}, there are actually some $\psi^-,\psi^+$ and $\tilde d_-,\tilde d_+ \in \tilde M^z$ such that
    $\psi^-(x;d_-) \leq \phi(x;y) \leq \psi^+(x;d_+)$ and $\int_{S_x(M)}\psi^+(x;d_+) - \psi^-(x;d_-)\,d\mu_1 \times \dots \times \mu_n < \frac{\varepsilon}{3}$.
    Thus on a $U$-large set of models $M$, we have
    \begin{align*}
        \max&\left(\sup_x \psi^-(x;d_-)\dot{-}\phi(x;y), \sup_x \phi(x;y)\dot{-}\psi^+(x;d_+),\right.\\
        &\left.\int_{S_x(M)}\psi^+(x;d_+) - \psi^-(x;d_-)\,d\mu_1 \times \dots \times \mu_n\right) < \frac{\varepsilon}{3}.
    \end{align*}
    This contradicts our assumption, which made sure that on the $U$-large set of $\Delta$ containing $\psi^-,\psi^+$, this quantity was greater than $\frac{\varepsilon}{3}$. 
\end{proof}

We can use Lemma \ref{lem_ortho_uniform} to make the definability and constructibility in Theorem \ref{thm_ortho_reg} uniform.
We state these consequences separately as a distal regularity lemma and strong Erd\H{o}s-Hajnal properties.
We note that also by Theorem \ref{thm_ortho_reg}, any of these properties implies weak orthogonality of all generically stable measures, and thus by Theorem \ref{thm_distal_measure}, distality.

\begin{thm}\label{thm_distal_reg}
    Assume $T$ is distal.
    For each definable predicate $\phi(x_1,\dots,x_n;y)$ and $\varepsilon > 0$, there exist predicates $\psi_i(x_i;z_i)$, which can be chosen to be either definable or constructible, and a constant $C$ such that
    if $\mu_1 \in \mathfrak{M}_{x_1}(M),\dots,\mu_n \in \mathfrak{M}_{x_n}(M)$ are such that for $i < n$, $\mu_i$ is generically stable, $b \in M^y$, and $\delta > 0$, the following all hold:
    The predicate $\prod_{i = 1}^n \psi_i(x_i;z_i)$ defines a $(\varepsilon,\delta)$-distal regularity grid partition for $\phi(x_1,\dots,x_n;b)$ of size $O(\delta^{-C})$.
\end{thm}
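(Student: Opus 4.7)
The plan is to lift Theorem \ref{thm_ortho_reg} to a uniform version using distality, mirroring the way Lemma \ref{lem_ortho_uniform} uniformizes Corollary \ref{cor_ortho}. By Theorem \ref{thm_distal_measure}, distality of $T$ implies that every generically stable measure is smooth, and by Lemma \ref{lem_smooth_ortho} smooth measures are weakly orthogonal to every other measure; in particular, any tuple $\mu_1,\dots,\mu_n$ with $\mu_i$ generically stable for $i<n$ is weakly orthogonal, and the same is true after passing to continuous localizations by Lemma \ref{lem_ortho_local}. So the hypotheses of every item of Theorem \ref{thm_ortho_reg} hold pointwise for each such tuple; the task is to extract a single predicate $\prod_i\psi_i(x_i;z_i)$ that witnesses the regularity partition uniformly in the measures and in $b$.

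First I would apply Lemma \ref{lem_ortho_uniform} to $\phi(x_1,\dots,x_n;y)$ with a parameter $\varepsilon' \ll \varepsilon$ to obtain a finite family $\Psi$ of rectangular definable predicates $\sum_j\prod_i\theta_{ij}(x_i;z_{ij})$, so that for every $M$, every valid tuple of measures, and every $b\in M^y$, there exist $\psi^+,\psi^-\in\Psi$ and parameters sandwiching $\phi(x;b)$ with total $\omega$-integral gap at most $\varepsilon'$. Since $\Psi$ is finite, I would then feed each pair $(\psi^+,\psi^-)\in\Psi^2$ into Lemma \ref{lem_ortho_pou}, which produces a rectangular partition-of-unity predicate $\Pi_{\psi^+,\psi^-}$ depending only on $\phi,\psi^+,\psi^-$; taking the coordinatewise product of these finitely many partitions yields a single rectangular predicate $\Pi_0(x;z)=\prod_i\pi_i(x_i;z_i)$ refining them all. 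By the argument of Lemma \ref{lem_ortho_reg} applied to $\Pi_0$, this predicate defines an $(\varepsilon,\delta_0)$-distal regularity partition for $\phi(x;b)$ for a $\delta_0$ that can be made as small as desired by shrinking $\varepsilon'$, uniformly in the measures and in $b$.

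From this uniform distal regularity partition I would run the chain of implications in the proof of Theorem \ref{thm_ortho_reg}: Lemma \ref{lem_reg_dseh} converts $\Pi_0$ into a uniformly definable density strong Erd\H{o}s-Hajnal statement (the witnessing predicates are the factors $\pi_i$ of $\Pi_0$), and Lemma \ref{lem_dseh_seh} converts this into a uniformly definable $\varepsilon$-SEH statement, again with witnessing predicates built out of the $\pi_i$'s in a manner independent of the measures and $b$. This produces predicates $\psi_i(x_i;z_i)$ and a uniform $\delta_0>0$ such that the definable $\varepsilon$-SEH holds with these $\psi_i$ and $\delta_0$ against every allowed continuous localization. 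Finally I would invoke the final sentence of Lemma \ref{lem_seh_reg}: since the SEH predicates $\psi_i$ are uniform, for each desired $\delta$ the lemma produces an $(\varepsilon,\delta)$-distal regularity grid partition for $\phi(x;b)$ of size $O(\delta^{-C})$, where $C$ depends only on $\delta_0$, and the defining predicate is a continuous (respectively constructible) combination of the fixed $\psi_i$'s, depending only on $\delta$ and $\varepsilon$. The constructible version is obtained by using the constructible options in Lemmas \ref{lem_cover_pou}, \ref{lem_ortho_pou}, and \ref{lem_seh_reg} throughout.

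The main obstacle is bookkeeping: one must check that the uniformity produced by Lemma \ref{lem_ortho_uniform} is preserved through the three-step chain $\Pi_0 \rightsquigarrow$ density SEH $\rightsquigarrow$ SEH $\rightsquigarrow$ grid partition, because each intermediate lemma was originally stated for a fixed tuple of measures. The key observation that unblocks this is that each of Lemmas \ref{lem_ortho_pou}, \ref{lem_reg_dseh}, \ref{lem_dseh_seh}, and \ref{lem_seh_reg} constructs its output predicate canonically from its input predicate, without using the measure beyond the integral inequalities; hence starting from a single uniform predicate $\Pi_0$ (itself obtained by combining the finitely many cases from $\Psi^2$) yields a single uniform output at every stage. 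The final size bound $O(\delta^{-C})$ and the shape of the combining predicate come directly from Lemma \ref{lem_seh_reg}, with $C$ depending on the uniform lower bound $\delta_0$ guaranteed by the preceding steps, hence only on $\phi$ and $\varepsilon$ as required.
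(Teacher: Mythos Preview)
Your proposal is correct and follows essentially the same approach as the paper: start from Lemma \ref{lem_ortho_uniform}, then run the chain of lemmas from the proof of Theorem \ref{thm_ortho_reg} (Lemmas \ref{lem_ortho_pou}, \ref{lem_ortho_reg}, \ref{lem_reg_dseh}, \ref{lem_dseh_seh}, \ref{lem_seh_reg}), noting that each of them builds its output predicate canonically from its input and hence preserves uniformity, with the final polynomial size bound coming from Lemma \ref{lem_seh_reg}. The paper's proof is simply a terser statement of the same argument, collapsing the finitely many candidate predicates into one via ``standard coding tricks'' where you take an explicit coordinatewise refinement.
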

\begin{proof}
The lemmas in the proof of Theorem \ref{thm_ortho_reg} all preserve the uniformity of predicates.
Thus by starting with Lemma \ref{lem_ortho_uniform}, we see that one of a finite set of predicates can be used to define distal regularity partitions,
which we may assume is a single predicate by standard coding tricks.
\end{proof}

In the case where $|x_1| = \dots = |x_n|$ and all of the measures are equal, we can find a common refinement of the partitions of unity on each piece, and deal with a single partition.
\begin{cor}\label{cor_distal_reg}
    Assume $T$ is distal.
    For each definable predicate $\phi(x_1,\dots,x_n;y)$ with $|x_1| = \dots = |x_n| = |x|$, and $\varepsilon> 0$, there exists a predicate $\psi(x;z)$, which can be chosen to either be definable or constructible, and $\delta > 0$ such that
    if $\mu \in \mathfrak{M}_{x}(M)$ is generically stable, and $b \in M^y$, then
    $\psi$ defines a partition $P$ such that $\otimes_{i = 1}^n P$ is a $(\varepsilon,\delta)$-distal regularity partition for $\phi(x_1,\dots,x_n;b)$,
    such that $|P| = O(\delta^{-C})$.
\end{cor}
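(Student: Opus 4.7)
The plan is to reduce to Theorem \ref{thm_distal_reg} and then collapse the $n$ separate predicates $\psi_i$ into a single predicate $\psi$ on the common sort by taking a common refinement. First I would apply Theorem \ref{thm_distal_reg} to $\phi(x_1,\dots,x_n;y)$ and $\varepsilon$ to obtain predicates $\psi_1(x_1;z_1),\dots,\psi_n(x_n;z_n)$ (either all definable or all constructible, as desired) and a constant $C_0$ such that for every generically stable $\mu\in\mathfrak{M}_x(M)$, every $b\in M^y$, and every $\delta>0$, the product $\prod_{i=1}^n \psi_i(x_i;z_i)$ defines a $(\varepsilon,\delta)$-distal regularity grid partition of $\phi(\,\cdot\,;b)$ of size $O(\delta^{-C_0})$.

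Since all $x_i$ share the sort $x$, I would view each $\psi_i(x_i;z_i)$ as a predicate $\psi_i(x;z_i)$ in the common variable and set
\[
\psi(x;z_1,\dots,z_n) \;=\; \prod_{i=1}^n \psi_i(x;z_i).
\]
For each admissible choice of parameters $d_1,\dots,d_n$ (one tuple $d_i$ from the parameter set used for $\psi_i$), the functions $\psi(x;d_1,\dots,d_n)$ form a partition of unity $P$ of $M^x$ which is the common refinement of the partitions $P_1,\dots,P_n$ defined by the individual $\psi_i$. The size satisfies $|P|\le\prod_i|P_i|=O(\delta^{-nC_0})$, so setting $C=nC_0$ yields the stated bound, and both definability and constructibility are inherited since finite products of definable (resp. constructible) predicates remain so.

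The key step is verifying that the grid $\otimes_{i=1}^n P$ remains a $(\varepsilon,\delta)$-distal regularity partition of $\phi(\,\cdot\,;b)$. Any piece of $\otimes_{i=1}^n P$ has the form $\prod_{i=1}^n \pi_{(i)}$, where each $\pi_{(i)}\in P$ refines some piece $\pi_{(i),i}\in P_i$, so its support sits inside the support of the old grid piece $\prod_{i=1}^n \pi_{(i),i}$. Because $(\phi,\varepsilon)$-homogeneity of a tuple of supports is preserved when each support shrinks, any tuple from the new grid that fails to be $(\phi,\varepsilon)$-homogeneous must lie inside a non-homogeneous tuple of the old grid. Summing the $\mu^{\otimes n}$-measures of such pieces, the bound $\delta$ inherited from Theorem \ref{thm_distal_reg} carries over, giving the conclusion. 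The argument is routine once Theorem \ref{thm_distal_reg} is in hand; no genuine obstacle arises, and the only point requiring care is keeping track of the inflation $C_0\mapsto nC_0$ of the exponent, which is harmless for the asymptotic size bound.
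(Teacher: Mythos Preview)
Your proposal is correct and follows exactly the approach the paper indicates: just before stating the corollary, the paper remarks that when all variable sorts agree and all measures are equal, one ``can find a common refinement of the partitions of unity on each piece, and deal with a single partition,'' which is precisely what you do by setting $\psi(x;z_1,\dots,z_n)=\prod_i\psi_i(x;z_i)$. Your verification that refinement preserves the $(\varepsilon,\delta)$-regularity bound (via the observation that non-homogeneous pieces of $\otimes_i P$ sit inside non-homogeneous pieces of $\otimes_i P_i$, and that summing over refined pieces recovers the original measures) and your bookkeeping $C=nC_0$ are exactly the routine details the paper leaves implicit.
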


Finally, we state the characterization of distality in terms of the definable strong Erd\H{o}s-Hajnal property, generalizing \cite[Theorem 3.1]{distal_reg} and \cite[Corollary 4.6]{distal_reg}.
This follows by applying the equivalences in the proof of Theorem \ref{thm_ortho_reg} to Theorem \ref{thm_distal_reg}. 
\begin{cor}\label{cor_distal_seh}
    A theory $T$ is distal if and only if each definable predicate $\phi(x_1,\dots,x_n;y)$ has the unformly definable 
    $\varepsilon$-strong Erd\H{o}s-Hajnal property with respect to all Keisler measures $\mu_1 \in \mathfrak{M}_{x_1}(M),\dots,\mu_n \in \mathfrak{M}_{x_n}(M)$ for every $\varepsilon > 0$.

    Specifically, there exist definable predicates $\psi_i(x_i;z_i)$ and $\delta > 0$ such that
    if $\mu_1 \in \mathfrak{M}_{x_1}(M),\dots,\mu_n \in \mathfrak{M}_{x_n}(M)$ are such that for $i < n$, $\mu_i$ is generically stable, and $b \in M^y$, then for any
    product measure $\omega$ of $\mu_1,\dots,\mu_n$, there are $d_i \in M^{z_i}$ such that $\psi_i(x_i;d_i)$ are $(\phi(x;b),\varepsilon)$-homogeneous 
            and $\int_{S_{x_i}(M)}\psi_i(x_i;d_i)\,d\mu_i \geq \delta$ for each $i$.

    Furthermore, for any $\varepsilon > \gamma \geq 0$, there are $\psi_i(x_i;z_i)$ and $\delta > 0$ such that
    if $\mu_1 \in \mathfrak{M}_{x_1}(M),\dots,\mu_n \in \mathfrak{M}_{x_n}(M)$ are such that for $i < n$, $\mu_i$ is generically stable, $b \in M^y$,
    and $\omega$ is a product measure of $\mu_1,\dots,\mu_n$ such that $\int_{S_{x_1\dots x_n}(M)}\phi\,d\omega \geq \varepsilon$,
    then there are $d_i \in M^{z_i}$ such that $\phi(a_1,\dots,a_n;b) \geq \gamma$ whenever $\psi_i(a_i;d_i) > 0$ for each $i$, and $\int_{S_{x_i}(M)}\psi_i(x_i;d_i)\,d\mu_i \geq \delta$ for each $i$.
\end{cor}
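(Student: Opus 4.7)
The plan is to derive this as a packaged consequence of Theorem \ref{thm_distal_reg} via the equivalences already assembled in the proof of Theorem \ref{thm_ortho_reg}, adding only the observation that the uniformity established in the distal regularity lemma is preserved through each of those equivalences.

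For the forward direction, I would begin by assuming $T$ is distal and invoking Theorem \ref{thm_distal_reg} to obtain, for the given $\phi(x_1,\dots,x_n;y)$ and a small auxiliary parameter to be fixed in a moment, definable predicates $\psi_i^{\mathrm{reg}}(x_i;z_i)$ whose product defines a $(\varepsilon',\delta')$-distal regularity grid partition for each $\phi(x_1,\dots,x_n;b)$ of bounded size, uniformly across $\mu_1,\dots,\mu_n$ (with $\mu_i$ generically stable for $i<n$) and $b \in M^y$. For the density statement, given $\varepsilon>\gamma\geq 0$, I would apply Lemma \ref{lem_reg_dseh} to this partition with $\alpha=\varepsilon$, $\beta=\gamma$, and a choice of $\varepsilon',\delta'$ small enough that $\alpha-\beta-\delta'-\varepsilon'>0$. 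The lemma directly produces parameters $d_i \in M^{z_i}$ selecting one piece $\prod_i \psi_i^{\mathrm{reg}}(x_i;d_i)$ on which $\phi\geq\gamma$ everywhere and whose marginals have integral bounded below by a constant depending only on $\phi,\varepsilon,\gamma$. Taking $\psi_i(x_i;z_i):=\psi_i^{\mathrm{reg}}(x_i;z_i)$ and $\delta$ equal to that constant gives the second displayed form.

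For the first (homogeneous) form of the $\varepsilon$-SEH, I would apply Lemma \ref{lem_dseh_seh} with $s$ large enough that $2/s\leq\varepsilon$: the hypothesis of that lemma is exactly the density form applied to each of the translated predicates $\tfrac{1}{s}\dot{-}|\phi(x;b)-j/s|$ with threshold $1/(s(s+1))$, so Theorem \ref{thm_distal_reg} applied to these finitely many predicates (and a coding trick to merge them into a single list of $\psi_i$) yields uniformly definable witnesses for the $(\phi,\varepsilon)$-homogeneous SEH. This gives the principal "$\varepsilon$-SEH with respect to all product measures" statement.

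For the reverse direction, suppose each $\phi(x_1,\dots,x_n;y)$ enjoys the uniformly definable $\varepsilon$-SEH with respect to tuples of measures where $\mu_i$ is generically stable for $i<n$ (in particular, then, for tuples of two generically stable measures). Given any generically stable $\mu$ and $\nu$, I would apply the SEH to each continuous localization of $(\mu,\nu)$; Lemma \ref{lem_seh_reg} then produces a distal regularity partition, and Lemma \ref{lem_ortho_reg} (the weak-orthogonality direction of Theorem \ref{thm_ortho_reg}) shows that $\mu$ and $\nu$ are weakly orthogonal. By Theorem \ref{thm_distal_measure}, $T$ is distal. The main obstacle, such as it is, lies in keeping the quantifiers straight between "uniform in $b,\mu_1,\dots,\mu_n$" and "uniform in $\varepsilon,\gamma$": the uniformity we need is per fixed $(\phi,\varepsilon)$ (resp.\ $(\phi,\varepsilon,\gamma)$), and Lemmas \ref{lem_reg_dseh} and \ref{lem_dseh_seh} both preserve precisely this sort of uniformity, so the assembly goes through without any new model-theoretic input beyond what is already in Theorems \ref{thm_ortho_reg} and \ref{thm_distal_reg}.
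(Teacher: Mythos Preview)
Your proposal is correct and matches the paper's approach exactly: the paper's proof is the single sentence ``This follows by applying the equivalences in the proof of Theorem \ref{thm_ortho_reg} to Theorem \ref{thm_distal_reg},'' and you have faithfully unpacked precisely that chain (Theorem \ref{thm_distal_reg} $\to$ Lemma \ref{lem_reg_dseh} $\to$ Lemma \ref{lem_dseh_seh} for the forward direction, and Lemma \ref{lem_seh_reg} $\to$ Lemma \ref{lem_ortho_reg} $\to$ Theorem \ref{thm_distal_measure} for the converse). Your attention to the uniformity bookkeeping is exactly the point the paper leaves implicit.
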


\subsection{Distal Cutting Lemma}
We now show how to find the predicates defining strong Erd\H{o}s-Hajnal properties and distal regularity partitions more explicitly, using techniques that are also useful for distal combinatorics.

\begin{defn}\label{defn_cutting}
Let $\phi(x;y)$ be a definable predicate, and let $\nu \in \mathfrak{M}_x(M)$ be a generically stable measure.
Then we say that a predicate $\psi(x;z)$ defines a $(\varepsilon,\delta)$-\emph{cutting of weight} $\gamma$ for $\phi(x;y)$ with respect to $\nu$ when
there is a finite set $D \subseteq M^z$ such that
$\inf_x\sum_{d \in D}\psi(x;d) \geq \gamma$, and for each $d \in D$,
$$\nu\left(y : \mathrm{osc}(\phi(x;y),\{a : \psi(a;d) > 0\}) > \varepsilon\right) \leq \delta.$$

A $(\varepsilon,\delta)$-cutting \emph{of size at most} $N$ consists of $\psi(x;z)$ and a particular valid choice of $D$ with $|D| \leq N$.
\end{defn}

\begin{lem}[{Generalizes \cite[Claim 3.5]{distal_reg} and \cite[Theorem 3.2]{cgs}}]\label{lem_cutting}
    If $M$ is distal, and $\phi(x;y)$ is a definable predicate, then for every $\varepsilon, \delta > 0$, there exists a $\gamma > 0$ and a predicate $\psi(x;z)$ that defines a $(\varepsilon,\delta)$-cutting of size at most $O_{\phi,\varepsilon}(\delta^{-1}\ln \delta^{-1})$ and weight at least $\gamma$ with respect to any generically stable measure $\nu \in \mathfrak{M}_y(M)$.
\end{lem}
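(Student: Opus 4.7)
The plan is to combine the distal uniformly definable Strong Erd\H{o}s-Hajnal property (Corollary \ref{cor_distal_seh}) with the $\delta$-net bound from Theorem \ref{thm_keisler_net}, following the combinatorial scheme behind \cite[Claim 3.5]{distal_reg}. First I apply Corollary \ref{cor_distal_seh} to $\phi(x;y)$ with accuracy $\varepsilon/2$: there exist definable predicates $\psi(x;z)$, $\chi(y;w)$ and a constant $\delta_0 > 0$ such that for every model $M$ and every pair of generically stable measures $\mu \in \mathfrak{M}_x(M)$, $\nu' \in \mathfrak{M}_y(M)$, there exist parameters $d,e$ with $\int \psi(x;d)\,d\mu \geq \delta_0$, $\int \chi(y;e)\,d\nu' \geq \delta_0$, and with the cells $\{\psi(\cdot;d)>0\}$ and $\{\chi(\cdot;e)>0\}$ jointly $(\phi,\varepsilon/2)$-homogeneous.

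Next I introduce the auxiliary definable predicate
\[
\Theta(y;z) = \sup_{x_1,x_2}\min\!\left(\psi(x_1;z),\;\psi(x_2;z),\;|\phi(x_1;y)-\phi(x_2;y)|\,\dot{-}\,\varepsilon/2\right),
\]
so that $\Theta(y;d) = 0$ exactly when the cell $\{\psi(\cdot;d)>0\}$ has $\phi(\cdot;y)$-oscillation at most $\varepsilon/2$. By NIP this family has polynomially bounded $\eta$-covering numbers at every scale $\eta$, so Theorem \ref{thm_keisler_net} produces $\delta$-nets for the associated fuzzy set system of size $O_{\phi,\varepsilon}(\delta^{-1}\ln\delta^{-1})$ with respect to any generically stable $\nu$.

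To assemble the cutting I fix any generically stable $\mu_0 \in \mathfrak{M}_x(M)$ (for instance the average measure of an indiscernible segment, via Lemma \ref{lem_ave_gs}) and run a greedy covering in $x$: at each step I apply the SEH to the localization of $\mu_0$ to the as-yet-uncovered region together with $\nu$, producing a new cell $\psi(\cdot;d_t)$ of localized $\mu_0$-mass at least $\delta_0$ that is $(\phi,\varepsilon/2)$-homogeneous for $y$ in a $\chi$-cell of $\nu$-mass at least $\delta_0$. After $O(\delta_0^{-1})$ such steps the uncovered $\mu_0$-mass drops below any preassigned threshold, yielding a uniform lower bound $\gamma = \gamma(\delta_0) > 0$ on $\inf_x \sum_{d}\psi(x;d)$.

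The main obstacle is guaranteeing the per-cell condition $\nu(\Theta(\cdot;d_t)>0) \leq \delta$ rather than only an aggregate bound of the sort supplied by distal regularity. I will treat this by refinement: any offending cell is subdivided by a further SEH application inside its bad $y$-fiber (whose localization remains generically stable, hence smooth, by Theorem \ref{thm_distal_measure} together with Corollary \ref{cor_smooth_local}), and the total number of such refinements is controlled by the $\delta$-net bound of Theorem \ref{thm_keisler_net} applied to the set system $\{\{y : \Theta(y;z) > 0\}\}_z$. This yields a total of $O_{\phi,\varepsilon}(\delta^{-1}\ln\delta^{-1})$ cells covering $x$-space with weight at least $\gamma$, each good for $\nu$ outside a set of $\nu$-measure at most $\delta$; the uniformity of $\psi$, $\chi$, and $\delta_0$ in Corollary \ref{cor_distal_seh} then ensures that the same $\psi(x;z)$ and $\gamma$ work for every generically stable $\nu$ simultaneously.
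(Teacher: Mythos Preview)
Your proposal has two genuine gaps, and the route you take is quite different from the paper's.

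\textbf{The pointwise covering gap.} The definition of an $(\varepsilon,\delta)$-cutting requires $\inf_x \sum_{d\in D}\psi(x;d)\geq\gamma$, a pointwise lower bound over \emph{all} of $M^x$. Your greedy argument only shows that the uncovered region has small $\mu_0$-mass for some fixed generically stable $\mu_0$; this says nothing about points outside the support of $\mu_0$, and even on the support it does not convert small measure into a uniform pointwise positive lower bound on $\sum_d\psi(x;d)$. There could be $a\in M^x$ with $\psi(a;d)=0$ for every $d$ you have chosen. (Attempting to patch this by taking $\mu_0$ to be a Dirac measure at each uncovered point destroys any uniform bound on the number of cells.)

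\textbf{The per-cell oscillation gap.} Corollary~\ref{cor_distal_seh} only gives homogeneity of $\psi(\cdot;d_t)$ against a single $\chi$-cell of $\nu$-mass $\geq\delta_0$, not against all $y$ outside a set of $\nu$-measure $\leq\delta$. Your refinement sketch does not explain why the total number of subdivisions stays at $O_{\phi,\varepsilon}(\delta^{-1}\ln\delta^{-1})$: the $\delta$-net from Theorem~\ref{thm_keisler_net} is a set of points in $M^y$, and it is unclear how it caps the number of $x$-cells produced by repeated SEH applications inside bad $y$-fibres.

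\textbf{What the paper actually does.} The paper bypasses both issues by working directly with a strong honest definition $\theta(x;z)$ for $\phi$ (available by distality). It forms the auxiliary predicate $\chi(y;z)=\sup_x(\phi(x;y)-\theta(x;z))\dot{-}\inf_x(\phi(x;y)+\theta(x;z))$, takes a $\delta$-net $B\subseteq M^y$ for the fuzzy family built from $\chi$ via Theorem~\ref{thm_keisler_net}, and then uses the defining property of strong honest definitions over the finite set $B$: for \emph{every} $a\in M^x$ there is $d\in B^z$ with $\theta(a;d)=0$ and $\chi(b;d)=0$ for all $b\in B$. Setting $\psi(x;z)=\tfrac{\varepsilon}{4}\dot{-}\theta(x;z)$ then gives the pointwise weight bound $\gamma=\tfrac{\varepsilon}{4}$ for free, and the $\delta$-net property turns $\chi(b;d)=0$ on $B$ into $\nu(\chi(y;d)>\varepsilon/2)<\delta$ for each chosen $d$. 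This is precisely the step your proposal is missing: strong honest definitions supply the \emph{pointwise} cover of $M^x$ in one stroke, which SEH alone does not.
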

    \begin{proof}
    Let $M, \phi(x;y), \varepsilon, \delta$ be as above. Let $\theta(x;z)$ be a strong honest definition for $\phi(x;y)$.
    Then we define $\chi(y;z) = \sup_x \left(\phi(x;y) - \theta(x;z)\right) \dot{-} \inf_x \left(\phi(x;y) + \theta(x;z)\right)$.
    
    Let $C = \mathrm{vc}_{\varepsilon/4}(\chi(y;z))$. By Theorem \ref{thm_keisler_net}, there is a $\delta$-net $B$ for the fuzzy set system $(1-\chi)_{1-\varepsilon/2,1}^{M^z}$ with respect to $\nu$,
    with $B = O\left(C\varepsilon^{-1}\ln \varepsilon^{-1}\right).$
    This means that if $d$ is such that $\chi(b;d) = 0$ for all $b \in B$, then $\nu\left(\chi(y;d) > \frac{\varepsilon}{2}\right) < \delta$.
    
    Because $\theta$ is a strong honest definition, for every $a \in M^x$, there is some $d \in B^z$ such that $\theta(a;d) = 0$ and for all $a' \in M^x, b \in B$, $\theta(a';b) \geq |\phi(a;b) - \phi(a';b)|$.
    Thus also for all $b \in B, a' \in M^x$, $\phi(a';b) - \theta(a';d) \leq \phi(a;d) \leq \phi(a';b) + \theta(a';d)$, so $\chi(b;d) = 0$.
    Let $D$ be the set of all $d \in B^z$ with $\chi(b;d) = 0$ for all $b \in B$, and recall then that for each $d \in D$, $\nu\left(\chi(y;d) > \frac{\varepsilon}{2}\right) < \delta$.
    We also find that for any $b \in M^y, d \in D$, and any $a,a' \in M^x$, we have $|\phi(a;b) - \phi(a';b)| \leq \chi(b;d) + \theta(a;d) + \theta(a';d)$.
    Now let $k$ be such that there exists a definable predicate $\theta'(x;y_1,\dots,y_k)$ such that for all $d$, if $(d_1,\dots,d_k)$ is an initial segment of the tuple $d$, then $|\theta'(x;d_1,\dots,d_k) - \theta(x;d)|\leq \frac{\varepsilon}{8}$.
    We find that then there is a set $D_k \subseteq D$ of size at most $|B|^k$ such that for each $d \in D$, there is $(d_1,\dots,d_k) \in D_0$ an initial segment of $d$,
    so for all $a \in M^x$, there is $d \in D_k$ with $\theta(a;d) \leq \frac{\varepsilon}{8}$.
    Thus also $\inf_x\sum_{d \in D_k}\left(\frac{\varepsilon}{4}\dot{-}\theta(x;d)\right) \geq \frac{\varepsilon}{4}$, so we let $\psi(x;z) = \frac{\varepsilon}{4}\dot{-}\theta(x;d)$ and let $\gamma = \frac{\varepsilon}{4}$.
    If $d \in D_k, a,a' \in M^x$ are such that $\psi(a;d),\psi(a';d) > 0$, then $\theta(a;d), \theta(a';d) < \frac{\varepsilon}{4}$,
    and for all $b$ outside a set of $\nu$-measure at most $\delta$, $\chi(b;d) \leq \frac{\varepsilon}{2}$, so $|\phi(a;b) - \phi(a';b)| \leq \chi(b;d) + \theta(a;d) + \theta(a';d) \leq \varepsilon$.
\end{proof}

We can now use a cutting to prove a version of uniformly definable strong Erd\H{o}s-Hajnal, and from it distal regularity, in two variables.
\begin{lem}
Let $\phi(x;y;w)$ be a definable predicate, and let $\varepsilon > 0$.
Then for any $0 < \beta < \frac{1}{50\varepsilon^{-1} + 5}$, there are $0 < \alpha < 1$ and definable predicates $\psi_1(x;z_1)$, $\psi_2(x;z_2)$ such that
for any Keisler measure $\mu \in \mathfrak{M}_x(M)$, any generically stable measure $\nu \in \mathfrak{M}_y(M)$, and any $c \in M^w$, there are $d_1 \in M^{z_1}, d_2 \in M^{z_2}$ such that
$\int_{S_x(M)}\psi_1(x;d_1)\,d\mu \geq \alpha$, $\int_{S_y(M)}\psi_2(y;d_2)\,d\nu \geq \beta$, and the pair $\psi_1(x;d_1), \psi_2(y;d_2)$ is $(\phi(x;y;c),\varepsilon)$-homogeneous.
\end{lem}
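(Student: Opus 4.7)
The plan is to apply the distal cutting lemma (Lemma \ref{lem_cutting}) to $\phi(x;y;c)$, then combine pigeonhole over the resulting cells in $x$-space with a discretization of the values $\phi(a_0;y;c)$ for a representative $a_0$ of the chosen cell. Fix $\varepsilon' = \varepsilon/10$, choose $s$ with $s+1 \leq 50\varepsilon^{-1}+5$ and $1/s < \varepsilon/4$, and a small $\delta > 0$ (say $\delta \leq 1/(10(s+1))$). Apply Lemma \ref{lem_cutting} with parameters $\varepsilon', \delta$ to obtain a definable predicate $\psi(x;z)$, a weight $\gamma > 0$, and a size bound $N$ (depending only on $\phi, \varepsilon', \delta$) such that for every generically stable $\nu \in \mathfrak{M}_y(M)$ and every $c \in M^w$, there is $D \subseteq M^z$ with $|D| \leq N$, $\inf_x \sum_{d \in D}\psi(x;d) \geq \gamma$, and $\nu(B_d) \leq \delta$ for the set $B_d$ of $y$ where $\phi(x;y;c)$ oscillates by more than $\varepsilon'$ on $\{\psi(\cdot;d) > 0\}$.

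Set $\psi_1(x;z_1) := \psi(x;z)$. Integrating $\sum_{d \in D}\psi(x;d) \geq \gamma$ against $\mu$ and applying pigeonhole, some $d_1 \in D$ satisfies $\int \psi(x;d_1)\,d\mu \geq \gamma/N =: \alpha$, with $\alpha$ independent of $\mu,\nu,c$. Since $\int \psi(x;d_1)\,d\mu > 0$, the open set $\{p \in S_x(M) : \psi(x;d_1)(p) > 0\}$ is nonempty, so by density of realized types we may fix $a_0 \in M^x$ with $\psi(a_0;d_1) > 0$. Define
\[
\psi_2(y;a_0,c,r) := \bigl(\tfrac{1}{s}\dot{-}|\phi(a_0;y;c)-r|\bigr)\cdot\chi(y),
\]
where $\chi(y) = \tfrac{\varepsilon'}{2}\dot{-}\eta(y)$ and
\[
\eta(y) := \sup_{x_1,x_2}\min\bigl(\psi(x_1;d_1),\psi(x_2;d_1),|\phi(x_1;y;c)-\phi(x_2;y;c)|\bigr).
\]
The predicate $\eta$ is definable since $\sup$ is a logical operation in continuous logic. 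On the support of $\chi$ we have $\eta(y) < \varepsilon'/2$, which forces $|\phi(x_1;y;c)-\phi(x_2;y;c)| \leq \varepsilon'/2$ whenever both $\psi(x_i;d_1) \geq \varepsilon'/2$; replacing $\psi_1$ by the truncation $\psi(x;d_1) \dot{-} \varepsilon'/2$ absorbs this thickening at the cost of reducing $\alpha$ by a constant factor.

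Now pigeonhole over the centers $r_k = k/s$ for $k \in \{0,1,\dots,s\}$: the $\nu$-mass of good $y$ (those with $\eta(y)=0$) is at least $1-\delta$, so for some $k$ the set $\{y : |\phi(a_0;y;c)-r_k| \leq 1/(2s),\ \eta(y)=0\}$ has $\nu$-mass at least $(1-\delta)/(s+1) \geq \beta$, giving the required lower bound on $\int \psi_2\,d\nu$. On the product of the supports of the truncated $\psi_1$ and $\psi_2$, $x$ lies in the cell core, $y$ satisfies $|\phi(a_0;y;c)-r_k|\leq 1/s$, and the cutoff $\chi$ controls the oscillation on the core, so $\phi(x;y;c)$ lies in an interval of width at most $2/s + 2\varepsilon' < \varepsilon$, yielding $(\phi(x;y;c),\varepsilon)$-homogeneity. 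The main obstacle is harmonizing the continuous cutoff $\chi$ — which only controls oscillation on a thickened subcell rather than the full cell — with the strict pointwise homogeneity requirement; this is handled by truncating $\psi_1$ at the same threshold $\varepsilon'/2$, and the remaining work is careful book-keeping to verify that the constants yield the stated bound $\beta < 1/(50\varepsilon^{-1}+5)$.
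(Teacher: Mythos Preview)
Your overall strategy---cutting lemma in $x$, pigeonhole with $\mu$ to select a cell, then discretize the $y$-behaviour and pigeonhole with $\nu$---is the same as the paper's. The gap is in the step where you ``absorb the thickening'' by truncating $\psi_1$ to $\psi(x;d_1)\dot{-}\varepsilon'/2$.

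You only know $\int\psi(x;d_1)\,d\mu\geq\gamma/N$; nothing prevents $\psi(\cdot;d_1)$ from being uniformly below $\varepsilon'/2$, in which case the truncation vanishes identically. Indeed, the $\psi$ produced by Lemma~\ref{lem_cutting} has the form $(\varepsilon'/4)\dot{-}\theta(x;d)$ and is bounded above by $\varepsilon'/4<\varepsilon'/2$, so your truncated $\psi_1$ is \emph{always} zero. The claim that truncation ``reduces $\alpha$ by a constant factor'' is therefore false as written. A related problem: you chose $a_0$ with $\psi(a_0;d_1)>0$, not $\psi(a_0;d_1)\geq\varepsilon'/2$, so $a_0$ need not lie in the truncated cell, and the $\chi$-cutoff does not tie $\phi(x;y;c)$ to $\phi(a_0;y;c)$ for $x$ in that cell. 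Finally, the cutting lemma only gives $\nu(\{y:\mathrm{osc}>\varepsilon'\})\leq\delta$, which yields $\eta(y)\leq\varepsilon'$ on a set of $\nu$-mass $\geq 1-\delta$, not $\eta(y)=0$ as you assert.

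The paper sidesteps all of this by never picking a representative point or truncating $\psi_1$. Instead of your oscillation predicate $\eta$, it uses the envelope functions $\psi^{+}(y;d)=\sup_x(\phi(x;y)-\theta(x;d))$ and $\psi^{-}(y;d)=\inf_x(\phi(x;y)+\theta(x;d))$ built directly from the strong honest definition $\theta$. For \emph{any} $a$ in the support of $\psi_1(x;d)=(\varepsilon/4)\dot{-}\theta(x;d)$ one has $\psi^{-}(b;d)-\varepsilon/4\leq\phi(a;b)\leq\psi^{+}(b;d)+\varepsilon/4$, so no core/boundary distinction is needed. The $y$-side partition is then a two-parameter discretization $f_i(\psi^{+}(y;d))f_j(\psi^{-}(y;d))$; on the large-$\nu$-mass set where $\chi=\psi^{+}-\psi^{-}\leq 1/s$ only pairs with $-1\leq i-j\leq 3$ contribute, and pigeonhole over these $5(s+1)$ pairs gives the bound $\beta=(1-\delta)/(5(s+1))$ and hence the constant $50\varepsilon^{-1}+5$ in the hypothesis.
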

\begin{proof}
    We will prove this for some $M$-definable $\phi(x;y) = \phi(x;y;c)$.
    As the formulas $\psi_1(x;z_1)$ and $\phi_2(y;z_2)$ will be constructed from a particular choice of strong honest definition for $\phi(x;y)$,
    it will suffice to show that there is some formula $\theta(x;z;w)$ such that for any $c \in M^z$, $\theta(x;z;c)$ is a strong honest definition for $\phi(x;y;c)$.
    To do this, we find a strong honest definition for $\phi(x;y,w)$, calling this $\theta(x;z,w)$, where $z$ is a tuple of copies of $y$, and we have set all copies of $w$ equal.

    Let $s = \left\lceil\frac{10}{\varepsilon}\right\rceil$, and let $\delta = 1 - 5(s+1)\beta$, so that $\delta > 0$ but also $\beta = \frac{1 - \delta}{5(s + 1)}$.

    As in the proof of Lemma \ref{lem_cutting}, let $\theta(x;z)$ be a strong honest definition for $\phi(x;y)$,
    define $\psi^+(y;z) = \sup_x(\phi(x;y) - \theta(x;z)), \psi^-(y;z) = \inf_x(\phi(x;y) + \theta(x;z))$, and $\chi(y;z) = \psi^+(y;z) - \psi^-(y;z)$.
    Recall that there is some $\gamma > 0$ such that for any choice of $\nu$, there is a finite set $D \in M^z$ of size at most $O_{\phi,\varepsilon}(\delta^{-1}\ln \delta^{-1})$ such that
    for each $d \in D$, $\nu(\chi(y;d) > \frac{1}{s}) < \delta$ and $\inf_x \sum_{d \in D}\left(\frac{\varepsilon}{4} \dot{-}\theta(x;d)\right) \geq \frac{\varepsilon}{4}.$
    
    If we let $\alpha > 0$ be such that $\frac{\gamma}{|D|}\geq \alpha$, then there is always some $d \in D$ such that $\int_{S_x(M)}\psi(x;d)\,d\mu \geq \alpha$.
    We can thus let $\psi_1(x;d_1) = \psi(x;d)$.

    We now let $f_i$ be defined by $f_i(t) = 1 - |st - i|$, so that $(f_0,\dots,f_s)$ is a partition of unity on $[0,1]$ and the support of $f_i$ is $(\frac{i - 1}{s},\frac{i + 1}{s})$.
    Thus $(f_i(\psi^+(y;d))f_j(\psi^-(y;d)) : 0 \leq i,j\leq s)$ forms a partition of unity on $S_y(M)$.
    If $b$ is such that $f_i(\psi^+(b;d))f_j(\psi^-(b;d)) > 0$, then $i \geq j - 1$, and also, $\frac{i - j - 2}{s} \leq \chi(b;d) = \psi^+(b;d) - \psi^-(b;d) \leq \frac{i - j + 2}{s}$.
    Thus if $\chi(b;d) \leq \frac{1}{s}$, we find that $i - j \leq 3$.
    Thus on all such $b$, $\sum_{i,j : -1 \leq i - j \leq 3}f_i(\psi^+(b;d))f_j(\psi^-(b;d)) = 1$.
    The measure of such $b$ is at least $1 - \delta$, so $\sum_{i,j : -1 \leq i - j \leq 3}\int_{S_{y}(M)}f_i(\psi^+(y;d))f_j(\psi^-(y;d))\,d\nu \geq 1 - \delta$,
    and thus for some $i,j$ with $-1 \leq i - j \leq 3$, 
    $\int_{S_{y}(M)}f_i(\psi^+(y;d))f_j(\psi^-(y;d))\,d\nu \geq \frac{1 - \delta}{5(s + 1)} = \beta$,
    so we can let $\psi_2(y;d_2) = f_i(\psi^+(y;d))f_j(\psi^-(y;d))$, using standard coding tricks to account for the finitely many choices of $i,j$.

    We now check homogeneity. For all $b$ in the support of that $f_i(\psi^+(y;d))f_j(\psi^-(y;d))$, and all $a$ in the support of $\frac{\varepsilon}{4} \dot{-}\theta(x;d)$,
    we have that $\frac{j - 1}{s} - \frac{\varepsilon}{4} \leq \phi(a;b) \leq \frac{i + 1}{s} + \frac{\varepsilon}{4}$, and this interval is of width at most $\frac{5}{s} + \frac{\varepsilon}{2} \leq \varepsilon$.
    Thus the pair $\left(\frac{\varepsilon}{4} \dot{-}\theta(x;d),f_i(\psi^+(y;d))f_j(\psi^-(y;d))\right)$ is $(\phi,\varepsilon)$-homogeneous.
\end{proof}

Fixing some $\beta$ and setting $\delta = \min(\alpha,\beta)$, we get an actual definable strong Erd\H{o}s-Hajnal statement.
\begin{cor}\label{cor_seh}
    Let $\phi(x;y;w)$ be a definable predicate, and let $\varepsilon > 0$.
    There are $\delta > 0$ and definable predicates $\psi_1(x;z_1)$, $\psi_2(x;z_2)$ such that
    for any Keisler measure $\mu \in \mathfrak{M}_x(M)$, any generically stable measure $\nu \in \mathfrak{M}_y(M)$, and any $c \in M^w$, there are $d_1 \in M^{z_1}, d_2 \in M^{z_2}$ such that
    $\int_{S_x(M)}\psi_1(x;d_1)\,d\mu \geq \delta$, $\int_{S_y(M)}\psi_2(y;d_2)\,d\nu \geq \delta$, and the pair $\psi_1(x;d_1), \psi_2(y;d_2)$ is $(\phi,\varepsilon)$-homogeneous.
\end{cor}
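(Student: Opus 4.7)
The plan is to derive this as an essentially immediate corollary of the preceding lemma, which gives an asymmetric version of the same conclusion with separate bounds $\alpha$ and $\beta$ on the $\mu$- and $\nu$-integrals. The previous lemma produces, for each sufficiently small $\beta > 0$, a corresponding $\alpha > 0$ and predicates $\psi_1(x;z_1), \psi_2(y;z_2)$ (depending only on $\phi$ and $\varepsilon$, not on the choice of $\mu, \nu, c$) such that for any $\mu, \nu, c$ we can find parameters $d_1, d_2$ realizing the desired integral lower bounds and the $(\phi,\varepsilon)$-homogeneity of the pair of supports.

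Concretely, I would fix once and for all some $\beta$ in the open interval $\left(0, \frac{1}{50\varepsilon^{-1}+5}\right)$, invoke the preceding lemma with this $\beta$ to obtain the corresponding $\alpha > 0$ together with the predicates $\psi_1(x;z_1)$ and $\psi_2(y;z_2)$, and then simply set $\delta := \min(\alpha,\beta)$. Since $\delta \leq \alpha$ and $\delta \leq \beta$, any $d_1,d_2$ produced by the previous lemma automatically satisfy $\int_{S_x(M)}\psi_1(x;d_1)\,d\mu \geq \delta$ and $\int_{S_y(M)}\psi_2(y;d_2)\,d\nu \geq \delta$, while the homogeneity conclusion carries over verbatim. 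Note that $\delta > 0$ because both $\alpha > 0$ and $\beta > 0$.

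There is essentially no obstacle here; the corollary is a cosmetic symmetrization of the preceding lemma, which itself carries the real content (the strong honest definition supplied in distal metric theories, the cutting from Lemma \ref{lem_cutting}, and the partition-of-unity argument for $\psi^\pm$). The only thing worth remarking on is that the predicates $\psi_1, \psi_2$ and the constant $\delta$ indeed depend only on $\phi$ and $\varepsilon$ (through $\beta$, and then $\alpha$), and in particular do not depend on $M$, $\mu$, $\nu$, or the parameter $c$. This uniformity is inherited directly from the uniformity clause in the preceding lemma, which was ensured by taking a strong honest definition of the whole predicate $\phi(x;y,w)$ treating $w$ symbolically.
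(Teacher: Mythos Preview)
Your proposal is correct and matches the paper's approach exactly: the paper simply states ``Fixing some $\beta$ and setting $\delta = \min(\alpha,\beta)$, we get an actual definable strong Erd\H{o}s-Hajnal statement,'' which is precisely what you do. Your additional remarks on uniformity are accurate and already implicit in the preceding lemma.
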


We now use Lemmas \ref{lem_seh_reg} and \ref{lem_reg_dseh} to show that if the integral of $\phi$ is large enough, the value of the predicate is positive on the whole pair.
This allows us to induct in dimension, and find an alternate proof of Theorem \ref{thm_distal_reg}.

\begin{thm}\label{thm_density_hyper}
    For each definable predicate $\phi(x_1,\dots,x_n;y)$ and $\varepsilon > \gamma \geq 0$, there exist definable predicates $\psi_i(x_i;z_i)$ and $\delta > 0$ such that
    if $\mu_1 \in \mathfrak{M}_{x_1}(M),\dots,\mu_n \in \mathfrak{M}_{x_n}(M)$ are such that for $i < n$, $\mu_i$ is generically stable, and $b \in M^y$,
    for any product measure $\omega$ of $\mu_1,\dots,\mu_n$,
    if $\int_{S_{x_1\dots x_n}(M)}\phi\,d\omega \geq \varepsilon$, then there are $d_i \in M^{z_i}$
    such that $\phi(a_1,\dots,a_n;b) \geq \gamma$ whenever $\psi_i(a_i;d_i) > 0$ for each $i$, and $\int_{S_{x_i}(M)}\psi_i(x_i;d_i)\,d\mu_i \geq \delta$ for each $i$.
\end{thm}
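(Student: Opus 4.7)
The plan is to induct on $n$, using the two-variable case of the density SEH as the critical base case and reducing the $n$-variable case to the two-variable case plus the inductive hypothesis.

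For the base case $n=1$, given $\int \phi(x_1;b)\,d\mu_1 \geq \varepsilon$, I would simply pick $\gamma_0 \in (\gamma,\varepsilon)$ and take $\psi_1(x_1;y) := \phi(x_1;y)\dot{-}\gamma_0$: then $\int \psi_1(x_1;b)\,d\mu_1 \geq \varepsilon - \gamma_0 =: \delta$ and $\psi_1(a;b)>0$ forces $\phi(a;b) > \gamma_0 > \gamma$. For the base case $n=2$, I would invoke Corollary \ref{cor_seh} with homogeneity $\varepsilon' := (\varepsilon-\gamma)/3$ to obtain uniform predicates $\psi_1(x_1;z_1), \psi_2(x_2;z_2)$ and $\delta' > 0$ witnessing uniformly definable $\varepsilon'$-SEH. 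By Lemma \ref{lem_seh_reg} the same predicates (composed with the coding trick of that proof) define $(\varepsilon',\delta_0)$-distal regularity partitions of polynomial size for any $\delta_0 > 0$. Choosing $\delta_0 := (\varepsilon-\gamma)/3$ so that $\gamma + \delta_0 + \varepsilon' = (\varepsilon + 2\gamma)/3 < \varepsilon$, Lemma \ref{lem_reg_dseh} applied with $\alpha = \varepsilon, \beta = \gamma$ produces the density SEH as required.

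For the inductive step, fix any $\varepsilon_1 \in (\gamma,\varepsilon)$. By Theorem \ref{thm_distal_measure} the generically stable measures $\mu_1,\dots,\mu_{n-1}$ are smooth, so by Lemma \ref{lem_associative} the Morley product $\nu := \mu_2 \otimes \dots \otimes \mu_n$ is a well-defined measure on $(x_2,\dots,x_n)$, and any product measure extending $\mu_2 \times \dots \times \mu_n$ agrees with it (by weak orthogonality of smooth measures, Lemma \ref{lem_smooth_ortho}). I would then apply the already-established two-variable density SEH to $\phi$ regarded as a predicate on the tuple variables $(x_1; \bar x')$ with $\bar x' := (x_2,\dots,x_n)$, using $\mu_1$ on the $x_1$-side and $\nu$ on the $\bar x'$-side. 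This yields uniform predicates $\psi_1(x_1;z_1), \psi'(\bar x';z')$ and $\delta_0 > 0$ such that, whenever $\int \phi\,d\omega \geq \varepsilon$, there are parameters $d_1, d'$ with $\int \psi_1\,d\mu_1 \geq \delta_0$, $\int \psi'(\bar x';d')\,d\nu \geq \delta_0$, and $\psi_1(a_1;d_1)\psi'(a_2,\dots,a_n;d') > 0 \Rightarrow \phi(a;b) \geq \varepsilon_1$.

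To finish, I would apply the inductive hypothesis to the uniform $(n-1)$-variable predicate $\psi'(x_2,\dots,x_n;z')$ with the measures $\mu_2,\dots,\mu_n$ (which are in the form required, since $\mu_2,\dots,\mu_{n-1}$ are generically stable and $\mu_n$ arbitrary), taking thresholds $\delta_0 > \gamma_0 > 0$ for some fixed $\gamma_0$. Since $\int \psi'(\bar x';d')\,d\nu \geq \delta_0$, this returns uniform predicates $\psi_i(x_i;z_i)$ for $2 \leq i \leq n$, a constant $\delta_1 > 0$, and parameters $d_2,\dots,d_n$ with $\int \psi_i\,d\mu_i \geq \delta_1$ and $\prod_{i=2}^n \psi_i(a_i;d_i) > 0 \Rightarrow \psi'(a_2,\dots,a_n;d') \geq \gamma_0 > 0$. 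Combining the two, $\prod_{i=1}^n \psi_i(a_i;d_i) > 0$ forces $\phi(a;b) \geq \varepsilon_1 > \gamma$, and $\int \psi_i\,d\mu_i \geq \min(\delta_0,\delta_1) =: \delta$, as desired. The main obstacle is setting up the two-variable density SEH step correctly with the tuple variable $\bar x'$ and the composite measure $\nu$; the key point is that distality (via smoothness and Lemma \ref{lem_associative}) is precisely what guarantees $\nu$ is well-defined, unique, and weakly orthogonal to $\mu_1$, so that the two-variable result applies and the output predicate $\psi'$ then feeds cleanly into the inductive hypothesis.
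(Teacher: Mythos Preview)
Your proposal is correct and follows essentially the same inductive strategy as the paper: establish the two-variable density SEH from Corollary~\ref{cor_seh} via Lemmas~\ref{lem_seh_reg} and~\ref{lem_reg_dseh}, then induct by grouping all but one variable and applying the two-variable case followed by the inductive hypothesis. The only difference is a mirror-image choice of which variable to peel off: you separate $x_1$ from $(x_2,\dots,x_n)$, whereas the paper separates $(x_1,\dots,x_n)$ from $x_{n+1}$; both groupings work because in the distal setting all the generically stable $\mu_i$ are smooth, which (via Corollary~\ref{cor_ortho}) forces the given product measure $\omega$ to split as a product of the grouped marginal and the single-variable marginal, and makes the grouped marginal the unique product measure of its factors.
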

\begin{proof}
    For a base case, we start with Corollary \ref{cor_seh}, and then applying Lemmas \ref{lem_seh_reg} and \ref{lem_reg_dseh}, recalling that all localizations of generically stable measures are generically stable by Corollary \ref{cor_smooth_local}.

    Now assume that this holds all predicates with variables partitioned in $n$ pieces, and consider $\phi(x_1,\dots,x_n,x_{n+1};y)$.
    We repartition it as $\phi(x_1,\dots,x_n;x_{n+1};y)$, and apply the base case to this binary predicate, getting some $\psi(x_1,\dots,x_n;z), \psi_{n+1}(x_{n+1};z_{n+1})$
    such that for any measures $\mu \in \mathfrak{M}_{x_1\dots x_n}(M),\mu_{n+1} \in \mathfrak{M}_{x_{n+1}}(M)$ with $\mu$ generically stable,
    any product measure $\omega$ of $\mu,\mu_{n+1}$, and any $c \in M^y$,
    if $\int_{S_{x_1\dots x_{n+1}}(M)}\phi(x_1,\dots,x_{n+1};c)\,d\omega \geq \varepsilon$, then there are $d \in M^z, d_{n+1} \in M^{z_{n+1}}$
    such that $\phi(a_1,\dots,a_{n+1};c) \geq \gamma$ whenever $\psi(a_1,\dots,a_n;d) > 0$ and $\psi_{n+1}(a_{n+1};d_{n+1}) > 0$,
    and as far as integrals, $\int_{S_{x_1\dots x_n}(M)}\psi(x_1,\dots,x_n;d)\,d\mu \geq \delta_0$ and $\int_{S_{x_{n+1}}(M)}\psi_{n+1}(x_{n+1};d_{n+1})\,d\mu_{n+1} \geq \delta_0$.

    Now we can apply the induction hypothesis to $\psi(x_1,\dots,x_n;z)$, getting predicates $\psi_i(x_i;z_i)$ for $1 \leq i \leq n$ and some $\delta_1 > 0$ such that
    such that for any measures $\mu_i \in \mathfrak{M}_{x_i}(M)$ with each $\mu_i$ generically stable, any product measure $\omega$ of the $\mu_i$s, and any $d \in M^z$,
    if $\int_{S_{x_1\dots x_{n}}(M)}\psi(x_1,\dots,x_n;d)\,d\omega \geq \delta_0$
    there are $d_i \in M^{z_i}$ such that $\psi(a_1,\dots,a_n;d) > 0$ whenever $\psi_i(a_i;d_i) > 0$ for each $i$,
    and for each $i$, $\int_{S_{x_i}(M)}\psi_i(x_i;d_i)\,d\mu_i \geq \delta_1$.

    We now let $\delta = \min(\delta_0,\delta_1)$.
    For any $c \in M^y$, generically stable measures $\mu_i \in \mathfrak{M}_{x_i}(M)$ for $1 \leq i \leq n$, and measure $\mu_{n+1} \in \mathfrak{M}_{x_{n+1}}(M)$,
    and any product measure $\omega$ of the $\mu_i$s, we let $\mu$ be the restriction of $\omega$ to the variables $x_1 \dots x_n$.
    As $\mu$ is a product measure of the $\mu_1,\dots,\mu_n$, and these measures are smooth, it is $\mu_1 \otimes \dots \otimes \mu_n$, which is itself smooth.
    Thus there are $d,d_{n+1}$ such that on the support of $\psi(x_1,\dots,x_n;d)\psi_{n+1}(x_{n+1};d_{n+1})$, $\phi(x_1,\dots,x_n;c) \geq \gamma$,
    $\int_{S_{x_{n+1}}(M)}\psi_{n+1}(x_{n+1};d_{n+1})\,d\mu_{n+1} \geq \delta_0 \geq \delta$,
    and $\int_{S_{x_1\dots x_n}(M)}\psi(x_1,\dots,x_n;d)\,d\mu \geq \delta_0$.
    From this last integral, we see that there are $d_i$ for $1 \leq i \leq n$ such that on the support of $\prod_{i = 1}^n\psi_i(x_i;d_i)$,
    $\psi(x_1,\dots,x_n;d) > 0$, so on the support of $\prod_{i = 1}^{n+1}\psi_i(x_i;d_i)$,
    $\phi(x_1,\dots,x_n;c) \geq \gamma$.
    Also, for each $i$, $\int_{S_{x_i}(M)}\psi_i(x_i;d_i)\,d\mu_i \geq \delta_1 \geq \delta$.
\end{proof}

Now by the equivalences in the proof of Theorem \ref{thm_ortho_reg}, this gives us another proof of Theorem \ref{thm_distal_reg}.

\subsection{Equipartitions}
In classical logic, by \cite[Corollary 5.14]{distal_reg} and \cite[Proposition 3.3]{simon_distal_reg}, distal regularity partitions can be chosen to be \emph{equipartitions}, where the measures of each piece are approximately equal.
In the case of $[0,1]$-valued partitions of unity, it is trivial to split a partition of unity into pieces of approximately equal integral.
However, it is not so easy to repartition a partition into sets of approximately equal measure, and this will require \emph{uniform cutting of generically stable measures}.
In this subsection, we check that we can modify our results about constructible partitions to work with equipartitions in a uniformly constructible way.

\begin{lem}\label{lem_pos_measure_gs}
In a distal structure $M$, if $\mu \in \mathfrak{M}_x(M)$ is a generically stable measure and $p \in S_x(M)$ a type with
$\mu(\{p\})> 0$, then $p$ is realized in $M$.
\end{lem}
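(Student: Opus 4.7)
The plan is to exploit smoothness of $\mu$, guaranteed by distality via Theorem \ref{thm_distal_measure}, together with finite satisfiability, to force the atom at $p$ to come from an actual point of $M$. First I would extend $\mu$ to its unique smooth global extension $\tilde\mu \in \mathfrak{M}_x(\mathcal{U})$. Writing $\pi : S_x(\mathcal{U}) \to S_x(M)$ for the restriction map, we have $\tilde\mu(\pi^{-1}(\{p\})) = \mu(\{p\}) = r > 0$, so Corollary \ref{cor_smooth_local} applied to the Borel indicator $\chi_{\{p\}}$ (composed with $\pi$) yields a smooth global measure $\nu := \tilde\mu_{\pi^{-1}(\{p\})}$ satisfying $\nu|_M = \delta_p$.

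The key step, and the one I expect to be most delicate, is using smoothness of $\nu$ to force $p$ to admit a \emph{unique} global extension. Whenever $q_1, q_2 \in S_x(\mathcal{U})$ both restrict to $p$, the Dirac measures $\delta_{q_1}$ and $\delta_{q_2}$ are global extensions of $\delta_p$, hence both equal $\nu$ by smoothness, so $q_1 = q_2$. Let $q^*$ denote this unique extension; then $\pi^{-1}(\{p\}) = \{q^*\}$ and $\tilde\mu(\{q^*\}) = r$. Moreover $q^*$ has a unique realization $a \in \mathcal{U}$, since any $a' \models q^*$ satisfies $d(x,a) = 0 \in q^*$, forcing $d(a',a) = 0$ and hence $a' = a$.

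To finish I would show $a \in M$. Since $\tilde\mu$ is smooth it is approximately realized in $M$ by Lemma \ref{lem_smooth_dfs}. Suppose toward contradiction that $a \notin M$. By completeness of $M$ as a metric structure, $\delta := \inf_{b \in M^x} d(b,a) > 0$: any sequence $(b_n) \subseteq M^x$ with $d(b_n,a) \to 0$ is Cauchy and converges in $M^x$ to a point at distance zero from $a$, forcing $a \in M^x$. Fix $k > 1/\delta$ and consider the $\mathcal{U}$-predicate $\psi(x) := \max(0, 1 - k \cdot d(x,a))$, which is $[0,1]$-valued and vanishes identically on $M^x$ since $k \cdot d(b,a) > 1$ for every $b \in M^x$. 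By Lemma \ref{lem_approx_realized}, $\int \psi \, d\tilde\mu = 0$; but $\psi(q^*) = 1$ since $q^* \models d(x,a) = 0$, so
\[
\int \psi \, d\tilde\mu \;\geq\; \psi(q^*)\,\tilde\mu(\{q^*\}) \;=\; r \;>\; 0,
\]
a contradiction. Hence $a \in M$ realizes $p$.
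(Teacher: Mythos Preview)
Your proof is correct and follows the same overall strategy as the paper: pass to the smooth global extension, localize at the atom so that $\delta_p$ is smooth over $M$, and then argue that $p$ must be realized. The paper's justification of the final step is a one-liner---a non-realized type has multiple realizations in $\mathcal{U}$, giving distinct global extensions of $\delta_p$---which implicitly uses $\mathrm{dcl}(M)=M$ and homogeneity of the monster. Your route instead goes through approximate realization (Lemma~\ref{lem_smooth_dfs} and Lemma~\ref{lem_approx_realized}) together with the explicit bump predicate $\psi(x)=\max(0,1-k\,d(x,a))$, which is more self-contained and makes the role of metric completeness of $M$ visible. The two arguments are interchangeable; yours trades brevity for explicitness.
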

\begin{proof}
    By Theorem \ref{lem_gs_smooth}, $\mu$ is smooth, so by Corollary \ref{cor_smooth_local}, the localization measure $\mu_{\{p\}} = \delta_p$ is smooth as well.
    A type that is smooth over $M$ as a measure is realized in $M$, because any nonrealized type has multiple realizations, each of which would be a valid extension.
\end{proof}

\begin{lem}\label{lem_cut_gsm}
Any distal structure $M$ \emph{uniformly cuts generically stable measures}. That is, for every definable predicate $\phi(x;y)$ and $\varepsilon > 0$, there is a definable predicate $\chi(x;z)$ such that if $\mu \in \mathfrak{M}_x(M)$ is a measure such that for all $a \in M^x$, $\mu(\{a\}) = 0$, and $0 \leq r \leq \mu(\phi(M))$, then there exists $c \in M^z$ with $|\mu(\phi(M)\cap \chi(M;c)) - r|\leq \varepsilon$.
\end{lem}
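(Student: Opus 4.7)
The plan is to prove the uniform cutting property via iterated definable bisection of $\phi(M;b)$ with respect to $\mu$. Stage one establishes a uniformly definable bisection lemma; stage two iterates it $O(\log \varepsilon^{-1})$ times to obtain a fine partition of $\phi(M;b)$; stage three assembles a target union by selecting a sub-collection of partition pieces. The final predicate $\chi(x;z)$ will have parameter tuple $z$ encoding both the parameters defining each of the pieces produced by iterated bisection and the (discrete) choice of sub-collection.

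For stage one I aim to establish: for any definable $\phi(x;y)$ and $\varepsilon > 0$ there exists a definable $\eta(x;y,w)$ such that for every atomless generically stable $\mu \in \mathfrak{M}_x(M)$ and every $b \in M^y$ with $\mu(\phi(M;b)) > 0$, one can find $c \in M^w$ with $\eta(M;b,c) \subseteq \phi(M;b)$ and $\bigl|\mu(\eta(M;b,c)) - \tfrac12\mu(\phi(M;b))\bigr| \leq \varepsilon/4$. The proof first applies Lemma \ref{lem_cutting} (or equivalently Corollary \ref{cor_seh}) inside $\phi(M;b)$ to produce a uniformly definable collection of subsets of $\mu$-measure each at most $\varepsilon/8$; here atomlessness of $\mu$, which by Lemma \ref{lem_pos_measure_gs} is equivalent to the absence of realized types of positive $\mu$-measure, guarantees that arbitrarily small-measure pieces exist inside any positive-measure definable set. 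A greedy union of these small pieces then approximates $\tfrac12\mu(\phi(M;b))$ within $\varepsilon/4$, and since the greedy selection reduces to comparing continuous functions of the parameters, it can be encoded into a uniformly definable $\eta$. With bisection in hand, stage two applies it iteratively $k = \lceil \log_2(4\varepsilon^{-1})\rceil$ times to produce a uniformly definable partition of $\phi(M;b)$ into at most $2^k$ pieces each of $\mu$-measure at most $\varepsilon/4$; stage three selects a sub-collection whose cumulative measure approximates the target $r$ within $\varepsilon$.

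The main obstacle is stage one, where we must upgrade pieces of merely positive measure produced by uniformly definable SEH or the cutting lemma to a controlled halving. I would handle this by driving the individual piece sizes below $\varepsilon/8$ through iteration of the cutting lemma (so that summation can interpolate finely), and then using atomlessness to rule out indivisible chunks of large measure. Encoding the greedy sub-collection choice as a definable predicate is tractable in continuous logic because it reduces to comparisons among the integrals $\int \eta(x;b,c)\,d\mu$, and since $\mu$ is definable, these integrals are continuous in the parameters and so can be used to define the cutoff. Once stage one is in place, the remaining stages are purely combinatorial packaging of the bisection step.
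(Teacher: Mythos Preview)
Your proposal has a genuine gap at Stage~1. The tools you invoke—Lemma~\ref{lem_cutting} and Corollary~\ref{cor_seh}—do not produce pieces of small $\mu$-measure. The cutting lemma yields a cover of $M^x$ by cells on which $\phi(x;b)$ oscillates little for most $b$ drawn from a measure on the \emph{parameter} side; it says nothing about the $\mu$-measure of those cells for a measure $\mu$ on $M^x$. Strong Erd\H{o}s-Hajnal produces pieces of \emph{large} measure (at least $\delta$), not small. You then appeal to atomlessness to say small pieces exist, and for a \emph{fixed} $\mu$ this is true; but the lemma demands a single predicate $\chi(x;z)$ with a fixed parameter sort $z$ that works for \emph{every} atomless generically stable $\mu$. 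The parameter $c$ may depend on $\mu$, but the formula and the arity of $z$ may not. Nothing in your outline bounds the complexity of the small pieces uniformly in $\mu$, and this uniformity is precisely the content of the lemma—your bisection step presupposes it rather than establishing it.

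The paper's proof addresses uniformity directly by an ultraproduct compactness argument: for each fixed atomless $\mu$, regularity of the type space yields a finite cover by closed sets of measure below $\varepsilon$; if no single formula $\chi_1$ and bound $K$ worked for all such $\mu$, one collects counterexamples and forms their ultraproduct (Lemma~\ref{lem_ultra_gs} keeps the limit generically stable, and distality keeps it smooth hence atomless on types by Lemma~\ref{lem_pos_measure_gs}), contradicting the per-measure statement. With a uniform cover by small pieces in hand, a greedy union of an initial segment approximates any target $r$—essentially your Stage~3 without the bisection detour. Incidentally, the greedy selection need not be encoded in the formula at all: $\chi(x;z)$ only has to express an arbitrary union of at most $K$ instances of $\chi_1$, and the choice of which union is absorbed into the parameter $c$, picked externally from $\mu$ and $r$. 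Your remark about integrals being continuous in parameters because $\mu$ is definable is therefore beside the point, and in any case cannot help, since $\mu$ is not available as a parameter inside $\chi$.
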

\begin{proof}
    It suffices to show this for the trivial predicate $\phi(x) = 0$, because we can simply replace $\mu$ with its localization to $\phi(M)$, and replace $r$ with $\frac{r}{\mu(\phi(M))}$. This works unless $\mu(\phi(M)) = 0$, when this is still trivial.

    Now fix $\mu$. Let $p \in S_x(M)$ be a type. Assume for contradiction that $\mu(\{p\})> 0$. Then by Lemma \ref{lem_pos_measure_gs}, $p$ is realized, contradicting our assumption on $\mu$, so $\mu(\{p\}) = 0$, and $\mu$ is atomless. As $\mu$ is also regular, if $p \in S_x(M)$, then there must be an open set $U \subseteq S_x(M)$ containing $p$ such that $\mu(U)<\varepsilon$. We can express $U$ as $\psi(x)<\delta$ for some $M$-definable predicate $\psi(x)$ and $\delta \in [0,1]$. As $\psi(p) < \delta$, there is some $\psi(p) < \delta' < \delta$, so if we let $U_p$ be the open set defined by $\psi(x) < \delta'$, and let $F_p$ be the closed set defined by $\psi(x) \leq \delta'$, we find that $p \in U_p \subseteq F_p$ and $\mu(F_p) \leq \mu(U) \leq \varepsilon$.

    By compactness, $S_x(M)$ can be covered with finitely many open sets $U_p$. In fact, there is some $K$ where in can be covered with at most $K$ many open sets $U_p$, where the sets $F_p$ are uniformly definable as $\chi(M;c)$ for various parameters $c \in M^z$. We show this by contradiction. For each finite set $F$ of pairs $(K,\chi(x;z))$, find a generically stable measure $\mu_{F}$ such that this fails for each $(K, \chi) \in F$. By taking an ultraproduct of these counterexamples according to an appropriate ultrafilter, as in the proof of Lemma \ref{lem_ortho_uniform}, we find a generically stable measure $\mu$ such that this fails for every $K$ and every definable predicate $\chi_1(x;z)$. This gives a contradiction, as for every $\mu$, there is some finite cover of open sets $U_p$ contained in closed sets $F_p$, and by the standard coding tricks, a single formula $\chi_1(x;z)$ can be used for each $F_p$ in the finite cover.

    We can then find a formula $\chi(x;z)$ such that for any $k \leq K$ and any $c_1,\dots,c_k$, there is some $c$ such that $\chi(M;c) = \bigcup_{i \leq k} \chi_1(M;c_i)$. We can cover $S_x(M)$ with closed sets $\chi_1(M;c_1),\dots, \chi_1(M;c_K)$, each of measure at most $\varepsilon$, and assume that $c_1,\dots,c_k$ form a minimal subset such that $\mu(\bigcup_{i = 1}^k \chi(M;c_k)) \geq r$. By minimality, we have that $|\mu(\bigcup_{i = 1}^k \chi(M;c_k)) - r| \leq \varepsilon$.
\end{proof}

\begin{lem}\label{lem_cut_fin}
    Any distal structure $M$ \emph{uniformly cuts finite sets}.
    That is, for every definable predicate $\phi(x;y)$ and $\varepsilon > 0$,
    there is a definable predicate $\chi(x;z)$
    such that for any sufficiently large finite set $A \subseteq M^x$, any $b \in M^y$, and any $0 \leq m \leq |\phi(A;b)|$,
    either $|\phi(A;b) = 0|$ or there is some $c \in M^z$ such that
    $$\left|\frac{|\phi(A;b) \cap \chi(A;c)|}{|\phi(A;b)|}-\frac{m}{|\phi(A;b)|}\right|\leq \varepsilon.$$
\end{lem}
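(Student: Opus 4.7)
The plan is to transfer Lemma \ref{lem_cut_gsm} from generically stable Keisler measures to finite counting measures via an ultrafilter argument in the style of Lemma \ref{lem_ortho_uniform}. The key observation is that for any nonempty finite set $B \subseteq M^x$, the normalized counting measure $\nu_B \in \mathfrak{M}_x(M)$ is generically stable: it is trivially finitely approximated by an enumeration of $B$ itself, hence fam, hence (by Theorem \ref{thm_gen_stable}) generically stable. In particular, taking $B = \phi(A;b)$ for a finite $A$ gives a generically stable $\nu_{A,\phi(\cdot;b)}$ whose value on $\chi(x;c)$ computes exactly the ratio $|\phi(A;b) \cap \chi(A;c)|/|\phi(A;b)|$ that we wish to control.

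Fix $\phi(x;y)$ and $\varepsilon > 0$ and suppose for contradiction that no single $\chi$ witnesses the claim. Let $\Sigma$ be the set of definable predicates in variables $(x;z)$ as $z$ ranges over sorts, and let $I$ be the directed set of pairs $(\Delta,N)$ with $\Delta \subseteq \Sigma$ finite and $N \in \mathbb{N}$. Using the negated hypothesis, for each $(\Delta,N)$ we choose a finite $A = A_{(\Delta,N)}$ with $|\phi(A;b)| \geq N$, together with $b = b_{(\Delta,N)} \in M^y$ and $0 \leq m = m_{(\Delta,N)} \leq |\phi(A;b)|$, such that for every $\chi \in \Delta$ and every parameter $c$ the desired cutting inequality fails. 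Extend the filter generated by the upward-closed sets $\{(\Delta',N') : \Delta \subseteq \Delta', N \leq N'\}$ to an ultrafilter $U$ on $I$, let $\tilde M = \prod_U M$, let $\tilde\nu = \prod_U \nu_{A_{(\Delta,N)},\phi(\cdot;b_{(\Delta,N)})}$, and let $\tilde b = [b_{(\Delta,N)}]_U$. Since distality is a property of the theory, $\tilde M$ is distal; by Lemma \ref{lem_ultra_gs}, $\tilde\nu$ is generically stable; and since $|\phi(A_{(\Delta,N)};b_{(\Delta,N)})| \to \infty$ along $U$, every $\tilde\nu$-atom vanishes, so the hypothesis of Lemma \ref{lem_cut_gsm} is satisfied. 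Finally, set $r = \lim_U m_{(\Delta,N)}/|\phi(A_{(\Delta,N)};b_{(\Delta,N)})| \in [0,1]$.

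Applying Lemma \ref{lem_cut_gsm} to $\tilde M$, $\tilde\nu$, the trivial predicate, tolerance $\varepsilon/2$, and target $r$ produces a definable predicate $\chi_0 \in \Sigma$ and $\tilde c = [c_{(\Delta,N)}]_U$ with $|\tilde\nu(\chi_0(x;\tilde c)) - r| \leq \varepsilon/2$. Since $\chi_0$ is a uniform limit of formulas, integrals of $\chi_0$ against $\tilde\nu$ are ultralimits of the corresponding integrals against $\nu_{A_{(\Delta,N)},\phi(\cdot;b_{(\Delta,N)})}$, so on a $U$-large set of indices we have $\bigl||\phi(A;b) \cap \chi_0(A;c)|/|\phi(A;b)| - m/|\phi(A;b)|\bigr| \leq \varepsilon$. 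But the set $\{(\Delta,N): \chi_0 \in \Delta\}$ is also in $U$, and on it the counterexamples were chosen precisely to falsify the inequality for $\chi_0$ and every parameter $c$, giving a contradiction. The main obstacle is arranging the ultrafilter so that the counting measures are both \emph{uniformly non-atomic} in the limit and the target fractions $m/|\phi(A;b)|$ converge; this is handled by forcing $N \to \infty$ in the filter, and if the negation cannot be realized with $|\phi(A;b)|$ arbitrarily large, then the target values $m/|\phi(A;b)|$ lie in a finite $\varepsilon$-net and the claim reduces to the trivial case where $\chi$ can pick out subsets of bounded size by standard coding tricks.
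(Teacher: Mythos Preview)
Your proposal is correct and follows essentially the same strategy as the paper: transfer Lemma~\ref{lem_cut_gsm} to finite counting measures by an ultraproduct contradiction argument. The paper streamlines the bookkeeping in two ways: it first reduces to the case of identically-zero $\phi$ (so the relevant denominator is $|A|$ itself, and taking $|A_n|\ge n$ already forces atomlessness of the limit measure, eliminating the need for your fallback case), and since Lemma~\ref{lem_cut_gsm} produces a single $\chi$ uniform over all atomless generically stable measures, it can fix that $\chi$ in advance and use a plain nonprincipal ultrafilter on $\mathbb{N}$ rather than your indexing over pairs $(\Delta,N)$.
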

\begin{proof}
    It is enough to show this for $\phi(x;y)$ which is uniformly 0.
    Specifically, we will show that for all $\varepsilon > 0$,
    there is a definable predicate $\chi(x;z)$
    such that for any sufficiently large finite set $A \subseteq M^x$, any $b \in M^y$, and any $r \in [0,1]$,
    there is some $c \in M^z$ such that
    $$\left|\frac{|\chi(A;c)|}{|A|}-r\right|\leq \varepsilon.$$
    We can then apply this with $\phi(A;b)$ in place of $A$, and $r = \frac{m}{|\phi(A;b)|}$.

    Assume for contradiction that this does not hold.
    Let $(A_n, r_n) : n \in \N$ be a sequence of counterexamples, with $|A_n| \geq n$ for each $n$.
    That is, for any predicate $\chi(x;z)$, any $c \in M^z$ and any $n \in \N$,
    $$\left|\frac{|\chi(A_n;c)|}{|A_n|}-r_n\right|>\varepsilon.$$

    Now let $\mu_n$ be the uniform measure on $A_n$ for each $n$.
    Fix a nonprincipal ultrafilter on $\N$, and let $\mu$ be the ultraproduct of the $\mu_n$s, and $r \in [0,1]$ be the ultralimit of the $r_n$s.
    Then $\mu$ is a generically stable measure with $\mu(\{c\}) = 0$ for each $c$.
    Thus Lemma \ref{lem_cut_gsm} applies. Let $\chi(x;z)$ be as given by that lemma, but with $\frac{\varepsilon}{2}$ substituted for $\varepsilon$.
    Then there exists some sequence $(c_n : n \in \N)$ such that if $c$ is the element of the ultraproduct representing that sequence,
    $\left|\mu(\chi(M;c)) - r \right|\leq \frac{\varepsilon}{2}$,
    and thus also on a large set of indices $n$,
    $\left|\mu(\chi(M;c_n)) - r_n \right|< \varepsilon.$
    However, for each such $n$,
    $$\left|\mu(\chi(A_n;c_n)) - r_n \right| = 
        \left|\frac{|\chi(A_n;c_n)|}{|A_n|} - r_n \right|,$$
    contradicting the choice of $(A_n,r_n)$.
\end{proof}

Having seen that distal structures uniformly cut finite sets,
we can make the partition in Corollary \ref{cor_distal_reg} a uniformly constructible equipartition, as in \cite[Corollary 5.14]{distal_reg}.

\begin{cor}\label{cor_distal_equi}
    Let $\phi(x_1,\dots,x_k)$ be a definable predicate where each $x_i$ is a copy of the same variable tuple $x$, and fix $\varepsilon > 0$.
    Then there is a constructible predicate $\psi(x;z)$ and some $C > 0$ such that the following holds:
    
    For any generically stable measure $\mu \in \mathfrak{M}_{x}(M)$ such that $\mu(\{a\}) = 0$ for all $a \in M^x$, and any $\gamma,\delta > 0$,
    $\psi$ defines a constructible $(\varepsilon,\delta)$-distal regularity partition $P$ of $M^x$ of size at most $\mathcal{O}(\delta^{-C})$,
    such that each cell in $P$ is uniformly (in terms of $\phi,\varepsilon,\delta, \gamma$) constructible over a set of parameters of size $\mathcal{O}(\delta^{-C})$,
    such that for any two sets $A,B \in P$, $|\mu(A) - \mu(B)|\leq \gamma$.
\end{cor}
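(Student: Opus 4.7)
The plan is to refine the constructible distal regularity partition from Corollary \ref{cor_distal_reg} into an equipartition by iteratively cutting via Lemma \ref{lem_cut_gsm}. First, apply Corollary \ref{cor_distal_reg} to $\phi$ and $\varepsilon$, obtaining a constructible predicate $\psi_0(x;z_0)$ and constant $C_0$ such that for any generically stable $\mu$, $\psi_0$ defines a partition $P_0$ of $M^x$ of size $N_0 = \mathcal{O}(\delta^{-C_0})$ whose $k$-fold grid $\otimes_{i=1}^k P_0$ is a $(\varepsilon,\delta)$-distal regularity partition for $\phi$. Then apply Lemma \ref{lem_cut_gsm} to obtain a uniform cutting predicate $\chi(x;z_\chi)$.

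For each cell $A \in P_0$ with $\mu(A) > 0$, the localization $\mu_A$ is smooth by Corollary \ref{cor_smooth_local} (hence generically stable, by the remark following Theorem \ref{thm_gen_stable}) and still atomless, so it satisfies the hypothesis of Lemma \ref{lem_cut_gsm}. Choosing a common target cell-measure $m^*$ in terms of $\gamma$ and $N_0$, I would iteratively apply $\chi$ to sub-localizations of $\mu_A$ to subdivide $A$ into $\lceil \mu(A)/m^* \rceil$ sub-cells of $\mu$-measure within $\gamma/2$ of $m^*$. The resulting refined partition $P$ is an equipartition in the required sense. The grid $\otimes_{i=1}^k P$ remains a $(\varepsilon,\delta)$-distal regularity partition because homogeneous grid cells stay homogeneous under refinement, and any non-homogeneous grid cell of $\otimes_{i=1}^k P$ is contained in a non-homogeneous grid cell of $\otimes_{i=1}^k P_0$, so the total product-measure of non-homogeneous cells does not exceed $\delta$. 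Each cell of $P$ is a Boolean combination of a bounded number of instances of the two fixed predicates $\psi_0$ and $\chi$, and standard coding packages these into the single constructible $\psi(x;z)$ with parameter set of the claimed size.

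The main obstacle is managing the compounding of error across iterated cuts: each application of Lemma \ref{lem_cut_gsm} to a localized measure introduces an error controlled by the precision with which $\chi$ was constructed, and driving the subdivision to cells of target measure $m^* \lesssim \gamma$ may require many levels of cuts, so the precision must be small on an absolute scale. A single $\chi$ from Lemma \ref{lem_cut_gsm} is built with a fixed precision, so for $\psi(x;z)$ to work for arbitrary $\gamma$ I would exploit the $k$-fold union structure in $\chi$'s construction in the proof of Lemma \ref{lem_cut_gsm} — where $\chi(M;c)$ is formed as a union of instances of a simpler $\chi_1$ controlled by the parameter length — allowing longer parameter tuples $z_\chi$ to encode arbitrarily fine cuts, and absorbing the resulting parameter-count growth into the bound $\mathcal{O}(\delta^{-C})$ by taking $C$ sufficiently large relative to $C_0$.
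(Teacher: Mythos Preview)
Your proposal is correct and follows essentially the same approach as the paper: start from the constructible distal regularity partition (the paper uses Theorem \ref{thm_distal_reg} with $\varepsilon/2$, you use Corollary \ref{cor_distal_reg} with $\varepsilon$, but your observation that refinement preserves homogeneity makes the halving unnecessary), then refine each cell into approximately-equal-measure pieces using the cutting predicate $\chi$ from Lemma \ref{lem_cut_gsm}, and package the resulting Boolean combinations into a single constructible $\psi$. The paper is terser, simply deferring the repartitioning mechanics to \cite[Corollary 5.14]{distal_reg}; your explicit identification of the precision issue (a fixed $\chi$ has fixed cutting accuracy, while $\gamma$ is arbitrary) and your proposed resolution via the variable-length union structure inside the proof of Lemma \ref{lem_cut_gsm} are exactly the details one would need to fill in.
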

\begin{proof}
    We start with $\psi(x;z)$ and $C > 0$ as given by Theorem \ref{thm_distal_reg}, with $\frac{\varepsilon}{2}$ playing the role of $\varepsilon$.
    (This $\psi$ and $C$ will not be the final $\psi$ and $C$.)
    Fix $\mu, \gamma,\delta$.

    We can use the same repartitioning argument from the proof of \cite[Corollary 5.14]{distal_reg} to form an equipartition, using the predicate $\chi$ from Lemma \ref{lem_cut_gsm} to cut the measure $\mu$.
    The resulting equipartition will consist of boolean combinations of pieces from the previous partition and $\chi$-zerosets, and thus with the usual coding tricks, are uniformly constructible.
\end{proof}

\bibliographystyle{plainurl}
\bibliography{ref.bib}

\begin{thebibliography}{10}

\bibitem{alon97}
Noga Alon, Shai Ben-David, Nicolo Cesa-Bianchi, and David Haussler.
\newblock Scale-sensitive dimensions, uniform convergence, and learnability.
\newblock {\em Journal of the ACM (JACM)}, 44(4):615--631, 1997.

\bibitem{anderson1}
Aaron Anderson.
\newblock Fuzzy {VC} combinatorics and distality in continuous logic, 2023.
\newblock \href {https://arxiv.org/abs/2310.04393} {\path{arXiv:2310.04393}}.

\bibitem{anderson3}
Aaron Anderson and Ita\"i Ben~Yaacov.
\newblock Examples and nonexamples of distal metric structures, 2023.

\bibitem{randomVC}
Ita\"i Ben~Yaacov.
\newblock Continuous and random {Vapnik-Chervonenkis} classes.
\newblock {\em Israel Journal of Mathematics}, 173(1):309--333, September 2009.

\bibitem{StabGrps}
Ita{\"\i} Ben~Yaacov.
\newblock Stability and stable groups in continuous logic.
\newblock {\em The Journal of Symbolic Logic}, 75(3):1111--1136, 2010.

\bibitem{mtfms}
Ita\"i Ben~Yaacov, Alexander Berenstein, C.~Ward Henson, and Alexander Usvyatsov.
\newblock Model theory for metric structures.
\newblock {\em London Mathematical Society Lecture Note Series}, 350:315, 2008.

\bibitem{random09}
Ita{\"\i} Ben~Yaacov and H~Jerome~Keisler.
\newblock Randomizations of models as metric structures.
\newblock {\em Confluentes Mathematici}, 1(02):197--223, 2009.

\bibitem{bourbaki_FVR}
Nicolas Bourbaki.
\newblock {\em Fonctions d'une variable r{\'{e}}elle}.
\newblock Springer Berlin Heidelberg, 2007.

\bibitem{csr}
Nicolas Chavarria, Gabriel Conant, and Anand Pillay.
\newblock Continuous stable regularity, 2021.
\newblock \href {https://arxiv.org/abs/2111.05435} {\path{arXiv:2111.05435}}.

\bibitem{cgs}
Artem Chernikov, David Galvin, and Sergei Starchenko.
\newblock Cutting lemma and {Z}arankiewicz's problem in distal structures.
\newblock {\em Selecta Mathematica}, 26:1--27, 2020.

\bibitem{cs2}
Artem Chernikov and Pierre Simon.
\newblock Externally definable sets and dependent pairs {II}.
\newblock {\em Transactions of the American Mathematical Society}, 367(7):5217--5235, 2015.

\bibitem{distal_reg}
Artem Chernikov and Sergei Starchenko.
\newblock Regularity lemma for distal structures.
\newblock {\em Journal of the European Mathematical Society}, 20(10):2437--2466, 2018.

\bibitem{nip_reg}
Artem Chernikov and Sergei Starchenko.
\newblock Definable regularity lemmas for {NIP} hypergraphs.
\newblock {\em The Quarterly Journal of Mathematics}, 72(4):1401--1433, 2021.

\bibitem{ct}
Artem Chernikov and Henry Towsner.
\newblock Hypergraph regularity and higher arity {VC}-dimension, 2020.
\newblock \href {https://arxiv.org/abs/2010.00726} {\path{arXiv:2010.00726}}.

\bibitem{cg}
Gabriel Conant and Kyle Gannon.
\newblock Remarks on generic stability in independent theories.
\newblock {\em Annals of Pure and Applied Logic}, 171(2):102736, 2020.

\bibitem{associativity}
Gabriel Conant and Kyle Gannon.
\newblock Associativity of the {M}orley product of invariant measures in {NIP} theories.
\newblock {\em The Journal of Symbolic Logic}, 86(3):1293--1300, 2021.

\bibitem{cgh}
Gabriel Conant, Kyle Gannon, and James Hanson.
\newblock Keisler measures in the wild.
\newblock {\em Model Theory}, 2(1):1--67, 2023.

\bibitem{random_gs}
Gabriel Conant, Kyle Gannon, and James~E. Hanson.
\newblock Generic stability, randomizations, and nip formulas, 2023.
\newblock \href {https://arxiv.org/abs/2308.01801} {\path{arXiv:2308.01801}}.

\bibitem{gannon}
Kyle Gannon.
\newblock {\em Approximation theorems for Keisler measures}.
\newblock University of Notre Dame, 2020.

\bibitem{HP}
Ehud Hrushovski and Anand Pillay.
\newblock On {NIP} and invariant measures.
\newblock {\em Journal of the European Mathematical Society (EMS Publishing)}, 13(4), 2011.

\bibitem{HPS}
Ehud Hrushovski, Anand Pillay, and Pierre Simon.
\newblock Generically stable and smooth measures in {NIP} theories.
\newblock {\em Transactions of the American Mathematical Society}, 365(5):2341--2366, 2013.

\bibitem{khanaki}
Karim Khanaki.
\newblock Generic stability and modes of convergence, 2022.
\newblock \href {https://arxiv.org/abs/2204.03910} {\path{arXiv:2204.03910}}.

\bibitem{ls1}
L{\'a}szl{\'o} Lov{\'a}sz and Bal{\'a}zs Szegedy.
\newblock Szemer{\'e}di’s lemma for the analyst.
\newblock {\em GAFA Geometric And Functional Analysis}, 17:252--270, 2007.

\bibitem{ls2}
L{\'a}szl{\'o} Lov{\'a}sz and Bal{\'a}zs Szegedy.
\newblock Regularity partitions and the topology of graphons.
\newblock {\em An Irregular Mind: Szemer{\'e}di is 70}, pages 415--446, 2010.

\bibitem{matousek_GTM}
Ji\v{r}{\'\i} Matou\v{s}ek.
\newblock {\em Lectures on Discrete Geometry}.
\newblock Springer-Verlag, Berlin, Heidelberg, 2002.

\bibitem{reed_simon}
M.~Reed and B.~Simon.
\newblock {\em I: Functional Analysis}.
\newblock Methods of Modern Mathematical Physics. Elsevier Science, 1980.

\bibitem{schaefer_wolff}
H.~H. Schaefer and M.~P. Wolff.
\newblock {\em Topological Vector Spaces}.
\newblock Springer-Verlag, Berlin, Heidelberg, 1999.

\bibitem{distal_simon}
Pierre Simon.
\newblock Distal and non-distal {NIP} theories.
\newblock {\em Annals of Pure and Applied Logic}, 164(3):294--318, 2013.

\bibitem{nip_guide}
Pierre Simon.
\newblock {\em A guide to {NIP} theories}.
\newblock Cambridge University Press, 2015.

\bibitem{simon_distal_reg}
Pierre Simon.
\newblock A note on ``{R}egularity lemma for distal structures''.
\newblock {\em Proceedings of the American Mathematical Society}, 144(8):3573--3578, 2016.

\end{thebibliography}

\end{document}